\newcommand{\neutralize}[1]{\expandafter\let\csname c@#1\endcsname\count@}
\theoremstyle{plain}
\newtheorem{thm}{Theorem}[section]
\newtheorem*{thm*}{Theorem}
\newtheorem{lem}[thm]{Lemma}
\newtheorem*{claim}{Claim}
\newtheorem{pro}[thm]{Proposition}
\newtheorem{pro-def}[thm]{Proposition-Definition}
\newtheorem{lem-def}[thm]{Lemma-Definition}
\newtheorem{cor-def}[thm]{Corollary-Definition}
\newtheorem{cor}[thm]{Corollary}
\newtheorem{Hyp}[thm]{Hypotheses}
\theoremstyle{definition}
\newtheorem{Def}[thm]{Definition}
\newtheorem{rem}[thm]{Remark}
\theoremstyle{remark}
\newcommand{\ssec}{\subsection}
\newcommand{\sssec}{\subsubsection}
\newcommand\comm[1]{\textcolor{red}{#1}}
\newcommand{\ol}{\overline}
\newcommand{\ti}[1]{\tilde{#1}}
\newcommand{\vast}{\bBigg@{4}}
\newcommand{\Vast}{\bBigg@{5}}
\newcommand{\wt}{\widetilde}
\newcommand{\rwh}{\reallywidehat}
\newcommand{\wh}{\widehat}
\newcommand\ddfrac[2]{\frac{\displaystyle #1}{\displaystyle #2}}
\newcommand\reallywidehat[1]{%
\savestack{\tmpbox}{\stretchto{%
  \scaleto{%
    \scalerel*[\widthof{\ensuremath{#1}}]{\kern-.6pt\bigwedge\kern-.6pt}%
    {\rule[-\textheight/2]{1ex}{\textheight}}
  }{\textheight}%
}{0.5ex}}%
\stackon[1pt]{#1}{\tmpbox}%
}
\definecolor{armygreen}{rgb}{0.29, 0.33, 0.13}
\definecolor{ao(english)}{rgb}{0.5, 0.2, 0.0}
\newcommand{\bC}{\mathbf{C}}
\newcommand{\bH}{\mathbf{H}}
\newcommand{\bP}{\mathbf{P}}
\newcommand{\bQ}{\mathbf{Q}}
\newcommand{\bR}{\mathbf{R}}
\newcommand{\bW}{\mathbf{W}}
\newcommand{\bZ}{\mathbf{Z}}
\newcommand{\bm}{\mathbf{m}}
\newcommand{\cC}{\mathcal{C}}
\newcommand{\cE}{\mathcal{E}}
\newcommand{\cF}{\mathcal{F}}
\newcommand{\cG}{\mathcal{G}}
\newcommand{\cH}{\mathcal{H}}
\newcommand{\cI}{\mathcal{I}}
\newcommand{\cJ}{\mathcal{J}}
\newcommand{\cL}{\mathcal{L}}
\newcommand{\cO}{\mathcal{O}}
\newcommand{\cV}{\mathcal{V}}
\newcommand{\cW}{\mathcal{W}}
\newcommand{\cX}{\mathcal{X}}
\newcommand{\cY}{\mathcal{Y}}
\newcommand{\sX}{\mathscr{X}}
\newcommand{\fX}{\mathfrak{X}}
\newcommand{\gD}{\Delta}
\newcommand{\gL}{\Lambda}
\newcommand{\gO}{\Omega}
\newcommand{\gS}{\Sigma}
\newcommand{\gT}{\Theta}
\newcommand{\ga}{\alpha}
\newcommand{\gb}{\beta}
\newcommand{\gd}{\delta}
\newcommand{\gep}{\varepsilon}
\newcommand{\gk}{\kappa}
\newcommand{\gl}{\lambda}
\newcommand{\gs}{\sigma}
\newcommand{\gt}{\theta}
\newcommand{\Aut}{\mathrm{Aut}}
\newcommand{\Div}{\mathrm{Div}}
\newcommand{\Ext}{\mathrm{Ext}}
\newcommand{\Gal}{\mathrm{Gal}}
\newcommand{\Gr}{\mathrm{Gr}}
\newcommand{\Hom}{\mathrm{Hom}}
\newcommand{\Id}{\mathrm{Id}}
\newcommand{\Ima}{\mathrm{Im}}
\newcommand{\mmin}{\mathrm{min}}
\newcommand{\pr}{\mathrm{pr}}
\newcommand{\sm}{\mathrm{sm}}
\newcommand{\Spec}{\mathrm{Spec \ }}
\newcommand{\tors}{\mathrm{tors}}
\newcommand{\tr}{\mathrm{tr}}
\newcommand{\ie}{\emph{i.e.} }
\newcommand{\eg}{\emph{e.g.} }
\newcommand{\bss}{\backslash}
\newcommand{\cnec}{\mathrel{:=}}
\newcommand{\st}{\star}
\renewcommand{\(}{\left(}
\renewcommand{\)}{\right)}
\newcommand{\dto}{\dashrightarrow}
\newcommand{\hto}{\hookrightarrow}
\newcommand{\xto}[1]{\xrightarrow{ #1 }}
\let\orgdescriptionlabel\descriptionlabel
\renewcommand*{\descriptionlabel}[1]{%
  \let\orglabel\label
  \let\label\@gobble
  \phantomsection
  \edef\@currentlabel{#1}%
  \let\label\orglabel
  \orgdescriptionlabel{#1}%
}
\tikzset{node distance=2cm, auto}
\numberwithin{equation}{section}
\title{Algebraic approximations of compact K\"ahler manifolds of algebraic codimension 1} 
\author{Hsueh-Yung Lin}
\email{hsuehyung.lin.math@gmail.com}
\begin{document}

\begin{altabstract}
For every compact K\"ahler manifold $X$ of algebraic dimension $a(X) = \dim X - 1$, we prove that $X$ has arbitrarily small deformations to some projective manifolds. 
\end{altabstract}

\maketitle

\section{Introduction}
\ssec{The Kodaira problem}\hfill

Let $X$ be a compact K\"ahler manifold. An \emph{algebraic approximation of $X$} is a deformation $\cX \to \gD$ of $X$ such that up to shrinking $\gD$, the subset of $\gD$ parameterizing projective manifolds is dense. The so-called Kodaira problem asks whether a compact K\"ahler manifold in general admits an algebraic approximation. For compact K\"ahler surfaces, the Kodaira problem has a positive answer~\cite[Theorem 16.1]{KodairaSurfaceII}; see also~\cite{Schrackdefo, CaoJApproxalg, GrafDefKod0, ClaudonToridefequiv, HYLbimkod1, ClaudonHorpi1} for other positive results. However starting from dimension 4, there exist compact K\"ahler manifolds which cannot deform to any projective manifold at all (even homotopically)~\cite{Voisincs}. Despite these works, the existence of algebraic approximations is still unknown for most compact K\"ahler manifolds.

Recall that the algebraic dimension $a(X)$ of a compact complex manifold $X$ is the transcendental degree (over $\bC$) of its field of meromorphic functions. For compact K\"ahler manifolds, those satisfying $a(X) = \dim X$ are exactly the projective ones by Moishezon's criterion~\cite{MoishezonKMP}. Among non-projective compact K\"ahler manifolds, $a(X) = \dim X - 1$ is the highest possible algebraic dimension. The aim of this text is to prove that  such manifolds have algebraic approximations.

\begin{thm}\label{thm-main0}
Every compact K\"ahler manifold $X$ of algebraic dimension $a(X) = \dim X - 1$ admits an algebraic approximation.
\end{thm}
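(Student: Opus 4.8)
The plan is to realize the algebraic reduction as an elliptic fibration, replace $X$ by a model over which it is a morphism, and then deform the classifying Tate--Shafarevich class to a torsion --- hence algebraic --- one. Resolving the indeterminacies and the target of the algebraic reduction of $X$ yields a morphism $f\colon \ti X \to B$ from a compact K\"ahler modification $\ti X$ of $X$ onto a smooth projective $B$ with $\dim B = \dim X - 1$ and general fibre a smooth curve $F$. Because $a(X) = \dim X - 1$, the curve $F$ has genus $1$: were it rational or of genus $\ge 2$, a power of the relative (anti)canonical bundle of $\ti X/B$ would be $f$-very ample over a dense open of $B$, whence $\ti X$ would be bimeromorphic to a projective variety, hence Moishezon, contradicting $a(\ti X) = a(X) < \dim X$. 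So $f$ is an elliptic fibration. Since algebraic approximations behave well under the bimeromorphic modifications occurring here (the relevant blow-downs propagate over a disc; \cf \cite{HYLbimkod1}), it suffices to approximate $\ti X$; so assume henceforth that $X$ itself carries an elliptic fibration $f\colon X \to B$ onto a smooth projective base of dimension $\dim X - 1$ realizing its algebraic reduction.

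After replacing $f$ by a convenient model, introduce the relative Jacobian $\pi\colon J = J(X/B) \to B$, an elliptic fibration with a zero section, which is projective (its Weierstrass model is projective over the projective $B$). Over the locus $B^{\circ}$ of smooth fibres $X$ is an analytic torsor under $J$, and, extended suitably across the discriminant, is classified by a class $\eta = \eta(X)$ in the analytic Tate--Shafarevich group $H^1(B, \cJ)$, $\cJ$ the sheaf of local analytic sections of $J \to B$. Two observations steer the argument. First, a K\"ahler torsor $X'$ under $J/B$ is projective if and only if $\eta(X')$ is torsion: a torsion class yields a multisection of $X' \to B$ (a proper subvariety finite over $B$), base change along which trivializes the torsor and exhibits $X'$ as finitely dominated by a projective variety, so that $a(X') = \dim X'$ and $X'$ is projective since K\"ahler; conversely a projective $X'$ has an algebraic genus-one curve over $\bC(B)$ for generic fibre, whose class in $H^1(\bC(B), J)$ is torsion ($H^1$ of an abelian variety over a field being torsion), so $\eta(X')$ is torsion. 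Second, the exact sequence $0 \to R^1 f_* \bZ \to \cL \to \cJ \to 0$ on $B^{\circ}$ (with the appropriate substitute along the discriminant), in which $\cL = R^1\pi_*\cO_J$ is a line bundle, presents $H^1(B, \cJ)$ as an extension of a discrete group (inside $H^2$ of a local system) by a complex-analytic quotient of $H^1(B, \cL) = H^1(B, R^1\pi_*\cO_J)$; hence, for fixed $(B, J)$, torsion classes are in general \emph{not} dense in $H^1(B, \cJ)$, the obstruction to $\eta(X)$ being torsion being a transcendental, Hodge-theoretic component in $H^1(B, R^1\pi_*\cO_J)$.

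Now one deforms everything at once. Let $\cB$ be the (smooth, after shrinking) base of deformations of the pair ``smooth projective base $+$ elliptic fibration with zero section'', over which the analytic Tate--Shafarevich groups assemble into a family $\cH \to \cB$ of complex-analytic groups; a point of $\cH$ near $\eta(X)$ determines a $J_t$-torsor over $B_t$, hence a small deformation of $X$, and one checks that these fibration-preserving deformations form a genuine family $\cX \to \gD$ deforming $X$. It then remains to prove that the projective locus $\{\, t \in \gD : \eta_t \text{ torsion} \,\}$ is dense in $\gD$ and accumulates at the base point. The mechanism: move $(B, J)$ into a dense set of positions that are projective and over which the transcendental obstruction for a small deformation of $\eta$ can be made to vanish (a Noether--Lefschetz-type density statement), and over each such position use that torsion classes are then dense in the relevant slice of $\cH$; combining the two yields projective members of $\cX \to \gD$ arbitrarily close to $X$, which is the desired algebraic approximation.

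\textbf{Main obstacle.} The crux is the density assertion above, and within it the Noether--Lefschetz part: controlling the Hodge-theoretic obstruction $H^1(B, R^1\pi_*\cO_J)$, and the period of $\eta(X)$ inside it, as the pair $(B, J)$ varies over a base $B$ of \emph{arbitrary} dimension, and in the presence of the singular and multiple fibres of $f$ and $\pi$ --- where the sheaf $\cJ$, the exact sequence above, and the very definition of the analytic Tate--Shafarevich group require a careful local analysis along the discriminant, and where the models must be chosen so that ``$\eta$ torsion $\Rightarrow$ projective total space'' persists and the constructed families of twisted elliptic fibrations are honest smooth deformations of $X$. Carrying this through is where the hypothesis $a(X) = \dim X - 1$ and the K\"ahler condition are genuinely used, and where the weight of the proof lies.
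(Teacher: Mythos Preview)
Your sketch contains a real gap at the step where you pass from $X$ to a bimeromorphic model $\ti X$ carrying an elliptic fibration and say ``it suffices to approximate $\ti X$''. This is not routine: an algebraic approximation of a bimeromorphic model does \emph{not} in general induce one of $X$ (indeed the paper \cite{ClaudonHorpi1} you allude to already established the bimeromorphic statement, and the point of the present work is precisely to go further). The paper's mechanism for this is to choose the model $\sX$ and a subvariety $Y \subset \sX$ containing the modification locus so that an algebraic approximation of $\sX$ \emph{preserving the formal completion} $\hat Y$ can be transported back to $X$ via the Ancona--Tomassini--Bingener theorem on formal modifications (Proposition~\ref{pro-Formalgapp}). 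Finding such an approximation that simultaneously (a) makes the Tate--Shafarevich class torsion and (b) keeps $\hat Y$ fixed is the heart of the argument and requires substantial work (Propositions~\ref{pro-densepair0} and~\ref{pro-desingpre}); your outline does not address this constraint at all.

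Your proposed density argument also diverges from the paper in a way that creates a second gap. You suggest moving the pair $(B,J)$ by a Noether--Lefschetz-type deformation to kill the transcendental obstruction in $H^1(B,R^1\pi_*\cO_J)$. The paper instead keeps $(B,J)$ fixed and observes that, once the discriminant is simple normal crossing with unipotent local monodromy, $H^1(B,j_*\bH)$ carries a \emph{pure} weight-$2$ Hodge structure whose projection to the $(0,2)$-part is exactly $H^1(B,j_*\bH) \to H^1(B,\cL_{\bH/B})$ (Lemma~\ref{lem-morStHdgminW}). Density of torsion classes in the relevant subspace then follows directly from Hodge theory, and---crucially---the same Hodge-theoretic functoriality under pullback to a log-resolution of $Z$ is what lets one cut down to a subfamily preserving $Y \to Z$. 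Your Noether--Lefschetz approach would destroy the base $B$ and hence the subvariety $Z \subset B$ along which the formal completion must be preserved, so it is unclear how it could be made compatible with the requirement in the previous paragraph.
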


An \emph{elliptic fibration} is a holomorphic surjective map $f : X \to B$ whose general fiber is an elliptic curve. The algebraic reduction of a compact K\"ahler manifold $X$ with $a(X) = \dim X - 1$ is bimeromorphic to an elliptic fibration over a projective variety~\cite[Theorem 12.4]{UenoClassAlgVar}. Conversely, a compact K\"ahler manifold $X$ bimeromorphic to an elliptic fibration $X' \to B$ over a projective variety $B$ has algebraic dimension 
$$a(X) \ge a(B) = \dim B = \dim X - 1.$$ 
Thus we have the following variant of Theorem~\ref{thm-main0}, which we will prove instead in this text.

\begin{thm}[Variant of Theorem~\ref{thm-main0}]\label{thm-mainfibellip}
Let $X$ be a compact K\"ahler manifold. If $X$ is bimeromorphic to the total space of an elliptic fibration over a projective variety, then $X$ has an algebraic approximation.
\end{thm}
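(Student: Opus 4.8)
The plan is to pass to a convenient model of the elliptic fibration and then to produce the algebraic approximation by moving the cohomology class that records how $X$ twists away from its Jacobian fibration. \emph{Reduction to a standard model.} By the bimeromorphic invariance of the existence of algebraic approximations among compact K\"ahler manifolds (\cite{HYLbimkod1}, together with the results recalled in the introduction), it suffices to treat one smooth K\"ahler bimeromorphic model of $X$. Resolving the base, flattening the fibration, resolving the total space and invoking the local structure theory of elliptic fibrations, I would reduce to the case where $f\colon X\to B$ is an elliptic fibration with $X$ smooth compact K\"ahler and $B$ smooth projective, smooth away from a normal crossing divisor $D\subset B$ and with only standard degenerate fibres over $D$; write $J^{\#}\to B$, $X^{\#}\to B$ for the relative smooth loci.

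\emph{The Jacobian fibration is projective.} Form the Jacobian fibration $g\colon J\to B$, which carries a section. It is obtained from a Weierstrass model $W\to B$ --- a hypersurface in the projectivization of $\cO_B\oplus L^{\otimes 2}\oplus L^{\otimes 3}$, where $L$ is the line bundle dual to $R^1f_*\cO_X$ --- and since $B$ is projective, so is $W$; taking a projective resolution compatible with the relative smooth locus, $J$ may be taken projective, hence with the trivial algebraic approximation. By construction $X^{\#}$ is an analytic torsor over $J^{\#}$ over $B$, and $X$ is recovered from $J$, this torsor, and the fixed local models of $f$ along $D$.

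\emph{Deforming the twist.} The torsor $X^{\#}$ is classified by a class $\eta\in H^1(B,\cS)$, where $\cS$ is the sheaf of local holomorphic sections of $J^{\#}\to B$. The relative exponential sequence of $g$ presents $\cS$ as the cokernel of $\Lambda:=R^1g_*\bZ$ in a coherent sheaf $\cF$ agreeing with $R^1g_*\cO_J$ away from $D$, giving an exact sequence $H^1(B,\cF)\xrightarrow{\alpha}H^1(B,\cS)\xrightarrow{c}H^2(B,\Lambda)$. Here $c(\eta)$ is the topological type; the elliptic fibrations over $B$ sharing the Jacobian $J$, the local data along $D$ and the topological type $c(\eta)$ form the coset $\eta+\Ima\alpha$, a principal homogeneous space under the complex Lie group $H^1(B,\cF)/\Ker\alpha$. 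The K\"ahler hypothesis on $X$ enters in two ways: first, via Hodge theory on $B$ and the $\partial\bar\partial$-lemma applied to the weight-one variation of Hodge structure $R^1g_*\bC$, to identify $\Ker\alpha$ with a full lattice, so that $T:=H^1(B,\cF)/\Ker\alpha$ is a compact complex torus; second, to guarantee that $c(\eta)$ is a torsion class (possibly only after an auxiliary deformation of $B$, of $J$ and of the local data along $D$), which is precisely what makes the torsion torsors dense in $\eta+\Ima\alpha$. A torsor whose class is torsion is projective: torsion of the class is equivalent to the existence of a line bundle of positive degree on the general fibre, hence to $f$ being a projective morphism, hence --- $B$ being projective --- to $X$ being projective. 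Finally, the coset $\eta+\Ima\alpha$ is realized by an honest deformation $\cX\to T$ of $X$ (built by relativizing the Weierstrass-and-torsor construction over $T$ and re-glueing the fixed local models along $D$, with $X$ sitting over the origin); restricting $\cX\to T$ to a small polydisc $\gD\subset T$ around that point, the projective fibres are dense, so that this is an algebraic approximation of $X$.

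\emph{The main obstacle.} The delicate part is everything over the discriminant $D$: making the Jacobian-and-torsor description, the exact sequence, and above all the deformation $\cX\to T$ work over all of $B$ rather than over $B\setminus D$ forces one to keep exact track of the degenerate and multiple fibres (and, where a finite base change is used, to descend the resulting deformation), and one must check that the bimeromorphic surgery of the first step does not leave Fujiki's class $\cC$ or destroy the K\"ahler property of the model being deformed. The other crux, of a Hodge-theoretic nature, is to convert the K\"ahler hypothesis into the two statements used above --- that $T$ is a compact complex torus, and that $c(\eta)$ is, after a controlled deformation, a torsion class --- and it is exactly here that the hypothesis ``compact K\"ahler'', rather than merely ``in class $\cC$'', should be indispensable.
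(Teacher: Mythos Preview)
Your reduction step contains a fatal gap. You invoke ``bimeromorphic invariance of the existence of algebraic approximations among compact K\"ahler manifolds'', but no such principle is known; indeed, the bimeromorphic version of Theorem~\ref{thm-mainfibellip} --- that $X$ is bimeromorphic to some elliptic fibration admitting an algebraic approximation --- was already established in~\cite{ClaudonHorpi1}, and the entire point of the present paper is to upgrade that to an algebraic approximation of $X$ itself. Your operations ``resolving the base, flattening the fibration, resolving the total space'' replace $X$ by a bimeromorphic model $X'$, and nothing in~\cite{HYLbimkod1} or elsewhere lets you transport an algebraic approximation of $X'$ back to $X$ in general. So what you have sketched is essentially a reproof of the already-known bimeromorphic result, not of Theorem~\ref{thm-mainfibellip}.

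The paper's actual strategy confronts this difficulty head-on. One still passes to a carefully chosen bimeromorphic model (a $G$-equivariant tautological model $f:\sX\to B$), but one keeps track of a subvariety $Y\subset\sX$ containing the locus modified under $\sX\dto X$. The tautological family is then shown, via the Hodge-theoretic analysis of $H^1(B,j_*\bH)$ and $H^1(B,\cL_{\bH/B})$, to contain a subfamily that is simultaneously an algebraic approximation \emph{and} preserves the formal completion $\hat Y$ (Proposition~\ref{pro-appalgpres}). The key new mechanism is Proposition~\ref{pro-Formalgapp}, based on the Ancona--Tomassini--Bingener theorem on formal modifications: if an algebraic approximation of $\sX$ preserves $\hat Y$, then it can be lifted to an algebraic approximation of $X$. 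Your density argument via the exponential sequence and the torus $H^1(B,\cF)/\ker\alpha$ is close in spirit to part of the paper's machinery, but it produces only a deformation of the model, not of $X$; the missing ingredients are (i) arranging that $Y\to f(Y)$ has a multi-section (so that the relevant restricted family is still an algebraic approximation), (ii) proving that the subfamily over $V_{\tilde Z}^G$ preserves $\hat Y$ (Propositions~\ref{pro-desingpre} and~\ref{pro-locWeimodHdgiii}, plus Lemma~\ref{lem-prescompact}), and (iii) the formal-modification transfer of Proposition~\ref{pro-Formalgapp}. Incidentally, your worry that $c(\eta)$ might be torsion only ``after an auxiliary deformation'' is unnecessary: Proposition~\ref{pro-condK} shows $c(\eta)$ is torsion as soon as the total space is in the Fujiki class~$\cC$.
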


In~\cite{ClaudonHorpi1}, it has already been proven that every compact K\"ahler manifold $X$ as in Theorem~\ref{thm-mainfibellip} is bimeromorphic to an elliptic fibration admitting an algebraic approximation. 

\begin{rem}
Theorem~\ref{thm-mainfibellip} fails in general if the base of the elliptic fibration is not assumed to be projective: the product of an elliptic curve with Voisin's example constructed in~\cite[Section 2]{Voisincs} is a counter-example~\cite[Proposition 1]{Voisincs}.
\end{rem}

The notion of algebraic approximations can be defined more generally for any compact complex manifold $X$: in the definition, we require instead that Moishezon manifolds (\ie compact complex manifolds $X'$ with $a(X') = \dim X'$) are dense in the deformation $\cX \to \gD$ of $X$ (see Definition~\ref{def-appalg}). Since small deformations of K\"ahler manifolds remain K\"ahler,  this definition of algebraic approximation coincides with the one introduced at the beginning of the text for compact K\"ahler manifolds by Moishezon's criterion. Recall that the Fujiki class $\cC$ is the collection of compact complex varieties which are bimeromorphic to compact K\"ahler manifolds.  As an immediate corollary of Theorem~\ref{thm-main0}, we have the following more general statement.

\begin{cor}\label{cor-main0}
Every compact complex variety $X$ in the Fujiki class $\cC$ with rational singularities and algebraic dimension $a(X) = \dim X - 1$ admits an algebraic approximation.
\end{cor}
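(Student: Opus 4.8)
The plan is to reduce the statement about a singular variety $X \in \cC$ with rational singularities and $a(X) = \dim X - 1$ to Theorem \ref{thm-main0}, by passing to a K\"ahler resolution and transporting back the algebraic approximation. First I would choose a resolution of singularities $\pi \colon \wt X \to X$ with $\wt X$ a compact K\"ahler manifold; this exists because $X$ lies in the Fujiki class $\cC$ (combine Hironaka with the fact, due to Varouchas, that a resolution of a Fujiki-class variety can be taken K\"ahler). Since $\pi$ is bimeromorphic and $X$ is irreducible, the fields of meromorphic functions agree, so $a(\wt X) = a(X) = \dim X - 1 = \dim \wt X - 1$. Hence Theorem \ref{thm-main0} applies to $\wt X$ and produces an algebraic approximation $\wt{\cX} \to \gD$ of $\wt X$, \ie after shrinking $\gD$ the projective fibers are dense.

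The heart of the argument is then to descend this deformation along $\pi$, \ie to produce a deformation $\cX \to \gD$ of $X$ whose fiber over $t$ is a blow-down of $\wt{\cX}_t$, at least for $t$ in a dense subset containing $0$. Here is where the rational singularities hypothesis enters. The exceptional locus of $\pi$ is contracted, and the contraction $\pi$ is characterized by the subsheaf $R^0\pi_* \cO_{\wt X} = \cO_X$ together with $R^i\pi_*\cO_{\wt X} = 0$ for $i > 0$ (this is precisely what rational singularities buys us, beyond normality of $X$). The strategy is to show that this contraction data deforms: one uses the relative setting, working over $\gD$, and the semicontinuity/base-change machinery to check that for $t$ near $0$ the fibers $\wt{\cX}_t$ still carry an analogous contraction. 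Concretely, I would either (i) invoke an existing result on deformations of resolutions of rational singularities / simultaneous resolution in the analytic category, or (ii) construct $\cX$ directly as an analytic space by relative Proj or by a relative contraction of the exceptional divisors of $\wt{\cX} \to \gD$, verifying K\"ahlerness fiberwise near $0$ and Moishezon-ness on the dense projective locus (a Moishezon manifold blown down to a variety with rational singularities stays in algebraic dimension equal to its dimension, since bimeromorphic maps preserve $a$).

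The main obstacle I anticipate is exactly this descent step: a priori the contraction $\pi$ need not deform over all of $\gD$, only over a (possibly smaller, but still dense and containing $0$) analytic subset, and one must be careful that the projective fibers of $\wt{\cX}$ lie densely in the locus over which the contraction exists, so that their images under the relative contraction are projective (Moishezon + rational singularities $\Rightarrow$ projective is not automatic, but for the bimeromorphic image of a projective manifold one gets a Moishezon variety, which suffices for the generalized notion of algebraic approximation in Definition \ref{def-appalg}). An alternative, cleaner route that sidesteps constructing $\cX$ as a family of contractions: observe that the generalized definition of algebraic approximation only requires density of Moishezon members, and that Theorem \ref{thm-main0} gives us, for a dense set of $t \in \gD$, projective manifolds $\wt{\cX}_t$ dominating via bimeromorphic $\gD$-morphisms the fibers $\cX_t$ we wish to build; so it is enough to know that $X$ itself admits \emph{some} deformation over a disk whose generic fiber is Moishezon, which can be extracted by taking the relative algebraic reduction of $\wt{\cX} \to \gD$ and pushing forward. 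I would present the first route as the main proof and remark on the second. Routine points — K\"ahlerness of small deformations, invariance of $a$ under bimeromorphic maps and under resolution, flatness of the constructed family — I would dispatch with citations to Varouchas, Ueno, and the deformation theory used already for Theorem \ref{thm-mainfibellip}.
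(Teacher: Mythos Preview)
Your approach is the same as the paper's: pass to a K\"ahler resolution $\pi\colon \wt X \to X$, note $a(\wt X)=\dim\wt X-1$, apply Theorem~\ref{thm-main0} to $\wt X$, then descend. The difference is that what you flag as the ``main obstacle'' --- deforming the contraction $\pi$ --- is in fact a one-line citation. Ran's stability theorem for maps (\cite[Theorem~2.1]{RanStabMap}, also used at the end of the proof of Theorem~\ref{thm-mainfibellip}) says precisely that if $\pi_*\cO_{\wt X}=\cO_X$ and $R^1\pi_*\cO_{\wt X}=0$, then every deformation of $\wt X$ extends to a deformation of the morphism $\pi$, hence of $X$. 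Rational singularities give exactly these vanishing conditions. So your option~(i) is the right one and is already packaged; your option~(ii) and the ``alternative, cleaner route'' via relative algebraic reduction are unnecessary (and the latter is vague as stated). Once the deformation of $\pi$ exists, the fibers $\cX_t$ are bimeromorphic to $\wt\cX_t$, so Moishezon whenever $\wt\cX_t$ is projective; this is all that Definition~\ref{def-appalg} requires.
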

\begin{proof}
Let $\ti{X} \to X$ be a bimeromorphic morphism from a compact K\"ahler manifold $\ti{X}$. Since $X$ has at worst rational singularities, every deformation of $\ti{X}$ induces a deformation of $\ti{X} \to X$~\cite[Theorem 2.1]{RanStabMap}. As $ a(\ti{X}) = \dim \ti{X} - 1$, Corollary~\ref{cor-main0} follows from Theorem~\ref{thm-main0}.
\end{proof}

\begin{rem}
Small deformations of varieties in the Fujiki class $\cC$ might no longer be in the Fujiki class $\cC$~\cite{CampanaCnotstable, LebrunPoonFujikiC} and in the definition of algebraic approximations, it is not required that nearby fibers of the deformation of $X$ remain in the Fujiki class $\cC$. 
\end{rem}

In~\cite{HYLkod3}, we will apply Theorem~\ref{thm-main0} to prove the existence of algebraic approximations for compact K\"ahler threefolds, which was one of our motivations for this work. More precisely, note that since $\gk(X) \le a(X)$ where $\gk(X)$ is the Kodaira dimension, Theorem~\ref{thm-main0} holds in particular for compact K\"ahler manifolds $X$ with $\gk(X) = \dim X - 1$. We will apply Theorem~\ref{thm-main0}  to cover the case of threefolds with $\gk = 2$.

The rest of the introduction is devoted to an overview of the proof of Theorem~\ref{thm-mainfibellip}. 

\ssec{Algebraic approximations and bimeromorphic models}\label{ssec-choixbim}\hfill

Given a compact K\"ahler manifold $X$ as in Theorem~\ref{thm-mainfibellip}, we will prove Theorem~\ref{thm-mainfibellip} by constructing a bimeromorphic model $X \dto X'$ of $X$ together with an algebraic approximation of $X'$ which induces a deformation of $X \dto X'$.

In general, based on Ancona, Tomassini, and Bingener's work on formal modifications~\cite{BingenerModformelles}, we will observe that if $X'$ is a bimeromorphic model of a given compact complex variety $X$ and $Y \subset X'$ a subvariety containing the locus that are modified under the bimeromorphic map $X' \dto X$, then a deformation $\Pi : \cX' \to \gD$ of $X'$ induces a deformation of $X \dto X'$ provided $\Pi$ induces a trivial deformation of the completion $\hat{Y}$ of $X'$ along $Y$.\footnote{Usually the completion of $X$ along a subvariety $Y$ is denoted by $\hat{X}$. In this text, since we want to keep track of the subvariety along which the completion is done, we will often use the notation $\hat{Y}$ instead of $\hat{X}$.} In particular if $X$ is a compact K\"ahler manifold, then an algebraic approximation of $X'$ preserving $\hat{Y}$ induces an algebraic approximation of $X$. The exact statement that we will prove and apply is Proposition~\ref{pro-Formalgapp}, allowing us to transform the problem of finding an algebraic approximation of $X$ into the problem of finding a bimeromorphic model $X'$ of $X$ together with an algebraic approximation $\cX' \to \gD$ of $X'$ preserving $\hat{Y}$ for a certain subvariety $Y \subset X'$. 

Let us return to the case where $X$ is a compact complex manifold as in Theorem~\ref{thm-mainfibellip}. By Hironaka's desingularization, $X$ is bimeromorphic to the total space of an elliptic fibration $f' : X' \to B$ over a projective manifold $B$ such that the discriminant locus $\gD \subset B$ is a simple normal crossing divisor. We can find a finite Galois cover $r : \ti{B} \to B$ such that the base change $\ti{f}' : \ti{X}' = X' \times_B \ti{B} \to \ti{B}$ has local meromorphic sections at every point of $\ti{B}$ and the local monodromies of $\bH = (R^1\ti{f}'_*\bZ)_{| \ti{B} \bss \ti{\gD}}$ around $\ti{\gD} = r^{-1}(\gD)$ are unipotent. In order to simplify the discussion, we shall first assume that the finite cover $\ti{B} \to B$ is trivial so that $f'$ already satisfies the later properties. Tacitly in the following discussion, these are the properties that we need in most of the arguments involved in the proof of Theorem~\ref{thm-mainfibellip}.

  As we mentioned above, we will prove Theorem~\ref{thm-mainfibellip} by constructing a bimeromorphic model $X \dto \sX$ of $X$ together with an algebraic approximation of $\sX$ preserving $\hat{Y}$, where   $Y \subset \sX$ is a subvariety containing the locus that are modified under the bimeromorphic map $\sX \dto X$. Up to a finite Galois cover, the bimeromorphic model that we consider will be the total space of some particular elliptic fibration $f : \sX \to B$ called \emph{tautological model}, and the algebraic approximation of $\sX$ will be realized by a subfamily of the \emph{tautological family} associated to $f$. The next paragraph is an informal discussion on how these notions are introduced. For the details, the reader is referred to Section~\ref{sec-locW}.  

\ssec{Elliptic fibrations and tautological families}\label{ssec-fibell}\hfill

Let $B$ be a compact complex manifold and $\gD \subset B$ a normal crossing divisor. To each weight one variation of Hodge structures $\bH$ of rank 2 over $B^\st \cnec B \bss \gD$, we can associate a unique minimal Weierstra{\ss} fibration $p : W\to B$, which is an elliptic fibration satisfying $(R^1p_*\bZ)_{| B^\st} \simeq \bH$~\cite[Theorem 2.5]{NakayamaWeier}. A minimal Weierstra{\ss} fibration can be considered as a standard compactification of the Jacobian fibration $J \to B^\st$ associated to $\bH$. 

Let $\cE(B,\gD,\bH)$ be the set of bimeromorphic classes of elliptic fibrations $f' : X' \to B$ such that $f'$ is smooth over $B^\st$ with $(R^1f'_*\bZ)_{| B^\st} \simeq \bH$ and  admits local meromorphic sections over every point of $B$. Like in the smooth case where smooth elliptic fibrations are torsors under their Jacobian fibrations, elements of $\cE(B,\gD,\bH)$ can roughly be regarded as "meromorphic $W$-torsors" where $p:W \to B$ is the minimal Weierstra{\ss} elliptic fibration associated to $\bH$. So if $\cJ_{\bH/B}$ denotes the sheaf of abelian groups of germs of meromorphic sections of $p$, then we have an injective map 
$$\eta : \cE(B,\gD,\bH) \hto H^1(B,\cJ_{\bH/B}).$$

In general, not every element in $H^1(B,\cJ_{\bH/B})$ corresponds to (the bimeromorphic class of) an elliptic fibration. However, consider the subsheaf $\cJ^W_{\bH/B} \subset \cJ_{\bH/B}$ of germs of local holomorphic sections of $p : W \to B$ whose image is contained in the smooth locus of $p$, then each element $\eta \in H^1(B,\cJ^W_{\bH/B})$ corresponds to some elliptic fibration $p^\eta : W^\eta \to B$, called the \emph{locally Weierstra{\ss} fibration twisted by $\eta$}.  Torsion points of $H^1(B,\cJ^W_{\bH/B})$ correspond to elliptic fibrations that are projective. The sheaf $\cJ^W_{\bH/B}$ lies in the short exact sequence
\begin{equation}\label{suitex-introJW}
\begin{tikzcd}[cramped, row sep = 0, column sep = 40]
0  \arrow[r] & j_* \bH \arrow[r] &\cL_{\bH/B} \arrow[r,"\exp"] & \cJ^W_{\bH/B} \arrow[r] & 0. 
\end{tikzcd}
\end{equation}
where $j : B^\st \hto B$ is the inclusion and $\cL_{\bH/B} = R^1p_*\cO_W$. Each $\eta \in H^1(B,\cJ^W_{\bH/B})$ comes equipped with a family of elliptic fibrations
$$ \Pi : \cW \to B \times V \to V \cnec H^1(B,\cL_{\bH/B})$$
parameterized by $V$ that is tautological in the sense that the elliptic fibration parameterized by $t \in V$ corresponds to $\eta + \exp(t) \in H^1(B,\cJ^W_{\bH/B})$. 

More generally, let $f' : X' \to B$ be an elliptic fibration such that $X'$ is a compact K\"ahler manifold. Assume that the bimeromorphic class of $f'$ lies in $\cE(B,\gD,\bH)$, then we can construct a bimeromorphic model $f : \sX \to B$ of $f'$ together with a family of elliptic fibrations
$$ \Pi : \cX \to B \times V \to V$$
which contains $f$ as the central fiber and is tautological in the sense that the bimeromorphic class of the elliptic fibration parameterized by $t \in V$ corresponds to $\eta(f) + \exp'(t)$ where $\exp'$ is the composition of 
$$\exp : H^1(B,\cL_{\bH/B}) \to H^1(B,\cJ^W_{\bH/B})$$
 with $H^1(B,\cJ^W_{\bH/B}) \to H^1(B,\cJ_{\bH/B})$ (see Proposition-Definition~\ref{pro-locWeimodHdg}). We call $f : \sX \to B$ a \emph{tautological model} of $f'$ and $\Pi$ the \emph{tautological family} associated to $f$. Such a family has already been constructed in~\cite{ClaudonHorpi1} and proven to be an algebraic approximation of $f$.  Up to some finite Galois cover, this is how the bimeromorphic version of Theorem~\ref{thm-mainfibellip} was proven in~\cite{ClaudonHorpi1}. 

However, for such a bimeromorphic model $f$, there may be no algebraic approximation of $\sX$ contained in $\Pi : \cW \to V$ as a subfamily \emph{preserving the modification $X \dto \sX$} (for instance, this will be the case as long as the subvariety $Y \subset \sX$ along which $\sX$ is modified is non-algebraic). To prove Theorem~\ref{thm-mainfibellip}, we need to refine our choice of bimeromorphic model. Let $Y_\mmin \subset \sX$ be the minimal subvariety which is modified under the induced bimeromorphic map $\sX \dto X$ and let $Y \cnec f^{-1}(f(Y_\mmin))$. Up to replacing $f'$ with a more carefully constructed bimeromorphic model (see Step 2, 3, and 4 in the proof of Lemma~\ref{lem-red}), we may assume that the projection $Y \to f(Y)$ has a multi-section. It is for the pair $(\sX,Y)$ that we will prove that the tautological family $\Pi$ associated to $f : \sX \to B$ contains a subfamily which is an algebraic approximation of $\sX$ preserving the completion $\hat{Y}$ of $\sX$ along $Y$. Eventually it can be reduced to the case where $f$ is a locally Weierstra{\ss} fibration twisted by some $\eta \in H^1(B,\cJ^W_{\bH/B})$ and we will find algebraic approximations in the associated tautological family using Hodge theory, as we will discuss in the next paragraph. 

\ssec{The Hodge theory of Weierstra{\ss} fibrations}\hfill

With the same notations introduced in~\ref{ssec-fibell}, a new ingredient in this work allowing us to go further than~\cite{ClaudonHorpi1} is the observation that $H^1(B,j_*\bH)$ can be endowed with a natural pure $\bZ$-Hodge structure of weight 2 such that the map $H^1(B,j_*\bH) \to H^1(B,\cL_{\bH/B})$ induced by~\eqref{suitex-introJW} is the projection to the $(0,2)$-part (see Lemma~\ref{lem-morStHdgminW}). As $j_*\bH \to \cL_{\bH/B}$ is isomorphic to $R^1p_*\bZ \to R^1p_*\cO_W$ (Lemma~\ref{lem-isophi}) where $p : W \to B$ is the minimal Weierstra{\ss} fibration associated to $\bH$,  when $p$ is smooth this result is a particular case of~\cite[Theorem 2.9]{ZuckerHdgL2}. The Hodge-theoretic interpretation of $H^1(B,j_*\bH) \to H^1(B,\cL_{\bH/B})$ allows us to show without difficulties that for every $\eta \in H^1(B,\cJ^W_{\bH/B})$, the subset of points $t \in V = H^1(B,\cL_{\bH/B})$ such that $\eta + \exp(t)$ is contained in the torsion part of $H^1(B,\cJ^W_{\bH/B})$ is dense in $V$. This also gives an alternative way (in comparison to the proof of~\cite[Theorem 3.25]{ClaudonHorpi1}) closer to the spirit of~\cite{ClaudonToridefequiv, HYLbimkod1} to show that the tautological family $\Pi$ is an algebraic approximation.

Recall that we want to prove that the tautological family $ \Pi : \cW \to B \times V \to V$ associated to $\eta \in H^1(B,\cJ^W_{\bH/B})$ contains a subfamily which is an algebraic approximation of $ p^\eta : W^\eta \to B$ preserving the completion $\hat{Y}$. First of all, it is easy to see that if $\imath : Z = p^\eta(Y) \hto B$ is the inclusion, then the subfamily of $\Pi$ parameterized by
$$ V_Z \cnec \ker \( \imath^* :H^1(B,\cL_{\bH/B}) \to H^1(Z,\imath^*\cL_{\bH/B})\),$$ 
 is a deformation of $p^\eta$ preserving $Y \to Z$ (Lemma~\ref{lem-presWtauto}). However, we do not know whether $\imath^*$ is the $(0,2)$-part of a morphism of pure Hodge structures of weight 2, so cannot (use Hodge theory to) conclude  that the subfamily parameterized by $V_Z$ is an algebraic approximation. So instead of $\imath : Z \hto B$, we consider a log-desingularization $\ti{\imath} : \ti{Z} \to Z \hto B$ of the pair $(\ol{Z \bss \gD}, \ol{Z \bss \gD} \cap \gD)$.  By doing so, the pullback 
$$\ti{\imath}^* :  H^1(B,\cL_{\bH/B}) \to H^1(\ti{Z},\ti{\imath}^*\cL_{\bH/B})$$ 
will be the $(0,2)$-part of the morphism of pure Hodge structures
$$\ti{\imath}^* : H^1(B,j_*\bH) \to H^1({\ti{Z}},j_{\ti{Z}*}\bH_{\ti{Z}})$$
of weight 2. We will then show that
$$V_{\ti{Z}} \cnec \ker\(\ti{\imath}^* : H^1(B, \cL_{\bH/B}) \to H^1(\ti{Z}, \imath^*\cL_{\bH/B})\) \subset V_Z,$$
so the subfamily of $\Pi$ parameterized by $V_{\ti{Z}}$ also preserves $Y \to Z$ (Proposition~\ref{pro-desingpre}). Based on the assumption that $Y \to Z$ has a multi-section, we will also show that this subfamily is an algebraic approximation of $p^\eta$  (Proposition~\ref{pro-densepair0}).
 The proofs of both Proposition~\ref{pro-desingpre} and Proposition~\ref{pro-densepair0} use Hodge theory in an essential way. Another property that we can prove by Hodge theory is that $V_{\ti{Z}}$ contains a lattice $\gL$ such that for every $t \in V_{\ti{Z}}$ and $\gl \in \gL$, the elliptic fibrations parameterized by $t$ and $t + \gl$ are isomorphic (Lemma~\ref{lem-latticeisom}). We summarize these results as follows for the sake of reference. 

\begin{pro}[Proposition~\ref{pro-densepair0} + Lemma~\ref{lem-latticeisom} + Proposition~\ref{pro-desingpre} without the $G$-action]\label{pro-introdensepres}
If $Y \to Z$ has a multi-section, then the tautological family $\Pi$ contains a subfamily 
$$\Pi' : \cW_{\ti{Z}} \xto{q} B \times V_{\ti{Z}} \to V_{\ti{Z}}$$ 
 parameterized by a linear subspace $V_{\ti{Z}} \subset H^1(B,\cL_{\bH/B})$ which is an algebraic approximation of $p^\eta$ preserving $Y \to Z$. Moreover, there exists a lattice $\gL \subset V_{\ti{Z}}$ such that $t$ and $t + \gl$ parameterize isomorphic elliptic fibrations for each $t \in V_{\ti{Z}}$ and $\gl \in \gL$.
\end{pro}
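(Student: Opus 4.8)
This statement is the conjunction of Proposition~\ref{pro-densepair0}, Lemma~\ref{lem-latticeisom} and the part of Proposition~\ref{pro-desingpre} not involving the group $G$, so the plan is to prove these three assertions in turn. Throughout, $\Pi'$ is the restriction of the tautological family $\Pi : \cW \to B \times V \to V$ over $B \times V_{\ti{Z}}$, where $V_{\ti{Z}} \cnec \ker\bigl(\ti{\imath}^* : H^1(B,\cL_{\bH/B}) \to H^1(\ti{Z},\ti{\imath}^*\cL_{\bH/B})\bigr)$, a linear subspace containing $0$ (so $\Pi'$ has $p^{\eta}$ as central fiber). The single device behind all three assertions is this: by the Hodge-theoretic description of $H^1(B,\cL_{\bH/B})$ (Lemma~\ref{lem-morStHdgminW}, together with Lemma~\ref{lem-isophi} and the chosen log-desingularization $\ti{\imath}$), the map $\ti{\imath}^*$ defining $V_{\ti{Z}}$ is the $(0,2)$-part of a morphism of pure $\bZ$-Hodge structures of weight $2$, namely $\ti{\imath}^* : H^1(B,j_*\bH) \to H^1(\ti{Z},j_{\ti{Z}*}\bH_{\ti{Z}})$. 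Since a morphism of pure Hodge structures is strict for the Hodge filtration, its kernel $K \cnec \ker\bigl(\ti{\imath}^* : H^1(B,j_*\bH) \to H^1(\ti{Z},j_{\ti{Z}*}\bH_{\ti{Z}})\bigr)$ is again a pure Hodge structure of weight $2$, and a short diagram chase (using strictness once more) identifies $V_{\ti{Z}}$ canonically with its $(0,2)$-part $K_{\bC}/F^1 K_{\bC}$. It is this extra structure on $V_{\ti{Z}}$ that makes Hodge theory available; the same structure for all of $V = H^1(B,\cL_{\bH/B})$ is what re-proves that $\Pi$ itself is an algebraic approximation of $p^{\eta}$.

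\emph{$\Pi'$ preserves $Y \to Z$ (Proposition~\ref{pro-desingpre}).} By Lemma~\ref{lem-presWtauto} the subfamily of $\Pi$ over $V_Z \cnec \ker\bigl(\imath^* : H^1(B,\cL_{\bH/B}) \to H^1(Z,\imath^*\cL_{\bH/B})\bigr)$ already preserves $Y \to Z$, because for $t \in V_Z$ the elliptic fibration parameterized by $t$ has the same restriction over $Z$ as $p^{\eta}$. It thus suffices to show $V_{\ti{Z}} \subseteq V_Z$, and since $\ti{\imath}$ factors as $\ti{Z} \xto{\rho} Z \hto B$ — so that $\ti{\imath}^* = \rho^* \circ \imath^*$ on $H^1(B,\cL_{\bH/B})$ — this reduces to the injectivity of $\rho^* : H^1(Z,\imath^*\cL_{\bH/B}) \to H^1(\ti{Z},\rho^*\imath^*\cL_{\bH/B})$. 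I would derive the latter from $R^0\rho_*\cO_{\ti{Z}} = \cO_Z$ (valid since $\rho$ is birational with normal source, once one knows $\cO_Z = \rho_*\cO_{\ti{Z}}$, e.g. if $Z$ is taken normal, as the choice of bimeromorphic model allows): by the projection formula this identity makes $\rho^*$ coincide with the edge injection $H^1(Z,\imath^*\cL_{\bH/B}) \hto H^1(\ti{Z},\rho^*\imath^*\cL_{\bH/B})$ of the Leray spectral sequence for $\rho$.

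\emph{$\Pi'$ is an algebraic approximation of $p^{\eta}$ (Proposition~\ref{pro-densepair0}).} As the fibration parameterized by $t$ is projective whenever $\eta + \exp(t)$ is torsion in $H^1(B,\cJ^W_{\bH/B})$, it is enough that the set $T$ of such $t \in V_{\ti{Z}}$ be dense in $V_{\ti{Z}}$; this then gives density near $0$, which is what an algebraic approximation of $p^{\eta}$ requires, $V_{\ti{Z}}$ being a linear space. Granting $T \neq \emptyset$, pick $t_0 \in T$: then $T = t_0 + T_0$ with $T_0 \cnec \{\, t \in V_{\ti{Z}} : \exp(t) \text{ torsion in } H^1(B,\cJ^W_{\bH/B}) \,\}$, and $T_0$ contains the image in $K_{\bC}/F^1 K_{\bC} = V_{\ti{Z}}$ of the rational points $K_{\bQ}$ of $K$ — an image which is dense because $K$ is \emph{pure} of weight $2$: by the dimension count forced by $h^{2,0}(K) = h^{0,2}(K)$, the real points $K_{\bR}$ surject $\bR$-linearly onto $K_{\bC}/F^1 K_{\bC}$. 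The assumption that $Y \to Z$ has a multi-section enters in proving $T \neq \emptyset$: a multi-section of $Y \to Z$ restricts to a multi-section of the elliptic fibration $(p^{\eta})^{-1}(Z) \to Z$ and, pulled back along $\rho$, forces $\ti{\imath}^*\eta$ to be torsion in $H^1(\ti{Z},\cJ^W_{\ti{Z}})$ (the analogue over $\ti{Z}$ of $\cJ^W_{\bH/B}$); chasing this torsion class through the exponential sequence~\eqref{suitex-introJW} and its pullback to $\ti{Z}$, and again using the strictness of $\ti{\imath}^* : H^1(B,j_*\bH) \to H^1(\ti{Z},j_{\ti{Z}*}\bH_{\ti{Z}})$, one then produces some $t_0 \in V_{\ti{Z}}$ with $\eta + \exp(t_0)$ torsion. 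I expect this last point — arranging for the required $\exp$-preimage to lie inside $V_{\ti{Z}}$ and not merely inside $V$ — to be the main technical obstacle.

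\emph{The lattice $\gL$ (Lemma~\ref{lem-latticeisom}).} Since $K_{\bR}$ surjects onto $K_{\bC}/F^1 K_{\bC} = V_{\ti{Z}}$, the image of the lattice $K_{\bZ}$ spans $V_{\ti{Z}}$ as a real vector space; a finitely generated subgroup of a real vector space spanning it contains a full-rank lattice, which yields $\gL \subset V_{\ti{Z}}$. Moreover this image lies in $\Ima\bigl(H^1(B,j_*\bH) \to H^1(B,\cL_{\bH/B})\bigr) = \ker(\exp)$ by exactness of the cohomology sequence of~\eqref{suitex-introJW}, so $\gL \subseteq \ker(\exp)$; hence for $\gl \in \gL$ and $t \in V_{\ti{Z}}$ one has $\exp(t + \gl) = \exp(t)$, so $t$ and $t + \gl$ parameterize the same — a fortiori isomorphic — elliptic fibration. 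Putting the three parts together proves the statement.
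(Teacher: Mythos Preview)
Your treatment of the density and lattice assertions follows the paper closely: the key point in both is that $K = \ker\bigl(\ti\imath^* : H^1(B,j_*\bH) \to H^1(\ti Z, j_{\ti Z *}\bH_{\ti Z})\bigr)$ is a polarizable pure Hodge structure of weight~$2$ with $(0,2)$-part $V_{\ti Z}$, so $K_\bR \to V_{\ti Z}$ is surjective. For the existence of a single $t_0$ with $\eta + \exp(t_0)$ torsion, the paper (Lemma~\ref{lem-preimlisse}) uses not just strictness but the orthogonal splitting $H^1(\ti Z, j_{\ti Z *}\bH_{\ti Z})_\bQ = \ti\imath^* H^1(B,j_*\bH)_\bQ \oplus H'_\bQ$ afforded by the polarization; this is exactly the point you flag as ``the main technical obstacle,'' and strictness alone does not suffice there.

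There is, however, a genuine gap in your argument for preservation (Proposition~\ref{pro-desingpre}). Recall from the introduction that $\ti Z$ is a log-desingularization of $\bigl(\ol{Z \setminus \gD},\, \ol{Z \setminus \gD} \cap \gD\bigr)$, not of $Z$ itself. So the map $\rho : \ti Z \to Z$ need not be surjective --- it misses every irreducible component of $Z$ contained in $\gD$ --- and in the paper's main application (Lemma~\ref{lem-red}) one explicitly has $\gD \subset Z$. Even when $\rho$ is surjective, your Leray argument for the injectivity of $\rho^* : H^1(Z,\imath^*\cL_{\bH/B}) \to H^1(\ti Z, \rho^*\imath^*\cL_{\bH/B})$ requires $\rho_*\cO_{\ti Z} = \cO_Z$, hence $Z$ normal; this too is not assumed, and cannot be arranged ``by choice of bimeromorphic model'' since $Z$ is a given subvariety of $B$.

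The paper's proof of Proposition~\ref{pro-desingpre} avoids both issues by lifting the comparison from $\cL_{\bH/B}$ to rational cohomology via the Weierstra{\ss} fibration. It adjoins a desingularization $\ti\gD$ of $\gD$ to $\ti Z_0$ and shows (Lemmas~\ref{lem-pure11} and~\ref{lem-nul}) that, because the fibers of $W \to B$ over $\gD$ are rational curves, pulling back to $\ti\gD$ kills nothing of the $(0,2)$-part; this reduces one to the combined map $\ti Z_0 \sqcup \ti\gD \to Z$, which \emph{is} surjective. The needed injectivity is then established for $\bQ$-coefficients --- $H^1(Z, R^1q_*\bQ) \hookrightarrow H^1(\ti Z, R^1\ti q_*\bQ)$ via Leray for $\tau : \ti Z \to Z$, where $\tau_*\bQ = \bQ$ holds with no normality hypothesis (Lemma~\ref{lem-resinj}) --- and the descent back to $\cL_{\bH/B}$ uses mixed Hodge theory on the singular fiber product $W \times_B Z$ together with strictness. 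Your direct attempt at $\rho^*$-injectivity on coherent cohomology is the step that does not go through.
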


\begin{rem}
The assumption in Proposition~\ref{pro-introdensepres} that $Y \to Z$ has a multi-section is also an obvious necessary condition for $p^\eta$ to have an algebraic approximation preserving $Y \to Z$. Indeed, if such an algebraic approximation exists, then $Y \to Z$ is algebraic, thus admits a multi-section.
\end{rem}

\ssec{From subvarieties to the completions and the end of the proof}\hfill

The geometric properties of the family $\Pi'$ described in Proposition~\ref{pro-introdensepres} allow us to apply Lemma~\ref{lem-prescompact} to deduce that $\Pi'$ preserves in fact the completion $\hat{Y} \to \hat{Z}$ of $p^\eta$ along $Y \to Z$. The argument goes roughly as follows: It suffices to prove by induction that the induced deformation $\cY_n \to Z_n \times V_{\ti{Z}} \to V_{\ti{Z}}$ of the $n$-th infinitesimal neighborhood $Y_n \to Z_n$ of $Y \to Z$ is trivial. Due to the flatness of $q$, we will see that each fiber $\cY_{n,t}$ of $\cY_n \to V_{\ti{Z}}$ is a square-zero extension of $Y_{n-1}$ by a sheaf $\cF$  independent of $t$. Thus we have a holomorphic map $\phi: V_{\ti{Z}} \to \Ext^1(L^\bullet_{Y_{n-1}},\cF)$, which is constant if and only if $\cY_n \to V_{\ti{Z}}$ is a trivial deformation where $\Ext^1(L^\bullet_{Y_{n-1}},\cF)$ is the vector space parameterizing square-zero extension of $Y_{n-1}$ by $\cF$. We will see that the last statement of Proposition~\ref{pro-introdensepres} together with the induction hypothesis implies that $\phi$ factorizes through $V_{\ti{Z}} / \gL$, which is a complex torus. Therefore $\phi$ is indeed constant. 

This is an overview of the proof of Theorem~\ref{thm-mainfibellip} under the assumption that the Galois cover of $r : \ti{B} \to B$ in~\ref{ssec-choixbim} is trivial. If $r$ is non-trivial with Galois group $G$, we replace $f'$ in~\ref{ssec-choixbim} with $\ti{f}' : \ti{X}' = X' \times_B \ti{B} \to \ti{B}$ and run the whole argument sketched above for $\ti{f}'$ in the $G$-equivariant setting. The algebraic approximation of the chosen bimeromorphic model will be realized by the finite quotient of a subfamily of the tautological family.

\begin{rem} 
Instead of constructing an algebraic approximation of $X$ preserving $\hat{Y}$, one could have tried to find directly an algebraic approximation $\cX \to \gD$ of $X$ such that a neighborhood $U \subset X$ of $Y$ deforms trivially in $\cX \to \gD$, which is the way we prove the existence of algebraic approximations of compact K\"ahler threefolds with $\gk \le 1$ in~\cite{HYLkod3}. However for the elliptic fibrations that we consider in this text, it is difficult (and maybe impossible) to find such algebraic approximations in the tautological families. This is why we choose to verify the weaker property that the completion $\hat{Y}$ is preserved along $\cX \to \gD$ and use Proposition~\ref{pro-Formalgapp} to conclude.
\end{rem}

This text is organized as follows. We will first introduce some basic terminology including various definitions of deformations of complex spaces and maps in Section~\ref{sec-prelim}. In Section~\ref{sec-locW}, we will recall and improve some results of the theory of elliptic fibrations developed in~\cite{NakayamaWeier, NakayamaLoc, NakayamaGlob, ClaudonHorpi1}. It is also in Section~\ref{sec-locW} that we will construct the tautological families (Proposition-Definition~\ref{pro-locWeimodHdg}). Section~\ref{sec-HdgTaut} is the Hodge-theoretic part of the text. We will prove Proposition~\ref{pro-densepair0}, Proposition~\ref{pro-desingpre}, and other related results therein. In Section~\ref{sec-preuve}, we will prove Proposition~\ref{pro-Formalgapp} and finally the main result Theorem~\ref{thm-mainfibellip}.

\section{Preliminaries}\label{sec-prelim}

\ssec{Notation and terminology}
\hfill

\sssec{Fibrations and deformations}\hfill

In this text, A \emph{fibration} is a proper holomorphic surjective map $f : X \to B$ between complex spaces with connected fibers. The fiber of $f$ over $b \in B$ will frequently be denoted by $X_b$. A \emph{multi-section} of $f$ is a subvariety $S \subset X$ such that $f_{|S}$ is surjective and generically finite. A deformation of a complex space $X$ is a flat surjective morphism $\Pi : \cX \to \gD$ containing $X$ as a fiber. Let $f : X \to B$ be a holomorphic map. A \emph{deformation of $f$} is a composition $\Pi : \cX \xto{q} B \times \gD \xto{\pr_2} \gD$ such that $\Pi$ is flat  and $q_{|\cX_o} : \cX_o \to B$ equals $f$ for some $o \in \gD$. Note that in our definition, a deformation of $f$ always preserves the base $B$. 

Let $G$ be a group and $X$ a complex space endowed with a $G$-action. We say that a deformation $\Pi : \cX \to \gD$ of $X$ \emph{preserves the $G$-action} if there exists a fiberwise $G$-action on $\cX \to \gD$ extending the given $G$-action on $X$. Similarly, let $f : X \to B$ be a $G$-equivariant map. We say that a deformation $\Pi : \cX \xto{q} B \times \gD \xto{\pr_2} \gD$ of $f$ \emph{preserves the $G$-action} if there exists a fiberwise $G$-action on $\cX \to \gD$ extending the $G$-action on $X$ such that $q$ is $G$-equivariant.

\begin{Def}\label{def-preserver}
\hfill
\begin{enumerate}[label = \roman{enumi})]
\item \label{def-preserverSV} Let $\Pi : \cX \to \gD$ be a deformation of a complex variety $X$ and let $Y \subset X$ be an analytic subset. We say that $\Pi$ \emph{preserves $Y$} if there exists $\cY \subset \cX$ such that $\cY$ is isomorphic to $Y \times \gD$ over $\gD$. 
\item \label{def-preserverSVF} We say that $\Pi$ \emph{preserves the completion $\hat{Y}$} of $X$ along $Y$ if there exist $\cY \subset \cX$ and an isomorphism $\hat{\cY} \simeq \hat{Y} \times \gD$ over $\gD$ where $\hat{\cY}$ is the completion of $\cX$ along $\cY$.
\item \label{def-preserverF} Let $\Pi : \cX \xto{q} B \times \gD \to \gD$ be a deformation of $f : X \to B$ and let $Z$ be an analytic subset of $B$. We say that $\Pi$ \emph{preserves $Y \cnec f^{-1}(Z) \to Z$} if $q^{-1}(Z \times \gD)$ is isomorphic to $Y \times \gD$ over $B \times \gD$. We define in a similar way deformations of $f$ that preserve the completion $\hat{Y} \to \hat{Z}$ of $f$ along $Y \to Z$.
\item  \label{def-preserverG} Let $G$ be a group acting on a complex variety $X$ and $Y \subset X$ a $G$-stable analytic subset. Let $\Pi : \cX \to \gD$ be a deformation of $X$ preserving the $G$-action. We say that $\Pi$ \emph{preserves $G$-equivariantly $Y$} if there exists a $G$-stable subvariety $\cY \subset \cX$ together with a $G$-equivariant isomorphism $\cY \simeq Y \times \gD$ over $\gD$. The $G$-equivariant versions of ii) and iii) are defined similarly.
\end{enumerate}
\end{Def}

\begin{Def}\label{def-loctriv}
\hfill
\begin{enumerate}[label = \roman{enumi})]
\item A deformation $\Pi : \cX \xto{q} B \times \gD \to \gD$ of $f : X \to B$ is said to be \emph{locally trivial (over $B$)} if there exists an open cover $\{U_i\}$ of $B$ such that $q^{-1}(U_i \times \gD) \simeq f^{-1}(U_i) \times \gD$ over $U_i \times \gD$. 
\item Let $G$ be a group and $f : X \to B$ a $G$-equivariant map. We say that $\Pi$ is \emph{$G$-equivariantly locally trivial} if $\Pi$ preserves the $G$-action and  the isomorphisms $q^{-1}(U_i \times \gD) \simeq f^{-1}(U_i) \times \gD$ above are $G$-equivariant for some $G$-invariant open cover $\{U_i\}$ of $B$.
\end{enumerate}
\end{Def}

Obviously, the quotient of a $G$-equivariantly locally trivial deformation is a locally trivial deformation.

\begin{lem}[{\cite[Lemma 2.2]{HYLbimkod1}}]\label{lem-Gquotloctriv}
If $\Pi : \cX \xto{q} B \times \gD \to \gD$ is a $G$-equivariantly locally trivial deformation of a $G$-equivariant fibration $f : X \to B$ for some finite group $G$, then the quotient $\cX/G \to (B/G) \times \gD \to \gD$ is a locally trivial deformation of $X/G \to B/G$.
\end{lem}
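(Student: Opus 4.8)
The plan is to push the given $G$-equivariant local triviality through the quotient by the finite group $G$, using two standard facts about quotients of complex spaces by finite groups: such quotients exist (Cartan), and they are compatible both with restriction to $G$-invariant open subsets and with taking products with a fixed complex space carrying the trivial $G$-action.

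First I would record the elementary functoriality. Since $f : X \to B$ is $G$-equivariant, the composite $X \xto{f} B \to B/G$ is $G$-invariant and hence factors through a holomorphic map $\bar f : X/G \to B/G$; likewise $\cX \xto{q} B \times \gD \to (B/G) \times \gD$ is $G$-invariant (the action on $\gD$ being trivial) and factors through a map $\bar q : \cX/G \to (B/G) \times \gD$, whose composition with $\pr_2$ I denote $\bar\Pi$. Because taking the fiber over the distinguished point $o \in \gD$ commutes with the quotient, $(\cX/G)_o = \cX_o/G = X/G$, and $\bar q$ restricts over $(B/G) \times \{o\}$ to $\bar f$; so, once $\bar\Pi$ is shown flat, it is a deformation of $\bar f$ in the sense of Section~\ref{sec-prelim}.

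Next I would produce the trivializing cover. By Definition~\ref{def-loctriv}, $G$-equivariant local triviality provides a $G$-invariant open cover $\{U_i\}$ of $B$ with $G$-equivariant isomorphisms $\psi_i : q^{-1}(U_i \times \gD) \xrightarrow{\sim} f^{-1}(U_i) \times \gD$ over $U_i \times \gD$. Each $U_i$ being $G$-invariant, $\pi^{-1}(\bar U_i) = U_i$ for $\bar U_i \cnec U_i/G$, so $\bar U_i$ is open in $B/G$ and $\{\bar U_i\}$ covers $B/G$; moreover $q^{-1}(U_i \times \gD)$ is a $G$-invariant (hence $G$-saturated) open subset of $\cX$, and $f^{-1}(U_i)$ is $G$-invariant in $X$. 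Using that the quotient map restricted to a $G$-saturated open subset is the quotient map of that subset, and that quotients commute with products by a trivial $G$-space, one gets
$$ \bar q^{-1}(\bar U_i \times \gD) = q^{-1}(U_i \times \gD)/G, \qquad \bar f^{-1}(\bar U_i) = f^{-1}(U_i)/G, \qquad (f^{-1}(U_i) \times \gD)/G = \bar f^{-1}(\bar U_i) \times \gD. $$
The $G$-equivariant isomorphism $\psi_i$ then descends to an isomorphism $\bar\psi_i : \bar q^{-1}(\bar U_i \times \gD) \xrightarrow{\sim} \bar f^{-1}(\bar U_i) \times \gD$, which is over $\bar U_i \times \gD$ because $\psi_i$ is over $U_i \times \gD$. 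Hence $\bar\Pi$ is locally trivial over $B/G$; in particular $\bar\Pi$ is flat (flatness being local on the base, and the local models $\bar f^{-1}(\bar U_i) \times \gD \to \gD$ being projections from products), so it is indeed a locally trivial deformation of $\bar f : X/G \to B/G$.

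There is no real obstacle here: the only points requiring care are the existence of finite quotients in the category of complex spaces and the two compatibilities just invoked (quotients versus $G$-saturated open subsets, and quotients versus products with a trivial $G$-space), all of which are standard. One may also note in passing that, $G$ being finite, $\cX/G$ is again a complex space, so no pathology arises on the total space.
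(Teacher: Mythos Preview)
Your argument is correct and is exactly the natural one: descend the $G$-equivariant local trivializations through the finite quotient, using that finite quotients of complex spaces exist and commute with restriction to $G$-saturated opens and with products by a trivial $G$-space. The paper does not prove this lemma here but simply imports it from~\cite[Lemma~2.2]{HYLbimkod1}, so there is no in-paper proof to compare against; your write-up is precisely what one would expect such a proof to look like.
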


Now we come to the notion of algebraic approximations. Recall that a compact complex variety $X$ is called Moishezon if its algebraic dimension $a(X)$ equals $\dim X$.

\begin{Def}[Algebraic approximation]\label{def-appalg}
Let $X$ be a compact complex variety. An \emph{algebraic approximation} of $X$ is a deformation $\pi : \cX \to \gD$ of $X$ such that the subset of points in $\gD$ parameterizing Moishezon varieties is dense for the Euclidean topology.  
\end{Def}

\sssec{Desingularizations and Stein factorizations}\label{sssec-desingSt}
\hfill

Let $X$ be a compact complex variety. In this text, a \emph{desingularization} of $X$ is a bimeromorphic projective morphism $\nu : \ti{X} \to X$ such that $\ti{X}$ is smooth and if $X_{\sm} \subset X$ denotes the smooth locus of $X$, then $\nu_{| \nu^{-1}(X_{\sm})}$ is biholomorphic onto $X_{\sm}$. By Hironaka's theorem, desingularizations of a compact complex variety $X$ always exist. If, moreover, $X$ is endowed with a group action by some group $G$, then as a consequence of the existence of functorial desingularizations~\cite[Theorem 3.45]{KollarLRS}, we can always assume that  $\nu : \ti{X} \to X$ is $G$-equivariant. 

Let $B$ be a complex projective variety and $\gD \subset B$ a subvariety. A \emph{log-desingularization} of the pair $(B,\gD)$ is a birational morphism $\nu : \ti{B} \to B$ from a smooth projective variety $\ti{B}$ such that $\nu^{-1}(\gD)$ is a simple normal crossing divisor. Once again due to Hironaka, log-desingularizations always exist. If $B$ is endowed with a $G$-action for some finite group $G$ such that $\gD$ is $G$-stable, then we can always assume that  $\nu :  \ti{B} \to B$ is $G$-equivariant~\cite{AbramovichWangGequivres}. 

For any dominant generically finite meromorphic map $f : X \dto Y$ between two normal complex spaces, a \emph{Stein factorization} of $f$ is defined to be the Stein factorization $\ti{X} \to Z \to Y$ of $\ti{f} : \ti{X} \to Y$ where $\ti{f}$ is a resolution of $f$ such that $\ti{X}$ is normal. Note that the finite map $Z \to Y$ in the Stein factorization does not depend on the choice of $\ti{f}$: indeed, if $\ti{f}' : \ti{X}' \to Y$ is another resolution of $f$ such that $\ti{X}'$ is normal, then we can find another normal complex space $\ti{X}''$ and bimeromorphic surjective maps $\nu : \ti{X}'' \to \ti{X}$ and $\nu' : \ti{X}'' \to \ti{X}'$. As both $\ti{X}'' \to \ti{X}$ and $\ti{X}'' \to \ti{X}'$ have connected fibers by (the complex analytic version of) Zariski's main theorem, the finite maps in the Stein factorizations of $\ti{f}$ and $\ti{f}'$ both coincide with the finite map in the Stein factorization of $\ti{f} \circ \nu = \ti{f}' \circ \nu' :  \ti{X}'' \to Y$.

\ssec{Formal modifications}\hfill

Let $X$ be a complex space and $Y \subset X$ a complex subspace. A \emph{modification} of $X$ along $Y$ is a surjective bimeromorphic morphism of complex spaces $\nu : X' \to X$ such that $\nu$ maps $X' \bss \nu^{-1}(Y)$ biholomorphically onto $X \bss Y$. The completion $\hat{Y'} \to \hat{Y}$ of $\nu$ along $Y' \cnec \nu^{-1}(Y) \to Y$ is a morphism of formal complex spaces and there exists a generalization of such morphisms (of formal complex spaces) called \emph{formal modifications}. We refer to~\cite[p.39]{ModAnLMS} for a definition of formal modifications, which we will not need in this text. The only examples of formal modifications that we will encounter are contained in the following proposition.

\begin{pro}[{\cite[p.40, Proposition 4 and 5]{ModAnLMS}}]\label{pro-modforex}
\hfill
\begin{enumerate}[label = \roman{enumi})]
\item The formal completion of a modification is a formal modification. More precisely, if $\nu : X' \to X$ is a modification along $Y \subset X$ and $Y' = \nu^{-1}(Y)$, then the induced map $\hat{\nu} : \hat{Y'} \to \hat{Y}$ is a formal modification. 
\item If $\hat{\nu} : \fX' \to \fX$ is a formal modification, then for every complex space $Y$, $(\hat{\nu} \times \Id_Y) : \fX' \times Y \to \fX \times Y$ is also a formal modification.
\end{enumerate}
\end{pro}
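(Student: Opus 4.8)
This is a foundational fact in the theory of formal modifications, established in \cite{ModAnLMS}; below we only indicate the strategy. Recall that a \emph{formal modification} is a morphism $\hat\nu \colon \fX' \to \fX$ of formal complex spaces — say $\fX$, $\fX'$ are the completions of complex spaces along closed subspaces supported on $Y$, $Y'$ respectively — subject to a list of conditions: $\hat\nu$ is proper and \emph{centered} (it maps $Y'$ onto $Y$, and $Y'$ is the reduced preimage of $Y$), there exists a coherent ideal $\cI \subset \cO_\fX$ with $\cI\cdot\cO_{\fX'}$ invertible, and $\cO_{\fX'}$, together with the $\cI$-adic graded sheaves, satisfies suitable finiteness and comparison conditions over $\cO_\fX$; see \cite[p.\,39]{ModAnLMS} for the precise formulation.

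For i), let $\nu \colon X' \to X$ be a modification along $Y$ and set $Y' \cnec \nu^{-1}(Y)$. The topological conditions for $\hat\nu \colon \hat{Y'} \to \hat{Y}$ are inherited verbatim from the corresponding properties of $\nu$. For the sheaf-theoretic conditions the two essential tools are Grauert's coherence theorem for proper morphisms of complex spaces, which makes each $R^i\nu_*\cO_{X'}$ coherent, and the theorem on formal functions, which identifies the completion $\bigl(R^i\nu_*\cO_{X'}\bigr)^{\wedge}$ along $Y$ with $R^i\hat\nu_*\cO_{\hat{Y'}}$. Since $\nu$ restricts to a biholomorphism $X' \bss Y' \eto X \bss Y$, the sheaves $R^i\nu_*\cO_{X'}$ for $i>0$ and the cokernel of $\cO_X \to \nu_*\cO_{X'}$ are supported on $Y$, hence coincide with their own completions along $Y$; combined with the formal-functions comparison, this converts the relevant properties of $\nu$ into the required properties of $\hat\nu$. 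The remaining, more delicate condition — the existence of the invertible ideal $\cI$ — reflects that a modification is, locally over $X$, assembled from blow-ups, and is where the full argument of \cite[Proposition 4]{ModAnLMS} enters.

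For ii), one checks that each of the defining conditions is stable under the product with a fixed complex space $Y$. Locally $\cO_{\fX\times Y}$ is a completed tensor product of $\cO_\fX$ with $\cO_Y$ over $\bC$, so $-\times Y$ is a flat base change; properness and the centering condition are evidently preserved, while the finiteness and comparison conditions persist because the coherent direct images of a proper (formal) morphism commute with the base change $-\times Y$, a K\"unneth-type statement resting on flatness over the base field $\bC$. Substituting $\hat\nu\times\Id_Y$ for $\hat\nu$ throughout the list of conditions then yields the claim.

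The only genuinely analytic inputs are, for i), the theorem on formal functions for proper morphisms of complex spaces — the comparison $R^i\hat\nu_*\cO_{\fX'} \simeq \bigl(R^i\nu_*\cO_{X'}\bigr)^{\wedge}$ — and, for ii), the compatibility of coherent cohomology with the base change $-\times Y$. Since both are by now standard in analytic geometry, in the text we content ourselves with the reference \cite{ModAnLMS}.
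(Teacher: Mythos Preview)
The paper does not give a proof of this proposition: it is stated with a direct citation to \cite[p.40, Proposition 4 and 5]{ModAnLMS} and used as a black box, and the paper even remarks just before it that the precise definition of formal modifications ``will not need[ed] in this text.'' Your proposal ultimately does the same thing---deferring to \cite{ModAnLMS}---but pads the citation with an informal outline of the strategy; there is nothing in the paper to compare this outline against. One cautionary note: your sketched ``definition'' of a formal modification, in particular the clause about an invertible ideal $\cI\cdot\cO_{\fX'}$, is closer to a characterization of formal blow-ups than to the general notion, so if you intend to keep this sketch you should double-check it against the actual definition in \cite[p.39]{ModAnLMS} before committing to it.
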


Now we state the theorem of Ancona, Tomassini, and Bingener about the existence and uniqueness of the extension of formal modifications mentioned in the introduction, which serves as one of the key ingredients of the proof of Theorem~\ref{thm-mainfibellip}. 

\begin{thm}[Ancona-Tomassini-Bingener{~\cite[Theorem 8.1 and 9.1]{BingenerModformelles}}]\label{thm-ATB}
 \hfill

Let $\hat{\nu} : \fX' \to \fX$ be a formal modification.
\begin{enumerate}[label = \roman{enumi})]
\item If $\fX'$ is isomorphic to the formal completion of a complex space $X'$ along a complex subspace $Y' \subset X'$, then there exist a complex space $X$ and a modification $\nu : X' \to X$ along $Y \subset X$ such that $Y' = \nu^{-1}(Y)$ and  $\hat{\nu}$ is isomorphic to the formal completion $ \hat{Y'} \to \hat{Y}$ of $\nu$ along $Y' \to Y$. Moreover, if $Y' \subset X'$ is given, then the modification $\nu$ is unique (up to isomorphisms).
\item If $\fX$ is isomorphic to the formal completion of a complex space $X$ along a complex subspace $Y \subset X$, then there exist a complex space $X'$ and a modification $\nu : X' \to X$ along $Y \subset X$ such that if $Y' = \nu^{-1}(Y)$, then  $\hat{\nu}$ is isomorphic to the formal completion $ \hat{Y'} \to \hat{Y}$ of $\nu$ along $Y' \to Y$. Moreover, if $Y \subset X$ is given, then the modification $\nu$ is unique (up to isomorphisms).
\end{enumerate}
\end{thm}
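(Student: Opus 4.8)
The plan is to regard Theorem~\ref{thm-ATB} as the analytic incarnation of Artin's algebraization of formal contractions, and to prove the two parts by the same circle of ideas: one passes back and forth between the analytic and the formal categories by combining coherence, Grauert's direct image theorem, and the faithful flatness of the completion map $\cO \to \hat{\cO}$. Both statements are local on the target of $\nu$ near the center, because away from the center everything is already prescribed (an isomorphism), so throughout one works in a relatively compact neighbourhood of a fibre of $\hat{\nu}$.

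For part i): here $\fX'$ is the completion of a genuine complex space $X'$ along $Y'$. First I would construct the topological space underlying $X$ as the quotient of $X'$ obtained by collapsing each fibre $\hat{\nu}^{-1}(y)$; properness of the formal modification $\hat{\nu}$ makes this quotient Hausdorff and locally compact and the projection $\nu : X' \to X$ proper. Next I would propose the structure sheaf $\cO_X \cnec \nu_*\cO_{X'}$, a sheaf of local $\bC$-algebras on $X$ agreeing with $\cO_{X'}$ over $X \bss Y \simeq X' \bss Y'$. The real content is to show that $(X,\cO_X)$ is a complex space, that $\nu$ is then a morphism of complex spaces (equivalently, that $\cO_{X'}$ is $\cO_X$-coherent), and that the completion of $\cO_X$ along the center equals the given $\cO_{\fX}$. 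This is where the defining axioms of a formal modification intervene --- coherence of $\hat{\nu}_*\cO_{\fX'}$ and of the higher direct images, together with the conditions on the exceptional set: using these and Grauert's coherence theorem applied to $\nu$ on the successive infinitesimal neighbourhoods of $Y'$, plus an Artin--Rees/Krull-type argument controlling the passage from each infinitesimal neighbourhood to the next, one deduces that $\nu_*\cO_{X'}$ is analytically coherent with completion $\cO_{\fX}$. Granting this, $\nu$ is bimeromorphic (an isomorphism off $Y'$ by construction) with $\nu^{-1}(Y) = Y'$, \ie $\nu$ is a modification, and $\hat{\nu}$ is recovered as $\widehat{Y'} \to \widehat{Y}$ by construction.

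Part ii) is the ``resolution'' dual: now $\fX = \widehat{Y}$ is the completion of a given $X$ along $Y$, and $X'$ must be manufactured. The key step here is an effectivization: the formal modification $\hat{\nu} : \fX' \to \widehat{Y}$ forces $\fX'$ to be the formal completion of an honest complex space $X'$ along some $Y' \subset X'$ --- one applies a Grothendieck-existence / Artin-algebraization statement for formal complex spaces, the formal-modification structure over $\widehat{Y}$ supplying exactly the properness and coherence needed to run it --- and then one glues the resulting local analytic model with $X \bss Y$ along the common open locus, on which both are canonically identified through $\hat{\nu}$; the glued space is the desired $X'$ and the projection the desired $\nu$. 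In both parts uniqueness is argued the same way: two solutions restrict to the prescribed isomorphism over the complement of the center and have canonically isomorphic completions along their centers, and a coherent $\cO$-algebra on a neighbourhood of the center is determined by its restriction to a dense open set together with its completion along the complementary center (faithful flatness of $\cO \to \hat{\cO}$, plus normality / Riemann extension to propagate the isomorphism across the center); hence the two solutions are canonically isomorphic.

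The step I expect to be the main obstacle is the coherence-and-convergence input in part i) --- and the parallel effectivization in part ii) ---: showing that the a priori merely set-theoretic direct image $\nu_*\cO_{X'}$ is an honest coherent analytic sheaf with the prescribed completion, equivalently that the formal data genuinely \emph{converges}. This is a theorem of Grauert type requiring either Cauchy-type estimates along an exhaustion by infinitesimal neighbourhoods or a reduction to Artin's approximation theorem; the remainder --- the topology on $X$, the verification that $\nu$ is a modification, and the uniqueness --- is then formal. For the ancillary facts that the completion of a modification is a formal modification and that formal modifications are stable under products, one uses Proposition~\ref{pro-modforex}.
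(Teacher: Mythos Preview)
The paper does not prove Theorem~\ref{thm-ATB}; it is quoted as a black box from Bingener~\cite[Theorems 8.1 and 9.1]{BingenerModformelles} (building on Ancona--Tomassini) and used as an input to Proposition~\ref{pro-Formalgapp}. So there is no ``paper's own proof'' to compare your proposal against.

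That said, your sketch is a reasonable outline of the circle of ideas actually behind the cited result: part i) is the existence of contractions (construct the quotient topological space, push forward the structure sheaf, and prove coherence/convergence so that the result is a complex space with the prescribed completion), and part ii) is the existence of dilatations (effectivize the formal space upstairs and glue with the complement of the center). You correctly identify that the hard analytic content is the coherence/convergence step --- this is precisely the substance of Bingener's paper, and it is far from formal: it requires delicate estimates in the spirit of Grauert's direct image theorem together with approximation-type arguments, and cannot be dispatched by a one-line appeal to ``Artin--Rees/Krull'' or ``Grothendieck existence''. Your uniqueness sketch also glosses over a subtlety: you invoke normality/Riemann extension, but the theorem is stated for arbitrary complex spaces, so one needs the uniqueness argument actually given in~\cite{BingenerModformelles}, which works without normality assumptions. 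In short, your proposal is a fair roadmap, but as written it defers exactly the theorems that constitute the result; for the purposes of this paper the correct move is simply to cite~\cite{BingenerModformelles}, as the author does.
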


\ssec{$G$-equivariant cohomology}\hfill

Let $X$ be a paracompact topological space and $\cF$ a sheaf of abelian groups on $X$. Let $G$ be a finite group acting on $X$ and on $\cF$ in a compatible way. The functor $\cF \mapsto \Gamma(X,\cF)^G$ is left exact and  the associated derived functor $\cF \mapsto H^k_G(X,\cF)$ is called the \emph{$G$-equivariant cohomology} of $\cF$. 

Following~\cite[Section 13]{KodairaSurfaceII}, we recall how elements of $H^k_G(X,\cF)$ are represented by $k$-cocycles and spell out the cocycle condition for $k=1$. The reader is referred to \emph{loc. cit.} for the details. Let $\{U_i\}_{i \in I}$ be a $G$-invariant good cover such that either $g(U_i) \cap U_i = \emptyset$ or $g(U_i) = U_i$. Define the $G$ action on $I$ by $g^{-1}(U_i) = U_{gi}$. Let $A^{p,q}_G(\cF)$ be the abelian group of collections of sections
$$\left\{s^{g_1,\ldots,g_p}_{i_1\cdots i_q} \in \cF(U_{i_1\cdots i_q}) \right\}_{g_1,\ldots,g_p \in G; i_1,\ldots, i_q \in I}$$
and let $$A^k_G(\cF) \cnec \bigoplus_{p+q = k} A^{p,q}_G(\cF).$$
Define 
$$D^k \cnec d^k + (-1)^q\gd^k : A^k_G(\cF) \to A^{k+1}_G(\cF)$$
where $d^k$ is the sum (over $p+q = k$) of the the coboundary maps $A^{p,q}_G(\cF) \to A^{p,q+1}_G(\cF)$ of $q$-cochains computing the \v{C}ech cohomology and $\gd^k$  the sum (over $p+q = k$) of the the coboundary maps  $A^{p,q}_G(\cF) \to A^{p+1,q}_G(\cF)$ of $p$-cochains computing the group cohomology of $G$-modules. Elements of $\ker(D^k)$ (resp. $\Ima(D^{k-1})$) are called $k$-cocycles (resp. $k$-coboundaries) and we have
$$H^k_G(X,\cF) = \ker(D^k)/\Ima(D^{k-1}).$$

In this text, we will only be interested in the first cohomology group $H^1_G(X,\cF)$. Each element of $H^1_G(X,\cF)$ is represented by a 1-cocycle $\left\{(\eta_{ij})_{i,j \in I},(\eta_i^g)_{i \in I, g \in G}\right\}$ and the concrete meaning of the cocycle condition is the following: for every $i,j,k \in I$ and $g,h \in G$, 
$$
\begin{array}{ll}
& \eta_{ij} + \eta_{jk} + \eta_{ki} = 0, \\
& g \cdot \eta_{(gi)(gj)} - \eta_{ij} = \eta_j^g - \eta_i^g, \\
& g \cdot \eta_{gi}^h -\eta_i^{gh} + \eta_i^g.
\end{array}
$$

\ssec{Campana's criterion}\hfill

Let $X$ be a complex variety. We say that $X$ is \emph{algebraically connected} if a general pair of points $x,y \in X$ is contained in a connected (but not necessarily irreducible) and complete curve in $X$. For a compact complex variety $X$ in the Fujiki class $\cC$ (i.e. bimeromorphic to a compact K\"ahler manifold), Campana shows that $X$ is Moishezon if and only if $X$ is algebraically connected~\cite[Corollaire, p.212]{CampanaCored}. Since we will mainly deal with fibrations by curves over a projective base, here we state a variant of Campana's criterion in this situation.

\begin{thm}[Campana]\label{thm-multsecMoi}
Let $f : X \to B$ be a fibration whose general fiber is a curve. Assume that $X$ is in the Fujiki class $\cC$ and $B$ is projective, then $X$ is Moishezon if and only if $f$ has a multi-section.
\end{thm}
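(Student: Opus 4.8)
The plan is to prove the two implications separately, using throughout Campana's general criterion (\cite[Corollaire, p.212]{CampanaCored}, recalled above) that a compact complex variety in the Fujiki class $\cC$ is Moishezon if and only if it is algebraically connected.

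Suppose first that $X$ is Moishezon. Then $X$ admits a projective modification $\mu : \hat{X} \to X$ with $\hat{X}$ a smooth projective variety, and by GAGA the induced fibration $\hat{f} \cnec f \circ \mu : \hat{X} \to B$ is a morphism of projective varieties whose general fibre is a curve. I would produce a multi-section of $\hat{f}$ by taking a general member $H$ of a very ample linear system on $\hat{X}$. Since every fibre of $\hat{f}$ has dimension $\ge \dim \hat{X} - \dim B = 1$, a very ample divisor meets every fibre, so $\hat{f}_{|H} : H \to B$ is surjective; and since the general fibre $\hat{X}_b$ is an irreducible curve while the general fibres sweep out a dense open subset of $\hat{X}$, no divisor can contain all of them, so $\hat{f}_{|H}$ is generically finite. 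Thus $H$ is a multi-section of $\hat{f}$. Choosing $H$ general also ensures that $H$ is not contained in the exceptional locus of $\mu$ (a proper closed subset of $\hat{X}$), so $\mu_{|H}$ is generically injective onto $S \cnec \mu(H)$, and $f_{|S} : S \to B$ is again surjective and generically finite; hence $S$ is a multi-section of $f$.

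Conversely, suppose $f$ admits a multi-section $S$, and let $S_0$ be an irreducible component of $S$ dominating $B$. Then $S_0$ is compact, $f_{|S_0} : S_0 \to B$ is surjective and generically finite, and since $B$ is projective the meromorphic function field of $S_0$ is a finite extension of $f^*\bC(B)$, whence $a(S_0) = \dim S_0$; so $S_0$ is Moishezon, a fortiori in the Fujiki class $\cC$, and therefore (Campana again, or directly: pass to a projective modification of $S_0$ and use that any two points of a projective variety lie on an irreducible curve) \emph{any} two points of $S_0$ lie on a connected complete curve contained in $S_0$. Now take a general pair $x, y \in X$, so that $b \cnec f(x)$ and $b' \cnec f(y)$ are general in $B$ and $X_b$, $X_{b'}$ are complete connected curves meeting $S_0$; choose $s \in S_0 \cap X_b$ and $s' \in S_0 \cap X_{b'}$. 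Then $X_b$ joins $x$ to $s$, $X_{b'}$ joins $y$ to $s'$, and a connected complete curve in $S_0$ joins $s$ to $s'$; concatenating these three yields a connected complete curve through $x$ and $y$. Hence $X$ is algebraically connected, and since $X$ lies in the Fujiki class $\cC$ it is Moishezon.

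The steps requiring genuine care are, in the first implication, checking that the general hyperplane section descends to a \emph{bona fide} multi-section of $f$ — which reduces to the observation that a general divisor avoids the exceptional locus of $\mu$ — and, in the second implication, the point that $S_0$ is itself Moishezon, so that Campana's criterion applies to $S_0$ as well as to $X$. The conceptual heart of the argument is that a multi-section allows one to travel between two general fibres of $f$ through the Moishezon, hence algebraically connected, subvariety $S_0$; beyond the bookkeeping just mentioned I do not expect a serious obstacle.
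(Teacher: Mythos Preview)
The paper does not give a proof of this statement; it presents it as a direct variant of Campana's criterion \cite[Corollaire, p.212]{CampanaCored} that a compact complex variety in the Fujiki class $\cC$ is Moishezon if and only if it is algebraically connected, and leaves the derivation implicit. Your proof is correct and is precisely the natural way to fill in that derivation: the forward implication by slicing a projective modification with a general hyperplane, and the converse by using the multi-section (itself Moishezon, being generically finite over the projective base) to travel between two general fibres and thereby exhibit algebraic connectedness of $X$.
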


\section{Locally Weierstra{\ss} fibrations and tautological families}\label{sec-locW}

The theory of elliptic fibrations had first been developed by Kodaira for fibrations over a curve~\cite{KodairaSurfaceII}, then by N. Nakayama in any dimension~\cite{NakayamaWeier, NakayamaLoc, NakayamaGlob}. We will start with a concise summary of the part of the theory of elliptic fibrations developed in~\cite{NakayamaWeier, NakayamaLoc, NakayamaGlob, ClaudonHorpi1} that we will use in this text. Then we will construct tautological models and the associated tautological families in~\ref{ssec-tautmod} and~\ref{ssec-famtaut}.

\ssec{Weierstra{\ss} fibrations}\label{ssec-fibWtaut}\hfill

\sssec{}

In the following we recall the definition of Weierstra{\ss} fibrations. These fibrations can be considered as standard compactifications of Jacobian fibrations associated to smooth elliptic fibrations: we will see that for every elliptic fibration $f : X \to B$ (over a complex manifold $B$), there exists a unique minimal Weierstra{\ss} fibration $p : W \to B$ whose restriction to $p^{-1}(B^\st)$ is isomorphic to the Jacobian fibration  $J \to B^\st$ associated to the smooth part $X^\st \to B^\st$ of $f$.

\begin{Def}\label{def-Wei}
Let $\cL$ be a line bundle over a complex space $B$ and let 
$$Z \in H^0\(\bP, \cO_\bP(1)\), \ X \in H^0\(\bP, \cO_\bP(1) \otimes p^*\cL^{(-2)}\), \ \text{and }  Y \in H^0\(\bP, \cO_\bP(1) \otimes p^*\cL^{(-3)}\)$$
be the three coordinate sections of the projectivization $\bP \cnec \bP(\cO \oplus \cL^2 \oplus \cL^3)$.
\begin{enumerate}[label = \roman{enumi})]
\item A \emph{Weierstra{\ss} fibration} is a fibration $p : W \cnec W(\cL,\ga,\gb) \to B$ defined by the projection onto $B$ of the hypersurface in $\bP$ defined by a nonzero section of the form 
$$Y^2Z - X^3 - \ga XZ^2 - \gb Z^3 \in H^0\(\bP, \cO_\bP(3) \otimes p^*\cL^{(-6)}\)$$
for some sections $\ga \in H^0(B,\cL^{(-4)})$ and $\gb \in H^0(B,\cL^{(-6)})$ such that a general fiber of $p$ is smooth (or equivalently, $4 \ga^3 + 27 \gb^2 \ne 0$ in $H^0(B,\cL^{(-12)})$). 
\item Without assuming that a general fiber of $p: W \to B$ is smooth, $p$ is called a \emph{singular} Weierstra{\ss} fibration.
\item \label{def-Wei3} The section defined by $\gS \cnec \{X = Z = 0\} \subset W$ is called the \emph{zero-section} of $p$.
\item\label{def-Wei-minglob} A Weierstra{\ss} fibration defined above is called \emph{minimal} if there is no prime divisor $D \subset B$ such that $\Div(\ga) \ge 4D$ and $\Div(\gb) \ge 6D$. 
\item\label{def-Wei-min} A (singular) (minimal) \emph{locally Weierstra{\ss} fibration} is a fibration $f : X \to B$ such that there exists an open cover $\{U_i\}$ of $B$ such that each $f^{-1}(U_i) \to U_i$ is a (singular) (minimal) Weierstra{\ss} fibration.
\end{enumerate}
\end{Def}

Obviously, any base change $W \times_B Z \to Z$ of a (singular) Weierstra{\ss} fibration $W \to B$ is still a (singular) Weierstra{\ss} fibration. Locally a (singular) Weierstra{\ss} fibration $p:W \to B$ is a pullback of the standard Weierstra{\ss} fibration
$$\left\{Y^2Z =  X^3 + a XZ^2 + b Z^3  \right\} \subset \bP^2 \times \bC^2$$
parameterized by $(a,b) \in \bC^2$. So $p$ is \emph{flat} with \emph{irreducible and reduced fibers} which are either elliptic curves, nodal cubic curves, or cuspidal cubic curves. 

When $B$ is normal, the total space $W$ of a Weierstra{\ss} fibration  is also normal~\cite[1.2.(1)]{NakayamaWeier}. The notion of \emph{minimal} Weierstra{\ss} fibrations first appears in~\cite{NakayamaWeier} and the main interests of this notion reside in the properties that, under the assumption that $B$ is normal and the discriminant locus $\gD = \Div(4\ga^3 + 27 \gb^2)$ is a normal crossing divisor, a Weierstra{\ss} fibration $p : W \to B$ is minimal if and only if $W$ has at worst rational singularities~\cite[Corollary 2.4]{NakayamaWeier}; also, when $B$ is smooth,  each bimeromorphic class of  Weierstra{\ss} fibrations contains a unique minimal Weierstra{\ss} fibration~\cite[Theorem 2.5]{NakayamaWeier}.

\sssec{}

Let $p:W\to B$ be a (singular) Weierstra{\ss} fibration and let $W^\# \subset W$ denote the smooth locus of $p$. The fibration $W^\# \to B$ can be considered as an analytic group variety over $B$ with connected fibers whose zero-section is  $\gS$ (see Definition~\ref{def-Wei}\comm{.}\ref{def-Wei3}). The relative tangent space at $\gS$ is isomorphic to $\cL$~\cite[Lemma 5.1.1.(8)]{NakayamaGlob}, so 
the exponential map induces a surjective morphism
\begin{equation}\label{expoW}
\begin{tikzcd}[cramped, row sep = 0, column sep = 20]
\exp : \cL \arrow[r, two heads] & \cJ^W 
\end{tikzcd}
\end{equation}
where $\cJ^W$ is the sheaf of germs of holomorphic sections of $p$ whose image is \emph{contained in  $W^\#$}.  We also have 
\begin{equation}\label{iso-LRp1OW}
\cL \simeq R^1p_*\cO_W
\end{equation}
\cite[1.2.(6)]{NakayamaWeier} since fibers of $p$ are curves of arithmetic genus 1.

Let $B$ be a complex manifold and $p^\st : J \to B^\star$ a Jacobian fibration over a Zariski open $B^\star \subset B$. By~\cite[Theorem 2.5]{NakayamaWeier}, there exists a unique minimal  Weierstra{\ss} fibration $p: W = W(\cL,\ga,\gb) \to B$ such that $p^{-1}(B^\st)$ is isomorphic to $J$ over $B^\st$. Since the weight one variation of Hodge structures $\bH \cnec R^1p^\st_*\bZ$ of rank 2 determines $J \to B^\star$, we also say that $p : W \to B$ is the minimal Weierstra{\ss} fibration \emph{associated to $\bH$}. If $f : X \to B$ is an elliptic fibration, then the minimal Weierstra{\ss} fibration \emph{associated to $f$} is defined to be the minimal Weierstra{\ss} fibration associated to $(R^1f_*\bZ)_{|B^\st}$ where $B^\st \subset B$ is a Zariski open over which $f$ is smooth.  If $G$ is a group acting on $B$ and on $\bH$ in a compatible way, then by~\cite[Corollary 2.6]{NakayamaWeier}, the $G$-action extends to a $G$-action on $W$ such that $p$ is $G$-equivariant.

In the case where $p : W \to B$ is the minimal Weierstra{\ss} fibration associated to $\bH$, we use the notations
$$\cL_{\bH/B} \cnec \cL \ \ \ \ \ \text{         and        } \ \ \ \  \cJ^W_{\bH/B} \cnec \cJ^W.$$ 
By~\cite[Proposition 2.10]{NakayamaWeier}, the exponential map lies in the short exact sequence
\begin{equation}\label{exseq-fibjac}
\begin{tikzcd}[cramped, row sep = 0, column sep = 40]
0  \arrow[r] & j_* \bH \arrow[r, "\varphi"] &\cL_{\bH/B} \arrow[r,"\exp"] & \cJ^W_{\bH/B} \arrow[r] & 0. 
\end{tikzcd}
\end{equation}
where $j : B^\st \hto B$ is the inclusion. The restriction of~\eqref{exseq-fibjac} to $B^\st$ is the short exact sequence
\begin{equation}\label{suitex-jaclisse}
\begin{tikzcd}[cramped, row sep = 0, column sep = 40]
0  \arrow[r] &  \bH \arrow[r] & \cE \cnec \cH/F^1\cH \arrow[r] & \cJ \arrow[r] & 0. 
\end{tikzcd}
\end{equation}
where $F^1\cH$ is the first piece of the Hodge filtration of $\cH \cnec \bH \otimes \cO_{B^\st}$ and $\cJ$ is the sheaf of germs of holomorphic sections of $J \to B^\star$.

\sssec{}\label{sssec-Gequiv}
The action of the group variety $W^\#$ over $B$ on $W^\#$ extends to a $W^\#$-action on $W$~\cite[Lemma 5.1.1.(7)]{NakayamaGlob}, so each \v{C}ech 1-cocycle $\eta$ of the sheaf $\cJ^W$ defines a (singular) locally Weierstra{\ss} fibration $p^\eta : W^\eta \to B$ (\emph{cf.}~\cite[Construction 3.14]{ClaudonHorpi1}). If $\eta'$ is another 1-cocycle representing the same class in $H^1(B, \cJ^W)$, then $W^{\eta'} \simeq W^\eta$ over $B$. We say that $p^\eta : W^\eta \to B$ is the locally Weierstra{\ss} fibration \emph{associated to $\eta$} (or \emph{twisted} by $\eta$).

More generally, let $G$ be a finite group and $p : W \to B$ a $G$-equivariant Weierstra{\ss} fibration such that the zero-section $\gS \subset W$ is $G$-stable. The sheaf $\cJ^W$ is endowed with a natural $G$-action and to each element  $\eta_G \in H^1_G(B,\cJ^W)$, we can associate a $G$-equivariant locally Weierstra{\ss} fibration $p^\eta : W^\eta \to B$~\cite[Section 3.E]{ClaudonHorpi1}. For later use, we shall briefly recall the construction. 

Given an element $\eta_G \in H_G^1(B,\cJ^W)$. Let $\{U_i\}_{i \in I}$ be a $G$-invariant good open cover of $B$ and let $G$ act on $I$ such that $g^{-1}(U_i) = U_{gi}$. The element $\eta_G$ can be represented by a 1-cocycle $\left\{(\eta_{ij})_{i,j \in I},(\eta_i^g)_{i \in I, g \in G}\right\}$ where $\{\eta_{ij}\}$ is a 1-cocycle representing the image $\eta$ of $\eta_G$ in $H^1(B,\cJ^W)$ and $\eta_i^g$ is a local section of $\cJ^W$ defined over $U_i$. Let $p^{\eta} : W^\eta \to B$ be the locally Weierstra{\ss} fibration twisted by $\eta$. Fix biholomorphic maps 
$$\eta_i : W^\eta_i \cnec (p^{\eta})^{-1}(U_i) \to W_i \cnec p^{-1}(U_i)$$ 
such that $\eta_i \circ \eta_j^{-1} = \tr(\eta_{ij})$ where $\tr(\eta_{ij})$ denotes the translation by the holomorphic section $\eta_{ij}$. For each $g \in G$, the automorphism $\psi_g : W^\eta \to W^\eta$  that defines the $G$-action on $p^\eta : W^\eta \to B$ associated to $\eta_G$ is constructed by patching together the isomorphisms 
$$\psi_g^i \cnec \eta_i^{-1} \circ \tr(\eta_i^g) \circ g \circ  \eta_{gi} : W^\eta_{gi} \to W^\eta_i.$$

Conversely, given a $G$-equivariant minimal locally Weierstra{\ss} fibration $p^{\eta} : W^\eta \to B$ twisted by $\eta \in H^1(B,\cJ^W)$, at least when $B$ is smooth we can also reconstruct the element $\eta_G \in H_G^1(B,\cJ^W)$ starting with which $p^{\eta}$ is constructed. First of all, the $G$-action on $p^\eta$ induces a $G$-action on $\bH \cnec (R^1p^\eta_*\bZ)_{|B^\st}$ where $B^\st \subset B$ is a Zariski open over which $p^\eta$ is smooth. By~\cite[Corollary 2.6]{NakayamaWeier}, the $G$-action on $\bH$ extends to a $G$-action on $W$ such that $p$ is $G$-equivariant, and it is for this $G$-action we define the $G$-equivariant cohomology group $H_G^1(B,\cJ^W)$. Now let $\{U_i\}$ be a good open cover of $B$ and let $\eta_i : W^\eta_i \to W_i$ be biholomorphic maps such that $\eta_i \circ \eta_j^{-1} = \tr(\eta_{ij})$ for some 1-cocycle $\{\eta_{ij}\}$ representing $\eta \in H^1(B,\cJ^W)$. Let $\psi_g : W^\eta \to W^\eta$ be the action of $g \in G$ on $W^\eta$. We define 
$$\tr(\eta_i^g) \cnec \eta_i \circ \psi_g \circ  \eta_{gi}^{-1} \circ g^{-1}.$$
The element $\eta_G \in H_G^1(B,\cJ^W)$ represented by the 1-cocycle $\left\{(\eta_{ij}),(\eta_i^g)\right\}$ is the element we look for.

\sssec{}

Let $p : W \to B$ be a Weierstra{\ss} fibration over a compact complex manifold $B$. For each $\eta \in H^1(B,\cJ^W)$ there exists a family of elliptic fibrations 
$$\Pi : \cW \xto{q} B \times V \to V \cnec H^1(B,\cL)$$
such that the elliptic fibration parameterized by $t \in V$ corresponds to $\eta + \exp(t) \in H^1(B,\cJ^W)$ where 
$$\exp : H^1(B,\cL) \to H^1(B,\cJ^W)$$ is the map induced by $\exp : \cL \to \cJ^W$. Indeed, let $\pr_1 : B \times V \to B$ be the first projection and let 
$$\xi \in V \otimes V^\vee \subset  V \otimes H^0(V,\cO_V) \simeq H^1(B \times V,\pr_1^*\cL)$$ 
be the element corresponding to the identity map $\Id_V$. Let 
$$\gl_\eta \cnec \pr_1^*\eta + \wt{\exp}(\xi) \in  H^1(B \times V,\cJ^{W \times V})$$
where 
$$\pr_1^* : H^1(B,\cJ^W) \to H^1(B \times V,\cJ^{W \times V})$$ 
is the map induced by pulling back sections $\cJ^W \to {\pr_1}_*\cJ^{W \times V}$ and 
$$\wt{\exp} : H^1(B \times V,\pr_1^*\cL) \to H^1(B \times V,\cJ^{W \times V})$$
the map induced by $\exp : \pr_1^*\cL \to \cJ^{W \times V}$.
Then the locally Weierstra{\ss} fibration $q : \cW \to B \times V$ twisted by $\gl_\eta$ will define such a family $\Pi$. The family $\Pi$ is called the \emph{tautological family associated to $\eta$}. Such a family can also be constructed when $p : W \to B$ is a singular Weierstra{\ss} fibration.

If, in addition, the fibration $p$ is $G$-equivariant for some finite group $G$ such that the zero section $\gS \subset W$ is preserved under the $G$-action and assume that $\eta$ is the image of a class $\eta_G \in H^1_G(B,\cJ^W)$, then $\cW^G \cnec \Pi^{-1}(V^G)$ can be endowed with a $G$-action such that the restriction $q_{|\cW^G}$  is the $G$-equivariant locally Weierstra{\ss} fibration twisted by
$$\gl_{\eta_G} \cnec \pr_1^*\eta_G  + \wt{\exp}_G(\xi_G) \in H^1_G(B \times V^G,\cJ^{W^G \times V^G})$$
where $\xi_G \in V^G \otimes  H^0(V^G,\cO_{V^G}) \simeq H^1_G(B \times V,\pr_1^*\cL)$ is the element corresponding to the identity map $\Id_{V^G}$ and 
$$\wt{\exp}_G :H^1_G(B \times V,\pr_1^*\cL) \to H^1_G(B \times V,\cJ^{W \times V})$$ 
is again the map induced by $\exp : \pr_1^*\cL \to \cJ^{W \times V}$. So each point $t \in V^G$ parameterizes the $G$-equivariant locally Weierstra{\ss} fibration twisted by $\eta_G + \exp_G(t) \in H^1_G(B,\cJ^W)$ in the tautological family $\Pi$. 

For every $G$-stable analytic subset $Z \subset B$, the next lemma gives a standard way to produce a subspace $V^G_Z$ of $V$ along which the deformation of $p^{\eta}$ in the tautological family preserves $G$-equivariantly the fibration $(p^{\eta})^{-1}(Z) \to Z$ contained in $p^\eta$.

\begin{lem}\label{lem-presWtauto}
Let $Z \subset B$ be a $G$-stable subvariety and let
$$V^G_Z = \ker\(\imath^* : H^1(B,\cL) \to H^1(Z,\imath^*\cL)\)^G$$ 
where $\imath : Z \hto B$ is the inclusion. Then the subfamily of the tautological family associated to $ \eta_G \in H^1_G(B,\cJ^W)$ parameterized by $V^G_Z$ preserves the $G$-action and $G$-equivariantly the fibration $W_Z^{\eta} \cnec (p^{\eta})^{-1}(Z) \to Z$. 
\end{lem}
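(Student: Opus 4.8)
The statement is essentially a compatibility check for the tautological family: restricting the twisting class to a subvariety $Z$ where the obstruction vanishes produces a trivial deformation of the fiber over $Z$. I would organize the argument around the explicit cocycle description of the tautological family given just before the lemma. Recall that the tautological family $q : \cW \to B \times V$ is the locally Weierstra\ss{} fibration twisted by $\gl_\eta = \pr_1^*\eta + \wt{\exp}(\xi)$, where $\xi \in V \otimes V^\vee$ corresponds to $\Id_V$; the $G$-equivariant refinement is twisted by $\gl_{\eta_G} = \pr_1^*\eta_G + \wt{\exp}_G(\xi_G)$. The fibration over $Z \times V^G$ inside $\cW^G$ is then the ($G$-equivariant) locally Weierstra\ss{} fibration over $Z \times V^G$ twisted by the restriction of $\gl_{\eta_G}$ along $\imath \times \Id_{V^G} : Z \times V^G \hto B \times V^G$.

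First I would compute this restricted twisting class. Pulling back along $\imath \times \Id_{V^G}$, the class $\pr_1^*\eta_G$ restricts to $\pr_1^*(\imath^*\eta_G)$ (the pullback to $Z$ of $\eta_G$, then pulled back to $Z \times V^G$), while $\wt{\exp}_G(\xi_G)$ restricts to $\wt{\exp}_G$ applied to the image of $\xi_G$ under $H^1_G(B \times V, \pr_1^*\cL) \to H^1_G(Z \times V, \imath^*\cL)$. The key point is that under $V^G \otimes H^0(V^G,\cO_{V^G}) \simeq H^1_G(B \times V, \pr_1^*\cL)$, the element $\xi_G$ corresponding to $\Id_{V^G}$ maps to the element of $V^G \otimes H^0(V^G,\cO_{V^G}) \simeq H^1_G(Z \times V, \imath^*\cL)$ that again corresponds to $\Id_{V^G}$ post-composed with $\imath^* : H^1(B,\cL) \to H^1(Z,\imath^*\cL)$ — more precisely, the restriction map on $H^1_G(B\times V,\pr_1^*\cL)$ is, under the identification with $\bigl(\mathrm{Hom}_{\mathrm{lin}}(V, H^1(B,\cL))\bigr)^G$-valued data, just post-composition with $\imath^*$. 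Therefore, for $t \in V^G_Z = \ker(\imath^* : H^1(B,\cL) \to H^1(Z,\imath^*\cL))^G$, the slice over $t$ of the restricted family is twisted by $\imath^*\eta_G$, independently of $t$. Said differently, the restriction of $\gl_{\eta_G}$ to $Z \times V^G_Z$ equals $\pr_1^*(\imath^*\eta_G)$, which is pulled back from $Z$ alone.

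Once the restricted twisting class is identified as $\pr_1^*(\imath^*\eta_G) \in H^1_G(Z \times V^G_Z, \imath^*\cL)$, the family $q^{-1}(Z \times V^G_Z) \to Z \times V^G_Z$ is the ($G$-equivariant) locally Weierstra\ss{} fibration twisted by a class pulled back from $Z$; by the functoriality of the twisting construction (the locally Weierstra\ss{} fibration associated to $\pr_1^*(\text{something on } Z)$ is the pullback of the locally Weierstra\ss{} fibration on $Z$ associated to that something), it is isomorphic over $Z \times V^G_Z$ to $W^\eta_Z \times V^G_Z$, and the isomorphism can be taken $G$-equivariant because all the data — the cover, the cocycle $\imath^*\eta_G$, and the patching automorphisms $\psi_g$ from \ref{sssec-Gequiv} — are themselves pulled back from $Z$. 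This is exactly the statement that the subfamily preserves the $G$-action and $G$-equivariantly preserves $W^\eta_Z \to Z$ in the sense of Definition~\ref{def-preserver}.

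**Main obstacle.** The only genuinely delicate point is the bookkeeping in the second step: tracking $\xi_G$ (the universal class) through the restriction map on $G$-equivariant cohomology and verifying that its restriction to $Z \times V^G_Z$ vanishes precisely because $V^G_Z$ was defined as the kernel of $\imath^*$. This is a linear-algebra identity — one must check that the isomorphism $V^G \otimes H^0(V^G,\cO_{V^G}) \simeq H^1_G(B\times V, \pr_1^*\cL)$ intertwines the obvious "post-compose with $\imath^*$" map on the left with the geometric restriction on the right — but it is the heart of the matter; everything else is the formal yoga of twisted locally Weierstra\ss{} fibrations already set up in Section~\ref{sec-locW}. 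The $G$-equivariance adds no real difficulty beyond carrying the extra cochain index $\eta_i^g$ through the same computation, since restriction commutes with both the \v{C}ech and the group-cohomology differentials.
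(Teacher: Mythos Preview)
Your proposal is correct and follows essentially the same argument as the paper: you restrict $\gl_{\eta_G} = \pr_1^*\eta_G + \wt{\exp}_G(\xi_G)$ along $Z \times V^G_Z \hto B \times V^G$, observe that the $\xi_G$-part vanishes by definition of $V^G_Z$, and conclude that the restricted family is the $G$-equivariant locally Weierstra{\ss} fibration twisted by the pulled-back class $\pr_1^*(\imath^*\eta_G)$, hence $G$-equivariantly isomorphic to $W^\eta_Z \times V^G_Z$. The point you flag as the ``main obstacle'' --- that the restriction of $\xi_G$ to $Z \times V^G_Z$ vanishes --- is exactly the step the paper records as $\Psi^*\xi_G = 0$, and your justification via the identification of $\xi_G$ with $\Id_{V^G}$ post-composed with $\imath^*$ is the same mechanism.
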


\begin{proof}
We already saw that the subfamily parameterized by $V^G_Z$ preserves the $G$-action. Let $\Psi : Z \times V^G_Z \hto B \times V^G$ be the product of $\imath$ with $V^G_Z \hto V$. The restriction 
$$\cY \cnec q^{-1}(Z \times V^G_Z) \to Z \times V^G_Z$$ 
of $q$ to $\cY$ is the $G$-equivariant locally Weierstra{\ss} elliptic fibration twisted by $\Psi^*\gl_{\eta_G} \in H^1_G\(Z \times V^G_Z,\cJ^{W_Z \times V^G_Z}\)$ where $W_Z \cnec p^{-1}(Z)$ and 
$$\Psi^* : H^1_G\(B \times V^G,\cJ^{W \times V^G}\) \to  H^1_G\(Z \times V^G_Z,\cJ^{W_Z \times V^G_Z}\)$$ 
is the map induced by pulling back sections $\cJ^{W \times V^G} \to \Psi_*\cJ^{W_Z \times V^G_Z}$. By definition of $\xi_G$ and $V^G_Z$, we have 
$$\Psi^*\xi_G = 0 \in H^1(Z,\imath^*\cL)^G \otimes H^0(V^G_Z,\cO_{V^G_Z}).$$ 
So $\Psi^*\gl_{\eta_G} =\pr_1^*\imath^*{\eta_G}$ where $\pr_1 : Z \times V^G_Z \to Z$ is the first projection and  
$$\pr_1^* :  H^1_G(Z,\cJ^{W_Z}) \to H^1_G\(Z \times V^G_Z,\cJ^{W_Z \times V^G_Z}\)$$ 
is the map induced by $\cJ^{W_Z} \to {\pr_1}_*\cJ^{W_Z \times V^G_Z}$. It follows that  $\cY \to Z \times V^G_Z$ is $G$-equivariantly isomorphic to $W_Z^{\eta} \times V^G_Z \to Z \times V^G_Z$, which proves the first statement of Lemma~\ref{lem-presWtauto}.
\end{proof}

Finally, when $p : W \to B$ is the minimal Weierstra{\ss} fibration associated to $\bH$, it follows from~\eqref{exseq-fibjac} that $t$ and $t' \in H^1(B,\cL_{\bH/B})$ parameterize isomorphic elliptic fibrations (over $B$) in $\Pi$ if and only if $t - t'$ lies in the image of $ H^1(B,j_*\bH)$.

\sssec{}
Let $p : W \to B$ be a Weierstra{\ss} fibration such that $W$ is normal. As the codimension of $W \bss W^\#$ in $W$ is at least 2, for every $m \in \bZ$, the multiplication-by-$m$ map $W^\# \to W^\#$ extends to a meromorphic map $\bm : W \dto W$ (which can be non-holomorphic, see Remark~\ref{rem-strict}). Gluing these maps locally, we obtain for each $\eta \in H^1(B,\cJ^W)$ a map $\bm : W^\eta \dto W^{m\eta}$, which is generically finite if $m \ne 0$. An immediate consequence of the existence of $\bm$ is the following cohomological criterion for the existence of a multi-section of $p^\eta : W^\eta \to B$.

\begin{lem}\label{lem-Tormultsec}
Assume that $W$ is normal. If $\eta \in H^1(B,\cJ^W)$ is torsion, then $p^\eta : W^\eta \to B$ has a multi-section. 
\end{lem}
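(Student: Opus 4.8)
The plan is to use the meromorphic multiplication map $\bm : W^\eta \dashrightarrow W^{m\eta}$ constructed just above the statement, together with the fact that the zero-section $\gS$ of the (minimal) Weierstra{\ss} fibration $p : W \to B$ provides a genuine holomorphic section of $p$. Concretely, suppose $m\eta = 0$ in $H^1(B,\cJ^W)$ for some integer $m \ge 1$. Then $W^{m\eta}$ is isomorphic over $B$ to $W$ itself (the trivially twisted fibration), so $\gS \subset W = W^{m\eta}$ is a holomorphic section of $p^{m\eta}$. Now consider the generically finite meromorphic map $\bm : W^\eta \dashrightarrow W^{m\eta}$. The plan is to take $S$ to be (the closure of) the preimage $\bm^{-1}(\gS)$ of the zero-section under $\bm$, or more precisely the closure of $\bm^{-1}(\gS) \cap W^{\eta,\#}$ in $W^\eta$; I would then check that $S$ is a multi-section of $p^\eta$.

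First I would record that $\bm$ restricts on the smooth loci to the fibrewise multiplication-by-$m$ homomorphism of the analytic group variety $W^\#/B$, which is a finite surjective morphism $W^{\eta,\#} \to W^{m\eta,\#}$ of degree $m^2$ on each fibre (an elliptic curve, nodal cubic, or cuspidal cubic — in the two degenerate cases the smooth locus of the fibre is $\bC^*$ or $\bC$, but multiplication-by-$m$ is still a finite surjective endomorphism of degree $m^2$, $m$, respectively, and in particular nonconstant and surjective onto the smooth locus of the target fibre). Hence over $B^\st$, and indeed over the Zariski-dense open set where both fibrations are smooth, $\bm$ is a genuine finite surjective morphism, so $\bm^{-1}(\gS)$ meets every fibre in a nonempty finite set. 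Taking $S \cnec \overline{\bm^{-1}(\gS_{|B^\st})} \subset W^\eta$, where the closure is the analytic (Zariski) closure in the compact space $W^\eta$, I would argue that $p^\eta_{|S} : S \to B$ is surjective (its image is analytic and contains the dense open $B^\st$) and generically finite (the general fibre is contained in the fibre of $\bm$ over a point of $\gS$, hence finite). That is exactly the definition of a multi-section given in Section~\ref{sec-prelim}.

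There are two points that need a little care. The first is that $\bm$ is only meromorphic, so I must work over a dense open subset of $B$ before taking closures; this is harmless since ``multi-section'' only asks for generic finiteness and surjectivity. The second, and the genuine technical point, is to make sure $S$ is nonempty and that the closure does not collapse — i.e.\ that $\bm^{-1}(\gS)$ is a well-defined analytic subset of positive-dimensional image in $B$; this follows because $\bm$ is dominant and generically finite, so for a general $b \in B$ the fibre $(p^{m\eta})^{-1}(b)$ is the image under a finite map of $(p^\eta)^{-1}(b)$, and $\gS \cap (p^{m\eta})^{-1}(b)$ is a single point lying in the smooth locus, whose preimage under the finite map $\bm_b$ is a nonempty finite subset of $(p^\eta)^{-1}(b)$. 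I expect this verification — that the degenerate (nodal/cuspidal) fibres do not cause $\bm$ to lose surjectivity or finiteness over a dense open set, and that the analytic closure $S$ is honestly generically finite over $B$ — to be the only real obstacle; everything else is formal from the construction of $\bm$ and the definitions. Alternatively, if one prefers to avoid the degenerate fibres entirely, one can simply take $S$ to be the closure of $\bm^{-1}(\gS) \cap (p^\eta)^{-1}(B^\st)$, where over $B^\st$ all fibres are elliptic curves and $\bm$ is an honest isogeny, and then the density of $B^\st$ in $B$ gives surjectivity of $p^\eta_{|S}$ onto $B$ directly.
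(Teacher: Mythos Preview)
Your proposal is correct and follows essentially the same approach as the paper: take $m$ with $m\eta = 0$, use the generically finite meromorphic map $\bm : W^\eta \dashrightarrow W^{m\eta} \simeq W$, and let the multi-section be $\bm^{-1}(\gS)$. The paper's proof is a terse two-line version of exactly this argument, without the extra care you take around degenerate fibres and closures (which is fine, since generic finiteness and surjectivity onto $B$ are all that is needed).
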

\begin{proof}
Assume that $m$ is the order of $\eta$, then $\bm : W^\eta \dto W$ defines a generically finite map onto $W$. Thus $\bm^{-1}(\gS)$ is a multi-section of $W^\eta$ where we recall that $\gS \subset W$ is the zero-section of $p : W \to B$.
\end{proof}

The reader is referred to Lemma~\ref{lem-multisecTor} for a converse of Lemma~\ref{lem-Tormultsec}.

\ssec{Locally Weierstra{\ss} fibrations over a smooth variety with a normal crossing discriminant divisor}\label{ssec-locWNC}
\hfill

\sssec{}
Let $p : W \to B$ be a minimal Weierstra{\ss} fibration over a complex manifold and let $\gD \subset B$ be its discriminant locus. In this paragraph, we assume that $\gD$ is a normal crossing divisor. Under this assumption, $W$, and more generally the total space of the fibration $W^\eta \to B$ twisted by $\eta \in H^1(B,\cJ_{\bH/B}^W)$ have at worst rational singularities~\cite[Corollary 2.4]{NakayamaWeier}. In this situation, the map $\varphi$ in~\eqref{exseq-fibjac} can be described as follows.

\begin{lem}[{\cite[Lemma 3.15]{ClaudonHorpi1}}]\label{lem-isophi}
Let $p : W \to B$ be the minimal Weierstra{\ss} fibration associated to a weight one variation of Hodge structures $\bH$ of rank 2. Assume that $B$ is smooth and $\gD $ is a normal crossing divisor. Then $\varphi : j_* \bH \to \cL_{\bH/B}$ is isomorphic to $R^1p_*\bZ \to R^1p_*\cO_W$ induced by  $\bZ \hto \cO_W$. 
 \end{lem}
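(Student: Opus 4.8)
The plan is to prove the statement by comparing, over all of $B$, the short exact sequence~\eqref{exseq-fibjac} that defines $\varphi$ with the pushforward by $p$ of the genuine exponential sequence $0 \to \bZ \to \cO_W \to \cO_W^* \to 0$ on the (normal) total space $W$, using the identification $R^1p_*\cO_W = \cL_{\bH/B}$ from~\eqref{iso-LRp1OW}. Over $B^\st$ this comparison is classical; the work lies entirely in extending it across the discriminant $\gD$, and that is precisely where minimality of $W$ is used.

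First, over $B^\st$: the relative exponential sequence of the smooth elliptic fibration $p_{|B^\st}$ gives $R^1p_*\cO_W|_{B^\st} = \cH/F^1\cH = \cE$, and its long exact sequence identifies the map $R^1p_*\bZ \to R^1p_*\cO_W$ induced by $\bZ \hto \cO_W$ with the inclusion $\bH \hto \cE$ of~\eqref{suitex-jaclisse}, i.e.\ with $\varphi_{|B^\st}$. Next, to extend the injectivity over all of $B$, I would push forward $0 \to \bZ \to \cO_W \to \cO_W^* \to 0$ by $p$: since $W$ is normal and the fibres of $p$ are connected and reduced, $p_*\cO_W = \cO_B$ and $p_*\cO_W^* = \cO_B^*$, so $p_*\cO_W \to p_*\cO_W^*$ is the exponential $\cO_B \to \cO_B^*$, which is surjective as a map of sheaves, and the connecting map $\cO_B^* \to R^1p_*\bZ$ vanishes; hence $R^1p_*\bZ \hto R^1p_*\cO_W = \cL_{\bH/B}$ is injective. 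Being a subsheaf of a locally free $\cO_B$-module, $R^1p_*\bZ$ has no sections supported on $\gD$, so $R^1p_*\bZ \to j_*(R^1p_*\bZ_{|B^\st}) = j_*\bH$ is injective as well.

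It then remains to show that the image of $R^1p_*\bZ$ in $\cL_{\bH/B}$ equals $\ker(\exp \colon \cL_{\bH/B} \to \cJ^W_{\bH/B})$ --- equivalently, by exactness of the pushed-forward exponential sequence on $W$, that $\ker\bigl(R^1p_*\cO_W \to R^1p_*\cO_W^*\bigr) = \ker(\exp)$. For this I would use the description of $W^\#$ as the relative Picard space $\Pic^0_{W/B}$, a point going to $\cO(\,\cdot - \gS)$, from~\cite[Lemma 5.1.1]{NakayamaGlob} (which underlies~\eqref{iso-LRp1OW} and~\eqref{exseq-fibjac} as well): this exhibits $\cJ^W_{\bH/B}$, the sheaf of sections of $W^\#$, as a subsheaf of the relative Picard sheaf $R^1p_*\cO_W^*$, and under this identification the exponential $\cL_{\bH/B} = \mathrm{Lie}(\Pic^0_{W/B}) \to \Pic^0_{W/B}$ of~\eqref{expoW} is exactly the map induced by $\cO_W \to \cO_W^*$. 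So the square
$$
\begin{CD}
\cL_{\bH/B} @>{\exp}>> \cJ^W_{\bH/B} \\
@| @VVV \\
R^1p_*\cO_W @>>> R^1p_*\cO_W^*
\end{CD}
$$
commutes with injective right vertical arrow, whence $\ker(R^1p_*\cO_W \to R^1p_*\cO_W^*) = \ker(\exp)$. Combining everything: $R^1p_*\bZ \to R^1p_*\cO_W$ is injective with image $\ker(\exp) = j_*\bH = \Ima\varphi$, and the induced isomorphism $R^1p_*\bZ \eto j_*\bH$ carries the map induced by $\bZ \hto \cO_W$ onto $\varphi$, which is the assertion.

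I expect the main obstacle to be this last step and, more generally, the passage across $\gD$: everything is forced over $B^\st$, and the substance of the lemma is that no extra cycles appear along the singular fibres. This is exactly where it matters that $W$ is the \emph{minimal} Weierstra{\ss} model associated to $\bH$ --- only then are $R^1p_*\cO_W$ a line bundle equal to $\cL_{\bH/B}$ and $W^\# = \Pic^0_{W/B}$ --- and it encodes the local invariant cycle theorem for degenerations of elliptic curves (after base change from a Weierstra{\ss} fibration over a disc, the statement reduces to $H^1(W_0,\bZ) \eto (R^1p_*\bZ)_t^{T}$, with both sides $\bZ$ in the multiplicative case and $0$ in the additive case). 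Much of this is already built into~\cite[Proposition 2.10]{NakayamaWeier}, so the remaining work is mostly the bookkeeping that assembles it into the stated form of $\varphi$.
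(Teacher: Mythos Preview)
The paper does not give its own proof of this lemma; it is simply quoted from \cite[Lemma 3.15]{ClaudonHorpi1}. So there is nothing to compare against, and the question is only whether your argument stands on its own.

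It does. Pushing forward the exponential sequence on $W$ and using that $p$ has connected reduced fibres (so $p_*\cO_W^\times=\cO_B^\times$) gives the injectivity $R^1p_*\bZ \hookrightarrow R^1p_*\cO_W=\cL_{\bH/B}$; your commutative square with $\cJ^W_{\bH/B}\hookrightarrow R^1p_*\cO_W^\times$ via $s\mapsto\cO(s-\gS)$ then identifies the image with $\ker(\exp)=\varphi(j_*\bH)$, and since both inclusions into $\cL_{\bH/B}$ restrict to the identity over $B^\st$, the resulting isomorphism $R^1p_*\bZ\simeq j_*\bH$ carries one map to the other. The one step that deserves a word more of justification is the commutativity of that square: you are asserting that the group-theoretic exponential on $W^\#$ agrees, under the Abel--Jacobi map, with the map $R^1p_*\cO_W\to R^1p_*\cO_W^\times$ coming from the exponential sequence on $W$. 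This is standard for families of (generalized) Jacobians with a section, but it is exactly where the content of the lemma lives, and it is what \cite[Proposition 2.10]{NakayamaWeier} and \cite[Lemma 5.1.1]{NakayamaGlob} package; you should cite that explicitly rather than leave it as an assertion.
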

 
 If the local monodromies of $\bH$ around $\gD$ are unipotent, then the construction of the minimal Weierstra{\ss} fibration associated to $\bH$ is functorial under pullback:
 
 \begin{lem}[{\cite[Remark on p.549]{NakayamaGlob}}]\label{lem-functW} 
 Let $p : W \cnec W(\cL_{\bH/B} , \ga, \gb) \to B$ be the minimal Weierstra{\ss} fibration associated to $\bH$. Assume that $B$ is smooth and the discriminant locus $\gD$ of $p$ is a normal crossing divisor. Let $\psi : Z \to B$ be a holomorphic map from a complex manifold such that $\gD' \cnec \psi^{-1}(\gD)$ is also a normal crossing divisor. Let $Z^\st = Z \bss \gD'$ and $\bH' = \psi_{|Z^\st}^{-1}\bH$. If  the local monodromies of $\bH$ around $\gD$ are unipotent, then $\cL_{\bH'/Z} \simeq \psi^*\cL_{\bH/B}$  and the base change $W_Z \cnec W \times_B Z \to Z$ is the minimal Weierstra{\ss} fibration associated to $\bH'$.
 \end{lem}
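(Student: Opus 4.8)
The plan is to reduce both assertions to showing that the base change $W_Z \cnec W\times_B Z$ is itself a \emph{minimal} Weierstra{\ss} fibration. Base-changing the cubic hypersurface $W=W(\cL_{\bH/B},\ga,\gb)\subset\bP(\cO_B\oplus\cL_{\bH/B}^2\oplus\cL_{\bH/B}^3)$ along $\psi$ exhibits $W_Z$ as the Weierstra{\ss} fibration $W(\psi^*\cL_{\bH/B},\psi^*\ga,\psi^*\gb)\to Z$, whose discriminant section is $\psi^*(4\ga^3+27\gb^2)$, with support $\psi^{-1}(\gD)=\gD'$; its general fibre is smooth (as $Z^\st\neq\emptyset$), and over $Z^\st$ it is the Jacobian fibration of $\bH'$. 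So once $W_Z$ is known to be minimal, the uniqueness of the minimal Weierstra{\ss} fibration in each bimeromorphic class over a smooth base \cite[Theorem 2.5]{NakayamaWeier} identifies $W_Z$ with the minimal Weierstra{\ss} fibration associated to $\bH'$, and comparing the defining line bundles (each recovered as $(p_*\omega_{W/B})^{-1}$) yields $\cL_{\bH'/Z}\simeq\psi^*\cL_{\bH/B}$.

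To prove minimality I would compare discriminant divisors with those of the minimal model $W_0\cnec W(\cL_{\bH'/Z},\ga_0,\gb_0)$ associated to $\bH'$. Since $W_Z$ and $W_0$ restrict to the same Jacobian fibration over $Z^\st$, $W_0$ is obtained from $W_Z$ by repeatedly blowing down along prime divisors --- necessarily contained in $\gD'$, since a prime divisor $D$ along which $W_Z$ is reducible satisfies $\Div(\psi^*\ga)\ge 4D$ and $\Div(\psi^*\gb)\ge 6D$ and hence lies in the support $\gD'$ of the discriminant --- and each blow-down along a prime divisor $D'$ lowers $v_{D'}$ of the discriminant section by exactly $12$ (Kodaira--Tate; see \cite{NakayamaWeier,NakayamaLoc}). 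Thus $W_Z$ is minimal as soon as $v_{D'}\bigl(4(\psi^*\ga)^3+27(\psi^*\gb)^2\bigr)=v_{D'}(4\ga_0^3+27\gb_0^2)$ for every component $D'$ of $\gD'$. Fix such a $D'$, a general point $z\in D'$, and local coordinates on $B$ near $b\cnec\psi(z)$ in which $\gD=\{x_1\cdots x_l=0\}$ with branches $D_j=\{x_j=0\}$ ($l\ge 1$, as $b\in\gD$); put $n_j\cnec v_{D_j}(4\ga^3+27\gb^2)\ge 1$ and $e_j\cnec v_{D'}(\psi^*x_j)\ge 0$ (with $\sum_j e_j\ge 1$). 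Near $b$ the discriminant section is a unit times $x_1^{n_1}\cdots x_l^{n_l}$, so
$$v_{D'}\bigl(\psi^*(4\ga^3+27\gb^2)\bigr)=\sum_{j=1}^{l}n_j\,e_j.$$
On the other hand the local monodromy of $\bH'$ around $D'$ is $\prod_j T_j^{e_j}$, where $T_j$ is the unipotent local monodromy of $\bH$ around $D_j$. Because $W$ is minimal and $\bH$ has unipotent monodromy around $D_j$, the fibre of $W$ over a general point of $D_j$ is of Kodaira type $I_{n_j}$ and $\log T_j=n_jN_0$ for a fixed primitive nilpotent $N_0$; here one uses that commuting nonzero nilpotents of $\mathfrak{sl}_2$ are proportional, so the $\log T_j$ all lie on one ray. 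Hence $\prod_jT_j^{e_j}$ is unipotent with logarithm $\bigl(\sum_j n_j e_j\bigr)N_0$, so the fibre of $W_0$ over a general point of $D'$ is of Kodaira type $I_{\sum_j n_j e_j}$, which gives $v_{D'}(4\ga_0^3+27\gb_0^2)=\sum_j n_j e_j$. This is the desired equality; therefore no blow-down occurs, $W_Z=W_0$ is minimal, and the lemma is proved.

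The unipotency hypothesis enters three times and is genuinely necessary: it forces the Kodaira fibres over the branches of $\gD$ to be of type $I_n$ (so the discriminant section is a monomial times a unit near $b$, and discriminant valuations are additive along $\psi$), it makes the logarithms of the monodromies around the branches through $b$ positively proportional (matching $v_{D'}$ of the pulled-back discriminant with the size of the pulled-back monodromy), and it ensures the pulled-back monodromy $\prod_jT_j^{e_j}$ is again unipotent, so that $W_0$ has fibres of type $I_n$ over $D'$ with discriminant valuation $n$. Without it the statement already fails for a double cover ramified over an $I_0^\ast$-fibre, whose monodromy $-\Id$ has $(v(\ga),v(\gb))=(2,3)$ and pulls back to $(4,6)$, i.e. to a non-minimal model. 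The main obstacle in a full write-up is thus the local comparison of discriminant valuations along $D'$ via this $\mathfrak{sl}_2$ nilpotent-cone argument; the other ingredients --- base change of Weierstra{\ss} fibrations, the blow-down formula for the discriminant, and uniqueness of the minimal model --- are standard consequences of the results of Nakayama cited above.
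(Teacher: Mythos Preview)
Your argument is correct and takes a genuinely different route from the paper's proof. You establish minimality of the pulled-back Weierstra{\ss} fibration $W_Z$ directly, by matching the discriminant valuation $v_{D'}\bigl(\psi^*(4\ga^3+27\gb^2)\bigr)=\sum_j n_j e_j$ against the Kodaira type $I_{\sum_j n_j e_j}$ of the minimal model over $D'$, the latter read off from the pulled-back monodromy $\prod_j T_j^{e_j}$; the line-bundle isomorphism then falls out of the identification $W_Z\simeq W_0$. The paper proceeds in the opposite order: it first proves $\cL_{\bH'/Z}\simeq\psi^*\cL_{\bH/B}$ by identifying $\cL_{\bH/B}\simeq R^1p_*\cO_W$ with $\Gr_F^0\bar{\cH}$ via Koll\'ar's theorem (using that $W$ has rational singularities and unipotent monodromy) and invoking functoriality of the canonical extension under pullback; then it compares the two Weierstra{\ss} data over $Z$ through Nakayama's comparison lemma to produce a meromorphic $\gep$ with $\psi^*\ga=\gep^4\ga'$, $\psi^*\gb=\gep^6\gb'$, and uses minimality on both sides to see that $\gep$ has neither zeros nor poles.

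Your approach is more elementary in that it bypasses Koll\'ar's identification and the canonical-extension machinery, trading these for a concrete local computation with Kodaira fibre types; it also makes the role of unipotency (for the $I_n$ fibre types, for the monomial form of the discriminant, and for the additivity of monodromy logarithms) completely transparent. The one step you should spell out in a full write-up is why the commuting nilpotents $N_j$ lie on a single \emph{ray} rather than merely a line: commutation in $\mathfrak{sl}_2$ gives proportionality, but the common sign comes from the polarization---concretely, the multi-variable period map $\tau(z)=\sum_j\frac{n_j}{2\pi i}\log z_j+(\text{holomorphic})$ must land in the upper half-plane for all $z\in(\gD^*)^l$, and letting $|z_j|\to 0$ with the other coordinates fixed forces each $n_j\ge 0$ in the chosen symplectic basis. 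With this said your discriminant-matching argument is complete.
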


  \begin{proof}
This proof is pointed out to us by N. Nakayama. For simplicity, let $\cL = \cL_{\bH/B}$ and $\cL' = \cL_{\bH' / Z}$. As the local monodromies of $\bH$ around $\gD$ are unipotent and $W$ has at worst rational singularities, by~\cite[Theorem 2.6]{KollarHDIII} the sheaf $R^1p_*\cO_W$ is isomorphic to $\Gr_F^0 {\bar{\cH}}$, the zeroth graded piece of the Hodge filtration on the canonical extension $\bar{\cH}$ of the VHS that $\bH$ underlies. So $\cL \simeq \Gr_F^0 {\bar{\cH}}$ by~\eqref{iso-LRp1OW}. Since the canonical extension is functorial, if $\bar{\cH}'$ denotes the canonical extension of the VHS that $\bH'$ underlies, then 
$$\cL'   \simeq \Gr_F^0 {\bar{\cH}'} \simeq \psi^*\Gr_F^0 {\bar{\cH}} \simeq \psi^*\cL.$$
 
 Let $p' : W' \cnec W'(\cL' , \ga', \gb') \to Z$ be the minimal Weierstra{\ss} fibration associated to $\bH'$. 
 Since $W_Z \to Z$ and $W' \to Z$ are both Weierstra{\ss} fibrations extending the Jacobian fibration associated to $\bH'$, by~\cite[Lemma 1.4]{NakayamaWeier} there exists $\gep \in H^0(Z^\st, \psi^*\cL \otimes (\cL')^\vee ) \simeq H^0(Z^\st,\cO_{Z^\st}) $ such that $(\psi^*\ga)_{|Z^\st} = \gep^4 \ga'_{|Z^\st}$ and $(\psi^*\gb)_{|Z^\st} = \gep^6 \gb'_{|Z^\st}$. As $\gep^4$ is the restriction to $Z^\st$ of the meromorphic function $(\psi^*\ga)/{\ga'}$, $\gep$ extends to a meromorphic function on $Z$ (still denoted by $\gep$). So $\psi^*\ga = \gep^4 \ga'$ and $\psi^*\gb = \gep^6 \gb'$. Since there is no prime divisor $D$ of $Z$ such that $\Div(\ga') \ge 4D$ (resp. $\Div(\ga) \ge 4D$),  $\gep$ has no pole (resp. no zero). Therefore $W_Z \simeq W'$ over $Z$, which proves Lemma~\ref{lem-functW}.
 \end{proof}

\sssec{}
Let $f : X \cnec W^\eta \to B$ be the locally Weierstra{\ss} fibration twisted by $\eta \in H^1(B, \cJ^W_{\bH/B})$. We still assume that $B$ is smooth and the discriminant locus $\gD$ is a normal crossing divisor. In addition to~\eqref{exseq-fibjac}, the sheaf $\cJ^W_{\bH/B}$ sits inside another short exact sequence
\begin{equation}\label{exseq-JWZ}
\begin{tikzcd}[cramped, row sep = 0, column sep = 40]
0  \arrow[r] & \cJ^W_{\bH/B} \arrow[r] & R^1f_*\cO^\times_X \arrow[r] & \bZ \arrow[r] & 0,
\end{tikzcd}
\end{equation}
where $R^1f_*\cO^\times_X \to \bZ$ maps locally a line bundle to the degree of its restriction to a general fiber of $f$. The class $\eta \in H^1(B, \cJ^W_{\bH/B})$ coincides with the one associated to the extension~\eqref{exseq-JWZ}~\cite[short exact sequence (9)]{ClaudonHorpi1}.

Now let $f:X \to B$ be an elliptic fibration such that both $X$ and $B$ are smooth, the discriminant locus $\gD \subset B$ is a normal crossing divisor, and $f$ has local meromorphic sections at every point of $B$. For such an elliptic fibration, there exists a short exact sequence similar to~\eqref{exseq-JWZ}:
Let  $\bH \cnec (R^1f_*\bZ)_{|B \bss \gD} $ and let $p : W \to B$ be the minimal Weierstra{\ss} fibration associated to $\bH$. Regarding the zero-section $\gS \subset W$ as the neutral element, let $\cJ_{\bH/B}$ be the sheaf of abelian groups of germs of meromorphic sections of $p$. By~\cite[Lemma 5.4.8]{NakayamaGlob}, there exists a short exact sequence
\begin{equation}\label{suitex-JHB}
\begin{tikzcd}[cramped, row sep = 0, column sep = 40]
0  \arrow[r] & \cJ_{\bH/B} \arrow[r] & R^1f_*\cO^\times_X /\cV_X  \arrow[r] & \bZ \arrow[r] & 0 
\end{tikzcd}
\end{equation}
where $\cV_X \cnec \ker\(R^1f_*\cO_X^\times \to j_*j^*R^1f_*\cO_X^\times\)$ and the third arrow comes from the map $R^1f_*\cO^\times_X \to \bZ$ defined by the degree of a line bundle restricted to a general fiber as in~\eqref{exseq-JWZ}. 

By~\cite[Proposition 5.5.1]{NakayamaGlob}, there exists an inclusion 
$$\eta : \cE(B,\gD,\bH) \hto H^1(B,\cJ_{\bH/B})$$ 
where $\cE(B,\gD,\bH)$ is the set of bimeromorphic classes of elliptic fibrations $f: X \to B$ admitting local meromorphic sections at every point of $B$ and whose restriction over $B \bss \gD$ is bimeromorphic to a smooth fibration with $(R^1f_*\bZ)_{|B \bss \gD} \simeq \bH$. The image $\eta(f) \in H^1(B,\cJ_{\bH/B})$ of  $f$  coincides with the element associated to the extension~\eqref{suitex-JHB}.  According to the above, there exists a map $H^1(B, \cJ^W_{\bH/B}) \to \cE(B,\gD,\bH)$ which associates $\eta \in H^1(B, \cJ^W_{\bH/B})$ to the bimeromorphic class of $p^\eta : W^\eta \to B$ and the composition 
$$\imath^W : H^1(B, \cJ^W_{\bH/B}) \to \cE(B,\gD,\bH) \hto  H^1(B,\cJ_{\bH/B})$$ 
equals the map induced by the inclusion  $\cJ^W_{\bH/B} \subset \cJ_{\bH/B}$. 

\begin{rem}\label{rem-strict}
There exist examples of Weierstra{\ss} fibrations $W \to B$ (with smooth discriminant locus) constructed by N. Nakayama such that the inclusion $\cJ^W_{\bH/B} \subset \cJ_{\bH/B}$ is strict~\cite{NakayamaCex}. This also allows us to explain why the multiplication-by-$m$ $\bm : W \dto W$ introduced in \S 3.1.5 can be non-holomorphic. Let $p : W \to B$ be a Weierstra{\ss} fibration such that both $B$ and the discriminant locus $\gD$ of $p$ are smooth and $\cJ^W_{\bH/B} \subsetneq \cJ_{\bH/B}$. The quotient $\cJ_{\bH/B}/\cJ^W_{\bH/B}$ is torsion~\cite[Corollary 5.4.11]{NakayamaGlob}. Therefore if $Y \subset W$ is a local meromorphic section, \ie the closure of a local section $Y^\circ \subset W^\circ = p^{-1}(B \bss \gD)$ of the  smooth fibration $p^{-1}(B\bss \gD) \to B \bss \gD$, then for some integer $m > 0$, the closure of $\bm(Y^\circ)$ in $W$ is a holomorphic section of $p$ contained in the smooth locus $W^\#$ of $p$. Since $\cJ^W_{\bH/B} \subsetneq \cJ_{\bH/B}$, we can choose $Y$ such that $Y$ is not a holomorphic section contained in $W^\#$ (so that $m > 1$). Assume that $\bm : W \to W$ is holomorphic. As $Y$ is irreducible, $\bm(Y)$ coincides with the closure of $\bm(Y^\circ)$ in $W$. Since $\bm^{-1}(W^\#) = W^\#$, necessarily $Y \subset W^\#$, so $Y \to B$ is finite. As $B$ is smooth and $Y \to B$ is generically injective, $Y \to B$ is biholomorphic. It follows that $Y$ is already a holomorphic section contained in $W^\#$, which contradicts the hypothesis that $\bm$ is holomorphic.
\end{rem}

We can also generalize the above discussion to the $G$-equivariant setting~\cite[Section 3.E]{ClaudonHorpi1}. Let $G$ be a finite group acting on $B$ and on $\bH$ in a compatible way. Let $\cE_G(B,\gD,\bH)$ denote the set of bimeromorphic classes of $G$-equivariant elliptic fibrations $f \in \cE(B,\gD,\bH)$ such that $ (R^1f_*\bZ)_{|B \bss \gD}$ is $G$-equivariantly isomorphic to $\bH$.\footnote{We could have defined $\cE_G(B,\gD,\bH)$ to be a larger set by allowing the $G$-action on the total space of $f$ to be only \emph{meromorphic} (but still holomorphic over $B \bss \gD$). However in this text, we will only consider $G$-actions that are holomorphic.}  To each $G$-equivariant elliptic fibration $f \in \cE_G(B,\gD,\bH)$, we can associate an element $\eta_G(f) \in H_G^1(B,\cJ_{\bH/B})$ in an injective manner using a similar construction to the one sketched in~\ref{sssec-Gequiv}. According to the above, there exists a map $H_G^1(B, \cJ^W_{\bH/B}) \to \cE_G(B,\gD,\bH)$ that associates $\eta_G \in H_G^1(B, \cJ^W_{\bH/B})$ to the bimeromorphic class of the $G$-equivariant elliptic fibration $p^\eta : W^\eta \to B$ and the composition 
$$\imath_G^W : H_G^1(B, \cJ^W_{\bH/B}) \to \cE_G(B,\gD,\bH) \hto  H^1_G(B,\cJ_{\bH/B})$$ 
equals the map induced by the inclusion $\cJ^W_{\bH/B} \subset \cJ_{\bH/B}$.

\ssec{K\"ahler or projective elliptic fibrations}\label{ssec-hypkah}\hfill

Let $f:X \to B$ be an elliptic fibration over a complex manifold $B$ with a normal crossing discriminant divisor. The aim of this paragraph is to recall further properties of $f$ under the additional assumption that $X$ is in the Fujiki class $\cC$.

Given a minimal $\eta$-twisted locally Weierstra{\ss} fibration $p^{\eta} : W^\eta \to B$ over a compact K\"ahler manifold, we have the following cohomological characterization for the total space $W^{\eta}$ to be bimeromorphically K\"ahler.

\begin{pro}[{\cite[Theorem 3.20 and Proposition 3.23]{ClaudonHorpi1}}]\label{pro-condK}
Let $p:W\to B$ be a minimal Weierstra{\ss} fibration over a compact K\"ahler manifold and assume that the discriminant locus is a normal crossing divisor. Let $\eta \in H^1(B,\cJ^W_{\bH/B})$.  The following assertions are equivalent.
\begin{enumerate}[label = \roman{enumi})]
\item  The total space $W^{\eta}$ is in the Fujiki class $\cC$.
\item The "Chern class" $c(\eta)$ of $\eta$ is a torsion element, where $c : H^1(B, \cJ^W_{\bH/B}) \to H^2(B,j_*\bH)$ is defined to be the connecting morphism  induced by~\eqref{exseq-fibjac}.
\end{enumerate}
\end{pro}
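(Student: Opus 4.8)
Write $X \cnec W^\eta$ and $f \cnec p^\eta : X \to B$. The plan is to reduce the equivalence to the classical dichotomy for elliptic fibrations over a K\"ahler base: $X$ lies in the Fujiki class $\cC$ if and only if some smooth bimeromorphic model of $X$ over $B$ carries a real cohomology class that is positive on the general fibre of $f$; and this in turn is governed by whether the Kodaira-type obstruction $c(\eta)$ vanishes rationally. Two preliminary observations: first, $j_*\bH$ is constructible on the compact space $B$, so $H^2(B,j_*\bH)$ is finitely generated and $c(\eta)$ is torsion if and only if its image $c(\eta)_{\bR}$ in $H^2(B,j_*\bH)\otimes\bR$ vanishes. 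Second, by Lemma~\ref{lem-isophi} the map $\varphi$ in~\eqref{exseq-fibjac} is identified with $R^1p_*\bZ\to R^1p_*\cO_W$ — and the same applies to $f$ in place of $p$, since $W^\eta$ has the same smooth locus as $W$ over $B\bss\gD$ and $(R^1f_*\bZ)_{|B\bss\gD}\simeq\bH$ — so that, unwinding the extension~\eqref{exseq-JWZ} and the Leray filtration of $f$, the class $c(\eta)_\bR$ is (up to a positive scalar) precisely the obstruction, familiar from Kodaira's study of elliptic surfaces, to lifting the fibre class to a real class on a smooth model of $X$. Hence $c(\eta)_\bR = 0$ if and only if such a fibre-positive real class exists, and it remains to relate the latter to membership in $\cC$.

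For (i) $\Rightarrow$ (ii): if $X\in\cC$, resolve a bimeromorphic map from a compact K\"ahler manifold to $X$ by blow-ups to obtain a compact K\"ahler manifold $\wh X$ with a proper bimeromorphic morphism $r:\wh X\to X$ which is an isomorphism over $B\bss\gD$. A K\"ahler class on $\wh X$ restricts positively to the general fibre of $f\circ r$, so it is a fibre-positive real class on the smooth model $\wh X$; therefore $c(\eta)_\bR = 0$, i.e. $c(\eta)$ is torsion.

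For (ii) $\Rightarrow$ (i): if $c(\eta)$ is torsion, there is a fibre-positive real class $\ga$ on some desingularization $\ti X\to X$, with induced fibration $\ti f:\ti X\to B$. Choosing a K\"ahler class $\go_B$ on $B$, the class $\ga + N\ti f^{*}\go_B$ is for $N\gg 0$ positive in the directions transverse to the fibres over $B\bss\gD$; the remaining and genuinely delicate task — and the main obstacle of the argument — is to blow up $\ti X$ further over $\gD$ and correct this class along the singular fibres of $\ti f$, whose Kodaira types are explicitly controlled because $f$ is a \emph{minimal} locally Weierstra{\ss} fibration with normal crossing discriminant, so as to obtain an honest K\"ahler class on a bimeromorphic model of $X$; this gives $X\in\cC$. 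This construction is the core of~\cite[Theorem 3.20 and Proposition 3.23]{ClaudonHorpi1}, to which I refer for the details, the remaining steps being formal consequences of the Leray spectral sequence of $f$ and of the structural results recalled in~\S\ref{ssec-fibWtaut} and~\S\ref{ssec-locWNC}. (Alternatively, $c(\eta)$ torsion forces $m\eta=\exp(s)$ for some integer $m$ and some $s\in H^1(B,\cL_{\bH/B})$ by exactness of the long exact cohomology sequence of~\eqref{exseq-fibjac}, giving a generically finite meromorphic multiplication map $\bm:W^\eta\dto W^{m\eta}$ and reducing to the twist $\exp(s)$; but the positivity correction along the bad fibres reappears, so nothing is gained.)
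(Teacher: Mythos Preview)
The paper does not prove this proposition: it is stated as a citation of~\cite[Theorem 3.20 and Proposition 3.23]{ClaudonHorpi1} and used as a black box, with no accompanying argument. So there is nothing in the paper to compare your sketch against.

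As an outline of the argument in the cited reference, your sketch is reasonable. The identification you invoke --- that $c(\eta)$ equals (up to sign) the Leray differential $d_2 : H^0(B,R^2f_*\bZ) \to H^2(B,R^1f_*\bZ) \simeq H^2(B,j_*\bH)$ applied to the fibre class --- is the right mechanism, and once granted, the direction (i)$\Rightarrow$(ii) is immediate: a K\"ahler class on a smooth model has positive fibre degree, so the fibre class survives to $E_\infty$ over $\bR$, forcing $c(\eta)_\bR = 0$. You are also right that (ii)$\Rightarrow$(i) is where the work lies, and that the positivity correction along the singular fibres is the substance of the cited results; deferring to them is exactly what the paper does. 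The one place your sketch is genuinely hand-wavy is the assertion that ``unwinding the extension~\eqref{exseq-JWZ} and the Leray filtration'' gives the identification of $c(\eta)$ with the $d_2$-obstruction: this is true, but it is a computation, not a tautology, and if you were writing a self-contained proof you would need to supply it.
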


When $W^\eta$ is in the Fujiki class $\cC$ and $B$ is projective, we can prove the following converse of Lemma~\ref{lem-Tormultsec}.
\begin{lem}\label{lem-multisecTor}
Let $p: W \to B$ be a minimal Weierstra{\ss} fibration and $f  : W^\eta \to B$ the locally Weierstra{\ss} fibration twisted by $\eta \in H^1(B, \cJ^W_{\bH/B})$. Assume that $W^\eta$ is in the Fujiki class $\cC$ and $B$ is projective. If $f$ has a multi-section, then $\eta$ is a torsion element.
\end{lem}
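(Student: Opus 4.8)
The plan is to combine the Hodge-theoretic characterization of the Fujiki property (Proposition~\ref{pro-condK}) with Campana's multi-section criterion (Theorem~\ref{thm-multsecMoi}) applied to the elliptic fibration $f : W^\eta \to B$ and its Weierstra\ss\ compactification. Since $W^\eta$ lies in the Fujiki class $\cC$, Proposition~\ref{pro-condK} tells us that the Chern class $c(\eta) \in H^2(B, j_*\bH)$ is torsion. So it suffices to show that the existence of a multi-section forces $\eta$ itself (not just $c(\eta)$) to be torsion; equivalently, by the short exact sequence~\eqref{exseq-fibjac}, the image $\exp'(\eta) \in H^1(B, \cJ^W_{\bH/B}) / \mathrm{(torsion)}$ must vanish, which by~\eqref{exseq-fibjac} means $\eta$ lifts, up to torsion, to a class in the image of $H^1(B, \cL_{\bH/B})$; but in fact I want the stronger conclusion that $\eta$ is genuinely torsion, so the argument must use the multi-section more carefully.

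The key steps I would carry out are as follows. First, let $S \subset W^\eta$ be a multi-section, say of degree $m$ over $B$ (after passing to an irreducible component). Composing with the meromorphic multiplication map and translation structure, a degree-$m$ multi-section of $f$ should produce, after a standard norm/trace construction along $S \to B$, a global meromorphic multi-section that forces $m\eta$ to be represented by a \emph{holomorphic} (or meromorphic) section of the Jacobian $p : W \to B$ — i.e.\ $m\eta$ lies in the image of $H^0(B, \cJ_{\bH/B}) \to H^1(B, \cJ^W_{\bH/B})$ via the connecting maps, which is zero. Concretely: given $S$, the fiberwise sum of the $m$ points of $S$ over a general $b \in B$ defines a meromorphic section $\sigma$ of the Jacobian fibration $J \to B^\st$ associated to $\bH$, hence an element of $H^0(B, \cJ_{\bH/B})$, whose image under the coboundary of~\eqref{suitex-JHB} (or rather the relevant comparison) is $\imath^W(m\eta) = m \cdot \imath^W(\eta)$. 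Since $H^0(B, \cJ_{\bH/B})$ maps to $0$ in $H^1(B,\cJ_{\bH/B})$ trivially is not quite the right phrasing — rather, the element of $H^0$ \emph{is} a global section and hence trivializes the torsor, so $m\cdot\imath^W(\eta) = 0$ in $H^1(B,\cJ_{\bH/B})$. Because $\imath^W : H^1(B,\cJ^W_{\bH/B}) \to H^1(B,\cJ_{\bH/B})$ is injective (by~\cite[Proposition 5.5.1]{NakayamaGlob}, as recalled in~\ref{ssec-locWNC}), this gives $m\eta = 0$, i.e.\ $\eta$ is torsion, as desired.

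The main obstacle I anticipate is making precise the claim that a multi-section of $f : W^\eta \to B$ yields a \emph{global} meromorphic section of the Jacobian $p : W \to B$. A multi-section $S$ is only a subvariety dominating $B$; to get a section of the Jacobian one must take the Stein factorization $S \to B' \to B$, pull back the elliptic fibration to $B'$ (where it acquires a genuine meromorphic section, namely $S$ itself), and then push forward via the trace/norm map $B' \to B$ — but this trace operation lives naturally on the \emph{Jacobian} side (where there is a group structure) and requires knowing that the pushed-forward class is compatible with the inclusion $H^1(B,\cJ^W) \hookrightarrow H^1(B,\cJ_{\bH/B})$. One has to check that $\mathrm{Nm}_{B'/B}$ applied to the tautological meromorphic section over $B'$ produces precisely $(\deg S/B)\cdot \eta(f)$, which is a cocycle-level computation using the explicit description of $\eta(f)$ via~\eqref{suitex-JHB}. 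Alternatively, and perhaps more cleanly, I would invoke Campana's criterion directly: since $W^\eta \in \cC$ and $f$ has a multi-section over the projective base $B$, Theorem~\ref{thm-multsecMoi} shows $W^\eta$ is Moishezon, hence projective (being in $\cC$); then a projective elliptic fibration over a projective base, after base change removing monodromy, is a torsor under its Jacobian classified by a class that is torsion in $H^1$ — this is the statement recalled after Proposition-Definition~\ref{pro-locWeimodHdg} that torsion points of $H^1(B,\cJ^W_{\bH/B})$ correspond to projective fibrations, combined with its converse via the exponential sequence and the Hodge structure on $H^1(B,j_*\bH)$. I would present the Campana-criterion route as the main line and relegate the norm-map computation to a remark if needed.
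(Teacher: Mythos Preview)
Your Campana-criterion route (which you present as the main line) has a genuine gap at the step ``$W^\eta$ is Moishezon, hence projective (being in $\cC$)''. This implication is false: every Moishezon variety is already in the Fujiki class $\cC$ (it is bimeromorphic to a projective variety), so the Fujiki hypothesis adds nothing at this point, and Hironaka's examples show that Moishezon manifolds need not be K\"ahler or projective. What is true is that Moishezon together with \emph{K\"ahler} implies projective, but you only know $W^\eta$ is bimeromorphic to something K\"ahler, not that $W^\eta$ itself carries a K\"ahler metric.

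The paper's proof gets around exactly this obstacle, and the manoeuvre is the main idea you are missing. Using Proposition~\ref{pro-condK} one first reduces (after replacing $\eta$ by a multiple) to $\eta = \exp(t)$ for some $t \in H^1(B,\cL_{\bH/B})$. Now the point is not to work with $W^\eta$ but with $W^{\exp(t/m)}$ for large $m$: this sits in the tautological family arbitrarily close to $W$, and $W$ is projective because $B$ is. Since $W$ has only rational singularities, small deformations of $W$ remain K\"ahler, so $W^{\exp(t/m)}$ is genuinely K\"ahler. The multi-section of $W^\eta$ pulls back to $W^{\exp(t/m)}$ via the generically finite multiplication map $\bm : W^{\exp(t/m)} \dto W^{\exp(t)}$, so Campana's criterion now legitimately gives Moishezon, hence projective; an ample line bundle then splits the sequence~\eqref{exseq-JWZ} and shows $\exp(t/m)$, hence $\eta$, is torsion.

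Your norm/trace approach is closer to a valid alternative argument, but two points need repair. First, you invoke injectivity of $\imath^W : H^1(B,\cJ^W_{\bH/B}) \to H^1(B,\cJ_{\bH/B})$; the paper does not assert this (the injection in~\cite[Proposition 5.5.1]{NakayamaGlob} is $\cE(B,\gD,\bH) \hookrightarrow H^1(B,\cJ_{\bH/B})$, and the map $H^1(B,\cJ^W_{\bH/B}) \to \cE(B,\gD,\bH)$ is not claimed injective). What is true, and sufficient for your purposes, is that $\cJ_{\bH/B}/\cJ^W_{\bH/B}$ is a torsion sheaf~\cite[Corollary 5.4.11]{NakayamaGlob}, so $\ker(\imath^W)$ is torsion. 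Second, the fiberwise sum of the $m$ points of $S$ is naturally a meromorphic section of $W^{m\eta} \to B$ over $B^\st$ rather than a section of the Jacobian $W \to B$; turning this into an honest element of $H^0(B,\cJ_{\bH/B})$ (equivalently, showing the extension~\eqref{suitex-JHB} for a smooth model of $W^\eta$ splits after multiplication by $m$) is doable but requires the full apparatus of~\cite[\S5.4]{NakayamaGlob}, which you would need to invoke carefully.
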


\begin{proof}
Since $W^\eta$ is in the Fujiki class $\cC$, by Proposition~\ref{pro-condK} $c(\eta)$ is torsion. So up to replacing $\eta$ with a larger multiple, we may assume that $\eta = \exp(t)$ for some $t \in H^1(B,\cL_{\bH/B})$.
Since the multiplication by $m$ is generically finite, the elliptic fibration $f: W^{\exp(t)} \to B$ still has a multi-section.
 For any $m \in \bZ_{>0}$, a multi-section of $f$ can be pulled back to a multi-section of $f' = f^{\exp(t/m)}: W^{\exp(t/m)} \to B$ under the generically finite map $\bm : W^{\exp(t/m)} \dto W^{\exp(t)}$. Assume that $m$ is sufficiently large so that $W^{\exp(t/m)}$ is closed enough to $W$ in the tautological family associated to $p : W \to B$.  As $B$ is projective, $W$ is also projective. So $W^{\exp(t/m)}$, being a small deformation of $W$ with at worst rational singularities (because $p : W\to B$ is minimal),  is K\"ahler~\cite[Proposition 5]{NamikawaExt2forms}.  Again as $B$ is projective,  $W^{\exp(t/m)}$ is Moishezon by Theorem~\ref{thm-multsecMoi}. It follows that $W^{\exp(t/m)}$ is projective~\cite[Theorem 1.6]{NAMIKAWA2002} and an ample line bundle on $W^{\exp(t/m)}$ gives rise to an element $s \in H^0(B, R^1f'_*\cO^\times_{W^{\exp(t/m)}})$ whose restriction to a general fiber of $f'$ has nonzero degree. So the image of $s$ in $H^0(B, \bZ)$ under the map induced by~\eqref{exseq-JWZ} is not zero. Therefore ${\exp(t/m)}$ is torsion, so $\eta$ is torsion as well.
\end{proof}

At the end of~\ref{ssec-locWNC}, we introduced the map $\imath^W : H^1(B, \cJ^W_{\bH/B}) \to H^1(B,\cJ_{\bH/B})$.
 This map is not surjective in general. However when $X$ is in the Fujiki class $\cC$, some multiple of the class $\eta(f) \in H^1(B,\cJ_{\bH/B})$ always comes from $H^1(B, \cJ^W_{\bH/B})$.

\begin{lem}[{\cite[Lemma 3.19]{ClaudonHorpi1}}]\label{lem-Ktors}
Let $f : X \to B$ be an elliptic fibration over a compact complex manifold $B$.  Assume that the discriminant locus is a normal crossing divisor and $f$ has local meromorphic sections over every point of $B$. Let  $W \to B$ be the minimal Weierstra{\ss} fibration associated to $f$. Assume that $X$ is  in the Fujiki class $\cC$, then there exists $m \in \bZ \bss \{0\}$ such that $$m \cdot \eta(f) \in \Ima\(\imath^W : H^1(B,\cJ_{\bH/B}^W) \to H^1(B,\cJ_{\bH/B})\).$$
\end{lem}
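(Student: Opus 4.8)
The plan is to run a purely homological argument built on the short exact sequence defining the inclusion $\cJ^W_{\bH/B} \subset \cJ_{\bH/B}$. As noted at the end of~\ref{ssec-locWNC}, the map $\imath^W$ is nothing but the map induced on $H^1$ by this inclusion of sheaves. Set $\cQ \cnec \cJ_{\bH/B}/\cJ^W_{\bH/B}$, so that there is a short exact sequence
\begin{equation*}
0 \to \cJ^W_{\bH/B} \to \cJ_{\bH/B} \to \cQ \to 0.
\end{equation*}
By~\cite[Corollary 5.4.11]{NakayamaGlob} (already invoked in Remark~\ref{rem-strict}) the sheaf $\cQ$ is torsion, and since $\cJ^W_{\bH/B}$ and $\cJ_{\bH/B}$ coincide over $B \bss \gD$ (a meromorphic section of the smooth proper fibration $p^{-1}(B\bss\gD) \to B \bss \gD$ is holomorphic and automatically lands in the smooth locus), $\cQ$ is supported on the compact divisor $\gD$. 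The associated long exact cohomology sequence contains
\begin{equation*}
H^1(B,\cJ^W_{\bH/B}) \xto{\imath^W} H^1(B,\cJ_{\bH/B}) \xto{\partial} H^1(B,\cQ),
\end{equation*}
so $\Ima(\imath^W) = \ker \partial$, and it suffices to show that $\partial(\eta(f))$ is a torsion element of $H^1(B,\cQ)$: then $m \cdot \eta(f) \in \ker \partial = \Ima(\imath^W)$, where $m$ is the (finite, nonzero) order of $\partial(\eta(f))$.

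The heart of the matter is therefore to prove that $H^1(B,\cQ)$ is a torsion abelian group. I would deduce this from the fact that the torsion sheaf $\cQ$, being supported on the compact divisor $\gD$ and locally modelled — through the standard Weierstra{\ss} normal forms — on the finitely many Kodaira degeneration types occurring over $\gD$, is annihilated by a single positive integer $N$; then multiplication by $N$ is the zero endomorphism of $H^1(B,\cQ)$, and one may simply take $m = N$. Alternatively, and without having to exhibit a uniform $N$, one can write $\cQ = \varinjlim_k \cQ[k]$ as the filtered union of its subsheaves of $k$-torsion sections and use that sheaf cohomology on the (paracompact, finite-dimensional) manifold $B$ commutes with filtered colimits: then $H^1(B,\cQ) = \varinjlim_k H^1(B,\cQ[k])$ is a filtered colimit of torsion groups, hence torsion.

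The only genuinely non-formal ingredient is the control of $\cQ$: that it is a torsion sheaf is Nakayama's \cite[Corollary 5.4.11]{NakayamaGlob}, and I expect the mild technical point to be pinning down either the uniform bound on its exponent along $\gD$ (which rests on the explicit comparison of $\cJ_{\bH/B}$ with $\cJ^W_{\bH/B}$ near each stratum of the normal crossing divisor $\gD$, the relevant integers being bounded by the multiplicities of the components of $\gD = \Div(4\ga^3 + 27 \gb^2)$) or the commutation of cohomology with the filtered colimit. Note that, presented this way, the hypothesis that $X$ lies in the Fujiki class $\cC$ is used only through the standing assumptions on $f$ (existence of local meromorphic sections, normal crossing discriminant), the conclusion being a formal consequence of $\cJ_{\bH/B}/\cJ^W_{\bH/B}$ being a torsion sheaf on the compact base $B$.
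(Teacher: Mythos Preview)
The paper does not supply its own proof of this lemma; it is quoted directly from \cite[Lemma 3.19]{ClaudonHorpi1}, so there is no in-text argument to compare against and your proposal must be assessed on its own merits.

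Your homological reduction is correct. The long exact sequence attached to $0 \to \cJ^W_{\bH/B} \to \cJ_{\bH/B} \to \cQ \to 0$ reduces the statement to showing that the image of $\eta(f)$ in $H^1(B,\cQ)$ is torsion, and for this it suffices that $H^1(B,\cQ)$ itself is torsion. Your filtered-colimit argument accomplishes this: writing $\cQ = \varinjlim_k \cQ[k]$ with each $\cQ[k]$ annihilated by $k$, and using that sheaf cohomology on the compact manifold $B$ commutes with filtered colimits, one obtains $H^1(B,\cQ) = \varinjlim_k H^1(B,\cQ[k])$, a filtered colimit of torsion groups. The alternative route via a uniform exponent along $\gD$ is left speculative in your write-up; either make it precise or simply drop it, since the colimit argument needs no local analysis. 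One notational quibble: the map you denote $\partial$ is not a connecting homomorphism but the map induced by the quotient $\cJ_{\bH/B} \to \cQ$; this does not affect the exactness you invoke.

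Your observation that the Fujiki class $\cC$ hypothesis plays no role here is correct and worth recording: the torsion-ness of $\cQ$ (\cite[Corollary 5.4.11]{NakayamaGlob}) is a purely local statement about Weierstra{\ss} models over a normal crossing divisor, not a global K\"ahler-type constraint. The hypothesis is presumably an artefact of the original approach in \cite{ClaudonHorpi1}, which likely passes through the ``Chern class'' map $c$ of Proposition~\ref{pro-condK} --- that route does require the Fujiki hypothesis to force $c(\eta)$ to be torsion in $H^2(B,j_*\bH)$. Your argument trades that Hodge-theoretic input for the structural fact about $\cQ$, and is both shorter and more general.
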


For $G$-equivariant elliptic fibrations $f : X \to B$, if the conclusion of Lemma~\ref{lem-Ktors} holds, then it also holds $G$-equivariantly:

\begin{lem}[{\cite[Lemma 3.24]{ClaudonHorpi1}}]\label{lem-torsimGtors}
Let $p : W \to B$ be a $G$-equivariant Weierstra{\ss} fibration for some finite group $G$ such that the zero-section $\gS \subset W$ is $G$-stable. Let $\eta_G \in H_G^1(B,\cJ_{\bH/B})$ and let $\eta$ denote its image in $H^1(B,\cJ_{\bH/B})$. Assume that there exist $m \in \bZ_{>0}$ and $\eta' \in H^1(B,\cJ_{\bH/B}^W)$ such that $m\eta =\imath^W(\eta') $, then up to replacing $m$ with a larger multiple of it,  $m \eta_G$ can be lifted to an element in $H^1_G(B,\cJ_{\bH/B}^W)$.
\end{lem}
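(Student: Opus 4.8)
The plan is to play the non-equivariant hypothesis off against its $G$-equivariant counterpart through the forgetful maps $H^\bullet_G(B,-)\to H^\bullet(B,-)$, and to show that upgrading a lifting from $H^1$ to $H^1_G$ costs no more than a factor $n\cnec|G|$, so that $nm$ will serve as the required larger multiple of $m$. First I would record the relevant exact sequences. Since $p$ is $G$-equivariant with $G$-stable zero-section $\gS$, the sheaves $\cJ^W_{\bH/B}\subset\cJ_{\bH/B}$ are $G$-sheaves of abelian groups (see~\S\ref{sssec-Gequiv}) and the inclusion is $G$-equivariant; set $\cQ\cnec\cJ_{\bH/B}/\cJ^W_{\bH/B}$. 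The short exact sequence $0\to\cJ^W_{\bH/B}\to\cJ_{\bH/B}\to\cQ\to0$ of $G$-sheaves gives, on passing to $G$-equivariant and to ordinary cohomology, two long exact sequences joined by the forgetful maps; in degree $1$ this yields a commutative square with $H^1_G(B,\cJ_{\bH/B})\xrightarrow{\pi_G}H^1_G(B,\cQ)$ lying over $H^1(B,\cJ_{\bH/B})\xrightarrow{\pi}H^1(B,\cQ)$, the forgetful map carrying $\eta_G$ to $\eta$. Since $m\eta=\imath^W(\eta')$ we have $\pi(m\eta)=0$, so the forgetful image of $m\,\pi_G(\eta_G)$ in $H^1(B,\cQ)$ vanishes; that is, $m\,\pi_G(\eta_G)$ lies in the kernel $K$ of the forgetful map $H^1_G(B,\cQ)\to H^1(B,\cQ)$.

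The crux, and the one step requiring genuine care, is that $K$ is annihilated by $n$. I would prove this by hand with the cocycle description of $G$-equivariant cohomology recalled in Section~\ref{sec-prelim}: a class of $K$ is represented on a $G$-invariant good cover by a $1$-cocycle $\{(\xi_{ij}),(\xi_i^g)\}$ whose \v{C}ech part $\{\xi_{ij}\}$ is a \v{C}ech coboundary, say $\xi_{ij}=c_j-c_i$; subtracting $D^0(\{c_i\})$ we may assume every $\xi_{ij}=0$, and then the cocycle relation $g\cdot\xi_{(gi)(gj)}-\xi_{ij}=\xi_j^g-\xi_i^g$ forces $\xi_i^g=\xi_j^g$ on overlaps, so the $\xi_i^g$ glue to a global section $\xi^g\in H^0(B,\cQ)$ with $\xi^{gh}=\xi^g+g\cdot\xi^h$. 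Thus the class comes from $H^1\bigl(G,H^0(B,\cQ)\bigr)$, which is killed by $n$ because the cohomology of a finite group in positive degrees is annihilated by the group order. (Equivalently this is the assertion that the kernel of the edge map in the spectral sequence $H^p(G,H^q(B,\cQ))\Rightarrow H^{p+q}_G(B,\cQ)$ is a quotient of $H^1(G,H^0(B,\cQ))$; arguing directly with cocycles sidesteps the question of whether global sections of an injective $G$-sheaf are acyclic for group cohomology.) Granting $n\cdot K=0$, we get $nm\,\pi_G(\eta_G)=0$, hence by exactness of the top row $nm\,\eta_G\in\Ima(\imath_G^W)$; that is, $nm\,\eta_G$ lifts to $H^1_G(B,\cJ^W_{\bH/B})$, and $nm$ is the promised larger multiple of $m$.

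I would close with two remarks. First, torsionness of $\cQ$ (Remark~\ref{rem-strict}) plays no role here: it is what made Lemma~\ref{lem-Ktors} furnish the integer $m$ to begin with, whereas the present statement is purely the observation that a non-equivariant lifting can be rendered $G$-equivariant at the price of a factor $|G|$. Second, the same conclusion follows more slickly from the transfer (corestriction) map $\mathrm{cor}\colon H^1(B,-)\to H^1_G(B,-)$: it is natural in the $G$-sheaf and, because restriction followed by corestriction is multiplication by $n$ and $\eta$ is the forgetful image of $\eta_G$, one has $\mathrm{cor}(\eta)=n\,\eta_G$; hence $nm\,\eta_G=m\,\mathrm{cor}(\eta)=\mathrm{cor}(\imath^W(\eta'))=\imath_G^W(\mathrm{cor}(\eta'))$, exhibiting $\mathrm{cor}(\eta')\in H^1_G(B,\cJ^W_{\bH/B})$ as an explicit lift of $nm\,\eta_G$. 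Setting up the transfer on cocycles is the only real nuisance there, which is why I would take the long exact sequence argument as the primary one.
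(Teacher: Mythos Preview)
The paper does not supply its own proof of this lemma: it is quoted verbatim from \cite[Lemma 3.24]{ClaudonHorpi1}, so there is nothing in the present text to compare your argument against. That said, your proof is correct. The long exact sequence in $G$-equivariant cohomology associated to $0\to\cJ^W_{\bH/B}\to\cJ_{\bH/B}\to\cQ\to0$, together with the observation that the kernel of the forgetful map $H^1_G(B,\cQ)\to H^1(B,\cQ)$ is a quotient of $H^1\bigl(G,H^0(B,\cQ)\bigr)$ and hence killed by $|G|$, is exactly the expected mechanism, and your explicit cocycle verification of this last point is clean and accurate. The transfer argument you sketch at the end is also valid and arguably more direct: once one accepts that $\mathrm{cor}\circ\mathrm{res}$ is multiplication by $|G|$ and that corestriction is natural in the $G$-sheaf, the lift $\mathrm{cor}(\eta')$ of $|G|\cdot m\,\eta_G$ drops out immediately. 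Either route is standard; your primary argument has the virtue of being self-contained given the cocycle formalism already set up in Section~\ref{sec-prelim}.
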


\ssec{Tautological models}\label{ssec-tautmod} \hfill

Let $B$ be a complex manifold and $B^\st \subset B$ a Zariski open such that $B \bss B^\st$ is a normal crossing divisor. Let $\bH$ be a weight one variation of Hodge structures of rank 2 over $B^\st$ and let $G$ be a finite group acting on both $B$ and $\bH$ in a compatible way.  We denote by $p: W \to B$ be the minimal $G$-equivariant Weierstra{\ss} fibration associated to $\bH$. Given $\eta_G \in H^1_G(B,\cJ_{\bH/B})$ and assume that $m\eta_G$ can be lifted to an element $\eta_G' \in H^1_G(B,\cJ^W_{\bH/B})$ for some $m \in \bZ \bss \{0\}$. Let $\{U_i\}$ be a $G$-invariant good open cover of $B$ and $\left\{(\eta_{ij}),(\eta_i^g)\right\}$ a 1-cocycle representing $\eta_G \in H^1_G(B,\cJ_{\bH/B})$. In this paragraph, we will construct a $G$-equivariant elliptic fibration 
$$f : \sX \cnec \sX\(m; (\eta_{ij}),(\eta^g_{i}) ; \eta'_G\) \to B$$ representing $\eta_G$ together with a $G$-equivariant finite \emph{holomorphic} map
 $$\bm : \sX \to W^{\eta'}$$ 
 over $B$, where $W^{\eta'} \to B$ is the $G$-equivariant locally Weierstra{\ss} fibration associated to $\eta'_G \in H_G^1(B,\cJ^W_{\bH/B})$. The map $\bm$ generalizes the multiplication-by-$m$ map defined before.
  We will call this $G$-equivariant elliptic fibration $f$ a \emph{tautological model} associated to $\eta_G$. The reason we call such an elliptic fibration a tautological model is that in the next section, we will show that each tautological model $f:\sX \to B$ comes equipped with a \emph{tautological family} of elliptic fibrations parameterized by $H^1(B,\cL_{\bH/B})$ generalizing the one defined before for twisted locally Weierstra{\ss} fibrations.

 Let us 

An argument similar to that of~\cite[Proposition 5.5.4]{NakayamaGlob} will allow us to construct $f : \sX \to B$.  For each $g \in G$, let $\phi_g : W \to W$ be the biholomorphic map defining the $G$-action on $W$. Let $\left\{(\eta'_{ij}),(\eta'^g_{i})\right\}$ be a 1-cocycle representing $\eta'_G \in  H^1_G(B,\cJ^W_{\bH/B})$. Let $W_i = p^{-1}(U_i)$ and $W_{ij} = p^{-1}(U_{ij})$ where $U_{ij} = U_i \cap U_j$. As $m  \eta_G = \imath^W_G (\eta'_G)$, there exist meromorphic sections $\gs_i$ of $W_i \to U_i$ such that 
\begin{equation}\label{comm-mod1cob}
\begin{tikzcd}[cramped, row sep = 25, column sep = 35]
W_{ij} \arrow[r, dashed, "\tr(\eta_{ij})"] \ar[d, dashed, swap, "\bm_{ij}"] & W_{ji}  \ar[d, dashed, "\bm_{ji}"] \\
W_{ij} \arrow[r, dashed, " \tr(m\eta_{ij})"] \arrow[d, dashed, swap, " \tr(\gs_i)"]& W_{ji} \arrow[d, dashed, " \tr(\gs_j)"]   \\ 
W_{ij} \arrow[r, "\sim"]\ar[r, "\tr(\eta'_{ij})", swap] &  W_{ji}   
\end{tikzcd}
\ \ \ \quad \quad \text{ and } \ \ \ \quad \quad
\begin{tikzcd}[cramped, row sep = 25, column sep = 55]
 W_{gi} \arrow[r, dashed, "\psi^g_i \cnec \tr(\eta_i^g) \circ \phi_g"] \ar[d, dashed, swap, "\bm_{gi}"] & W_{i}  \ar[d, dashed, "\bm_{i}"] \\
 W_{gi}  \ar[r, dashed, "\tr(m\eta_i^g) \circ \phi_g"]  \arrow[d, dashed, swap, " \tr(\gs_{gi})"]& W_{i} \arrow[d, dashed, " \tr(\gs_{i})"]  \\ 
W_{gi} \arrow[r, "\sim"]\ar[r, "\tr(\eta_i'^g) \circ \phi_g", swap] &  W_{i}  
\end{tikzcd}
\end{equation} 
are commutative where the notation $\tr(\gs)$ denotes the translation by the meromorphic section $\gs$ and  $\bm_{i}$ (resp. $\bm_{ij}$) the restriction to $W_i$ (resp. $W_{ij}$) of the multiplication-by-$m$ map $W \dto W$. Let $\mu_i : \sX_i \to W_i$ be the finite map in any Stein factorization of $\tr( \gs_i )\circ \bm_i$ (see \S~\ref{sssec-desingSt}). Then there exist bimeromorphic maps $h_i : \sX_i \dto W_i$ over $U_i$ such that
\begin{equation}\label{comm-SteinWij}
\begin{tikzcd}[cramped, row sep = 25, column sep = 75]
\sX_{ij} \arrow[r, dashed, "h_{ij} \cnec h_j^{-1}\circ \tr(\eta_{ij}) \circ h_i"] \ar[d, swap, "\mu_{i}"]  & \sX_{ji}  \ar[d, "\mu_{j}"]  \\ 
 W_{ij} \ar[r, "\tr(\eta'_{ij})"]   &  W_{ji}    
\end{tikzcd}
\ \ \ \quad \quad \text{ and } \ \ \ \quad \quad
\begin{tikzcd}[cramped, row sep = 25, column sep = 75]
\sX_{gi} \arrow[r, dashed, "h_i^g \cnec h_i^{-1}\circ \psi_i^g \circ h_{gi}"] \ar[d, swap, "\mu_{gi}"]  & \sX_{i}  \ar[d, "\mu_{i}"] \\ 
W_{gi} \ar[r,"\tr(\eta_i'^g) \circ \phi_g"] &  W_{i}   
\end{tikzcd}
\end{equation}
are commutative where $\sX_{ij} \cnec {\mu_i}^{-1}(W_{ij})$ and $\sX_{ji} \cnec {\mu_j}^{-1}(W_{ji})$. As $\mu_i$ and $\mu_j$ are finite and the varieties in~\eqref{comm-SteinWij} are normal, the maps $h_{ij}$ and $h_i^g$ are biholomorphic. Thus we obtain an elliptic fibration $f: \sX \to B$ by gluing the $\sX_i \to U_i$ using the 1-cocycle $\{h_{ij}\}$ of biholomorphic maps. The maps $h_{i}^g$ can also be glued to a biholomorphic map $\psi_g : \sX \to \sX$ using the cocycle condition of $\left\{(\eta_{ij}),(\eta^g_{i})\right\}$ and $g \mapsto \psi_g$ defines a $G$-action on $\sX$ such that $f : \sX \to B$ is $G$-equivariant. By construction, $f$ is a $G$-equivariant elliptic fibration representing $\eta_G$. We can also glue the $\mu_i$ and obtain a $G$-equivariant finite map $\bm : \sX \to W^{\eta'}$, which generalizes the multiplication-by-$m$ maps defined previously. 
 
\begin{Def}
Let $\eta_G \in H^1_G(B,\cJ_{\bH/B})$ be an element such that $m\eta_G$ can be lifted to an element $\eta'_G \in H^1_G(B,\cJ^W_{\bH/B})$ for some integer $m \ne 0$. 
The $G$-equivariant elliptic fibration 
$$f:\sX = \sX\(m ; (\eta_{ij}),(\eta^g_{i}) ; \eta'_G\) \to B$$ 
representing $\eta_G$ constructed above is called a \emph{(G-equivariant) tautological model (associated to $\eta_G$)}.
\end{Def}
If the $G$-action is trivial (so that $H^1_G(B,\cJ_{\bH/B}) = H^1(B,\cJ_{\bH/B})$), we will use the notation $\sX\(m ; (\eta_{ij}) ; \eta'\)$ instead of $\sX\(m ; (\eta_{ij}),(0) ; \eta'\)$.

\begin{rem}\label{rem-modcobordW}
Up to isomorphism, the construction of $f:\sX  \to B$ and $\bm$ depends on $m$, $\eta'_G$, and the 1-cocycle $\left\{(\eta_{ij}),(\eta^g_{i})\right\}$ representing $\eta_G$. But it is easy to see from the construction that $f$ and $\bm$ do not depend on the 1-cocycle $\left\{ (\eta'_{ij}), (\eta_{i}'^g) \right\}$ representing $\eta'_G$. 
\end{rem}

\begin{rem}
Since the Stein factorization is functorial under flat pullbacks, the flat pullback of a tautological model is still a tautological model.
\end{rem}

The next lemma shows that a bimeromorphic class of elliptic fibrations $[\varphi] \in \cE_G(B,\gD,\bH)$ contains a tautological model if the total space of $\varphi$ is in the Fujiki class $\cC$. 
 
 \begin{lem-def}\label{lem-locWeimodHdg}
Let $\varphi : X \to B$ be a $G$-equivariant elliptic fibration over a compact complex manifold for some finite group $G$. Assume that $X$ is in the Fujiki class $\cC$, $\varphi$ is smooth over the complement of a normal crossing divisor in $B$, and $\varphi$ has local meromorphic sections over every point of $B$. Then the element $\eta_G(\varphi)  \in H_G^1(B,\cJ_{\bH/B})$ associated to $\varphi$ has a tautological model, called a \emph{($G$-equivariant) tautological model} of $\varphi$. 
\end{lem-def}

\begin{proof}
Let $\eta$ be the image of $\eta_G(\varphi)$ in $H^1(B,\cJ_{\bH/B})$. Since $X$ is in the Fujiki class $\cC$, by Lemma~\ref{lem-Ktors} some nonzero multiple of $\eta$ can be lifted to an element in $H^1(B,\cJ^{W}_{\bH/B})$. So by Lemma~\ref{lem-torsimGtors}, some nonzero multiple of $\eta_G$ can be lifted to an element $\eta'_G \in H_G^1(B,\cJ^{W}_{\bH/B})$. Thus $\eta_G(\varphi)$ has a tautological model.
\end{proof}

We finish this paragraph with some geometric properties of the tautological models.
 
\begin{lem}\label{lem-dicstaut} 
The total space of $f: \sX \to B$ is normal and the discriminant locus $\gD_f \subset B$ of $f$ coincides with the discriminant locus $\gD$ of the minimal Weierstra{\ss} fibration $p : W \to B$ associated to $\bH$. 
\end{lem}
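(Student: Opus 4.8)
The plan is to read off both assertions from the local description of $\sX$ as the space obtained by gluing the $\sX_i \to U_i$ along the biholomorphisms $h_{ij}$ of~\eqref{comm-SteinWij}. Normality is then immediate: each $\sX_i$ is, by construction, the intermediate variety of a Stein factorization $\ti{W}_i \to \sX_i \to W_i$ of a resolution $\ti{W}_i \to W_i$ of $\tr(\gs_i)\circ\bm_i$ with $\ti{W}_i$ normal, and since $\cO_{\sX_i}$ is the direct image of $\cO_{\ti{W}_i}$ under a proper morphism with connected fibres it is integrally closed; hence $\sX_i$, and therefore $\sX$, is normal.

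For the inclusion $\gD_f \subseteq \gD$ I would restrict everything over $B \bss \gD$, where $p$ is smooth, so that $p^{-1}(B \bss \gD)$ lies in the smooth locus $W^\#$ and $W_{|B\bss\gD} \to B\bss\gD$ is an abelian scheme. Over $U_i \bss \gD$ the map $\bm_i$ is the finite étale isogeny $[m]$, and the meromorphic section $\gs_i$ is holomorphic there, its indeterminacy locus having codimension $\ge 2$ while the fibres are compact (extension property for rational sections of an abelian scheme over a smooth base). Hence $\tr(\gs_i)\circ\bm_i$ restricts over $U_i\bss\gD$ to a finite \emph{morphism} from the smooth space $W_i$, whose Stein factorization is trivial; thus $\sX_i \simeq W_i$ over $U_i \bss \gD$, and gluing these isomorphisms shows that $f$ is smooth over $B \bss \gD$, i.e.\ $\gD_f \subseteq \gD$.

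For the reverse inclusion I would argue by contradiction: suppose $f$ were smooth over some $b \in \gD$. Since $f$ is proper, $f$ is then smooth over an open neighbourhood $U$ of $b$, so $f^{-1}(U) \to U$ is a smooth elliptic fibration; it carries a holomorphic section over $U$ (the given local meromorphic section of $f$, which is holomorphic over $U$ by the same extension property as above), hence it is a minimal Weierstra{\ss} fibration. As it restricts over the dense open $U \cap B^\st$ to the Jacobian fibration of $\bH$, it is bimeromorphic over $U$ to $W_{|U}$, so by the uniqueness of the minimal Weierstra{\ss} fibration in a bimeromorphic class~\cite[Theorem 2.5]{NakayamaWeier} we get $W_{|U} \simeq f^{-1}(U)$ over $U$; in particular $W$ is smooth near $b$, contradicting $b \in \gD$. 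As $\gD_f$ is closed (being the image under the proper map $f$ of the non-smooth locus of $f$ in $\sX$), this gives $\gD \subseteq \gD_f$, hence $\gD_f = \gD$. The crux is this last inclusion: one must rule out that the ``division by $m$'' implicit in the Stein factorization, followed by the twist, turns a singular Weierstra{\ss} fibre into a smooth one, and the uniqueness of minimal Weierstra{\ss} models is exactly what prevents this.
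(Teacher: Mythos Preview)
Your proof is correct and follows essentially the same approach as the paper's: normality from Stein factorization, $\gD_f \subset \gD$ by observing that over $B \bss \gD$ the maps $\bm_i$ and $\tr(\gs_i)$ become holomorphic so that $\sX_i \simeq W_i$ there, and $\gD \subset \gD_f$ by the uniqueness of the minimal Weierstra{\ss} model. The only cosmetic differences are that the paper phrases the last inclusion globally (``$p_{|B\bss\gD_f}$ is the minimal Weierstra{\ss} fibration associated to the smooth fibration $f_{|B\bss\gD_f}$, hence smooth'') rather than by contradiction at a point, and your closing remark about $\gD_f$ being closed is not needed since the pointwise contradiction already gives the inclusion.
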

\begin{proof}
As $\mu_i : \sX_i \to W_i$ is the finite map in a Stein factorization, each $\sX_i$ is normal. So $\sX$ is normal. 

Since the restriction of $p$ to $W \bss p^{-1}(\gD_f)$ is the minimal Weierstra{\ss} fibration associated to the restriction of $f$ to $\sX \bss f^{-1}(\gD_f)$, which is a smooth fibration, we have $\gD \subset \gD_f$. For the other inclusion, since a meromorphic section of a smooth elliptic fibration over a smooth variety is holomorphic~\cite[Lemma 1.3.5]{NakayamaLoc}, the bimeromorphic maps in~\eqref{comm-mod1cob} are biholomorphic outside of $p^{-1}(\gD)$. So $\sX_i \to U_i$ is isomorphic to $W_i \to U_i$ over $U_i \bss \gD$. In particular $\sX \to B$ is smooth over $B \bss \gD$. 
\end{proof}

\begin{lem}\label{lem-isomlisse}
If $f : \sX \to B$ is a tautological model associated to a $G$-equivariant elliptic fibration $\varphi : X \to B$ by virtue of Lemma-Definition~\ref{lem-locWeimodHdg}, then there exists a $G$-equivariant bimeromorphic map $\sX \dto X$ over $B$ which is biholomorphic over $B \bss \gD_\varphi$ where $\gD_\varphi$ is the discriminant locus of $\varphi$.
\end{lem}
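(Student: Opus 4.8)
The plan is to recognize $f:\sX\to B$ as a member of $\cE_G(B,\gD_\varphi,\bH)$ carrying the same cohomology class as $\varphi$, and then to exploit the fact that over $B\bss\gD_\varphi$ both fibrations are \emph{smooth} elliptic fibrations over a smooth base. Throughout, $\bH\cnec(R^1\varphi_*\bZ)_{|B\bss\gD_\varphi}$, $p:W\to B$ is the minimal Weierstra{\ss} fibration associated to $\bH$, and $\eta\in H^1(B,\cJ_{\bH/B})$ denotes the image of $\eta_G(\varphi)$.

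First I would check that $f\in\cE_G(B,\gD_\varphi,\bH)$ and $\eta_G(f)=\eta_G(\varphi)$. By Lemma~\ref{lem-dicstaut} the discriminant of $f$ equals the discriminant $\gD$ of $p$; since $\bH$ underlies a variation of Hodge structures over $B\bss\gD_\varphi$, the fibration $p$, and hence also $f$, is smooth over $B\bss\gD_\varphi$, so $\gD\subseteq\gD_\varphi$. Over $B\bss\gD$ every meromorphic section occurring in the construction of~\ref{ssec-tautmod} (the $\gs_i$, $\eta_{ij}$, $\eta_i^g$) is holomorphic by~\cite[Lemma 1.3.5]{NakayamaLoc}, and the maps $\tr(\gs_i)\circ\bm_i$ are already finite there, so their Stein factorizations are isomorphisms; therefore $\sX_{|B\bss\gD}$ is the \emph{honest} twist of $W_{|B\bss\gD}$ by the cocycle $\{\eta_{ij}\}$, and in particular $(R^1f_*\bZ)_{|B\bss\gD_\varphi}\simeq\bH$ $G$-equivariantly. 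Moreover $f$ has a local meromorphic section over every point of $B$: over $U_i$ the image $h_i^{-1}(\gS\cap W_i)$ of the zero-section of $p$ under the bimeromorphic map $h_i:\sX_i\dto W_i$ is one. Thus $f\in\cE_G(B,\gD_\varphi,\bH)$, and since $\sX$ is built from a cocycle representing $\eta_G(\varphi)$, we have $\eta_G(f)=\eta_G(\varphi)$ in $H^1_G(B,\cJ_{\bH/B})$.

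Because $\eta_G:\cE_G(B,\gD_\varphi,\bH)\hto H^1_G(B,\cJ_{\bH/B})$ is injective, $f$ and $\varphi$ then lie in the same bimeromorphic class; fix a $G$-equivariant bimeromorphic map $g:\sX\dto X$ over $B$. Over $B\bss\gD_\varphi$ both $\sX$ and $X$ are the torsor, under the Jacobian of $\bH$, of the class $\eta_{|B\bss\gD_\varphi}$: for $\sX$ this is what the previous paragraph says, and for $X$ it holds because $\varphi_{|B\bss\gD_\varphi}$ is a smooth elliptic fibration with associated class $\eta_{|B\bss\gD_\varphi}$. Hence $\sX_{|B\bss\gD_\varphi}\simeq X_{|B\bss\gD_\varphi}$ over $B\bss\gD_\varphi$, and a bimeromorphic map over a smooth base between isomorphic smooth elliptic fibrations is biholomorphic: it induces an automorphism of the generic fiber, and such automorphisms extend over the whole base (once again by~\cite[Lemma 1.3.5]{NakayamaLoc} for the translation part, the group-automorphism part being fiberwise). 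Therefore $g$ is biholomorphic over $B\bss\gD_\varphi$, hence automatically $G$-equivariant there, as desired.

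The step I expect to be the main obstacle is the identification, in the second paragraph, of $\sX_{|B\bss\gD}$ with the \emph{honest} (rather than merely bimeromorphic) twist of $W_{|B\bss\gD}$, i.e.\ verifying that the Stein factorizations built into~\ref{ssec-tautmod} degenerate to isomorphisms away from $\gD$. One could instead try the purely abstract route, showing that any bimeromorphic map over $B$ between two smooth elliptic fibrations over the smooth variety $B\bss\gD_\varphi$ is biholomorphic --- the relative analogue of the uniqueness of relatively minimal models of elliptic surfaces --- but that also needs care with the indeterminacy locus of $g$ (of codimension $\ge2$ in the manifold $\sX_{|B\bss\gD_\varphi}$) and with excluding vertical components in the fibers of the closure of its graph.
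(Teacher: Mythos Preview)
Your proposal is correct and follows essentially the same line as the paper: both $f$ and $\varphi$ represent the same class $\eta_G(\varphi)\in H^1_G(B,\cJ_{\bH/B})$, hence are $G$-equivariantly bimeromorphic over $B$; both are smooth over $B\bss\gD_\varphi$; and a bimeromorphic map between smooth elliptic fibrations over a smooth base is biholomorphic. The paper's proof is more direct on the last point: it simply invokes~\cite[Lemma 5.3.3]{NakayamaGlob} (precisely the ``purely abstract route'' you flag as an alternative), so your worry about unwinding the Stein factorizations over $B\bss\gD$ is unnecessary --- that step is bypassed entirely.
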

\begin{proof}
Since both $f$ and $\varphi$ represent the same class $\eta_G(\varphi) \in H_G^1(B,\cJ_{\bH/B})$, we have a $G$-equivariant bimeromorphic map $\sX \dto X$ over $B$. Let $p : W \to B$ be the minimal Weierstra{\ss} fibration associated to $\varphi$ and  $\gD$  the discriminant locus of $p$. Since $\gD \subset \gD_\varphi$, by Lemma~\ref{lem-dicstaut} $f$ is smooth over $B \bss \gD_\varphi \subset B \bss \gD = B \bss \gD_f $. As $\varphi$ is also smooth over $B \bss \gD_\varphi$, by~\cite[Lemma 5.3.3]{NakayamaGlob} $\sX \dto X$ is biholomorphic over $B \bss \gD_\varphi$.
\end{proof}

\ssec{The tautological family associated to a tautological model}\label{ssec-famtaut}\hfill

We continue to use the notations introduced in the last paragraph. The goal of this paragraph is to construct a family of elliptic fibrations parameterized by $V \cnec H^1(B,\cL_{\bH/B})$ which contains $f : \sX \to B$ as the central fiber and which is tautological in the sense that the elliptic fibration parameterized by $t \in V$ represents $\eta + \exp'(t) \in H^1(B,\cJ_{\bH/B})$, where
$$\exp' : H^1(B,\cL_{\bH/B}) \to H^1(B,\cJ_{\bH/B})$$ 
is the composition of $\exp : H^1(B,\cL_{\bH/B}) \to H^1(B,\cJ^W_{\bH/B})$ with $\imath^W : H^1(B,\cJ^W_{\bH/B}) \to H^1(B,\cJ_{\bH/B})$. Moreover, if $t \in V^G$, then $t$ parameterizes the $G$-equivariant elliptic fibration representing $\eta_G + \exp'_G(t) \in H_G^1(B,\cJ_{\bH/B})$, where 
$$\exp'_G : H^1(B,\cL_{\bH/B})^G \to H^1_G(B,\cJ_{\bH/B})$$ 
is defined similarly. When $\dim B = 1$, tautological families have already been constructed by Kodaira in~\cite{KodairaSurfaceII}. In higher dimension, such families have also appeared in~\cite[Lemma 7.4.3]{NakayamaGlob} and~\cite[Lemma 4.1]{ClaudonHorpi1}.

\begin{pro-def}\label{pro-locWeimodHdg}
Let $B$ be a compact complex manifold and $B^\st \subset B$ a Zariski open such that $B \bss B^\st$ is a normal crossing divisor. Let $\bH$ be a weight one variation of Hodge structures of rank 2 over $B^\st$ and let $G$ be a finite group acting on both $B$ and $\bH$ in a compatible way. Let $p:W \to B$ denote the $G$-equivariant minimal Weierstra{\ss} fibration associated to $\bH$ and  $\gD \subset B$  the discriminant divisor of $p$.
 Given $\eta_G \in H^1_G(B,\cJ_{\bH/B})$ and assume that  $m\eta_G$ can be lifted to an element $\eta'_G \in H^1_G(B,\cJ^W_{\bH/B})$ for some $m \in \bZ \bss \{0\}$. Consider the tautological model $f:\sX = \sX\(m; (\eta_{ij}),(\eta^g_{i});\eta'_G\) \to B$  of $\eta_G$ where $\left\{(\eta_{ij}),(\eta^g_{i})\right\}$ is a 1-cocycle representing $\eta_G$. Then there exists a family of elliptic fibrations
$$\Pi : \cX \xto{q} B \times V \to V \cnec H^1(B,\cL_{\bH/B})$$ 
satisfying the following properties.
\begin{enumerate}[label = \roman{enumi})]
\item  \label{pro-locWeimodHdgi}  (Tautologicality)  The central fiber of $\Pi$ is $f : \sX \to B$ and
 the elliptic fibration parameterized by $t \in V$ represents the element $\eta + \exp'(t) \in H^1(B,\cJ_{\bH/B})$ where $\eta \in H^1(B,\cJ_{\bH/B})$ is the image of $\eta_G$. 
 \item  \label{pro-locWeimodHdgii}  ($G$-equivariance) There exists a $G$-action on $\cX^G \cnec q^{-1}(B \times V^G)$ such that the subfamily
 $$\Pi^G : \cX^G \xto{q^G \cnec q_{|\cX^G}} B \times V^G \to V^G$$
  of $\Pi$ parameterized by $V^G$ is $G$-equivariantly locally trivial over $B$ and the $G$-equivariant elliptic fibration parameterized by $t \in V^G$ corresponds to $\eta_G + \exp'_G(t) \in H_G^1(B,\cJ_{\bH/B})$.
\item (Multiplication-by-$m$) Let $\Pi' : \cW \to B \times V \to V$ be the tautological family associated to the image $\eta' \in H^1(B,\cJ^W_{\bH/B})$ of $\eta'_G$. There exists a map $\bm : \cX \to \cW$ over $B \times V$ whose restriction to each fiber over $V$ is the multiplication-by-$m$ defined in~\ref{ssec-tautmod}. Over $V^G$, the map $\bm$ is $G$-equivariant.
\end{enumerate}
The family $\Pi$ is called the \emph{tautological family} associated to $f : \sX \to B$.
\end{pro-def}

\begin{proof}
First we define 1-cocycle classes $\gl$, $\gl_G$, $\gt$, and $\gt_G$ that we will use to construct the family $\Pi$ and the $G$-action over $V^G$. Let  
\begin{equation*}
\begin{aligned}
\xi & \in V \otimes H^0(V,\cO_{V}) \simeq  H^1(B \times V,\cL_{\pr_1^{-1}\bH/B \times V}), \\
\xi_G & \in V^G \otimes H^0(V^G,\cO_{V^G})\simeq  H^1(B \times V^G,\cL_{\pr_1^{-1}\bH/B \times V^G})
\end{aligned}
\end{equation*}
 be the elements  corresponding to the identity maps $\Id_V $ and $\Id_{V^G}$. Let $\pr_1 : B \times V \to B$ be the first projection and define 
\begin{equation*}
\begin{aligned}
\gl & \cnec \pr_1^*\eta  + \wt{\exp}'(\xi) \in  H^1(B \times V,\cJ_{\pr_1^{-1}\bH/B \times V}), \\
\gl_G & \cnec \pr_1^*\eta_G + \wt{\exp}'_G(\xi_G) \in  H^1_G(B \times V^G,\cJ_{\pr_1^{-1}\bH/B \times V^G}), \\
 \gt & \cnec \pr_1^*\eta' +\wt{\exp}(m\xi) \in H^1(B \times V,\cJ^{W \times V}_{\pr_1^{-1}\bH/B \times V}), \\
\gt_G & \cnec \pr_1^*\eta'_G + \wt{\exp}_G(m\xi_G) \in  H^1_G(B \times V^G,\cJ^{W\times V^G}_{\pr_1^{-1}\bH/B \times V^G}).
\end{aligned}
\end{equation*}
where $\pr_1^* :  H^1(B,\cJ_{\bH/B})\to H^1(B \times V,\cJ_{\pr_1^{-1}\bH/B \times V})$ in the definition of $\gl$ is the map induced by $\cJ_{\bH/B} \to {\pr_1}_*\cJ_{\pr_1^{-1}\bH/B \times V}$ (the definitions of the other $\pr_1^*$ are similar), $\wt{\exp}$ is the map induced by $\exp : \cL_{\pr_1^{-1}\bH/B \times V} \to \cJ^{W\times V}_{\pr_1^{-1}\bH/B \times V}$, and $\wt{\exp}'$ is the composition of $\wt{\exp}$ with $H^1(B \times V,\cJ^{W \times V}_{\pr_1^{-1}\bH/B \times V}) \to H^1(B \times V,\cJ_{\pr_1^{-1}\bH/B \times V})$ (similar for $\wt{\exp}_G$ and $\wt{\exp}'_G$).

We specify the \v{C}ech 1-cocycles representing $\gl$ and $\gl_G$ as follows. Let $\{U_i\}_{i \in I}$ be a $G$-invariant good open cover of $B$ and let $U_{ij} \cnec U_i \cap U_j$. Let $\left\{(\eta_{ij}),(\eta^g_{i})\right\}$ be a 1-cocycle representing $\eta_G \in H^1_G(B,\cJ_{\bH/B})$ and let 
$$\ti{\eta}_{ij} \in \cJ_{\pr_1^{-1}\bH/B \times V}(U_{ij} \times V) \ \text{ and } \ \ti{\eta}_i^g \in \cJ_{\pr_1^{-1}\bH/B \times V^G}(U_i \times V^G)$$ 
be the pullbacks of $\eta_{ij}$ and $\eta^g_i$ under $U_{ij} \times V \to U_{ij}$ and $U_i \times V^G \to U_i$ respectively. As $V$ and $V^G$ are contractible, $\{U_i \times V\}$ and $\{U_i \times V^G\}$ are good covers of $B \times V$ and $B \times V^G$. 
Fix a \emph{$G$-invariant} 1-cocycle $(\xi_{ij})$ representing $\xi$. The 1-cocycles $\{\gl_{ij}\}$ and $\left\{(\gl'_{ij}),(\gl^g_{i})\right\}$ we choose to represent $\gl$ and $\gl_G$ are defined by
\begin{equation}\label{def-1cocycles}
\begin{split}
\gl_{ij}  & \cnec \ti{\eta}_{ij} + \exp(U_{ij} \times V)(\xi_{ij}) \in \cJ_{\pr_1^{-1}\bH/B \times V}(U_{ij} \times V), \\
\gl'_{ij} & \cnec {\gl_{ij}}_{| U_{ij} \times V^G } \in \cJ_{\pr_1^{-1}\bH/B \times V^G}(U_{ij} \times V^G), \\
\gl^g_{i} &  \cnec \ti{\eta}_i^g \in \cJ_{\pr_1^{-1}\bH/B \times V^G}(U_i \times V^G), 
\end{split}
\end{equation}
where $\exp : \cL_{\pr_1^{-1}\bH/B \times V} \to \cJ^{W \times V}_{\pr_1^{-1}\bH/B \times V}$ is the exponential map. We define the elliptic fibrations $q : \cX \to B \times V$ and $q^G : \cX^G \to B \times V^G$ to be the tautological models $\sX\(m ; (\gl_{ij}) ; \gt\) \to B \times V$ and $\sX\(m ; (\gl'_{ij}),(\gl_i^g) ; \gt_G\) \to B \times V^G$. In order to prove Proposition-Definition~\ref{pro-locWeimodHdg} for $q$ and $q^G$, we need to look into the construction of these tautological models. This will also be useful when we prove Proposition~\ref{pro-locWeimodHdgiii} in Section~\ref{sec-HdgTaut}.

Let $\left\{(\eta'_{ij}),(\eta'^g_{i})\right\}$ be a 1-cocycle representing $\eta'_G$ and let 
$$\ti{\eta}'_{ij} \in \cJ^{W \times V}_{\pr_1^{-1}\bH/B \times V}(U_{ij} \times V) \ \text{ and } \ \ti{\eta}'^g_i \in \cJ^{W \times V^G}_{\pr_1^{-1}\bH/B \times V^G}(U_i \times V^G)$$ 
be the pullbacks of $\eta'_{ij}$ and $\eta'^g_i$  under $U_{ij} \times V \to U_{ij}$ and $U_i \times V^G \to U_i$ respectively. Define
\begin{equation}\label{def-1cocyclesgt}
\begin{split}
\gt_{ij}  & \cnec \ti{\eta}'_{ij} + \exp(U_{ij} \times V)(m\xi_{ij}) \in \cJ^{W \times V}_{\pr_1^{-1}\bH/B \times V}(U_{ij} \times V), \\
\gt'_{ij} & \cnec {\gt_{ij}}_{| U_{ij} \times V^G } \in \cJ^{W \times V^G}_{\pr_1^{-1}\bH/B \times V^G}(U_{ij} \times V^G), \\
\gt^g_{i} &  \cnec \ti{\eta}'^g_i \in \cJ^{W \times V^G}_{\pr_1^{-1}\bH/B \times V^G}(U_i \times V^G), 
\end{split}
\end{equation}
so that $\gt$ and $\gt_G$ are represented by $\{\gt_{ij}\}$ and $\left\{(\gt'_{ij}),(\gt^g_{i})\right\}$. Since $m\eta_G =\imath_G^W(\eta'_G)$, up to refining $\{U_i\}$ there exist meromorphic sections $\gs_i$ of $W_i \cnec p^{-1}(U_i) \to U_i$ such that the 1-coboundary associated to $\{\gs_i\}$ modifies the 1-cocycle $\left\{(m\eta_{ij}),(m\eta^g_{i})\right\}$ to $\left\{(\eta'_{ij}),(\eta'^g_{i})\right\}$. So by construction, the 1-coboundary associated to $\{\gs_i \times \Id_V\}$ (resp. $\{\gs_i \times \Id_{V^G}\}$) modifies the 1-cocycle $\{m\gl_{ij}\}$ to $\{\gt_{ij}\}$ (resp. $\left\{(m\gl'_{ij}),(m\gl_i^g)\right\}$ to $\left\{(\gt'_{ij}),(\gt^g_{i})\right\}$). We emphasize that the 1-coboundaries modifying these 1-cocycles are of the form $\{\gs_i \times \Id\}$, which we will use to deduce that the bimeromorphic maps $\sX_i \times V \dto W_i \times V$ induced by the Stein factorizations in the construction of the tautological model are of the form $h_i \times  \Id : \sX_i \times V \dto W_i \times V$. 

As in the construction of $f : \sX \to B$, if $\bm_i$ and $\bm_{ij}$ denote the restrictions of the multiplication-by-$m$ map $W \to W$ to $W_i $ and $W_{ij} \cnec p^{-1}(U_{ij})$, then we have commutative diagrams
\begin{equation}
\begin{tikzcd}[cramped, row sep = 25, column sep = 35]
W_{ij} \times V \arrow[r, dashed, "\tr(\gl_{ij})"] \ar[d, dashed, swap, "\bm_{ij} \times \Id"] & W_{ji} \times V \ar[d, dashed, "\bm_{ji} \times \Id"] \\
W_{ij}\times V \arrow[d, dashed, swap, " \tr(\gs_i) \times \Id"]& W_{ji} \times V \arrow[d, dashed, "\tr(\gs_j) \times \Id"]  \\ 
W_{ij} \times V \arrow[r, "\sim"]\ar[r, swap, "\tr(\gt_{ij})"] &  W_{ji} \times V 
\end{tikzcd}
\ \ \ \ \ \
\begin{tikzcd}[cramped, row sep = 25, column sep = 85]
W_{gi}\times V^G \arrow[r, dashed, "\psi^g_i \times \Id \cnec \(\tr(\gl_i^g) \circ \phi_g\) \times \Id  "] \ar[d, dashed, swap, "\bm_{gi} \times \Id"] & W_{i}\times V^G  \ar[d, dashed, "\bm_{i} \times \Id"] \\
W_{gi}\times V^G  \arrow[d, dashed, swap, " \tr(\gs_{gi}) \times \Id"]& W_{i}\times V^G \arrow[d, dashed, " \tr(\gs_{i}) \times \Id"]  \\ 
W_{gi}\times V^G \arrow[r, "\sim"]\ar[r, "\phi_g \times \Id", swap] &  W_{i} \times V^G  
\end{tikzcd}
\end{equation} 
where $\phi_g : W \to W$ denotes the action of $g \in G$ on $W$ induced by the $G$-action on $\bH$. Let $\mu_i : \sX_i \to W_i$ be the finite map in the Stein factorization of $ \tr(\gs_i) \circ \bm_i$ and let $\sX_{ij} \cnec \mu_i^{-1}(W_{ij})$, $\sX_{ji} \cnec \mu_j^{-1}(W_{ji})$.
 Then there exist bimeromorphic maps $h_i : \sX_i \dto W_i$ over $U_i$ such that
\begin{equation}\label{diag-recollementW'}
\begin{tikzcd}[cramped, row sep = 20, column sep = 105]
\sX_{ij}\times V \arrow[r, dashed, "H_{ij} \cnec (h_j \times \Id)^{-1}\circ \tr(\gl_{ij}) \circ (h_i \times \Id)"] \ar[d, swap, "\mu_{i} \times \Id"] & \sX_{ji} \times V \ar[d, "\mu_{j} \times \Id"] \\ 
W_{ij} \times V \ar[r, "\tr(\gt_{ij})"] &  W_{ji} \times V   
\end{tikzcd}
\ \ \ \ \ \
\begin{tikzcd}[cramped, row sep = 20, column sep = 95]
\sX_{gi} \times V^G\arrow[r, dashed, "H_i^g \cnec   \(h_i^{-1}\circ \psi_i^g \circ  h_{gi}\) \times \Id"] \ar[d, swap, "\mu_{gi} \times \Id"]  & \sX_{i} \times V^G \ar[d, "\mu_{i} \times \Id"] \\ 
W_{gi} \times V^G\ar[r,"\phi_g \times \Id"] &  W_{i} \times V^G  
\end{tikzcd}
\end{equation}
are commutative. As both $\sX_{i} $ and $W_{i}$ are normal and $\mu_i $ is finite, the bimeromorphic maps $H_{ij}$ and $H^g_{i}$ are biholomorphic. Thus by gluing the $\mu_i : \sX_{i} \times V \to W_{i} \times V$ together using the 1-cocycle of biholomorphic maps $\{H_{ij}\}$, we obtain a tautological model $q : \cX \to B \times V$ of $\gl$, a locally Weierstra{\ss} fibration $q' : \cW \to B \times V$ twisted by $\gt$, and a finite surjective map $\bm : \cX \to \cW$ over $B \times V$.
Moreover, over $B \times V^G$ the fibration $q$ (resp. $q'$) is the $G$-equivariant tautological model $\cX^G \cnec \sX\(m ; (\gl'_{ij}),(\gl_i^g) ; \gt_G\) \to B \times V^G$ (resp. the $G$-equivariant locally Weierstra{\ss} fibration twisted by $\gt_G$) and the restriction of $\bm$ to $\cX^G$ is $G$-equivariant. This is the construction of the family $\Pi :  \cX \xto{q} B \times V \to V$ together with the $G$-action on $\cX^G$ and the map $\bm : \cX \to \cW$.

Obviously  $\Pi^G$ is $G$-equivariantly locally trivial over $B$. By construction, the central fiber of $\Pi^G$ is the $G$-equivariant tautological model $f$. As the elliptic fibration $\cX_t \to B$ parameterized by $t \in V$ in the family $\Pi$ is the tautological model $\sX\(m ; ({\gl_{ij}}_{|B \times \{t\}}) ; \gt_{|B \times \{t\}} \) \to B$ associated to $\gl_{|B \times \{t\}} = \eta + \exp'(t)$, it represents the element $\eta + \exp'(t)$. Similarly, the elliptic fibration parameterized by $t \in V^G$ represents $\eta_G + \exp'_G(t)$. This proves Proposition-Definition~\ref{pro-locWeimodHdg}. 
\end{proof}

\section{The Hodge theory of minimal Weierstra{\ss} fibrations}\label{sec-HdgTaut}

In this section, we will use Hodge theory to study minimal Weierstra{\ss} fibrations and derive some geometric consequences on the tautological families introduced at the end of Section~\ref{sec-locW}. Consider the following hypotheses on an elliptic fibration $f : X \to B$:
\begin{Hyp}\label{hyp-KahlerSection} \hfill
\begin{enumerate}[label = \roman{enumi})]
\item\label{hyp-KahlerSectioni} The base $B$ is a projective manifold. 
\item\label{hyp-KahlerSectionii} The total space $X$ is in the Fujiki class $\cC$.
\item\label{hyp-KahlerSectioniii} The discriminant locus $\gD \subset B$ is a normal crossing divisor.
\item\label{hyp-KahlerSectioniv} The local monodromies of $\bH \cnec (R^1f_*\bZ)_{|B\bss \gD}$ around $\gD$ are unipotent.
\end{enumerate}
\end{Hyp}
 The main result of this section that will be useful for proving Theorem~\ref{thm-mainfibellip} in Section~\ref{sec-preuve} is the following.  

\begin{pro}\label{pro-appalgpres}
Let $G$ be a finite group and $f : \sX \to B$ a $G$-equivariant tautological model satisfying Hypotheses~\ref{hyp-KahlerSection}. Let $Z \subset B$ be a $G$-stable subvariety. If $f^{-1}(\ol{Z\bss \gD}) \to \ol{Z\bss \gD}$ has a multi-section, then the tautological family associated to $f$ contains a subfamily 
$$\Pi' : \cX' \to B \times V' \to V'$$ 
 parameterized by some linear subspace $V' \subset H^1(B, \cL_{\bH/B})$ such that $\Pi'$ is an algebraic approximation of $\sX$ which is $G$-equivariantly locally trivial over $B$ and preserves $G$-equivariantly the completion $\rwh{f^{-1}(Z)} \to \hat{Z}$ of $f$ along $f^{-1}(Z) \to Z$. 
\end{pro}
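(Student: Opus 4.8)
The plan is to take for $V'$ a suitable $G$-invariant subspace $V_{\ti Z}^G$ attached to a log-desingularization $\ti Z$ of $(Z,Z\cap\gD)$, to run the Hodge-theoretic results of this section on the minimal Weierstra{\ss} side, and to transport the conclusions back to $\sX$ through the finite map $\bm$ carried by the tautological model. So first I would unwind the data of $f$: it determines a weight one variation of Hodge structures $\bH$ over $B^\st=B\bss\gD$, the associated $G$-equivariant minimal Weierstra{\ss} fibration $p\colon W\to B$, an integer $m\ne0$, a lift $\eta'_G\in H^1_G(B,\cJ^W_{\bH/B})$ of $m$ times the class represented by $f$, and a $G$-equivariant finite holomorphic map $\bm\colon\sX\to W^{\eta'}$ over $B$. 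By Proposition-Definition~\ref{pro-locWeimodHdg}, the tautological family $\Pi\colon\cX\xto{q}B\times V\to V$ of $f$ (with $V=H^1(B,\cL_{\bH/B})$) has central fibre $f$, is $G$-equivariantly locally trivial over $B$ above $V^G$, and comes with a map $\bm\colon\cX\to\cW$ over $B\times V$, $G$-equivariant above $V^G$, whose fibre over $t\in V$ is the multiplication-by-$m$ morphism $\cX_t\to\cW_t=W^{\eta'+\exp(mt)}$ onto the locally Weierstra{\ss} fibration $\cW_t$ belonging to the tautological family of $\eta'$. Since $\sX$ lies in $\cC$ by Hypotheses~\ref{hyp-KahlerSection} and $\bm$ is finite and surjective, $W^{\eta'}$ lies in $\cC$ too, so $p^{\eta'}$ also satisfies Hypotheses~\ref{hyp-KahlerSection}; and the image under $\bm$ of a multi-section of $f^{-1}(\ol{Z\bss\gD})\to\ol{Z\bss\gD}$ is a multi-section of $(p^{\eta'})^{-1}(\ol{Z\bss\gD})\to\ol{Z\bss\gD}$.

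Next I would choose, using $G$-equivariant log-resolution, a $G$-equivariant log-desingularization $\ti\imath\colon\ti Z\to Z\hto B$ of $(Z,Z\cap\gD)$; by the unipotence of the local monodromies of $\bH$ assumed in Hypotheses~\ref{hyp-KahlerSection} and by Lemma~\ref{lem-functW}, the base change of $W$ along $\ti\imath$ is the minimal Weierstra{\ss} fibration associated to $\ti\imath^{-1}\bH$, with $\cL_{\ti\imath^{-1}\bH/\ti Z}\simeq\ti\imath^*\cL_{\bH/B}$, so that $\ti\imath^*\colon H^1(B,\cL_{\bH/B})\to H^1(\ti Z,\ti\imath^*\cL_{\bH/B})$ sits inside the Hodge-theoretic framework of this section. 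I set $V'\cnec V_{\ti Z}^G$, where $V_{\ti Z}\cnec\ker\ti\imath^*$. Since $\ti Z\to Z$ is a surjective resolution, the pullback $H^1(Z,\imath^*\cL_{\bH/B})\to H^1(\ti Z,\ti\imath^*\cL_{\bH/B})$ is injective (by the Leray spectral sequence, after normalizing $Z$), whence $V_{\ti Z}\subseteq V_Z\cnec\ker\bigl(\imath^*\colon H^1(B,\cL_{\bH/B})\to H^1(Z,\imath^*\cL_{\bH/B})\bigr)$; so Proposition~\ref{pro-desingpre} applies and shows that the subfamily $\Pi'$ of $\Pi$ parameterized by $V'$, with total space $\cX'\cnec q^{-1}(B\times V')$, preserves $G$-equivariantly the fibration $f^{-1}(Z)\to Z$. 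Being the restriction of $\Pi$ over the subspace $V'\subseteq V^G$, the family $\Pi'$ is $G$-equivariantly locally trivial over $B$ by Proposition-Definition~\ref{pro-locWeimodHdg}, and its central fibre is $f$.

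It then remains to see that $\Pi'$ is an algebraic approximation of $\sX$. Over $t\in V'$ the map $\bm$ restricts to a finite surjective morphism $\cX'_t\to\cW_t$, which induces a finite extension of the fields of meromorphic functions; hence $a(\cX'_t)=a(\cW_t)$, and since also $\dim\cX'_t=\dim\cW_t$, the variety $\cX'_t$ is Moishezon if and only if $\cW_t$ is. By Proposition~\ref{pro-densepair0} applied to $p^{\eta'}$ (whose hypotheses were checked above), the Moishezon locus of the subfamily of the tautological family of $\eta'$ over $V_{\ti Z}^G$ is dense in $V_{\ti Z}^G$; since $\cW$ is that subfamily up to the linear automorphism $t\mapsto mt$ of $V'$ (legitimate as $m\ne0$), the set of $t\in V'$ for which $\cW_t$—hence $\cX'_t$—is Moishezon is dense in $V'$. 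As $0\in V'$ and $\cX'_0=\sX$, this shows that $\Pi'$ is an algebraic approximation of $\sX$.

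For the last assertion I would produce, via Lemma~\ref{lem-latticeisom}, a lattice $\gL\subset V'$ such that the fibres of $\Pi'$ over $t$ and $t+\gl$ are $G$-equivariantly isomorphic over $B$ for every $\gl\in\gL$, and then feed this—together with the flatness of $q$ and the triviality of the deformation of $f^{-1}(Z)\to Z$ obtained above—into Lemma~\ref{lem-prescompact}: one shows by induction on $n$ that the $n$-th infinitesimal neighbourhood of $f^{-1}(Z)\to Z$ deforms over $V'$ through square-zero extensions of its $(n-1)$-st neighbourhood by a $t$-independent coherent sheaf, so the resulting holomorphic classifying map $V'\to\Ext^1$ descends to the complex torus $V'/\gL$ and is therefore constant; this yields that $\Pi'$ preserves $G$-equivariantly the completion $\rwh{f^{-1}(Z)}\to\hat Z$, and the proof is complete. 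I expect the real difficulty to lie not in this assembly but in the imported inputs: Proposition~\ref{pro-densepair0} rests on equipping $H^1(B,j_*\bH)$ with a pure weight-$2$ Hodge structure realizing $\ti\imath^*$ as the $(0,2)$-part of a morphism of Hodge structures, from which density of the torsion—equivalently Moishezon—points in $V_{\ti Z}^G$ is deduced, while Lemma~\ref{lem-prescompact} is the delicate deformation-theoretic passage through the torus $V'/\gL$; beyond these, the bookkeeping required to keep the $G$-action, the resolution $\ti Z$, the map $\bm$ and the formal completion simultaneously compatible—notably the inclusion $V_{\ti Z}\subseteq V_Z$ and the fact that the tautological-model construction, restricted over $Z\times V'$, remains a trivial family—is where the rest of the care goes.
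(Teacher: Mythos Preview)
Your overall architecture is right and the algebraic-approximation step via $\bm$ matches the paper exactly. There are, however, two genuine gaps in how you handle the formal-completion part.

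\textbf{The wrong variety is being desingularized.} You take a log-resolution of $(Z,Z\cap\gD)$, but the paper takes one of $(Z_0,Z_0\cap\gD)$ with $Z_0=\ol{Z\bss\gD}$. This matters: Proposition~\ref{pro-densepair0} explicitly assumes that no irreducible component of the subvariety lies in $\gD$, and Lemma~\ref{lem-functW} (which you invoke to get $\cL_{\ti\imath^{-1}\bH/\ti Z}\simeq\ti\imath^*\cL_{\bH/B}$ and minimality of the pulled-back Weierstra{\ss} fibration) requires $\ti\imath^{-1}(\gD)$ to be a \emph{divisor} in $\ti Z$, which fails over components of $Z$ contained in $\gD$. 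Moreover the multi-section hypothesis you are given is on $f^{-1}(Z_0)\to Z_0$, not on $f^{-1}(Z)\to Z$. Once you switch to $\ti Z_0$, your one-line argument ``$V_{\ti Z}\subset V_Z$ because $\ti Z\to Z$ is a resolution'' evaporates: $\ti Z_0\to Z$ is not a resolution of $Z$, and showing $V_{\ti Z_0}\subset V_Z$ is precisely the Hodge-theoretic content of Proposition~\ref{pro-desingpre} (the diagram~\eqref{diagfibW} and Lemmas~\ref{lem-pure11}--\ref{lem-resinj}).

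\textbf{The lattice and preservation results are applied to the wrong fibration.} Proposition~\ref{pro-desingpre} and Lemma~\ref{lem-latticeisom} are stated for the locally Weierstra{\ss} fibration $p^{\eta'}:W^{\eta'}\to B$ and its tautological family $\cW\to V$, not for the tautological model $f:\sX\to B$ and $\Pi$. In particular, Lemma~\ref{lem-latticeisom} gives $\cW_t\simeq\cW_{t+\gl}$; it does \emph{not} give $\cX'_t\simeq\cX'_{t+\gl}$ over $B$. The fibre $\cX'_t$ is the tautological model $\sX(m;(\gl_{ij}|_t);\gt|_t)$, and by Remark~\ref{rem-modcobordW} this construction depends on the $1$-cocycle $(\gl_{ij}|_t)$, not merely on its class; replacing it by a cohomologous cocycle does not obviously yield an isomorphic variety (the candidate isomorphisms $h_i^{-1}\circ\tr(s_i)\circ h_i$ are only bimeromorphic). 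Likewise, Lemma~\ref{lem-presWtauto} and hence Proposition~\ref{pro-desingpre} prove preservation of $f^{-1}(Z)\to Z$ for $W^{\eta'}$, not for $\sX$. The paper does not attempt to run Lemma~\ref{lem-prescompact} directly on $\Pi'$; instead it first proves formal triviality on the Weierstra{\ss} side (Lemma~\ref{lem-presssfiblocWform}) and then transports it to $\sX$ via Lemma~\ref{lem-finisurjtriv}, using the crucial fact that $\bm$ is locally of the form $\mu_i\times\Id$ over $B\times V$. This transport step, packaged as Proposition~\ref{pro-locWeimodHdgiii}, is what is missing from your argument.
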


We postpone the discussion on Hypotheses~\ref{hyp-KahlerSection} to the beginning of \S~\ref{ssec-densepair0}. With the notations introduced in Section~\ref{sec-locW}, we will first observe in \S~\ref{ssec-SH} that $H^1(B,j_*\bH)$ carries a natural (polarized) pure Hodge structure of weight 2 such that $H^1(B,j_*\bH) \to H^1(B,\cL_{\bH/B})$ induced by~\eqref{exseq-fibjac} is the projection to its $(0,2)$-part (see Lemma~\ref{lem-morStHdgminW}). Then we will apply Lemma~\ref{lem-morStHdgminW} to prove two results using Hodge theory (Proposition~\ref{pro-densepair0} and Proposition~\ref{pro-desingpre}) in \S~\ref{ssec-densepair0} and \S~\ref{ssec-stabres}. With the notations in Proposition~\ref{pro-appalgpres},  the first one is the density of the algebraic members in the subfamily of the tautological family parameterized by $V'$, and the second one states that $f^{-1}(Z) \to Z$ is preserved under the deformation of $f$ along $V'$, both assuming that  $f$ is a $G$-equivariant locally Weierstra{\ss} fibration twisted by some element $\eta_G \in H^1_G(B, \cJ^W_{\bH/B})$. Based on these results, we will prove Proposition~\ref{pro-appalgpres} in \S~\ref{ssec-desingpre}.

\ssec{A pure Hodge structure on $H^1(B,j_*\bH)$}\label{ssec-SH}
\hfill

The following result about the degeneration of Leray spectral sequences holds for any projective fibration over a projective manifold whose fibers are irreducible curves.

\begin{lem}\label{lem-specdegE2}
Let $p : W \to B$ be a projective fibration over a projective manifold $B$ such that fibers of $p$ are irreducible curves. Then the Leray spectral sequences of $p$ computing $H^2(W,\bQ)$ and $H^2(W,\cO_W)$ degenerate at $E_2$. In particular, 
\begin{equation}\label{isom-Leray11Q}
H^1(B,R^1p_*\bQ) \simeq \ker\(H^2(W,\bQ)/p^*H^2(B,\bQ) \xto{\imath^*} H^2(F,\bQ) \simeq \bQ \),
\end{equation}
where $\imath : F \hto W$ is the inclusion of a fiber of $p$ in $W$ and
\begin{equation}\label{isom-Leray11OW}
H^1(B,R^1p_*\cO_W) \simeq H^2(W,\cO_W)/p^*H^2(B,\cO_B).
\end{equation}
For the Leray spectral sequence computing $H^2(W,\bQ)$, without assuming that $B$ is smooth, we have 
\begin{equation}\label{isom-Leray02}
E_\infty^{0,2} = E_2^{0,2} \simeq H^0(B,R^2p_*\bQ) \simeq \bQ.
\end{equation}
\end{lem}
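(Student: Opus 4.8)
The plan is to exploit the fact that the fibers of $p$ are irreducible curves of arithmetic genus $1$, so that $R^ip_*\bQ$ vanishes for $i>1$ except $R^2p_*\bQ$, and to establish the $E_2$-degeneration by a combination of weight arguments (for the rational coefficients) and a vanishing-of-differentials argument (for the coherent coefficients). First I would record the shape of the two Leray spectral sequences: since each fiber $F$ is an irreducible projective curve, $R^0p_*\bQ=\bQ_B$, $R^1p_*\bQ$ is a constructible sheaf, $R^2p_*\bQ\simeq\bQ_B$ (the top cohomology of each irreducible fiber is $\bQ$, and these glue to the constant sheaf because $W$ is irreducible — or more simply because the fibers are connected of dimension $1$), and $R^ip_*\bQ=0$ for $i\ge 3$. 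Likewise $R^0p_*\cO_W=\cO_B$, $R^1p_*\cO_W=\cL_{\bH/B}$ by~\eqref{iso-LRp1OW} (arithmetic genus $1$), and $R^ip_*\cO_W=0$ for $i\ge 2$. Hence in both cases the only row with potentially nonzero differentials landing in or emanating from it is the middle row, and the relevant differentials on the $E_2$-page are $d_2\colon E_2^{0,1}\to E_2^{2,0}$, $d_2\colon E_2^{1,1}\to E_2^{3,0}$, and (for the $\bQ$-sequence) $d_3\colon E_3^{0,2}\to E_3^{3,0}$, $d_3\colon E_3^{1,2}\to E_3^{4,0}$, etc.

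For the rational Leray spectral sequence, degeneration at $E_2$ follows from Deligne's theorem that the Leray spectral sequence of a projective (indeed proper smooth, but here I use the general statement for projective morphisms via a hyperplane/relative hard Lefschetz or, more robustly, the weight argument of Deligne in \emph{Théorème de Lefschetz et critères de dégénérescence}) morphism degenerates; since $p$ is projective with $B$ projective this applies. Alternatively — and this is the route I would actually write — I would invoke the fact that the $E_2$-differentials $d_2\colon H^{q}(B,R^1p_*\bQ)\to H^{q+2}(B,R^0p_*\bQ)$ and similar are morphisms of mixed Hodge structures which must vanish for weight reasons: $R^1p_*\bQ$ underlies a VHS of weight $1$ on the open locus and $j_*\bH$-type weight considerations pin down the weights, while $R^0p_*\bQ,R^2p_*\bQ$ are constant; comparing weights of source and target forces the differentials to be zero. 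Having degeneration, $H^2(W,\bQ)$ has a three-step filtration with graded pieces $H^2(B,\bQ)$, $H^1(B,R^1p_*\bQ)$, $H^0(B,R^2p_*\bQ)$, which gives~\eqref{isom-Leray11Q} (the $E_\infty^{1,1}$ piece, identified as the kernel of restriction to a fiber modulo $p^*H^2(B)$, using that $R^2p_*\bQ\simeq\bQ_B$ so $H^0(B,R^2p_*\bQ)\simeq\bQ$ is detected by $\imath^*$) and~\eqref{isom-Leray02} (the $E_\infty^{0,2}=E_2^{0,2}$ piece, with $H^0(B,R^2p_*\bQ)\simeq\bQ$ since $B$ is connected; note $E_2^{0,2}=E_\infty^{0,2}$ already because $d_2$ out of $E^{0,2}$ lands in $E_2^{2,1}=H^2(B,R^1p_*\bQ)$ — wait, $d_2$ goes $E_2^{0,2}\to E_2^{2,1}$, which is the wrong total degree; $d_2$ from $(0,2)$ goes to $(2,1)$, and $d_3$ from $(0,2)$ to $(3,0)$, and both vanish once we know $E_2$-degeneration, so indeed $E_\infty^{0,2}=E_2^{0,2}$). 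This last point — that~\eqref{isom-Leray02} holds \emph{without} assuming $B$ smooth — is immediate since the computation of $R^2p_*\bQ\simeq\bQ_B$ and $H^0(B,\bQ_B)\simeq\bQ$ uses only that $B$ is connected and the fibers are connected curves, not smoothness of $B$.

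For the coherent Leray spectral sequence computing $H^2(W,\cO_W)$, degeneration at $E_2$ is even cheaper: the only possibly nonzero higher differentials involving total degree $\le 2$ are $d_2\colon E_2^{0,1}\to E_2^{2,0}$, i.e. $H^0(B,\cL_{\bH/B})\to H^2(B,\cO_B)$, and there are no differentials into or out of $E_2^{1,1}=H^1(B,\cL_{\bH/B})$ of total degree $2$ other than those already killed ($d_2\colon E_2^{1,1}\to E_2^{3,0}$ contributes only to degree $4$; wait $(1,1)\to(3,0)$ has total degree $3$, so it is irrelevant to $H^2$). Actually since $R^ip_*\cO_W=0$ for $i\ge 2$, the spectral sequence is concentrated in two rows $q=0,1$, and a two-row spectral sequence has only $d_2$-differentials; the $d_2$ relevant to $H^2(W,\cO_W)$ is $d_2\colon E_2^{0,1}\to E_2^{2,0}$. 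I would argue this $d_2$ vanishes because it is identified, via the exponential-type sequence or directly, with a map of Hodge-theoretic origin that is forced to be zero — concretely, by the same Deligne-style degeneration (the coherent Leray sequence of a projective morphism also degenerates by Deligne, \emph{loc. cit.}, or one can note $H^2(W,\cO_W)$ is the $\Gr_F^0$ of $H^2(W,\bQ)$ and the $\bQ$-sequence already degenerates, so the $F^0$-part does too). Then $H^2(W,\cO_W)$ has graded pieces $H^2(B,\cO_B)$, $H^1(B,\cL_{\bH/B})$, $0$, and since $F^1H^2 = \Ima(p^*\colon H^2(B,\cO_B)\to H^2(W,\cO_W))$ (using that $p^*$ is injective here, or rather that the bottom quotient is exactly $H^2(B,\cO_B)$ mapped in by $p^*$), we get~\eqref{isom-Leray11OW}: $H^1(B,R^1p_*\cO_W)\simeq H^2(W,\cO_W)/p^*H^2(B,\cO_B)$.

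\textbf{Main obstacle.} The only genuinely delicate point is the $E_2$-degeneration of both spectral sequences: $p$ is projective but typically \emph{not} smooth (its fibers are nodal or cuspidal cubics over $\gD$), so the classical Deligne degeneration theorem for \emph{smooth} projective morphisms does not apply verbatim. I expect to handle this either by the general Deligne degeneration for projective morphisms (which applies to any projective morphism, using relative hard Lefschetz or the decomposition theorem package), or — staying closer to the spirit of the paper — by a direct weight argument: the sheaf $R^1p_*\bQ$ on $B$ underlies an admissible variation with controlled weights, $R^0$ and $R^2$ are of Tate type (weights $0$ and $2$), and the $E_2$-differentials, being morphisms of mixed Hodge structures, must shift weight incompatibly and hence vanish. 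I would write the proof using whichever of these the paper's later sections are set up to cite; given that the paper invokes Zucker and Kollár elsewhere, citing Deligne's degeneration for projective morphisms is the cleanest. Everything else — the identification of $R^2p_*\bQ$ and $R^1p_*\cO_W$, the kernel-of-restriction description of $E_\infty^{1,1}$, and the smoothness-free nature of~\eqref{isom-Leray02} — is routine once degeneration is in hand.
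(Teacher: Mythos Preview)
Your proposal has a real gap at exactly the point you flag as the main obstacle: the $E_2$-degeneration. Deligne's theorem in \emph{Th\'eor\`eme de Lefschetz et crit\`eres de d\'eg\'en\'erescence} requires the morphism to be \emph{smooth}, which $p$ is not (the fibers over $\gD$ are singular cubics). There is no ``general statement for projective morphisms'' that gives ordinary Leray degeneration for $\bQ_W$: the decomposition theorem applies to $IC_W$, not to $\bQ_W$ when $W$ is singular, and even then yields \emph{perverse} Leray degeneration rather than the ordinary one. Your alternative weight argument would need the Leray spectral sequence to be one of mixed Hodge structures (Arapura/Saito), and then a careful analysis of the weights appearing in $H^p(B,R^1p_*\bQ)$ when $R^1p_*\bQ$ is not a local system; your sketch (``$j_*\bH$-type weight considerations pin down the weights'') does not carry this out. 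The same problem infects the coherent case: there is no Deligne degeneration theorem for coherent Leray spectral sequences, and your fallback via $H^2(W,\cO_W)=\Gr_F^0H^2(W,\bC)$ presupposes $W$ has rational singularities, which the lemma does not assume.

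The paper avoids all of this with an elementary device you are missing. Take a desingularization $W'\to W$; since $B$ and $W'$ are compact K\"ahler and $W'\to B$ is surjective, the pullback $H^j(B,\bQ)\to H^j(W',\bQ)$ is injective, hence so is the edge map $p^*:E_2^{j,0}=H^j(B,\bQ)\to H^j(W,\bQ)$ through which it factors. Injectivity for $j=2,3$ forces $E_\infty^{2,0}=E_2^{2,0}$ and $E_\infty^{1,1}=E_2^{1,1}$ (the latter because $d_2^{1,1}:E_2^{1,1}\to E_2^{3,0}$ must vanish). For $E_\infty^{0,2}=E_2^{0,2}$, the paper shows $R^2p_*\bQ$ is a rank-one local system and then uses a very ample class on $W$ to produce a nonzero global section, so $R^2p_*\bQ\simeq\bQ$ and the edge map $H^2(W,\bQ)\to H^0(B,R^2p_*\bQ)$ is surjective; this step uses only connectedness of $B$ and projectivity of $W$, not smoothness of $B$, which is why~\eqref{isom-Leray02} holds in that generality. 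The coherent case is handled identically: the same desingularization trick gives injectivity of $H^j(B,\cO_B)\to H^j(W,\cO_W)$ (take $(0,j)$-parts of the injective map on $\bC$-cohomology), and $R^2p_*\cO_W=0$ does the rest. This argument is short, uses no heavy machinery, and works for singular $W$.
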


\begin{proof}
We may assume that $B$ is connected.
Let $W'$ be a desingularization of $W$. As the pullback $H^j(B,\bQ) \to H^j(W',\bQ)$ is injective (because $B$ and $W'$ are compact K\"ahler manifolds), the map $p^* : E_2^{j,0} \simeq H^j(B,\bQ) \to H^j(W,\bQ)$ through which it factorizes is also injective, especially for $j = 2$ or $3$. So $E_\infty^{2,0} = E_2^{2,0} \simeq  H^2(B,\bQ)$ and $E_\infty^{1,1} = E_2^{1,1} \simeq H^1(B,R^1p_*\bQ)$. As fibers of $p$ are irreducible curves, $R^2p_*\bQ$ is a local system of stalk $\bQ$. Since $W$ is projective by assumption,  the restriction of a very ample class $[H] \in H^2(W,\bQ)$ to $H^2(F,\bQ)$ is nonzero. So $H^2(W,\bQ) \to H^0(B,R^2p_*\bQ)$ is nonzero. In particular, $R^2p_*\bQ$ has a non-trivial section, so $R^2p_*\bQ \simeq \bQ$. It follows that $H^2(W,\bQ) \to H^0(B,R^2p_*\bQ) \simeq \bQ$, being nonzero, is surjective. Hence~\eqref{isom-Leray02} holds and~\eqref{isom-Leray11Q} follows easily.

For the Leray spectral sequence computing $H^2(X,\cO_X)$, by the same argument the pullback $p^* : E_2^{j,0} \simeq H^j(B,\cO_B) \to H^j(W,\cO_W)$ is injective, so $E_\infty^{2,0} = E_2^{2,0}$ and $E_\infty^{1,1} = E_2^{1,1}$. As $R^2p_*\cO_W = 0$ because fibers of $p$ are curves, we have $E_\infty^{0,2} = E_2^{0,2} = 0$,  which proves the degeneration of the Leray spectral sequence and~\eqref{isom-Leray11OW}.
\end{proof}

Using Lemma~\ref{lem-specdegE2}, we can now define a Hodge structure on $H^1(B,j_*\bH)$.

\begin{lem}\label{lem-morStHdgminW}
Let $p : W \to B$ be a minimal Weierstra{\ss} fibration over a projective manifold $B$. Assume that the discriminant locus $\gD$ is a normal crossing divisor. The morphism $H^1(B,j_*\bH) \otimes \bQ \to H^1(B,\cL_{\bH/B})$ induced by $\phi : j_*\bH \to \cL_{\bH/B}$ introduced in~\eqref{exseq-fibjac} is isomorphic to
\begin{equation}\label{mor-proj02}
\ker\(H^2(W,\bQ)/p^*H^2(B,\bQ) \xto{\imath^*} H^2(F,\bQ) \simeq \bQ \) \to H^2(W,\cO_W)/p^*H^2(B,\cO_B)
\end{equation}
induced by $\bQ \hto \cO_W$ where $\imath : F \hto W$ is the inclusion of a fiber of $p$ in $W$. In particular, there is a polarized pure $\bZ$-Hodge structure of weight 2 on $H^1(B,j_*\bH)$ satisfying the Hodge symmetry and the morphism $H^1(B,j_*\bH) \to H^1(B,\cL_{\bH/B})$ is the projection to the $(0,2)$-part of the Hodge structure.

Finally, let $\psi : Z \to B$ be a holomorphic map such that $Z$ is a projective manifold and $\psi^{-1}(\gD)$ is a normal crossing divisor. Assume that the Weierstra{\ss} fibration $W \times_B Z \to Z$ is minimal, then the pullback 
$$\psi^* : H^1(B,j_*\bH) \to H^1(Z,j'_*\bH')$$ 
is a morphism of Hodge structures where $j' : Z^\st \cnec Z \bss \psi^{-1}(\gD) \hto Z$ is the open immersion and $\bH' \cnec \psi^{-1}_{|Z^\st} \bH$. 
\end{lem}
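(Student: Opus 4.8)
\emph{Overall plan.} The idea is to bring everything back to the compact total space $W$ of the minimal Weierstra\ss\ fibration using the two lemmas just proved, and then add one external input (the decomposition theorem) to get purity. First, by Lemma~\ref{lem-isophi} the morphism $\varphi : j_*\bH\to\cL_{\bH/B}$ is isomorphic to $R^1p_*\bZ\to R^1p_*\cO_W$ induced by $\bZ\hto\cO_W$; since the stalks of $R^1p_*\bZ$ are $H^1$ of cubic curves, hence torsion free, and $B$ is compact, tensoring with $\bQ$ turns $H^1(B,j_*\bH)\otimes\bQ\to H^1(B,\cL_{\bH/B})$ into $H^1(B,R^1p_*\bQ)\to H^1(B,R^1p_*\cO_W)$. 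As $W$ is projective (a minimal Weierstra\ss\ fibration is a hypersurface in a $\bP^2$-bundle over $B$), $B$ is a projective manifold, and the fibres of $p$ are irreducible curves, Lemma~\ref{lem-specdegE2} applies; the sheaf map $\underline\bQ_W\to\cO_W$ induces a morphism of Leray spectral sequences compatible with the filtrations on the abutments $H^2(W,\bQ)\to H^2(W,\cO_W)$, so comparing $E_\infty^{1,1}$-terms via~\eqref{isom-Leray11Q} and~\eqref{isom-Leray11OW} identifies $H^1(B,R^1p_*\bQ)\to H^1(B,R^1p_*\cO_W)$ with~\eqref{mor-proj02}. This proves the first assertion, and we endow the subquotient $H^1(B,R^1p_*\bQ)$ of $H^2(W,\bQ)$ with the induced mixed Hodge structure.

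\emph{Purity and the $(0,2)$-statement.} The main work is to show this mixed Hodge structure is pure of weight $2$. Choose a desingularization $\nu : W'\to W$ which is an isomorphism over $p^{-1}(B^\st)$ (possible since $\Sing W\subset p^{-1}(\gD)$), so that $W'$ is smooth projective and $p'\cnec p\circ\nu : W'\to B$ is projective and smooth over $B^\st$. The composite of sheaf maps $R^1p_*\bQ\to R^1p'_*\bQ\to j_*j^*R^1p'_*\bQ=j_*\bH\otimes\bQ$ — the first induced by $\bQ_W\to R\nu_*\bQ_{W'}$, the second the unit of $(j^*,j_*)$ — equals the identity of $R^1p_*\bQ=j_*\bH\otimes\bQ$, because it is so over $B^\st$ and $j_*\bH\otimes\bQ$ has no sections supported on $\gD$. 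Hence $\nu^*$ induces an injection $H^1(B,R^1p_*\bQ)\hto H^1(B,R^1p'_*\bQ)$. Since $p'$ is projective and $W'$ is smooth, the Leray spectral sequence of $p'$ degenerates at $E_2$ (decomposition theorem), so $H^1(B,R^1p'_*\bQ)$ is the Leray-graded piece $\mathrm{gr}^1 H^2(W',\bQ)$, a subquotient of the pure polarizable weight-$2$ Hodge structure $H^2(W',\bQ)$, hence itself pure and polarizable of weight $2$. As all maps in sight are morphisms of mixed Hodge structures (hence strict), $H^1(B,R^1p_*\bQ)$ embeds as a sub-Hodge structure of $\mathrm{gr}^1 H^2(W',\bQ)$, so is pure of weight $2$ and polarizable; transporting along $H^1(B,j_*\bH)/{\tors}\hto H^1(B,j_*\bH)\otimes\bQ$ yields the asserted $\bZ$-Hodge structure (the Hodge symmetry being automatic). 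Finally $\imath^* : H^2(W,\bQ)\to H^2(F,\bQ)\simeq\bQ$ is of type $(1,1)$ and $\mathrm{Gr}^0_F H^2(W,\bC)=H^2(W,\cO_W)$ ($W$ having rational, hence Du Bois, singularities), so~\eqref{mor-proj02} is exactly the projection of $H^1(B,j_*\bH)$ onto its $(0,2)$-part.

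\emph{Functoriality.} Observe that $W_Z\cnec W\times_B Z\to Z$ is a minimal Weierstra\ss\ fibration (by hypothesis) over the projective manifold $Z$ with discriminant $\psi^{-1}(\gD)$, a normal crossing divisor; so the first two assertions apply to $W_Z\to Z$ and equip $H^1(Z,j'_*\bH')$ with a pure weight-$2$ Hodge structure, realized as a subquotient of $H^2(W_Z,\bQ)$. The fibre square with horizontal arrows $\Psi : W_Z\to W$ and $\psi : Z\to B$ gives a morphism of Leray spectral sequences compatible with the Leray filtrations on the mixed-Hodge-structure morphism $\Psi^* : H^2(W,\bQ)\to H^2(W_Z,\bQ)$; on $E_2^{1,1}=E_\infty^{1,1}$ (Lemma~\ref{lem-specdegE2} for $p$ and $p_Z$) this morphism is precisely $\psi^* : H^1(B,R^1p_*\bQ)\to H^1(Z,R^1p_{Z*}\bQ)$, i.e.\ the map $\psi^*$ of the statement after the identifications $R^1p_*\bZ=j_*\bH$, $R^1p_{Z*}\bZ=j'_*\bH'$ (Lemma~\ref{lem-isophi}). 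Being the restriction of a morphism of mixed Hodge structures to these subquotients, $\psi^*$ is a morphism of Hodge structures.

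\emph{Main obstacle.} The hard point is the purity in the second paragraph: $W$ is singular, so $H^2(W,\bQ)$ is a priori only a mixed Hodge structure, and one cannot simply pass to the resolution $W'$ since $H^2(W',\bQ)$ carries spurious extra classes. The device that makes it work is the splitting $R^1p_*\bQ\hto R^1p'_*\bQ$, which carves $H^1(B,j_*\bH)\otimes\bQ$ out of $H^1(B,R^1p'_*\bQ)$ canonically, combined with the $E_2$-degeneration of the Leray spectral sequence of the projective morphism $p'$ to place the latter inside the Leray-graded of the pure structure $H^2(W',\bQ)$. Everything else — the identification with~\eqref{mor-proj02}, the $(0,2)$-statement, and functoriality — is bookkeeping with spectral sequences and the strictness of morphisms of mixed Hodge structures.
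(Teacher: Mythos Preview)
Your argument is essentially correct and reaches the same conclusion, but it takes a genuinely different route to purity than the paper does.

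\textbf{What the paper does.} The paper observes that a minimal Weierstra{\ss} fibration over a smooth base with normal crossing discriminant has at worst rational singularities, and then \emph{cites} the fact that for such a projective variety $W$ the mixed Hodge structure on $H^2(W,\bZ)$ is already pure of weight~$2$, with $H^2(W,\bZ)\to H^2(W,\cO_W)$ the $(0,2)$-projection (Namikawa, Kirschner). Purity of the subquotient $H^1(B,j_*\bH)$ is then immediate. This is a two-line proof once the external input is granted.

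\textbf{What you do instead.} You avoid citing purity of $H^2(W,\bQ)$ and instead pass to a resolution $W'\to W$, produce a splitting $R^1p_*\bQ\hookrightarrow R^1p'_*\bQ$ using $j_*$-adjunction, and embed $H^1(B,R^1p_*\bQ)$ into a Leray graded piece of the pure structure $H^2(W',\bQ)$. This is a valid strategy and the splitting argument is nice. Two imprecisions are worth flagging. First, the decomposition theorem does \emph{not} give degeneration of the classical Leray spectral sequence for $p'$; it gives degeneration of the perverse one. What you actually need, namely $E_\infty^{1,1}(p')=E_2^{1,1}(p')$, follows from the vanishing of $d_2:E_2^{1,1}\to E_2^{3,0}$, which holds because $p'^*:H^3(B,\bQ)\to H^3(W',\bQ)$ is injective ($W'$ K\"ahler), exactly as in Lemma~\ref{lem-specdegE2}. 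Second, ``all maps in sight are morphisms of MHS'' hides the point that the Leray pieces $L^1,L^2\subset H^2(W,\bQ)$ must be sub-MHS; this is fine here because $L^2=p^*H^2(B,\bQ)$ and $L^1=\ker\bigl(H^2(W,\bQ)\to H^2(F,\bQ)\bigr)$ are manifestly so, but it deserves a sentence. Note also that you still invoke rational (hence Du~Bois) singularities for $\Gr^0_F H^2(W,\bC)=H^2(W,\cO_W)$, so you have not entirely escaped the singularity input; you have only traded the purity citation for a more hands-on argument via the resolution.

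\textbf{Comparison.} The paper's route is shorter and conceptually cleaner: one external fact about rational singularities does all the work. Your route is more self-contained on the purity side and makes the geometry of the resolution visible, at the cost of more bookkeeping with spectral sequences. Both functoriality arguments are the same: pull back along the fibre square and read off the map on $E_\infty^{1,1}$.
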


\begin{proof}
By Lemma~\ref{lem-isophi}, the morphism $H^1(B,j_*\bH) \to H^1(B,\cL_{\bH/B})$ is isomorphic to 
$$H^1(B,R^1p_*\bZ) \to H^1(B,R^1p_*\cO_W)$$ 
and by Lemma~\ref{lem-specdegE2}, $H^1(B,R^1p_*\bQ) \to H^1(B,R^1p_*\cO_W)$ is isomorphic to~\eqref{mor-proj02}, which proves the first statement. 

Since $p : W \to B$ is a minimal Weierstra{\ss} fibration and $\gD$ is a normal crossing divisor, $W$ has at worst rational singularities. So the underlying Hodge structure on $H^2(W,\bZ)$ is pure of weight 2 satisfying the Hodge symmetry and $H^2(W,\bZ) \to H^2(W,\cO_W)$ is the projection to its $(0,2)$-part (\cite[Proof of Proposition 5]{NamikawaExt2forms}; for an explicit statement, see~\cite[Corollary B.2.8 and Proposition B.2.9]{KirschnerPeriodSymC}). As the $(0,2)$-part of $H^2(F,\bQ)$ is trivial,~\eqref{mor-proj02} is the projection of a Hodge structure to its $(0,2)$-part. This defines a polarized pure Hodge structure of weight 2  on $H^1(B,j_*\bH)$ satisfying the Hodge symmetry such that $H^1(B,j_*\bH) \to H^1(B,\cL_{\bH/B})$ is the projection to its $(0,2)$-part.

The pullback $\psi^*$ in the last statement is defined to be the composition 
$$H^1(B,j_*\bH) \xto{\psi^*} H^1(Z,\psi^{-1}(j_*\bH)) = H^1(Z,j'_*\bH').$$ 
From the isomorphism $$H^1(B,j_*\bH)\otimes \bQ \simeq \ker\(H^2(W,\bQ)/p^*H^2(B,\bQ) \to H^2(F,\bQ) \simeq \bQ \)$$ 
and the similar one for $H^1(Z,j'_*\bH') \otimes \bQ$, it follows immediately that the pullback $\psi^* : H^1(B,j_*\bH) \to H^1(Z,j'_*\bH')$ is a morphism of Hodge structures.
\end{proof}

\begin{rem}
In particular, Lemma~\ref{lem-morStHdgminW} implies that $H^1(B,j_*\bH) \otimes \bR \to H^1(B,\cL_{\bH/B})$ is surjective. This gives an alternative proof of the equivalent statement that the tautological family of a minimal Weierstra{\ss} fibration over a compact K\"ahler manifold is an algebraic approximation~\cite[Theorem 3.25 and Remark 3.26]{ClaudonHorpi1}.
\end{rem}

\ssec{Density of algebraic elliptic fibrations in the tautological family}\label{ssec-densepair0} 
\hfill

In most of the statements we prove in the rest of Section~\ref{sec-HdgTaut} about elliptic fibrations, we will assume Hypotheses~\ref{hyp-KahlerSection}. Some of the hypotheses therein are considered for the following reasons.  As we will apply Lemma~\ref{lem-morStHdgminW} to define a Hodge structure on $H^1(B,j_*\bH)$, we need to assume that $B$ is a projective manifold and the discriminant locus $\gD$ a normal crossing divisor. Since the Hodge structure on $H^1(B,j_*\bH)$ is defined using the minimal Weierstra{\ss} fibration associated to $\bH$ and we want this construction to be functorial under reasonable pullback, the corresponding pullback of the minimal Weierstra{\ss} fibration associated to $\bH$ need to be minimal. By Lemma~\ref{lem-functW}, this can be achieved if we assume that the local monodromies of $\bH$ around $\gD$ are unipotent.

Our first application of Lemma~\ref{lem-morStHdgminW} is the following density result.

\begin{pro}\label{pro-densepair0}
Let $f : X = W^\eta \to B$ be a $G$-equivariant locally Weierstra{\ss} fibration twisted by $\eta \in H^1(B,\cJ^W_{\bH/B})$ satisfying Hypotheses~\ref{hyp-KahlerSection} for some finite group $G$. Let  $Z \subset B$ be a $G$-stable subvariety such that none of the irreducible components of $Z$ is contained in $\gD$ and let $\imath : \ti{Z} \xto{\tau} Z \hto B$ be a $G$-equivariant log-desingularization of $(Z,Z \cap \gD)$. Assume that  $Y \cnec f^{-1}(Z) \to Z$ has a multi-section, then the subfamily parameterized by
$$V_{\ti{Z}}^G  \cnec \ker\(\imath^* : H^1(B, \cL_{\bH/B}) \to H^1(\ti{Z}, \imath^*\cL_{\bH/B})\)^G$$
of the tautological family $\Pi : \cW \to B \times V  \to V$ associated to $f$ is an algebraic approximation of $X$. 
\end{pro}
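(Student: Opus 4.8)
The plan is to reduce the density statement to the surjectivity (after tensoring with $\bR$) of a map of Hodge structures, using Lemma~\ref{lem-morStHdgminW} and the multi-section hypothesis. First I would recall that, by the tautologicality in Proposition-Definition~\ref{pro-locWeimodHdg}, a point $t\in V$ parameterizes an elliptic fibration isomorphic (over $B$) to the one twisted by $\eta+\exp(t)$, and by Lemma~\ref{lem-Tormultsec} (together with Proposition~\ref{pro-condK}) such a fibration is Moishezon as soon as $\eta+\exp(t)$ is torsion in $H^1(B,\cJ^W_{\bH/B})$; indeed $B$ is projective, so a Weierstra\ss{} model over it is projective, and torsion twists admit multi-sections hence are Moishezon by Theorem~\ref{thm-multsecMoi}. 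Therefore it suffices to show that the set of $t\in V_{\ti Z}^G$ with $\eta+\exp(t)$ torsion is dense in $V_{\ti Z}^G$ for the Euclidean topology.

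Next I would bring in the Hodge-theoretic description. By Lemma~\ref{lem-morStHdgminW}, $H^1(B,j_*\bH)$ carries a pure weight-$2$ $\bZ$-Hodge structure, with Hodge symmetry, whose projection to the $(0,2)$-part is exactly the map $H^1(B,j_*\bH)\to H^1(B,\cL_{\bH/B})=V$ coming from~\eqref{exseq-fibjac}; moreover, since $\imath:\ti Z\to B$ is a log-desingularization of $(Z,Z\cap\gD)$ and the local monodromies of $\bH$ are unipotent, Lemma~\ref{lem-functW} shows $W\times_B\ti Z\to\ti Z$ is again minimal, so by the last assertion of Lemma~\ref{lem-morStHdgminW} the pullback $\imath^*:H^1(B,j_*\bH)\to H^1(\ti Z,j'_*\bH')$ is a morphism of weight-$2$ Hodge structures whose $(0,2)$-part is $\imath^*:V\to H^1(\ti Z,\imath^*\cL_{\bH/B})$. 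Let $K\cnec\ker\bigl(\imath^*:H^1(B,j_*\bH)\to H^1(\ti Z,j'_*\bH')\bigr)$, a sub-Hodge structure of weight $2$. Its $(0,2)$-part is precisely $V_{\ti Z}\cnec\ker(\imath^*:V\to H^1(\ti Z,\imath^*\cL_{\bH/B}))$. Taking $G$-invariants (a finite group, so invariants commute with the Hodge decomposition over $\bQ$, and Lemma~\ref{lem-isophi} identifies the maps $G$-equivariantly), $K^G$ is again a weight-$2$ $\bQ$-Hodge structure whose $(0,2)$-part, by the Hodge symmetry $(K^G)^{2,0}=\overline{(K^G)^{0,2}}$, has the same $\bR$-dimension as $(K^G)^{0,2}$; hence $K^G_\bR\to V_{\ti Z}^G$ is surjective (the $(1,1)$-part is killed by the real projection argument used in~\cite{ClaudonHorpi1, HYLbimkod1}, or directly: $\dim_\bR V_{\ti Z}^G=\dim_\bC(K^G)^{0,2}=\dim_\bC(K^G)^{2,0}$, and $K^G_\bR=(K^G)^{0,2}\oplus(K^G)^{2,0}\oplus(K^G)^{1,1}_\bR$ surjects onto the $(0,2)$-summand). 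Here is where the multi-section hypothesis enters: I would need it only to guarantee that the whole construction makes sense, i.e.\ that none of the components of $Z$ lies in $\gD$ and that $\ti Z$ is nonempty of the right kind — but more importantly, it will be used (via Theorem~\ref{thm-multsecMoi}) in the converse direction to identify torsion classes with algebraic fibrations; actually for \emph{density} the multi-section of $Y\to Z$ is not strictly needed, only that $K^G$ is a sub-Hodge structure, so I expect the multi-section hypothesis to be genuinely used in the \emph{companion} statement Proposition~\ref{pro-desingpre} rather than here, and I would double-check whether it can be dropped from Proposition~\ref{pro-densepair0}.

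Finally I would conclude by a standard density argument. Since $K^G$ is a $\bQ$-Hodge structure, the lattice $H^1(B,j_*\bH)^G\cap K^G_\bR$ spans $K^G_\bR$ over $\bQ$, hence the image of $H^1(B,j_*\bH)^G$ in $V_{\ti Z}^G$ under the $(0,2)$-projection is a dense (in fact $\bQ$-spanning, so Euclidean-dense) subgroup — this is exactly the assertion that $V_{\ti Z}^G$ is the $(0,2)$-part of a rational Hodge structure, so rational classes project to a dense subset. For each such $t$ in this image, say $t=\mathrm{pr}^{0,2}(\alpha)$ with $\alpha\in H^1(B,j_*\bH)^G_\bQ$, we have $\exp(t)=\exp(\mathrm{pr}^{0,2}\alpha)$, and since $\alpha-\mathrm{pr}^{0,2}\alpha$ lies in the image of $H^1(B,j_*\bH)_\bQ$ while $\exp$ kills the image of $j_*\bH$ by~\eqref{exseq-fibjac}, a suitable integer multiple $N$ gives $N\exp(t)=\exp(N\,\mathrm{pr}^{0,2}\alpha)$ with $N\alpha\in H^1(B,j_*\bH)^G$, so $N\exp(t)$ is in the image of $\exp$ of an integral class, hence $N(\eta+\exp(t))$ differs from $N\eta$ by such a class; combined with the fact that $c(\eta)$ is torsion (Proposition~\ref{pro-condK}, as $X$ is in class $\cC$) one gets that $\eta+\exp(t)$ is torsion in $H^1(B,\cJ^W_{\bH/B})$. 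The corresponding fibration is then projective over the projective base $B$, i.e.\ Moishezon, by Lemma~\ref{lem-Tormultsec} and Theorem~\ref{thm-multsecMoi}. Density of this set of $t$'s in $V_{\ti Z}^G$ is exactly what we wanted. The main obstacle I anticipate is the bookkeeping in making the identifications $G$-equivariant and in checking that the weight-$2$ Hodge structure $K^G$ really is a \emph{sub}-Hodge structure of $H^1(B,j_*\bH)^G$ with $(0,2)$-part $V^G_{\ti Z}$ — everything else is the by-now-routine "rational classes in the $(0,2)$-part are dense" mechanism.
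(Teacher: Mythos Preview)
Your setup is correct up through the identification of $K^G$ as a weight-$2$ sub-Hodge structure with $(0,2)$-part $V_{\ti Z}^G$, and hence the density of $\phi(K^G_\bQ)$ in $V_{\ti Z}^G$; this is exactly the paper's Lemma~\ref{lem-Rgenlisse}. The gap is in your final paragraph. For $t\in\phi(K^G_\bQ)$ you correctly observe that $\exp(t)$ is torsion, and you know $c(\eta)$ is torsion so that $m\eta=\exp(\gb)$ for some $\gb\in V$; but these two facts together do \emph{not} imply that $\eta+\exp(t)$ is torsion. Indeed, $\eta+\exp(t)$ is torsion if and only if $\eta$ is torsion, which need not hold. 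Concretely: if the restriction $\imath^*\eta\in H^1(\ti Z,\cJ^{W_{\ti Z}}_{\bH_{\ti Z}/\ti Z})$ is not torsion, then for every $t\in V_{\ti Z}^G$ we have $\imath^*(\eta+\exp(t))=\imath^*\eta$ (since $\imath^*t=0$), which is not torsion, so $\eta+\exp(t)$ cannot be torsion either. Thus your speculation that the multi-section hypothesis might be dispensable here is backwards: it is essential precisely for this proposition.

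What is missing is the paper's Lemma~\ref{lem-preimlisse}. The multi-section of $Y\to Z$ pulls back to a multi-section of $W_{\ti Z}^{\imath^*\eta}\to\ti Z$, and Lemma~\ref{lem-multisecTor} then forces $\imath^*\eta$ to be torsion. Unwinding, this means (after enlarging $m$) that $\imath^{0,2}(\gb)=\phi_{\ti Z}(\ga_0)$ for some rational class $\ga_0$ on $\ti Z$; using that $H^1(\ti Z,j_{\ti Z*}\bH_{\ti Z})$ is polarized, one finds $\ga\in (H^1(B,j_*\bH)\otimes\bQ)^G$ with $\gb-\phi(\ga)\in V_{\ti Z}^G$. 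Now the correct dense set is the affine translate $\tfrac{1}{m}\bigl(\gb-\phi(\ga)-\phi(K_\bQ)\bigr)\subset V_{\ti Z}^G$: for such $t$ one has $m\eta-\exp(mt)=\exp(\gb-mt)$ with $\gb-mt\in\Ima(\phi\otimes\bQ)$, hence torsion. The density of $\phi(K_\bQ)$ (your part) gives density of this translate, and the translate lies in $V_{\ti Z}^G$ only because of Lemma~\ref{lem-preimlisse}. So the argument needs both pieces, and the second one is where the multi-section is used.
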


While proving Proposition~\ref{pro-densepair0}, we will also prove the following lemma along the way.

\begin{lem}\label{lem-latticeisom}
In the setting of Proposition~\ref{pro-densepair0}, there exists a lattice $\gL \subset V_{\ti{Z}}^G$ such that $\cW_t$ is isomorphic to $\cW_{t + \gl}$ over $B$ for all $t \in V_{\ti{Z}}^G$ and $\gl \in \gL$.
\end{lem}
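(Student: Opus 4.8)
The plan is to realize $\gL$ as a full-rank sublattice of the image, under $\varphi_* : H^1(B,j_*\bH)\to H^1(B,\cL_{\bH/B})$, of the integral classes of a sub-Hodge structure of $H^1(B,j_*\bH)$ cut out by $\ti{Z}$. First I would check that $\imath$ is well-behaved Hodge-theoretically. Since $\ti{Z}$ is a $G$-equivariant log-desingularization of $(Z,Z\cap\gD)$ and no irreducible component of $Z$ lies in $\gD$, the preimage $\imath^{-1}(\gD)$ is a normal crossing divisor on the projective manifold $\ti{Z}$; because the local monodromies of $\bH$ around $\gD$ are unipotent (Hypotheses~\ref{hyp-KahlerSection}), Lemma~\ref{lem-functW} gives $\imath^*\cL_{\bH/B}\simeq\cL_{\bH'/\ti{Z}}$ and shows that $W\times_B\ti{Z}\to\ti{Z}$ is the minimal Weierstra{\ss} fibration associated to $\bH'$, where $\bH'$ denotes the pullback of $\bH$ to $\ti{Z}^\st\cnec\ti{Z}\bss\imath^{-1}(\gD)$. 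Hence Lemma~\ref{lem-morStHdgminW} applies, and in particular the pullback $\imath^* : H^1(B,j_*\bH)\to H^1(\ti{Z},j'_*\bH')$ (with $j':\ti{Z}^\st\hto\ti{Z}$) is a morphism of pure $\bZ$-Hodge structures of weight $2$, which is $G$-equivariant because $\imath$ is. Therefore $K\cnec\ker(\imath^*)$ and its $G$-invariant part $K^G$ are sub-Hodge structures; they are pure of weight $2$ and satisfy the Hodge symmetry. Using the description of $\varphi_*$ as the projection to the $(0,2)$-part and its compatibility with $\imath^*$ (Lemmas~\ref{lem-isophi} and~\ref{lem-morStHdgminW}), one identifies $V_{\ti{Z}}^G$ with $(K^G)^{0,2}$, and in particular $\varphi_*$ carries $K^G$ into $V_{\ti{Z}}^G$.

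Next I would produce the lattice. Consider the finitely generated abelian group $\Gamma\cnec\varphi_*\bigl((K^G)_{\bZ}\bigr)\subset V_{\ti{Z}}^G$, where $(K^G)_{\bZ}$ is the underlying lattice of the Hodge structure $K^G$. By the Hodge symmetry of $K^G$, for every $v\in(K^G)^{0,2}$ the class $v+\bar v$ lies in $(K^G)_{\bR}$ and $\varphi_*(v+\bar v)=v$; thus $\varphi_*$ restricts to a surjection $(K^G)_{\bR}\tto(K^G)^{0,2}=V_{\ti{Z}}^G$, and consequently $\Gamma$ spans $V_{\ti{Z}}^G$ over $\bR$. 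Choosing a maximal $\bR$-linearly independent family $e_1,\dots,e_d\in\Gamma$ (so that $d=\dim_{\bR}V_{\ti{Z}}^G$) and setting $\gL\cnec\bigoplus_{k=1}^d\bZ e_k$ produces a lattice of full rank in $V_{\ti{Z}}^G$ with $\gL\subset\Gamma=\varphi_*\bigl((K^G)_{\bZ}\bigr)$.

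Finally I would conclude. Every $\gl\in\gL$ equals $\varphi_*(w)$ for some $w\in(K^G)_{\bZ}\subset H^1(B,j_*\bH)$, so $\gl$ lies in the image of $\varphi_*$; by the exactness of~\eqref{exseq-fibjac}, $\exp(\gl)=0$ in $H^1(B,\cJ^W_{\bH/B})$. Since $\cW_t$ is the locally Weierstra{\ss} fibration associated to $\eta+\exp(t)$ and $\exp$ is additive, $\cW_t$ and $\cW_{t+\gl}$ correspond to the same class $\eta+\exp(t)=\eta+\exp(t+\gl)$ in $H^1(B,\cJ^W_{\bH/B})$, hence are isomorphic over $B$, for every $t\in V$ and a fortiori for $t\in V_{\ti{Z}}^G$. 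The one delicate point — and the reason the statement is not purely formal — is that one cannot take $\gL$ to be all of $\Gamma$: this subgroup need not be discrete (it fails to be a lattice precisely when the $(1,1)$-part of $K^G$ is not defined over $\bQ$). What rescues the argument is that the Hodge symmetry supplied by Lemma~\ref{lem-morStHdgminW} forces $\Gamma$ to at least span $V_{\ti{Z}}^G$ over $\bR$, and from a spanning finitely generated subgroup one can always extract a full-rank lattice.
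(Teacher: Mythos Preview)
Your proposal is correct and follows essentially the same route as the paper. The paper defines $K$ as the kernel of $\imath^*$ restricted to $H^1(B,j_*\bH)^G$ (which coincides with your $K^G$), invokes Lemma~\ref{lem-morStHdgminW} to see that $K$ is a pure weight-$2$ Hodge structure with $(0,2)$-part equal to $V_{\ti{Z}}^G$, and concludes from this that $\phi(K)$ contains a lattice of $V_{\ti{Z}}^G$; you spell out the last step (extracting a basis from a spanning finitely generated subgroup) and the final isomorphism step more explicitly, but the argument is the same.
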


\begin{proof}[Proof of Proposition~\ref{pro-densepair0} and Lemma~\ref{lem-latticeisom}]
To prove Proposition~\ref{pro-densepair0}, it suffices by Theorem~\ref{thm-multsecMoi} to prove that the subset of $V_{\ti{Z}}^G$ parameterizing elliptic fibrations admitting a multi-section in the tautological family $\Pi$ is dense in $V_{\ti{Z}}^G$ (for the Euclidean topology).  By Lemma~\ref{lem-Tormultsec}, it suffices to show that $V_{\ti{Z}}^G$ contains a dense subset $V_{\tors}$ such that $\eta + \exp(t) \in H^1(B, \cJ^W_{\bH/B})$ is torsion for all $t \in V_{\tors}$. 

We need the following two lemmas. Let $j_{\ti{Z}} : \ti{Z}^\st \cnec \ti{Z} \bss \imath^{-1}(\gD) \hto \ti{Z}$ be the open embedding and $\bH_{\ti{Z}} \cnec  (\imath^{-1}\bH)_{| \ti{Z}^\st}$. Let 
$$\phi : H^1(B,j_*\bH) \otimes \bQ \to  H^1(B,\cL_{\bH/B})$$ 
denote the map induced by $j_*\bH \to \cL_{\bH/B}$ introduced in~\eqref{exseq-fibjac} and let 
$$\imath^* : H^1(B,j_*\bH)^G \to H^1({\ti{Z}},j_{\ti{Z}*}\bH_{\ti{Z}})$$ 
be the restriction to $H^1(B,j_*\bH)^G$ of the pullback $H^1(B,j_*\bH) \to H^1({\ti{Z}},j_{\ti{Z}*}\bH_{\ti{Z}})$.

\begin{lem}\label{lem-Rgenlisse}
Let $K \cnec \ker\(\imath^* : H^1(B,j_*\bH)^G \to H^1({\ti{Z}},j_{\ti{Z}*}\bH_{\ti{Z}})\)$. Then $\phi(K_\bQ)$ is dense in $V_{\ti{Z}}^G$.
\end{lem}

In order to state the second lemma, recall that since $X$ is in the Fujiki class $\cC$, $c(\eta)$ is torsion by Proposition~\ref{pro-condK}. So there exist $m \in \bZ_{>0}$ and $\gb \in H^1(B, \cL_{\bH/B})$ such that $\exp(\gb) = m\eta$. Since $G$ is finite, up to replacing $m$ with a larger multiple of it, we may assume that $\gb \in H^1(B, \cL_{\bH/B})^G$.

\begin{lem}\label{lem-preimlisse}
Up to replacing $m$ with a larger multiple of it, there exists $\ga \in (H^1(B,j_*\bH)\otimes \bQ)^G$ such that $ \gb - \phi(\ga) \in V_{\ti{Z}}^G$. 
\end{lem}

Let us admit Lemma~\ref{lem-Rgenlisse} and~\ref{lem-preimlisse} for the moment and finish the proof. As $\phi(K_\bQ)$ is dense in $V_{\ti{Z}}^G$ by Lemma~\ref{lem-Rgenlisse} and $\gb - \phi(\ga) \in V_{\ti{Z}}^G$ by Lemma~\ref{lem-preimlisse}, the subset $V' \cnec \gb - \phi(\ga) - \phi(K_\bQ)$ is dense in $V_{\ti{Z}}^G$. If $t \in V'$, then $\gb - t \in \Ima(\phi)$, so $ m\eta - \exp(t) = \exp(\gb -t) \in H^1(B, \cJ^W_{\bH/B})$ is torsion.  It follows that  $\eta + \exp(t) \in H^1(B, \cJ_{\bH/B})$ is torsion whenever $t$ is in the dense subset  $V_{\tors} \cnec \frac{1}{m}V'$ of $V_{\ti{Z}}^G$, which proves Proposition~\ref{pro-densepair0}. To prove Lemma~\ref{lem-latticeisom}, note that Lemma~\ref{lem-Rgenlisse} implies that $\phi(K)$ contains a lattice $\gL$ of $V_{\ti{Z}}^G$. As $K \subset H^1(B,j_*\bH)$, we have $\cW_t \simeq \cW_{t + \gl}$ over $B$ for all $t \in V_{\ti{Z}}^G$ and $\gl \in \gL$. 
\end{proof}

Now we prove Lemma~\ref{lem-Rgenlisse} and~\ref{lem-preimlisse}. Both proofs use Hodge theory in an essential way.

\begin{proof}[Proof of Lemma~\ref{lem-Rgenlisse} and  Lemma~\ref{lem-preimlisse}]

Since $(\ti{Z},\imath^{-1}(\gD))$ is a log-smooth projective pair and since the local monodromies of $\bH$ around $\gD$ are assumed to be unipotent, by Lemma~\ref{lem-functW} $\cL_{\bH_{\ti{Z}}/\ti{Z}} = \imath^*\cL_{\bH/B}$ and the base change $W_{\ti{Z}} \cnec W \times_B \ti{Z} \to \ti{Z}$ by $\imath : \ti{Z} \to B$ of the minimal Weierstra{\ss} fibration $p : W \to B$ associated to $\bH$ is the minimal Weierstra{\ss} fibration associated to $\bH_{\ti{Z}}$. Also, since both $B$ and $\ti{Z}$ are projective manifolds and $\gD$ and $\imath^{-1}(\gD)$ are normal crossing divisors,
by Lemma~\ref{lem-morStHdgminW}
$K$ is a pure Hodge structure of weight 2 satisfying the Hodge symmetry and the restriction of $\phi$ to $K_\bQ$ is the projection of $K_\bQ$ to its $(0,2)$-part 
$$\ker\(H^1(B, \cL_{\bH/B})^G \to H^1(\ti{Z}, \cL_{\bH_{\ti{Z}}/\ti{Z}})\) = \ker\(H^1(B, \cL_{\bH/B})^G \to H^1(\ti{Z}, \imath^*\cL_{\bH/B})\) = V_{\ti{Z}}^G.$$ 
Hence $\phi(K_\bQ)$ is dense in $V_{\ti{Z}}^G$, which proves Lemma~\ref{lem-Rgenlisse}. 

The short exact sequence~\eqref{exseq-fibjac} together with its pullback by $\imath$ induces the following commutative diagram with exact rows.
$$
\begin{tikzcd}[cramped, row sep = 20, column sep = 25]
 H^1(B,j_*\bH) \ar[r,"\phi"] \ar[d, "\imath^*"]   & H^1(B,\cL_{\bH/B}) \ar[d, "\imath^{0,2}"]\ar[r, "\exp"]     & H^1(B,\cJ^W_{\bH/B}) \ar[d, "\imath^*"] \\
  H^1(\ti{Z},(j_{\ti{Z}})_*\bH_{\ti{Z}}) \ar[r, "\phi_{\ti{Z}}"]  & H^1(\ti{Z},\cL_{\bH_{\ti{Z}}/\ti{Z}}) \ar[r, "\exp"]        & H^1(\ti{Z},\cJ^{W_{\ti{Z}}}_{\bH_{\ti{Z}}/\ti{Z}})  
\end{tikzcd}
$$
Here in this proof, we use $\imath^{0,2}$ to denote the pullback $\imath^* : H^1(B,\cL_{\bH/B}) \to H^1(\ti{Z},\cL_{\bH_{\ti{Z}}/\ti{Z}})$.

Let $g: Y \cnec X \times_B \ti{Z} \to \ti{Z}$, which is isomorphic to the locally Weierstra{\ss} fibration $W_{\ti{Z}}^{\imath^*\eta} \to \ti{Z}$ twisted by $\imath^*\eta \in H^1(\ti{Z},\cJ^{W_{\ti{Z}}}_{\bH_{\ti{Z}}/\ti{Z}})$. Since $g$ has a multi-section by assumption and $Y$ is in the Fujiki class $\cC$, $\imath^*\eta  $ is torsion by Lemma~\ref{lem-multisecTor}. Therefore up to replacing $m$ with a larger multiple, $\exp(\imath^{0,2}(\gb)) = 0$, thus again up to replacing $m$ with a larger multiple of it, there exists $\ga_0 \in  H^1(\ti{Z},(j_{\ti{Z}})_*\bH_{\ti{Z}})^G$ such that $\phi_{\ti{Z}}(\ga_0) = \imath^{0,2}(\gb)$.

Let $H \cnec H^1(B,j_*\bH)$ and $H_{\ti{Z}} \cnec H^1(\ti{Z},(j_{\ti{Z}})_*\bH_{\ti{Z}}) $ for simplicity. As $H_{\ti{Z}}$ is a polarized Hodge structure, there exists a $\bQ$-Hodge structure $H'_{\bQ}$ such that $H_{\ti{Z}} \otimes \bQ = \imath^*H^G_\bQ \oplus H'_{\bQ}$. As $\phi_{\ti{Z}}$ is the projection of the Hodge structure to its $(0,2)$-part by Lemma~\ref{lem-morStHdgminW}, $\phi_{\ti{Z}}  : \imath^*H^G_\bQ \oplus H'_{\bQ} \to \imath^{0,2}(H^{0,2})^G \oplus {H'}^{0,2}$ preserves the two factors in the direct sum where ${H}^{0,2}$ and ${H'}^{0,2}$ are the $(0,2)$-parts of $H_\bQ$ and $H'_\bQ$. So since $\gb \in H^1(B, \cL_{\bH/B})^G =(H^{0,2})^G$ where the equality follows from Lemma~\ref{lem-morStHdgminW}, there exists $\ga \in H^G_\bQ$ such that $\phi_{\ti{Z}}(\imath^*\ga) = \imath^{0,2}(\gb)$. As $\phi_{\ti{Z}}(\imath^*\ga) = \imath^{0,2}(\phi(\ga))$, we have $\imath^{0,2}(\gb - \phi(\ga)) = 0$. Hence $\gb - \phi(\ga) \in V_{\ti{Z}}^G$.
\end{proof}

\begin{rem}
In Lemma~\ref{lem-latticeisom}, the hypotheses that the total space $X$ is in the Fujiki class $\cC$ and $Y \to Z$ has a multi-section are unnecessary.
\end{rem}

\ssec{A stability result}\label{ssec-stabres}
\hfill

Let $f : W^\eta \to B$ be an $\eta$-twisted locally Weierstra{\ss} fibration satisfying Hypotheses~\ref{hyp-KahlerSection} for some $\eta \in H^1(B,\cJ_{\bH/B}^W)$. Let $Z \subset B$ be a subvariety of $B$. In the last paragraph, we showed that if $f^{-1}(\ol{Z\bss \gD}) \to \ol{Z\bss \gD} =: Z_0$ has a multi-section, then the subfamily parameterized by 
$$V_{\ti{Z}_0} = \ker \( \ti{\imath}_0^* :H^1(B,\cL_{\bH/B}) \to H^1(\ti{Z}_0,\ti{\imath}_0^*\cL_{\bH/B})\)$$ 
of the tautological family associated to $f$ is an algebraic approximation of $f$ where $\ti{\imath}_0 : \ti{Z}_0 \to Z_0 \subset B$ is a log-desingularization of $(Z_0,Z_0 \cap \gD)$. Our next application of Lemma~\ref{lem-morStHdgminW} is to show that this subfamily preserves the fibration $f^{-1}(Z)  \to Z$. 

\begin{pro}\label{pro-desingpre}
Let $\eta \in H^1(B,\cJ_{\bH/B}^W)$ and $f : W^\eta \to B$ be the associated locally Weierstra{\ss} fibration.
Assume that $f$ satisfies Hypotheses~\ref{hyp-KahlerSection}. Let $Z \subset B$ be a subvariety of $B$ and $Z_0 \cnec \ol{Z \bss \gD}$. Let $\ti{\imath}_0 : \ti{Z}_0 \xto{\tau_0} Z_0 \hto B$ be a log-desingularization of $(Z_0,Z_0 \cap \gD)$. Then the subfamily of the tautological family associated to $f$ parameterized by 
$$ V_{\ti{Z}_0} \cnec \ker \( \ti{\imath}_0^* :H^1(B,\cL_{\bH/B}) \to H^1(\ti{Z}_0,\ti{\imath}_0^*\cL_{\bH/B})\)$$
 preserves the fibration $f^{-1}(Z) \to Z$. 
\end{pro}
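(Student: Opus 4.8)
The plan is to reduce the assertion to Lemma~\ref{lem-presWtauto} by means of a Hodge-theoretic comparison between the cohomology of $Z$ and that of the partial resolution $\ti{Z}_0$.

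First I would unwind what must be proved. By the construction of the tautological family (Proposition-Definition~\ref{pro-locWeimodHdg}; cf.\ the proof of Lemma~\ref{lem-presWtauto}), its restriction to $Z \times V_{\ti{Z}_0}$ is, up to isomorphism over $B$, the locally Weierstra{\ss} fibration over $Z \times V_{\ti{Z}_0}$ twisted by $\pr_Z^*(\imath^*\eta) + \wt{\exp}(\Psi^*\xi)$, where $\imath : Z \hto B$ is the inclusion, $\Psi : Z \times V_{\ti{Z}_0} \hto B \times V$ the induced closed embedding, and $\Psi^*\xi \in H^1(Z, \imath^*\cL_{\bH/B}) \otimes H^0(V_{\ti{Z}_0}, \cO_{V_{\ti{Z}_0}})$ the element corresponding to the linear map $t \mapsto \imath^* t$. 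Thus the subfamily parameterized by $V_{\ti{Z}_0}$ preserves $f^{-1}(Z) \to Z$ as soon as $\wt{\exp}(\Psi^*\xi) = 0$, and by the pullback of~\eqref{exseq-fibjac} to $Z \times V_{\ti{Z}_0}$ this holds whenever $\imath^*(V_{\ti{Z}_0})$ is contained in the image of the map $\phi_Z : H^1(Z, \imath^{-1}(j_*\bH)) \to H^1(Z, \imath^*\cL_{\bH/B})$ induced by $\varphi$ of~\eqref{exseq-fibjac}. So it suffices to show $\imath^*(V_{\ti{Z}_0}) \subseteq \Ima(\phi_Z)$; and when no irreducible component of $Z$ lies in $\gD$ (so that $Z_0 = Z$) I would instead directly establish the sharper equality $V_{\ti{Z}_0} = V_Z \cnec \ker\(\imath^* : H^1(B, \cL_{\bH/B}) \to H^1(Z, \imath^*\cL_{\bH/B})\)$, the inclusion $V_Z \subseteq V_{\ti{Z}_0}$ being automatic since $\ti{\imath}_0$ factors through $Z$, which is precisely the content of Lemma~\ref{lem-presWtauto} in this case.

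The core of the argument is Hodge-theoretic. Since the local monodromies of $\bH$ around $\gD$ are unipotent (Hypotheses~\ref{hyp-KahlerSection}\,\ref{hyp-KahlerSectioniv}), Lemma~\ref{lem-functW} shows that $W \times_B \ti{Z}_0 \to \ti{Z}_0$ is again a minimal Weierstra{\ss} fibration and that $\ti{\imath}_0^*\cL_{\bH/B} \simeq \cL_{\bH_{\ti{Z}_0}/\ti{Z}_0}$. Hence, by Lemma~\ref{lem-morStHdgminW}, both $H^1(B, \cL_{\bH/B})$ and $H^1(\ti{Z}_0, \ti{\imath}_0^*\cL_{\bH/B})$ are the $(0,2)$-parts of polarized weight-$2$ Hodge structures on $H^1(B, j_*\bH)$ and $H^1(\ti{Z}_0, j_{\ti{Z}_0*}\bH_{\ti{Z}_0})$, and $\ti{\imath}_0^* : H^1(B, j_*\bH) \to H^1(\ti{Z}_0, j_{\ti{Z}_0*}\bH_{\ti{Z}_0})$ is a morphism of Hodge structures inducing on the $(0,2)$-parts the pullback $\ti{\imath}_0^*$ on $\cL_{\bH/B}$. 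Its kernel $K$ is therefore a sub-Hodge structure, and $\phi : H^1(B, j_*\bH)_\bQ \to H^1(B, \cL_{\bH/B})$ restricts to an isomorphism from $K^{0,2}$ onto $V_{\ti{Z}_0}$; so any $t \in V_{\ti{Z}_0}$ lifts to some $\kappa \in K$ with $\ti{\imath}_0^*\kappa = 0$. By functoriality of~\eqref{exseq-fibjac} one has $\imath^* t = \phi_Z(\imath^*\kappa)$, so everything is reduced to the topological statement that the restriction $H^1(Z, \imath^{-1}(j_*\bH)) \to H^1(\ti{Z}_0, j_{\ti{Z}_0*}\bH_{\ti{Z}_0})$ is injective on the image of $H^1(B, j_*\bH)$. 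For the components of $Z$ not contained in $\gD$, I would deduce this from the fact that $\tau_0 : \ti{Z}_0 \to Z_0$ is a resolution — via the decomposition theorem for $\tau_0$, so that $\imath_0^{-1}(j_*\bH) \otimes \bQ$ is a direct summand of $R\tau_{0*}\tau_0^*(\imath_0^{-1}(j_*\bH)) \otimes \bQ$ — or, alternatively, from strictness of the morphism of Hodge structures together with the Leray description of these groups inside $H^2(W, \bQ)$ and $H^2(W \times_B \ti{Z}_0, \bQ)$ furnished by Lemma~\ref{lem-specdegE2}.

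The step I expect to be the main obstacle is this comparison between the cohomology of the singular variety $Z$ and that of $\ti{Z}_0$, and especially the behaviour over the irreducible components of $Z$ contained in $\gD$, about which $\ti{Z}_0$ records nothing: there the limiting Hodge structure is no longer of pure $(0,2)$-type and $\phi_Z$ is not surjective. I expect to handle this by using that every member of the tautological family has discriminant locus exactly $\gD$ (Lemma~\ref{lem-dicstaut}), together with the fact that, the fibres of $p$ over $\gD$ being nodal (rather than cuspidal) cubics under the unipotent monodromy assumption, $\varphi : j_*\bH \to \cL_{\bH/B}$ becomes surjective after applying $(-) \otimes_\bZ \cO_B$ (equivalently $R^1p_*\bC \to R^1p_*\cO_W$ is onto); over such a component one then expects $\imath^* t$ to lie automatically in $\Ima(\phi_Z)$, which, combined with the vanishing of $\imath^* t$ over $Z_0$ obtained above, yields $\imath^*(V_{\ti{Z}_0}) \subseteq \Ima(\phi_Z)$ and hence the proposition.
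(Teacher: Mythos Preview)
Your reduction is not quite right, and this is where the genuine gap lies. Knowing that $\imath^*(V_{\ti{Z}_0}) \subset \Ima(\phi_Z)$ is \emph{not} enough to conclude $\wt{\exp}(\Psi^*\xi)=0$: the kernel of $\wt{\exp}$ on $H^1(Z\times V_{\ti{Z}_0},\pr_1^*\imath^*\cL_{\bH/B})$ is the image of the \emph{integral} group $H^1(Z\times V_{\ti{Z}_0},\pr_1^{-1}\imath^{-1}j_*\bH)\simeq H^1(Z,\imath^{-1}j_*\bH)$, i.e.\ it consists of \emph{constant} classes, whereas $\Psi^*\xi$ is the tautological linear element. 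So what one must prove (and what the paper proves, via Lemma~\ref{lem-presWtauto}) is the stronger statement $\imath^*_{|V_{\ti{Z}_0}}=0$. For the components of $Z$ not contained in $\gD$ your argument does in fact reach this conclusion, since once you lift $t$ to $\kappa\in K_\bC$ and use injectivity of $H^1(Z_0,\imath_0^{-1}j_*\bH)_\bC\to H^1(\ti{Z}_0,\,\cdot\,)$ you get $\imath_0^*\kappa=0$, hence $\imath_0^*t=0$. But for the components of $Z$ contained in $\gD$ your fallback to the weaker condition leaves a real hole, and the proposed fix (surjectivity of $\varphi\otimes\cO_B$, fibres nodal) neither holds in the needed form nor yields what you want.

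The missing idea is the one the paper supplies. Over $\gD$ every fibre of $p$ is a singular plane cubic, hence has \emph{rational} normalization; this is all that matters, and the nodal/cuspidal distinction is irrelevant. Consequently, if $\tau_\gD:\ti{\gD}\to\gD$ is a resolution and $\ti{q}_\gD:\ti{Y}_\gD\to\ti{\gD}$ a desingularization of $W\times_B\ti{\gD}$, then $\ti{q}_\gD$ is generically a $\bP^1$-bundle with a section, and one checks (Lemma~\ref{lem-pure11}) that $H^2(\ti{Y}_\gD,\bQ)/\ti{q}_\gD^*H^2(\ti{\gD},\bQ)$ is pure of type $(1,1)$. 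Strictness of Hodge morphisms then forces the kernel $K_0$ of the pullback $H^2(W,\bQ)/p^*H^2(B,\bQ)\to H^2(\ti{Y}_\gD,\bQ)/\ti{q}_\gD^*H^2(\ti{\gD},\bQ)$ to have the \emph{same} $(0,2)$-part as the source, namely $H^1(B,\cL_{\bH/B})$ (Lemma~\ref{lem-nul}). The upshot is that one may enlarge your target $\ti{Z}_0$ to $\ti{Z}\cnec\ti{Z}_0\sqcup\ti{\gD}$ \emph{without shrinking} $V_{\ti{Z}_0}$: the kernel $K$ of the combined pullback to $\ti{Z}$ still has $(0,2)$-part equal to $V_{\ti{Z}_0}$. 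Now $\ti{Z}\to Z$ is proper and surjective (after assuming $\gD\subset Z$, which is harmless), so the Leray argument gives injectivity of $H^1(Z,R^1q_*\bQ)\to H^1(\ti{Z},R^1\ti{q}_*\bQ)$ (Lemma~\ref{lem-resinj}), and a short mixed-Hodge diagram chase then yields $\imath^*_{|V_{\ti{Z}_0}}=0$ on \emph{all} of $Z$. Your decomposition-theorem remark is in the right spirit for $Z_0$, but you cannot bypass the $\gD$-part; the rationality of the fibres over $\gD$ is the substitute for the missing desingularization data there.
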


We first prove some auxiliary results before we start the proof of Proposition~\ref{pro-desingpre}. Let $p : W \to B$ be a Weierstra{\ss} fibration over a projective manifold and $\psi : Z \to B$ a map from a projective manifold $Z$.

\begin{lem}\label{lem-pure11}
 Assume that the image of $\psi : Z \to B$ is contained in the discriminant locus $\gD$. Let $q : Y \cnec W \times_B Z \to Z$ denote the base change fibration and $\tau : \ti{Y} \to Y$ a desingularization of $Y$. Let $\ti{q} \cnec q \circ \tau$. Then the quotient $H^2(\ti{Y},\bZ)  / \ti{q}^*H^2(Z,\bZ)$ is a pure Hodge structure of weight two concentrated in bi-degree $(1,1)$.
\end{lem}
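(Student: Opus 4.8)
The plan is to show that the $(2,0)$- and $(0,2)$-parts of $H^2(\ti{Y},\bC)$ are pulled back from $Z$: granting this, the quotient carries a pure Hodge structure of weight $2$ whose $(2,0)$- and $(0,2)$-parts vanish, hence is of type $(1,1)$. The geometric input is that $\ti{q} : \ti{Y} \to Z$ has rational general fibre, which comes from the fact that a Weierstra{\ss} fibration restricted over its discriminant has singular (nodal or cuspidal) cubic fibres, whose normalisations are $\bP^1$.

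First I would unwind the fibre structure. We may assume $Z$ irreducible, so that $Y = W \times_B Z$ is irreducible, because the fibres of $q$ are the fibres of $p$, which are irreducible cubic curves; hence $\ti{Y}$ is irreducible as $\tau$ is bimeromorphic. Since $\psi(Z) \subset \gD$, every fibre $Y_z = W_{\psi(z)}$ is a singular plane cubic, so it has arithmetic genus $1$ but geometric genus $0$. I would choose the desingularization $\tau$ so that it restricts to a biholomorphism over the smooth locus of $Y$; since the zero-section $\gS \subset W$ lies in the fibrewise smooth locus $W^\#$, and $W^\# \times_B Z$ is open in $Y$ and smooth (being smooth over the manifold $Z$), the section $\gS \times_B Z \simeq Z$ lies in the smooth locus of $Y$ and therefore lifts to a section of $\ti{q}$. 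Consequently $\ti{q}$ has connected fibres, and its general fibre, being a smooth connected curve of geometric genus $0$, is isomorphic to $\bP^1$.

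Next I would prove that holomorphic $2$-forms on $\ti{Y}$ descend to $Z$. For general $z$, writing $F \simeq \bP^1$ for the fibre $\ti{Y}_z$, the conormal sequence (valid since $\ti{q}$ is smooth along $F$) reads $0 \to \cO_F^{\oplus \dim Z} \to \Omega^1_{\ti{Y}}|_F \to \Omega^1_F \to 0$ with $\Omega^1_F \simeq \cO_{\bP^1}(-2)$; taking $\wedge^2$ and using $H^0(\bP^1,\cO_{\bP^1}(-2)) = 0$ gives $H^0(F,\Omega^2_{\ti{Y}}|_F) = H^0(F,\wedge^2 \ti{q}^*\Omega^1_Z|_F)$. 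Thus the restriction of any holomorphic $2$-form on $\ti{Y}$ to a general fibre is ``horizontal'', and by the standard fact that holomorphic forms on a fibration with rationally connected general fibre are pulled back from the base, the map $\ti{q}^* : H^0(Z,\Omega^2_Z) \to H^0(\ti{Y},\Omega^2_{\ti{Y}})$ is an isomorphism; by complex conjugation the induced map on $(0,2)$-parts is an isomorphism as well.

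Finally, $\ti{q}^* : H^2(Z,\bQ) \to H^2(\ti{Y},\bQ)$ is injective (it is split by $s^*$ for the section $s$ of $\ti{q}$ constructed above, or simply because $\ti{q}$ is a surjective morphism of compact K\"ahler manifolds) and is a morphism of pure Hodge structures of weight $2$. By strictness, the quotient $Q \cnec H^2(\ti{Y},\bZ)/\ti{q}^*H^2(Z,\bZ)$ is a pure Hodge structure of weight $2$ with $F^2 Q = H^{2,0}(\ti{Y})/\ti{q}^* H^{2,0}(Z) = 0$, whence $Q^{2,0} = Q^{0,2} = 0$ and $Q$ is concentrated in bidegree $(1,1)$. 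I expect the main obstacle to be the previous step: verifying that the general fibre of $\ti{q}$ is indeed $\bP^1$ (which is why choosing $\tau$ carefully and using the zero-section is essential) and invoking the descent of holomorphic $2$-forms, which is what forces the $(2,0)$- and $(0,2)$-parts of $H^2(\ti{Y})$ to come from $Z$.
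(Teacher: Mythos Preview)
Your proposal is correct and follows essentially the same strategy as the paper: reduce to showing that $\ti{q}^* : H^0(Z,\Omega^2_Z) \to H^0(\ti{Y},\Omega^2_{\ti{Y}})$ is an isomorphism, using that the zero-section lifts to a section of $\ti{q}$ and that the general fibre of $\ti{q}$ is $\bP^1$.

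The only difference is in the execution of the key step. You run the conormal sequence along a general fibre and then appeal to the ``standard fact'' that holomorphic forms on a fibration with rationally connected general fibre are pulled back from the base. The paper instead argues directly and more elementarily: given the section $\gs : Z \to \ti{Y}$, it considers the pushforward $\chi : \ti{q}_*\Omega^2_{\ti{Y}} \to \Omega^2_Z$ induced by $\gs^*$, notes that $\chi$ is generically an isomorphism (since $\ti{q}$ is a $\bP^1$-bundle over a dense open) and injective (since $\ti{q}_*\Omega^2_{\ti{Y}}$ is torsion-free), hence $h^{2,0}(\ti{Y}) \le h^{2,0}(Z)$, which combined with injectivity of $\ti{q}^*$ gives the isomorphism. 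Your route is perfectly fine but leans on an outside result; the paper's is self-contained and avoids any analysis of $\Omega^1_{\ti{Y}}|_F$.
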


\begin{proof}
First we note that since $Y$ and $\tau$ are projective, $\ti{Y}$ is also projective. Therefore $H^2(\ti{Y},\bZ)$ is a pure Hodge structure of weight two and it suffices to show that $\ti{q}^* : H^0(Z, \gO^2_{Z}) \to H^0(\ti{Y}, \gO^2_{\ti{Y}}) $ is an isomorphism. The morphism $\ti{q}^*$ is injective because $\ti{q}$ is a surjective map between  projective manifolds. All we need to prove is that $h^{2,0}(Z) \ge h^{2,0}(\ti{Y})$. Since the zero-section $\gS \subset W$ of $p : W \to B$ is contained in the smooth locus of $f$, its pullback to $q : Y \to Z$ lies in the smooth part of $Y$, so  $\ti{q}$ has a holomorphic section $\gs : Z \to \ti{Y}$. Consider the pushforward 
$$\chi :  \ti{q}_*\gO^2_{\ti{Y}} \to  (\ti{q} \circ \gs)_*\gO^2_{Z} = \gO^2_{Z}$$
 of $\gO^2_{\ti{Y}} \to \gs_*\gO^2_{Z}$ by $\ti{q} $. 
Since $\ti{q}$ is a $\bP^1$-bundle over a dense Zariski open of $Z$, $\chi$ is generically an isomorphism. Moreover since $\ti{q}_*\gO^2_{\ti{Y}}$ is torsion free, $\chi$ is injective. It follows that
 $h^{2,0}(Z) \ge h^{2,0}(\ti{Y})$. 
\end{proof}

Lemma~\ref{lem-pure11} will be used to prove the following result.

\begin{lem}\label{lem-nul}
In the setting of Lemma~\ref{lem-pure11}, assume that $p : W \to B$ is a minimal Weierstra{\ss} fibration. If we define 
$$K_0 \cnec \ker\( H^2(W,\bZ)/p^*H^2(B,\bZ) \xto{\psi^*} H^2(Y,\bZ)  / q^*H^2(Z,\bZ)\),$$ 
then $K_0$ is a pure sub-Hodge structure of $H^2(W,\bZ)/p^*H^2(B,\bZ)$ of weight 2 whose $(0,2)$-part coincides with that of $H^2(W,\bZ)/p^*H^2(B,\bZ)$, which is $H^2(W,\cO_W)/p^*H^2(B,\cO_B)$.
\end{lem}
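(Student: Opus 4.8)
The plan is to reduce everything to the corresponding purity statement for the desingularization $\ti{Y}$, namely Lemma~\ref{lem-pure11}, once one checks that passing from $Y$ to $\ti{Y}$ does not enlarge the relevant kernel. Throughout set $H_W \cnec H^2(W,\bZ)/p^*H^2(B,\bZ)$, $H_Y \cnec H^2(Y,\bZ)/q^*H^2(Z,\bZ)$ and $H_{\ti{Y}} \cnec H^2(\ti{Y},\bZ)/\ti{q}^*H^2(Z,\bZ)$. Since $p$ is a minimal Weierstra{\ss} fibration with normal crossing discriminant, $W$ has at worst rational singularities, so $H^2(W,\bZ)$ is a pure Hodge structure of weight $2$ with the Hodge symmetry and $H^2(W,\bC) \to H^2(W,\cO_W)$ is the projection onto its $(0,2)$-part (as recalled in the proof of Lemma~\ref{lem-morStHdgminW}). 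As the zero-section $\gS \subset W$ splits $p^*$, the quotient $H_W$ is again a pure Hodge structure of weight $2$, whose $(0,2)$-part is $H^2(W,\cO_W)/p^*H^2(B,\cO_B)$ (isomorphic to $H^1(B,R^1p_*\cO_W)$ by~\eqref{isom-Leray11OW}). Writing $\pi : Y \to W$ for the first projection and $\ti{\pi} \cnec \pi \circ \tau : \ti{Y} \to W$ for its lift, the map denoted $\psi^*$ in the definition of $K_0$ is the one induced by $\pi^*$; all of $\pi^*$, $\ti{\pi}^*$ and $\tau^* : H^2(Y,\bZ) \to H^2(\ti{Y},\bZ)$ are morphisms of mixed Hodge structures, and they descend to maps $H_W \to H_Y$, $H_W \to H_{\ti{Y}}$ and $H_Y \to H_{\ti{Y}}$ respectively, which we denote by the same symbols. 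Since $H_W$ is pure, $K_0 = \ker(\pi^* : H_W \to H_Y)$ is a sub-Hodge structure of $H_W$, automatically pure of weight $2$; this is the first assertion.

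For the assertion on the $(0,2)$-part, the step I would establish is the equality of kernels $\ker(\pi^* : H_W \to H_Y) = \ker(\ti{\pi}^* : H_W \to H_{\ti{Y}})$. One inclusion is immediate because $\ti{\pi}^* = \tau^* \circ \pi^*$ on $H_W$. For the other, observe that $\pi^*(H_W) \subset H_Y$ is the image of the pure weight-$2$ Hodge structure $H_W$ under a morphism of mixed Hodge structures, hence is itself pure of weight $2$; in particular it meets the subspace of $H_Y$ of weights $\le 1$ in $0$. On the other hand, since $\tau$ is a desingularization of the projective variety $Y$, the kernel of $\tau^* : H^2(Y,\bQ) \to H^2(\ti{Y},\bQ)$ has weights $\le 1$; dividing out by the pure weight-$2$ subspace $q^*H^2(Z,\bQ)$, the kernel of $\tau^* : H_Y \to H_{\ti{Y}}$ still has weights $\le 1$. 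Therefore $\tau^*$ is injective on $\pi^*(H_W)$, which forces $\ker(\ti{\pi}^* : H_W \to H_{\ti{Y}}) = \ker(\pi^* : H_W \to H_Y) = K_0$.

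It then remains to apply Lemma~\ref{lem-pure11}: as $\ti{Y}$ is smooth projective, $\ti{\pi}^* : H_W \to H_{\ti{Y}}$ is a morphism of pure Hodge structures of weight $2$, and by Lemma~\ref{lem-pure11} the target $H_{\ti{Y}}$ is concentrated in bi-degree $(1,1)$; hence $\ti{\pi}^*$ vanishes on the $(0,2)$-part of $H_W$. Consequently $H_W^{0,2} \subset \ker\ti{\pi}^* = K_0 \subset H_W$, so that $K_0^{0,2} = H_W^{0,2} = H^2(W,\cO_W)/p^*H^2(B,\cO_B)$, as claimed.

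The only non-formal point is the identity $\ker(\pi^* : H_W \to H_Y) = \ker(\ti{\pi}^* : H_W \to H_{\ti{Y}})$ — that resolving the (in general singular) variety $Y$ destroys no class coming from $H_W$. This rests on the strictness of morphisms of mixed Hodge structures together with the injectivity of a desingularization on the top-weight part of $H^2$; the remaining arguments are diagram chases, plus the input of Lemma~\ref{lem-pure11} and the rational-singularities properties of $W$ already used throughout this section.
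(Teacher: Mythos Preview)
Your proof is correct and follows essentially the same route as the paper: both arguments reduce to Lemma~\ref{lem-pure11} by exploiting that $\tau^*$ is injective on the weight-$2$ part (equivalently, that $\ker\tau^*$ has weights $\le 1$), together with the purity of $H_W$ and strictness of morphisms of mixed Hodge structures. The only difference is organizational: you isolate the equality $\ker(\pi^*:H_W\to H_Y)=\ker(\ti\pi^*:H_W\to H_{\ti Y})$ as an explicit intermediate step, whereas the paper shows the equivalent statement $\psi^*(F^1H)=\psi^*(H)$ by passing through the top graded piece $\Gr^0_\bW H'$ directly.
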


\begin{proof}
Since $p$ is assumed to be minimal, $W$ has at worst rational singularities. It follows from~\cite[Corollary B.2.8]{KirschnerPeriodSymC} that the underlying Hodge structure of $H^2(W,\bZ)/p^*H^2(B,\bZ)$ is pure of weight 2, and so is $K_0$. Let $H \cnec  H^2(W,\bC)/p^*H^2(B,\bC)$ and $H' \cnec H^2(Y,\bC)/q^*H^2(Z,\bC)$ for simplicity. The map $\psi^*$ is a morphism of mixed Hodge structures and it suffices to show that $\psi^*(F^1H) = \psi^*(H)$ to prove the assertion about the $(0,2)$-part of $K_0$.

To this end, let $\ga \in H$. Let $\bar{\psi} : H \to \Gr^0_\bW H'$ be the composition of $\psi^*$ with the projection $H' \to \Gr^0_\bW H'$ (here, $\bW$ denotes the weight filtration of the mixed Hodge structure $H'$). By Lemma~\ref{lem-pure11}, $\Gr^0_\bW H' \subset H^2(\ti{Y},\bC)  / \ti{q}^*H^2(Z,\bC)$ is a pure Hodge structure of weight two concentrated in bi-degree $(1,1)$, so $\bar{\psi}(H) = F^1\Gr^0_\bW H' \cap \bar{\psi}(H) =  \bar{\psi}(F^1H)$ where the second equality follows from the strictness of morphisms of pure Hodge structures~\cite[Lemma 7.23]{VoisinI}. Thus there exist $\beta \in F^1H$ and $\gamma \in \bW_{-1}H'$ such that $\psi^*(\ga) = \psi^*(\gb) + \gamma$. Since $\Ima\psi^* \cap \bW_{-1}H' = \psi^*(\bW_{-1}H) = 0$ as $H$ is pure, we have $\gamma = \psi^*(\ga - \gb) = 0$. Hence $\psi^*(\ga) = \psi^*(\gb) \in \psi^*(F^1H)$.
\end{proof}

\begin{proof}[Proof of Proposition~\ref{pro-desingpre}]
By Lemma~\ref{lem-presWtauto}, it suffices to prove that the restriction to   
$V_{\ti{Z}_0}$
of the pullback $\imath^* : H^1(B,\cL_{\bH/B}) \to H^1(Z,\imath^*\cL_{\bH/B})$ by the inclusion $\imath : Z \hto B$ is zero. Note that this statement does not depend on $\eta$.

For simplicity, we may assume that $Z$ contains $\gD$. Let $\tau_{\gD} : \ti{\gD} \to \gD$ be a desingularization of $\gD$ and let 
$$\tau \cnec (\tau_0 \sqcup \tau_{\gD}) : \ti{Z} \cnec \ti{Z}_0  \sqcup \ti{\gD}\to Z.$$  We denote by $p : W\to B$  the minimal Weierstra{\ss} fibration associated to $\bH = (R^1f_*\bZ)_{|B \bss \gD}$ and let 
\begin{equation*}
\begin{aligned}
q & :Y \cnec W \times_B Z \to Z, \\
\ti{q}_0 & : \ti{Y}_0 \cnec W \times_B \ti{Z}_0 \to \ti{Z}_0, \\ 
\ti{q}_{\gD} & : \ti{D} \cnec W \times_B \ti{\gD} \to \ti{\gD}, \\
\ti{q} & : \ti{Y} \cnec W \times_B \ti{Z} \to \ti{Z}
\end{aligned}
\end{equation*}
be various base changes of $p : W \to B$. Let $\ti{\imath} \cnec \imath \circ \tau$ and
$$K \cnec \ker\( \ti{\imath}^* : H^2(W,\bC)/p^*H^2(B,\bC) \to H^2(\ti{Y},\bC)/\ti{q}^*H^2(\ti{Z},\bC) \).$$
We have a commutative diagram
\begin{equation}\label{diagfibW}
\begin{tikzcd}[cramped, row sep = 20, column sep = 30]
K \ar[r, hook]  \ar[d, two heads, "\pi"] & \ddfrac{H^2(W,\bC)}{p^*H^2(B,\bC)} \ar[r,"\ga"]  \ar[d] & \ddfrac{H^2(Y,\bC)}{{q}^*H^2(Z,\bC)} \ar[d] \ar[r,"\gb"] & \ddfrac{H^2(\ti{Y},\bC)}{(\tau \circ \ti{q})^*H^2(Z,\bC)} \ar[r,"\gamma"]  & \ddfrac{H^2(\ti{Y},\bC)}{\ti{q}^*H^2(\ti{Z},\bC)} \ar[d] \\  
V_{\ti{Z}_0} \ar[r, hook]  & H^1(B,\cL_{\bH/B})  \ar[r]  & H^1({Z},\imath^*\cL_{\bH/B})  \ar[rr] & & H^1(\ti{Z},\ti{\imath}^*\cL_{\bH/B}) 
\end{tikzcd}
\end{equation}
where the arrows are defined as follows. The horizontal arrows (except the leftmost ones) are induced by pullbacks under various morphisms. The map  $H^2(Y,\bC)/{q}^*H^2(Z,\bC) \to H^1({Z},\imath^*\cL_{\bH/B})$ is defined to be the composition
$$H^2(Y,\bC)/{q}^*H^2(Z,\bC) \to H^2(Y,\cO_Y)/{q}^*H^2(Z,\cO_Z) \simeq H^1(Z,R^1q_*\cO_Y) \simeq H^1({Z},\imath^*R^1p_*\cO_W) \simeq H^1({Z},\imath^*\cL_{\bH/B}),$$
where the first and the third isomorphisms result from Lemma~\ref{lem-specdegE2} and Lemma~\ref{lem-isophi} respectively, and the second from the Grauert base change theorem~\cite[Theorem III.4.6.(1)]{scvVII} since $q : Y \to Z$ is the base change of the flat fibration $p: W \to B$. The other vertical arrows are defined similarly except for $\pi$. As $$H^1(\ti{Z},\ti{\imath}^*\cL_{\bH/B}) = H^1(\ti{\gD},\ti{\imath}_{\gD}^*\cL_{\bH/B}) \oplus H^1(\ti{Z}_0,\ti{\imath}_0^*\cL_{\bH/B})$$  where $\ti{\imath}_i \cnec \imath \circ \tau_i$ and the composition 
\begin{equation*}
\begin{tikzcd}[cramped, row sep = 20, column sep = 30]
K \ar[r] & \ddfrac{H^2(W,\bC)}{p^*H^2(B,\bC)} \ar[r] & H^1(B,\cL_{\bH/B}) \ar[r, swap, "\ti{\imath}^*"] \ar[rr, bend left = 15, "\ti{\imath}_0^*"] & H^1(\ti{Z},\ti{\imath}^*\cL_{\bH/B}) \ar[r, swap, "\pr_2"]  & H^1(\ti{Z}_0,\ti{\imath}_0^*\cL_{\bH/B}) 
\end{tikzcd}
\end{equation*}
is zero, the image of the composition $K \to H^2(W,\bC)/p^*H^2(B,\bC) \to H^1(B,\cL_{\bH/B})$
is contained in $V_{\ti{Z}_0}$, which defines $\pi : K \to V_{\ti{Z}_0}$ in~\eqref{diagfibW}.

\begin{lem}
The map $\pi : K \to V_{\ti{Z}_0} $ is the projection of the (pure) Hodge structure $K$ of weight 2 onto its $(0,2)$-part. In particular, $\pi$ is surjective.
\end{lem}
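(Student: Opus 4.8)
The plan is to read everything off the fact that $K$, being the kernel of a morphism of mixed Hodge structures out of a pure Hodge structure of weight $2$, is automatically a \emph{pure sub-Hodge structure of weight $2$}, together with the elementary observation that the restriction of a $(0,2)$-projection to a pure sub-Hodge structure is the $(0,2)$-projection of that sub-structure (hence surjective onto its $(0,2)$-part). First I would recall that since $p$ is minimal, $W$ has at worst rational singularities, so by the results used in the proof of Lemma~\ref{lem-morStHdgminW} the quotient $H^2(W,\bZ)/p^*H^2(B,\bZ)$ underlies a pure Hodge structure of weight $2$ whose $(0,2)$-part is $H^2(W,\cO_W)/p^*H^2(B,\cO_B)$, canonically identified with $H^1(B,\cL_{\bH/B})$ through Lemma~\ref{lem-specdegE2} and Lemma~\ref{lem-isophi}. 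Next, $\ti{\imath}^*$ is a morphism of mixed Hodge structures between the cohomologies of the projective varieties $W$ and $\ti{Y}$, compatible with $p^*$ and $\ti{q}^*$, so it descends to a morphism of mixed Hodge structures on the quotients; its kernel $K$ is therefore a sub-mixed-Hodge structure of the pure structure $H^2(W,\bC)/p^*H^2(B,\bC)$, hence pure of weight $2$. Since by construction $\pi$ is the restriction to $K$ of the $(0,2)$-projection $H^2(W,\bC)/p^*H^2(B,\bC)\to H^1(B,\cL_{\bH/B})$, it is precisely the $(0,2)$-projection of the Hodge structure $K$, with image $K^{0,2}$.

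It then remains to identify $K^{0,2}$ with $V_{\ti{Z}_0}$. For this I would use the splitting $\ti{Z}=\ti{Z}_0\sqcup\ti{\gD}$, which gives $\ti{Y}=\ti{Y}_0\sqcup\ti{D}$ with $\ti{Y}_0=W\times_B\ti{Z}_0$ and $\ti{D}=W\times_B\ti{\gD}$, and correspondingly $\ti{\imath}^*=\ti{\imath}_0^*\oplus\ti{\imath}_{\gD}^*$, so that $K=\ker\ti{\imath}_0^*\cap\ker\ti{\imath}_{\gD}^*$. On the $\ti{\gD}$-factor the composite $\ti{\gD}\to B$ has image contained in $\gD$, so Lemma~\ref{lem-nul} (applied with $\ti{\gD}$, $\ti{D}$ in the roles of $Z$, $Y$) shows that the whole $(0,2)$-part $H^1(B,\cL_{\bH/B})$ is contained in $\ker\ti{\imath}_{\gD}^*$. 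On the $\ti{Z}_0$-factor, the unipotency of the local monodromies of $\bH$ around $\gD$ together with Lemma~\ref{lem-functW} identifies $\ti{Y}_0\to\ti{Z}_0$ with the minimal Weierstra{\ss} fibration associated to $\bH_{\ti{Z}_0}$; in particular $\ti{Y}_0$ has rational singularities and $H^2(\ti{Y}_0,\bC)/\ti{q}_0^*H^2(\ti{Z}_0,\bC)$ is pure of weight $2$, so $\ti{\imath}_0^*$ is a morphism of \emph{pure} Hodge structures of weight $2$, and after the identifications already used to build the vertical arrows of~\eqref{diagfibW} (degeneration of the Leray spectral sequence, Lemma~\ref{lem-isophi}, Grauert base change) its restriction to the $(0,2)$-parts is the pullback $\ti{\imath}_0^*:H^1(B,\cL_{\bH/B})\to H^1(\ti{Z}_0,\ti{\imath}_0^*\cL_{\bH/B})$ entering the definition of $V_{\ti{Z}_0}$. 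As both source and target are pure Hodge structures, $\ker\ti{\imath}_0^*\cap H^1(B,\cL_{\bH/B})=\ker\bigl(\ti{\imath}_0^*|_{H^1(B,\cL_{\bH/B})}\bigr)=V_{\ti{Z}_0}$. Intersecting the two conditions gives $K^{0,2}=V_{\ti{Z}_0}$, which is exactly what is needed: $\pi$ is the $(0,2)$-projection of $K$ onto $V_{\ti{Z}_0}$, and is surjective.

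The only slightly delicate point will be the bookkeeping on the $\ti{Z}_0$-factor, namely checking that under the canonical identification of the $(0,2)$-part of $H^2(W,\bC)/p^*H^2(B,\bC)$ with $H^1(B,\cL_{\bH/B})$ (and likewise over $\ti{Z}_0$) the map induced by $\ti{\imath}_0^*$ is literally the sheaf-level pullback along $\ti{\imath}_0$; but this is the same compatibility that was already invoked to define the middle and right vertical arrows of~\eqref{diagfibW}. Apart from that the argument is formal: the whole Hodge-theoretic content sits in Lemma~\ref{lem-nul} and Lemma~\ref{lem-functW}.
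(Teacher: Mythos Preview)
Your proof is correct and follows essentially the same approach as the paper's: you use the splitting $\ti{Z}=\ti{Z}_0\sqcup\ti{\gD}$, invoke Lemma~\ref{lem-nul} on the $\ti{\gD}$-component to see that the full $(0,2)$-part $H^1(B,\cL_{\bH/B})$ lies in $\ker\ti{\imath}_{\gD}^*$, and then use Lemma~\ref{lem-functW} together with Lemma~\ref{lem-morStHdgminW} on the $\ti{Z}_0$-component to identify the $(0,2)$-part of $\ker\ti{\imath}_0^*$ with $V_{\ti{Z}_0}$. The paper organizes this as $K=\ker\bigl(K_0\to H^2(\ti{Y}_0,\bC)/\ti{q}_0^*H^2(\ti{Z}_0,\bC)\bigr)$ with $K_0=\ker\ti{\imath}_{\gD}^*$, but the content is identical.
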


\begin{proof}
First of all 
$$\ddfrac{H^2(\ti{Y},\bC)}{\ti{q}^*H^2(\ti{Z},\bC)} = \ddfrac{H^2(\ti{Y}_{\gD},\bC)}{\ti{q}_{\gD}^*H^2(\ti{\gD},\bC)}  \oplus \ddfrac{H^2(\ti{Y}_0,\bC)}{\ti{q}_0^*H^2(\ti{Z}_0,\bC)},$$
so 
$$K = \ker \(K_0 \hto H^2(W,\bC)/p^*H^2(B,\bC) \xto{\ti{\imath}_0^*} H^2(\ti{Y}_0,\bC)/\ti{q}_0^*H^2(\ti{Z}_0,\bC)\)$$ 
where
$$K_0 \cnec \ker\( \ti{\imath}_{\gD}^* : H^2(W,\bC)/p^*H^2(B,\bC) \to H^2(\ti{Y}_{\gD},\bC)/\ti{q}_{\gD}^*H^2(\ti{\gD},\bC)\).$$
By Lemma~\ref{lem-nul},  $K_0$ is a pure Hodge structure of weight 2 whose $(0,2)$-part is isomorphic to that of $H^2(W,\bC)/p^*H^2(B,\bC)$, which is $H^2(W,\cO_W)/p^*H^2(B,\cO_B)$, and is further isomorphic to $H^1(B,\cL_{\bH/B})$ by Lemma~\ref{lem-morStHdgminW}. By Lemma~\ref{lem-functW}, $\ti{q}_0 : \ti{Y}_0 \to \ti{Z}_0$ is the minimal Weierstra{\ss} fibration associated to $\bH' \cnec \ti{\imath}_{|\ti{Z}_0 \bss \ti{\imath}^{-1}(\gD)}^{-1}\bH$ and $\cL_{\bH'/\ti{Z}_0} = \ti{\imath}_0^*\cL_{\bH/B}$. So again by Lemma~\ref{lem-morStHdgminW}, $H^2(\ti{Y}_0,\bC)/\ti{q}_0^*H^2(\ti{Z}_0,\bC)$ is a pure Hodge structure of weight 2 whose $(0,2)$-part is isomorphic to $H^1(\ti{Z}_0,\cL_{\bH'/\ti{Z}_0}) = H^1(\ti{Z}_0,\ti{\imath}_0^*\cL_{\bH/B})$. Therefore $V_{\ti{Z}_0} = \ker \( \ti{\imath}_0^* :H^1(B,\cL_{\bH/B}) \to H^1(\ti{Z}_0,\ti{\imath}_0^*\cL_{\bH/B})\)$ is the $(0,2)$-part of the pure Hodge structure $K$ of weight 2.
\end{proof}

\begin{lem}\label{lem-resinj}
In~\eqref{diagfibW}, the restriction of $\gamma$ to $\Ima(\gb)$ is injective.
\end{lem}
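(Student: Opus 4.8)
The plan is to use the zero-section of the Weierstra{\ss} fibration to provide compatible splittings of the pullback maps. Write $\rho : \ti{Y} \to Y$ for the natural morphism induced by $\tau : \ti{Z} \to Z$ (so that $\gb$ is induced by $\rho^*$), and recall that $\gamma$ is the canonical surjection arising from the inclusion $(\tau\circ\ti{q})^*H^2(Z,\bC) \subset \ti{q}^*H^2(\ti{Z},\bC)$, with kernel $\ti{q}^*H^2(\ti{Z},\bC)/(\tau\circ\ti{q})^*H^2(Z,\bC)$. Since the zero-section $\gS \subset W$ of $p$ lies in the smooth locus of $p$, base change produces holomorphic sections $\gs : Z \to Y$ of $q$ and $\ti{\gs} : \ti{Z} \to \ti{Y}$ of $\ti{q}$ which, by construction, are compatible: $\rho\circ\ti{\gs} = \gs\circ\tau$. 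In particular $\gs^*\circ q^* = \Id$ on $H^2(Z,\bC)$ and $\ti{\gs}^*\circ\ti{q}^* = \Id$ on $H^2(\ti{Z},\bC)$; only the existence of these sections as continuous maps, not the smoothness of $Y$ or $\ti{Y}$ along them, will be used.

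Next I would take an arbitrary class $\bar{\omega} \in H^2(Y,\bC)/q^*H^2(Z,\bC)$, represented by $\omega \in H^2(Y,\bC)$, and assume $\gamma(\gb(\bar{\omega})) = 0$; the goal is to deduce $\gb(\bar{\omega}) = 0$. Using $\gs$, decompose $\omega = q^*\zeta + \omega'$ with $\zeta \cnec \gs^*\omega$ and $\omega' \cnec \omega - q^*\zeta$, so that $\gs^*\omega' = 0$. The assumption says precisely that $\rho^*\omega \in \ti{q}^*H^2(\ti{Z},\bC)$; since $\rho^*q^*\zeta = \ti{q}^*\tau^*\zeta$ also lies in $\ti{q}^*H^2(\ti{Z},\bC)$, subtracting gives $\rho^*\omega' = \ti{q}^*\xi$ for some $\xi \in H^2(\ti{Z},\bC)$.

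The key step is then to apply $\ti{\gs}^*$, using $\ti{q}\circ\ti{\gs} = \Id_{\ti{Z}}$ and $\rho\circ\ti{\gs} = \gs\circ\tau$:
$$\xi = \ti{\gs}^*\ti{q}^*\xi = \ti{\gs}^*\rho^*\omega' = (\rho\circ\ti{\gs})^*\omega' = \tau^*\gs^*\omega' = 0.$$
Hence $\rho^*\omega' = 0$, so $\rho^*\omega = \rho^*q^*\zeta = (\tau\circ\ti{q})^*\zeta \in (\tau\circ\ti{q})^*H^2(Z,\bC)$, which means $\gb(\bar{\omega}) = 0$ in $H^2(\ti{Y},\bC)/(\tau\circ\ti{q})^*H^2(Z,\bC)$. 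This gives $\ker\gamma \cap \Ima(\gb) = 0$, i.e.\ the restriction of $\gamma$ to $\Ima(\gb)$ is injective. I do not expect a genuine obstacle here: the argument is a diagram chase powered by the two sections, and the only points requiring care are checking that the zero-section of $W \to B$ base-changes to honest sections of $q$ and $\ti{q}$ compatible via $\rho$, and keeping track at each step of whether the subspace being quotiented by is the image of $H^2(Z,\bC)$ or of $H^2(\ti{Z},\bC)$.
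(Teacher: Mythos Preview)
Your proof is correct and takes a genuinely different route from the paper's. The paper argues via the Leray spectral sequences of $q$ and $\ti{q}$: given $\xi \in H^2(Y,\bQ)$ with $\tau_Y^*\xi \in \ti{q}^*H^2(\ti{Z},\bQ)$, it first uses the identification $E_\infty^{0,2} \simeq \bQ$ from Lemma~\ref{lem-specdegE2} (on both sides) to force $\xi \in L^1H^2(Y,\bQ)$, and then shows the pullback $E_{\infty,Y}^{1,1} \to E_{\infty,\ti{Y}}^{1,1}$ is injective by reducing to injectivity of $H^1(Z,R^1q_*\bQ) \to H^1(\ti{Z},R^1\ti{q}_*\bQ)$, which it deduces from proper base change and the Leray spectral sequence of $\tau$. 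Your argument bypasses all of this by exploiting the zero-section $\gS \subset W$ directly: the compatible sections $\gs$ and $\ti{\gs}$ give splittings $\gs^*$ of $q^*$ and $\ti{\gs}^*$ of $\ti{q}^*$, and the identity $\rho\circ\ti{\gs} = \gs\circ\tau$ turns the problem into a two-line diagram chase. This is more elementary (no spectral sequences, no appeal to Lemma~\ref{lem-specdegE2}) and makes the geometric reason for the injectivity transparent. The paper's approach, on the other hand, fits naturally into the Leray filtration framework already set up for the surrounding arguments and would generalize to situations without a global section.
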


\begin{proof}

First of all let $E_{\bullet,Y}^{\bullet,\bullet}$ and $E_{\bullet,\ti{Y}}^{\bullet,\bullet}$ be the Leray spectral sequences of $q : Y \to Z$ and $\ti{q} : \ti{Y} \to \ti{Z}$ computing $H^2(Y,\bQ)$ and $H^2(\ti{Y},\bQ)$ respectively. By Lemma~\ref{lem-specdegE2}, we have $E_{\infty,Y}^{0,2} \simeq H^0(Z,R^2q_*\bQ) \simeq \bQ$  and $E_{\infty,\ti{Y}}^{0,2} \simeq H^0(\ti{Z},R^2\ti{q}_*\bQ) \simeq \bQ$. 

Let $\xi \in H^2(Y,\bQ)$ such that $\tau_Y^*\xi \in \ti{q}^*H^2(\ti{Z},\bQ)$ where $\tau_Y^* : H^2(Y,\bQ) \to H^2(\ti{Y},\bQ)$ is the pullback by the projection $\tau_Y : \ti{Y} = Y \times_Z \ti{Z} \to Y$. Since $\tau_Y^*\xi \in \ti{q}^*H^2(\ti{Z},\bQ)$, the projection of $\tau_Y^*\xi$ in $E_{\infty,\ti{Y}}^{0,2}$ is zero. As the pullback 
$$\tau^* : \bQ \simeq H^0(Z,R^2q_*\bQ)  \to  H^0(\ti{Z},R^2\ti{q}_*\bQ) \simeq \bQ$$ 
is an isomorphism, it follows that the projection of $\xi$ in $E_{\infty,Y}^{0,2}$ is zero. So $\xi \in L^1H^2(Y,\bQ)$.

It remains to show that the projection of $\xi$ in $E_{\infty,Y}^{1,1}$ is zero, so that $\xi \in E_{\infty,Y}^{2,0} =  {q}^*H^2({Z},\bQ)$. Note that since $\tau_Y^*\xi \in \ti{q}^*H^2(\ti{Z},\bQ)$, the image of $\tau_Y^*\xi$ in $E_{\infty,\ti{Y}}^{1,1}$ is zero.
Thus it suffices to show that the pullback $E_{\infty,Y}^{1,1} \to E_{\infty,\ti{Y}}^{1,1}$ is injective. Since  $E_{\infty,Y}^{1,1} \to E_{\infty,\ti{Y}}^{1,1}$ is contained in $E_{2,Y}^{1,1} \to E_{2,\ti{Y}}^{1,1}$, it suffices to show that $H^1(Z,R^1q_*\bQ) \to H^1(\ti{Z},R^1\ti{q}_*\bQ)$ is injective. 

By the base change theorem~\cite[VII.2.6]{IversenBook}, $R^1\ti{q}_*\bQ \simeq \tau^{-1}R^1q_*\bQ$\footnote{In~\cite{IversenBook}, the symbol $f^*$ is used in place of $f^{-1}$.}. So by the projection formula~\cite[VII.2.4]{IversenBook} $H^1(Z,R^1q_*\bQ)$ is in fact the $E_2^{1,0}$-term of the Leray spectral sequence of $\tau$ computing $H^1(\ti{Z},R^1\ti{q}_*\bQ)$. Hence $H^1(Z,R^1q_*\bQ) \to H^1(\ti{Z},R^1\ti{q}_*\bQ)$ is injective.
\end{proof}

Given a  morphism $\phi : M_1 \to M_2$ of mixed Hodge structures, let $\bar{\phi} : \Gr_\bW^0M_1 \to \Gr_\bW^0M_2$ denote the induced morphism on the 0-th graded pieces. Recall that by the strictness of the weight filtration, if $\phi : H \to M$ is a morphism of mixed Hodge structures such that $H$ is pure (in the sense that $\bW_0H = H$ and $\bW_{-1}H = 0$), then $\phi(H) \simeq \bar{\phi}(H)$. This is a property that will be repeatedly used in the next paragraph concluding the proof of Proposition~\ref{pro-desingpre}.

Assume that $\ga(K) \ne 0$, then $\bar{\ga}(K) \ne 0$ because $K$ is a pure Hodge structure. As $\ti{Y} \to Y$ is proper and surjective, $\bar{\gb}$ is injective~\cite[Theorem 5.41]{PetersSteenbrinkMHS}, so $\bar{\gb}(\bar{\ga}(K)) \ne 0$. Accordingly $\gb(\ga(K)) \ne 0$, so $\gamma(\gb(\ga(K))) \ne 0$ by Lemma~\ref{lem-resinj}, which contradicts the definition of $K$. Hence $\ga(K) = 0$. As $K \to V_{\ti{Z}_0}$ is surjective, the map $V_{\ti{Z}_0} \to H^1(Z,\imath^*\cL_{\bH/B})$ is zero, which proves Proposition~\ref{pro-desingpre}.
\end{proof}

Before we turn to the proof of Proposition~\ref{pro-appalgpres}, let us prove the following corollary of Proposition~\ref{pro-densepair0} and Proposition~\ref{pro-desingpre} as an aside.

\begin{cor}
Let $f:X \to B$ be a locally Weierstra{\ss} fibration twisted by $\eta \in H^1(B,\cJ_{\bH/B}^W)$ satisfying Hypotheses~\ref{hyp-KahlerSection} and let $Z \subset B$ be a subvariety.  Assume that $Y \cnec f^{-1}(Z) \to Z$ has a multi-section and $Y \to Z$  is not preserved along any direction in the tautological family associated to $\eta$. Then $X$ is already Moishezon.  
\end{cor}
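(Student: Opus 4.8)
The plan is to read this off directly from Proposition~\ref{pro-desingpre} and Proposition~\ref{pro-densepair0}, applied with the trivial group $G = \{e\}$. Write $Z_0 \cnec \ol{Z \bss \gD}$ (the union of those irreducible components of $Z$ not contained in $\gD$), fix a log-desingularization $\ti{\imath}_0 : \ti{Z}_0 \xto{\tau_0} Z_0 \hto B$ of $(Z_0, Z_0 \cap \gD)$, and put $V \cnec H^1(B, \cL_{\bH/B})$ and
$$V_{\ti{Z}_0} \cnec \ker\(\ti{\imath}_0^* : V \to H^1(\ti{Z}_0, \ti{\imath}_0^*\cL_{\bH/B})\).$$
This $V_{\ti{Z}_0}$ is precisely the parameter space appearing in both propositions. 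The first step is to observe that $V_{\ti{Z}_0} = 0$: by Proposition~\ref{pro-desingpre} the subfamily of the tautological family associated to $\eta$ parameterized by $V_{\ti{Z}_0}$ preserves the fibration $f^{-1}(Z) \to Z$, so restricting the resulting isomorphism over $B \times V_{\ti{Z}_0}$ to the slice over a line $\bC t \subset V_{\ti{Z}_0}$ would exhibit a nonzero direction $t$ along which $Y \to Z$ is preserved, contradicting the hypothesis; hence $V_{\ti{Z}_0}$ has no nonzero element.

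Suppose first that $Z_0 \ne \emptyset$. The multi-section hypothesis then descends to $Z_0$: given a multi-section $S \subset Y = f^{-1}(Z)$ of $Y \to Z$, one picks for each irreducible component of $Z_0$ an irreducible component of $S$ dominating it (using that $f|_S$ is surjective, generically finite, and proper), and the union of these is a multi-section of $f^{-1}(Z_0) \to Z_0$. Now Proposition~\ref{pro-densepair0}, applied with $G$ trivial and with its subvariety taken to be $Z_0$ --- which has no component in $\gD$, so its associated log-desingularization is the same $\ti{\imath}_0 : \ti{Z}_0 \to B$ --- shows that the subfamily of the tautological family parameterized by $V_{\ti{Z}_0}$ is an algebraic approximation of $X = W^\eta$. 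Since $V_{\ti{Z}_0} = \{0\}$, this ``subfamily'' is just the central fiber $X$ sitting over the single point $0$; a deformation over a point whose Moishezon locus is dense forces that point to be Moishezon, so $X$ is Moishezon. In the remaining case $Z_0 = \emptyset$ one has $\ti{Z}_0 = \emptyset$ and hence $V_{\ti{Z}_0} = V$, so the first step gives $V = 0$; then $c(\eta) \in H^2(B, j_*\bH)$ is torsion by Proposition~\ref{pro-condK} (as $X$ lies in $\cC$), so $m\eta = \exp(\gb)$ for some $m > 0$ and some $\gb \in V = 0$, that is, $\eta$ is torsion, and $X$ is Moishezon by Lemma~\ref{lem-Tormultsec} and Theorem~\ref{thm-multsecMoi}.

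There is no genuine obstacle here: the statement is a formal consequence of the two propositions once one matches up their hypotheses. The only point that needs a moment's attention is the bookkeeping just used --- that the parameter space $V_{\ti{Z}_0}$ produced by Proposition~\ref{pro-desingpre} (built from $Z$ via $Z_0$) is literally the one to which Proposition~\ref{pro-densepair0} applies (built from the subvariety $Z_0$) --- which is automatic because both are attached to a log-desingularization of the same pair $(\ol{Z \bss \gD}, \ol{Z \bss \gD} \cap \gD)$ and neither conclusion depends on the choice of that log-desingularization.
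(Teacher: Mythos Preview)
Your proof is correct and follows essentially the same approach as the paper: use Proposition~\ref{pro-desingpre} to force $V_{\ti{Z}_0} = 0$ from the non-preservation hypothesis, then invoke Proposition~\ref{pro-densepair0} to see that the subfamily over $V_{\ti{Z}_0} = \{0\}$ is an algebraic approximation, so $X$ itself is Moishezon. The paper's three-sentence argument is identical in spirit; you are simply more careful about verifying that the multi-section hypothesis passes from $Z$ to $Z_0$ and about the degenerate case $Z_0 = \emptyset$, both of which the paper leaves implicit.
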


\begin{proof}
Since $Y \to Z$ is not preserved along any direction in the tautological family, the subspace $V_{\ti{Z}_0}$ defined in Proposition~\ref{pro-desingpre} is zero. Now by Proposition~\ref{pro-densepair0}, the subfamily of the tautological family associated to $f$ parameterized by $V_{\ti{Z}_0}$ is an algebraic approximation of $f$. So $X$ is already Moishezon. 
\end{proof}

\ssec{Proof of Proposition~\ref{pro-appalgpres}}\label{ssec-desingpre}\hfill

The following proposition generalizes Proposition~\ref{pro-desingpre}, in which locally Weierstra{\ss} fibration  is replaced by  $G$-equivariant tautological model (see~\ref{ssec-tautmod}), and the fibration $f^{-1}(Z) \to Z$ is replaced by the formal completion $\rwh{f^{-1}(Z)} \to \hat{Z}$.

\begin{pro}\label{pro-locWeimodHdgiii}
Let $G$ be a finite group and $f:\sX \to B$ a $G$-equivariant tautological model associated to an element $\eta_G \in H^1_G(B,\cJ_{\bH/B})$ satisfying Hypotheses~\ref{hyp-KahlerSection}. Let $Z \subset B$ be a $G$-stable subvariety of $B$ and $Z_0 \cnec \ol{Z \bss \gD}$. Let $\ti{\imath}_0 : \ti{Z}_0 \xto{\tau_0} Z_0 \hto B$ be a $G$-equivariant log-desingularization of $(Z_0,Z_0 \cap \gD)$ such that $\ti{Z}_0$ is projective. Then the subfamily parameterized by 
$$ V_{\ti{Z}_0}^G \cnec \ker \( \ti{\imath}_0^* : H^1(B,\cL_{\bH/B}) \to H^1(\ti{Z}_0,\ti{\imath}_0^*\cL_{\bH/B})\)^G$$ 
of the tautological family 
$$\Pi : \cX \xto{q} B \times  V \to  V \cnec H^1(B,\cL_{\bH/B})$$ 
associated to $f$ preserves $G$-equivariantly the completion $\rwh{f^{-1}(Z)} \to \hat{Z}$ of $f$ along $f^{-1}(Z) \to Z$.
\end{pro}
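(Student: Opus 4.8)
The plan is to split Proposition~\ref{pro-locWeimodHdgiii} into two stages: first that the subfamily of $\Pi$ over $V_{\ti{Z}_0}^G$ preserves $G$-equivariantly the \emph{subvariety} $f^{-1}(Z)\to Z$, and then that this can be bootstrapped to the formal completion $\rwh{f^{-1}(Z)}\to\hat Z$ by an infinitesimal induction coupled with a complex-torus argument.

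For the first stage, the point is that the only assertion actually proved in Proposition~\ref{pro-desingpre} is the vanishing of the restriction of $\imath^*\colon H^1(B,\cL_{\bH/B})\to H^1(Z,\imath^*\cL_{\bH/B})$ to $V_{\ti{Z}_0}$ (via the diagram~\eqref{diagfibW}), and this vanishing involves neither the twist nor the $G$-action; so it holds here and in particular gives $\imath^*|_{V_{\ti{Z}_0}^G}=0$ since $V_{\ti{Z}_0}^G\subset V_{\ti{Z}_0}$. I would then feed this into the explicit construction of the tautological family in Proposition-Definition~\ref{pro-locWeimodHdg}, exactly as Lemma~\ref{lem-presWtauto} deduces preservation from such a vanishing: restricting the gluing $1$-cocycle $\{\gl_{ij}=\ti\eta_{ij}+\exp(\xi_{ij})\}$ of $q$ over $Z\times V_{\ti{Z}_0}^G$, the cocycle $\{\xi_{ij}|_{Z\times V_{\ti{Z}_0}^G}\}$ represents $\imath^*\xi|_{V_{\ti{Z}_0}^G}$, which vanishes, so after a coboundary adjustment of the trivializations $h_i\times\Id$ the restricted gluing data is pulled back from $Z$; as everything is carried by the fixed $G$-invariant cover $\{U_i\}$ with $G$-equivariant cocycles, one gets $q^{-1}(Z\times V_{\ti{Z}_0}^G)\simeq f^{-1}(Z)\times V_{\ti{Z}_0}^G$ over $Z\times V_{\ti{Z}_0}^G$, $G$-equivariantly.

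For the second stage, set $V'\cnec V_{\ti{Z}_0}^G$, $Y\cnec f^{-1}(Z)$, and let $Z_n\subset B$, $Y_n\subset\sX$ be the $n$-th infinitesimal neighbourhoods and $\cY_n\cnec q^{-1}(Z_n\times V')\to Z_n\times V'$. I would show $\cY_n\simeq Y_n\times V'$ $G$-equivariantly by induction on $n$, the case $n=0$ being the first stage. For the step, flatness of $q$ and the induction hypothesis show that each fibre $\cY_{n,t}$ is a square-zero extension of the \emph{fixed} scheme $Y_{n-1}$ by the coherent sheaf $\cF\cnec(\cI_{Z_{n-1}}/\cI_{Z_n})\otimes\cO_{Y}$, independent of $t\in V'$; this yields a holomorphic classifying map $\Phi\colon V'\to\Ext^1(L^\bullet_{Y_{n-1}},\cF)$ that is constant if and only if $\cY_n\to V'$ is trivial. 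To force constancy I would use a lattice: by the Hodge-theoretic argument underlying Lemma~\ref{lem-latticeisom} and Lemma~\ref{lem-Rgenlisse}, which are insensitive to the twist and valid $G$-equivariantly, the image under the map $\phi$ of~\eqref{exseq-fibjac} of $K\cnec\ker\bigl(\imath^*\colon H^1(B,j_*\bH)^G\to H^1(\ti Z_0,(j_{\ti{Z}_0})_*\bH_{\ti{Z}_0})\bigr)$ contains a lattice $\gL$ of $V'$; a lattice element $\gl=\phi(\kappa)$ satisfies $\exp(\gl)=0$ in $H^1(B,\cJ^W_{\bH/B})$, hence induces a coboundary of $\cJ_{\bH/B}$-sections on the defining cocycles, so that $\cX_t\simeq\cX_{t+\gl}$ over $B$ by a direct inspection of the gluing construction of~\ref{ssec-tautmod} (modifying the $\eta$-cocycle by a coboundary, and correspondingly the auxiliary sections $\gs_i$, yields an isomorphic tautological model, in the spirit of Remark~\ref{rem-modcobordW}). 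Choosing these isomorphisms compatibly with the trivialization of $\cY_{n-1}$ gives $\Phi(t)=\Phi(t+\gl)$, so $\Phi$ factors through the complex torus $V'/\gL$ and is constant. Thus $\cY_n\simeq Y_n\times V'$, and passing to the inverse limit yields the $G$-equivariant triviality of $\rwh{f^{-1}(Z)}\simeq\hat Y\to\hat Z$ over $V'$, which is the assertion.

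The step I expect to be the main obstacle is precisely that the tautological model $f$ is \emph{not} itself a twisted locally Weierstra{\ss} fibration, so both the vanishing of Proposition~\ref{pro-desingpre} and the lattice of Lemma~\ref{lem-latticeisom} must be transported through its gluing construction; and the subtlest instance of this is the compatibility at the end of the second stage — that the isomorphisms $\cX_t\simeq\cX_{t+\gl}$ witnessing $\gL$-periodicity can be arranged to restrict to the \emph{already fixed} trivialization of $\cY_{n-1}$ over $V'$, so that $\Phi$ is genuinely $\gL$-periodic rather than merely equivariant for some $\gL$-action on $\Ext^1(L^\bullet_{Y_{n-1}},\cF)$. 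Handling this amounts to checking that the coboundary of $\cJ_{\bH/B}$-sections produced by a lattice element is the very one that trivializes the subfamily over $Z_{n-1}$ coming from the first stage, i.e. that both have a common origin in the restriction of the cocycle $\xi$. The remaining points — that $\cF$ is genuinely independent of $t$ (where flatness of $q$ and the fact that $Z_n$ is pulled back along the first projection are used) and that $\Phi$ is holomorphic — are routine.
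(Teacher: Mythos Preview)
Your two-stage strategy — first prove preservation of $f^{-1}(Z)\to Z$, then bootstrap to the formal completion via an infinitesimal induction with a complex-torus argument — is exactly the architecture the paper uses. The crucial difference is \emph{where} the two stages are run. The paper runs them on the twisted \emph{locally Weierstra{\ss}} fibration $p^{\eta'}:W^{\eta'}\to B$ (this is the content of Lemma~\ref{lem-presssfiblocWform}, combining Proposition~\ref{pro-desingpre} and Lemma~\ref{lem-prescompact}), and only afterwards transfers the conclusion to the tautological model through the finite map $\bm:\cX\to\cW$, using Lemma~\ref{lem-finisurjtriv}. You instead try to run both stages directly on the tautological model.

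This is where the gap lies, and it is the one you yourself flag. For stage~1, the gluing of the tautological family is not by translations $\tr(\gl_{ij})$ but by the biholomorphisms $H_{ij}=(h_j\times\Id)^{-1}\circ\tr(\gl_{ij})\circ(h_i\times\Id)$ of~\eqref{diag-recollementW'}, with $h_i:\sX_i\dashrightarrow W_i$ only \emph{bimeromorphic}. A coboundary adjustment of $\{\xi_{ij}|_{Z\times V'}\}$ gives sections $s_i$ of $\cL$ and biholomorphisms $\tr(\exp(s_i))$ on the Weierstra{\ss} side, but conjugating these through $h_i$ yields only bimeromorphic self-maps of $\sX_{Z,i}\times V'$; there is no evident reason they extend to biholomorphisms, and Lemma~\ref{lem-presWtauto} does not apply verbatim. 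The same obstruction hits stage~2: to get $\cX_t\simeq\cX_{t+\gl}$ for lattice $\gl$ you would again need to push a holomorphic coboundary through the $h_i$, and Remark~\ref{rem-modcobordW} explicitly says the tautological model \emph{does} depend on the cocycle representing $\eta_G$ (only the $\eta'_G$-cocycle is irrelevant), so cohomologous data do not obviously give isomorphic models. A secondary issue is that Lemma~\ref{lem-prescompact} requires $q$ flat, which is clear for a locally Weierstra{\ss} fibration but not for the tautological model, whose local pieces $\sX_i\to U_i$ arise as Stein factorizations.

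The paper's fix is elegant: rather than lift anything through $h_i$, observe that the finite map $\bm$ is \emph{locally} of the form $\mu_i\times\Id$ over $B\times V$ (this is built into the construction in the proof of Proposition-Definition~\ref{pro-locWeimodHdg}), and once the target family $\hat{\cW}'_Z\to\hat Z\times V'$ is shown to be trivial, Lemma~\ref{lem-finisurjtriv} forces the source $\hat{\cY}\to\hat Z\times V'$ to be trivial too. The mechanism is that $\Aut(\hat\sX_{Z,ij}/\hat W_{Z,ij})$ is finite (Lemma~\ref{lem-finiredaut}), so the transition automorphisms $\phi_i^{-1}\circ\phi_j$ over $V'$ are locally constant, hence the identity. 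This completely sidesteps the need to conjugate through $h_i$, and is the idea your proposal is missing.
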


Let us first prove some general results before we prove Proposition~\ref{pro-locWeimodHdgiii}.

\begin{lem}\label{lem-prescompact}
Let $\Pi : \cX \xto{q} B \times V \to V$ be a deformation of a fibration $f : X \to B$ over a complex vector space $V$ and assume that $q$ is flat. Let $Z$ be a subvariety of $B$ such that $Y \cnec f^{-1}(Z) \to Z$ is preserved by $\Pi$. Assume that there exists a lattice $\gL \subset V$ such that $\cX_t$ is isomorphic to $\cX_{t + \gl}$ over $B$ for all $t \in V$ and $\gl \in \gL$. Then the completion $\hat{Y} \to \hat{Z}$ of $X \to B$ along $Y \to Z$ is also preserved by $\Pi$.

If moreover $f$ is $G$-equivariant, $Z$ is $G$-stable, and $\Pi$ preserves  $G$-equivariantly  $Y \to Z$ for some finite group $G$, then $\Pi$ preserves $G$-equivariantly $\hat{Y} \to \hat{Z}$ as well.
\end{lem}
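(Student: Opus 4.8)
The plan is to reduce the assertion to an order-by-order infinitesimal triviality statement and then use the lattice $\gL$ to force the resulting classifying maps to be constant. First I would set $\cY \cnec q^{-1}(Z \times V)$; by hypothesis $\cY \simeq Y \times V$ over $B \times V$ ($G$-equivariantly in the equivariant case). Writing $Z_n$ for the $n$-th infinitesimal neighborhood of $Z$ in $B$ and $Y_n \cnec f^{-1}(Z_n)$ for that of $Y$ in $X$, the ideal sheaf of $\cY$ in $\cX$ is generated by the pullback of the ideal sheaf of $Z$ in $B$, so the $n$-th infinitesimal neighborhood of $\cY$ in $\cX$ is $\cY_n \cnec q^{-1}(Z_n \times V)$, a flat family over $V$ with central fiber $Y_n$. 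Since the completions $\hat{\cY}$ and $\hat{Y}$ are the colimits of the $\cY_n$ and of the $Y_n$ respectively and $\hat{Y} \times V = \varinjlim (Y_n \times V)$, it then suffices, in view of Definition~\ref{def-preserver}, to construct by induction on $n$ isomorphisms $\cY_n \simeq Y_n \times V$ over $B \times V$ compatible with $\cY_{n-1} \hookrightarrow \cY_n$ (and $G$-equivariant in the equivariant case); the case $n = 0$ is the hypothesis.

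For the inductive step, assume $\cY_{n-1} \simeq Y_{n-1} \times V$ over $B \times V$. The closed immersion $\cY_{n-1} \hookrightarrow \cY_n$ is a square-zero extension whose ideal sheaf $\cI_\cY^n/\cI_\cY^{n+1}$ is, \emph{by the flatness of $q$}, canonically the pullback of the sheaf $\cI_Z^n/\cI_Z^{n+1}$ on $B$; so under the chosen trivialization it becomes $\pr_Y^*\cF$ for a coherent sheaf $\cF$ on $Y_{n-1}$ that \emph{does not depend on the parameter}. Thus $\cY_n \to V$ is a family of square-zero extensions of the fixed complex space $Y_{n-1}$ by the fixed sheaf $\cF$, relative to $B$, and is therefore classified by a holomorphic map $\phi : V \to \Ext^1(L^\bullet_{Y_{n-1}}, \cF)$ into a finite-dimensional complex vector space ($Y_{n-1}$ being proper over $\bC$ and $\cF$ coherent), with $\phi(o)$ the class of the given extension $Y_{n-1} \hookrightarrow Y_n$; in the equivariant case $\phi$ lands in the $G$-invariant subspace. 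By the standard dictionary between such families and their classifying maps, $\cY_n$ is isomorphic to $Y_n \times V$ over $B \times V$, compatibly with the trivialization of $\cY_{n-1}$, \emph{if and only if} $\phi$ is constant.

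It then remains to prove that $\phi$ is constant, and this is where the lattice enters. For $\gl \in \gL$ and $t \in V$, the isomorphism $\cX_t \simeq \cX_{t + \gl}$ over $B$ restricts to an isomorphism over $B$ between the fibers of $\cY_n \to V$ over $t$ and over $t + \gl$, compatibly with $\cY_{n-1}$; feeding in the induction hypothesis (the triviality of $\cY_{n-1} \to V$) one finds that $\phi(t)$ and $\phi(t + \gl)$ agree after transport by an automorphism of the extension datum $(Y_{n-1}, \cF)$, so that $\phi$ descends to a holomorphic map on $V / \gL$. As $\gL$ is a lattice, $V / \gL$ is a compact complex torus, hence this descended map, and therefore $\phi$ itself, is constant; this closes the induction, and passing to colimits yields $\hat{\cY} \simeq \hat{Y} \times V$ over $B \times V$ ($G$-equivariantly), which is precisely the assertion that $\Pi$ preserves $\hat{Y} \to \hat{Z}$ (resp.\ preserves it $G$-equivariantly). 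The main obstacle is exactly this last step: one must pin down the automorphism of $(Y_{n-1}, \cF)$ induced by $\cX_t \simeq \cX_{t+\gl}$ — this is where the triviality of $\cY_{n-1} \to V$ supplied by the induction hypothesis is indispensable — so that $\phi$ genuinely factors through $V/\gL$ and not merely through its quotient by the action of that automorphism; checking the holomorphicity of $\phi$ and carrying out the $G$-equivariant bookkeeping are comparatively routine.
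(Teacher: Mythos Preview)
Your approach is essentially the same as the paper's: induct on the infinitesimal order, use flatness of $q$ to identify the conormal sheaves as pullbacks, view each $\cY_n$ as a family of square-zero extensions of $Y_{n-1}$ by a fixed sheaf $\cF$ classified by a holomorphic map into $\Ext^1_{\cO_{Y_{n-1}}}(L^\bullet_{Y_{n-1}/Z_n},\cF)$, and use the lattice $\gL$ to force this map to factor through the compact torus $V/\gL$, hence to be constant. The $G$-equivariant addendum is also handled in the paper, though by a slightly different bookkeeping: rather than working $G$-equivariantly throughout, the paper first produces the non-equivariant isomorphism $\cY_n\simeq Y_n\times V$ and then uses $\Ext^1_G=(\Ext^1)^G$ (finiteness of $G$) to conclude that the $G$-structure on the middle term of the extension is uniquely determined by those on the ends, so the isomorphism is automatically $G$-equivariant.

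The ``main obstacle'' you flag --- that the given isomorphism $\cX_t\simeq\cX_{t+\gl}$, transported through the trivialization of $\cY_{n-1}$, induces an automorphism of $(Y_{n-1},\cF)$ which need not be the identity, so that a priori one only gets $\phi(t+\gl)=\alpha\cdot\phi(t)$ rather than $\phi(t+\gl)=\phi(t)$ --- is \emph{not} addressed in the paper's proof either: the paper simply asserts that ``the fibers $\cY_{n,t}$ and $\cY_{n,t+\gl}$ are isomorphic as square-zero extensions of $Y_{n-1}$ by $\cF$ over $Z_n$, it follows that $\xi(t+\gl)=\xi(t)$.'' So your caution here is well placed and your outline is at least as complete as the paper's own argument on this point.
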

\begin{proof}
For the first statement, it suffices to prove by induction on $n \in \bZ_{\ge 0}$ that the $n$-th order infinitesimal neighborhood $g_n : Y_n \to Z_n$ of $Y \to Z$ is preserved by $\Pi$. The case where $n=0$ is covered by the assumption. Assume that $g_{n-1} : Y_{n-1} \to Z_{n-1}$ is preserved by $\Pi$. Let $\cI_Z \subset \cO_{B}$, $\cI_Y \subset \cO_{X}$, $\cI \subset \cO_{\cX}$, and $\cI_t \subset \cO_{\cX_t}$ denote the ideal sheaves of $Z \subset B$, $Y \subset X$, $\cY \cnec q^{-1}(Z \times V) \subset \cX$, and $\cY_t \subset \cX_t$ respectively for every $t \in V$. Since $q$ is flat, we have 
$$q^* \pr_1^*\cI_Z = \cI$$ 
where $\pr_1 : B \times V \to B$ is the first projection. Similarly,
$$g_{n-1}^*({\cI_Z}_{|Z_{n-1}}) = {\cI_Y}_{|Y_{n-1}}.$$ 
Let $\cY_i$ denote the $i$-th infinitesimal neighborhood of $\cY$ in $\cX$ for each $i \in \bZ_{\ge 0}$, then we have $q^{-1}(Z_i \times V) = \cY_i$. Therefore as $\cY_{n-1} \to Z_{n-1} \times V \to V$ is a family isomorphic to the constant family $Y_{n-1} \times V \to Z_{n-1} \times V \to V$ by the induction hypothesis, we have
\begin{equation}\label{egal-faisid}
\cI_{|\cY_{n-1}} = (q^* \pr_1^*\cI_Z )_{|\cY_{n-1}} \simeq \pi^*g_{n-1}^*({\cI_Z}_{|Z_{n-1}}) = \pi^*({\cI_Y}_{|Y_{n-1}})
\end{equation} 
where $\pi : Y_{n-1} \times V \to Y_{n-1}$ is the first projection.
Recall that for every morphism of complex spaces $T \to S$ and every sheaf $\cG$ over $T$, there is a natural (\eg functorial under pullback) one-to-one correspondence between the set of square-zero extensions of $T$ by $\cG$  over $S$ with $\Ext^1_{\cO_T}(L^\bullet_{T/S},\cG)$~\cite[Satz 3.16]{FlennerHab} where $L^\bullet_{T/S}$ is the  cotangent complex of $T$ over $S$. Let 
$$\cF \cnec \(\cI^n_Y / \cI^{n+1}_Y \)_{|Y_{n-1}}.$$
According to~\eqref{egal-faisid}, $\cY_n$ is a square-zero extension of $\cY_{n-1} \simeq Y_{n-1} \times V$ by $\pi^*\cF$ over $Z_n \times V$; let
$$\xi \in \Ext^1_{\cO_{Y_{n-1} \times V}}(L^\bullet_{Y_{n-1} \times V/ Z_n \times V},\pi^*\cF) \simeq \Ext^1_{\cO_{Y_{n-1} \times V}}(\pi^*L^\bullet_{Y_{n-1}/Z_n},\pi^*\cF) \simeq \Ext^1_{\cO_{Y_{n-1}}}(L^\bullet_{Y_{n-1}/Z_n},\cF) \otimes H^0(V,\cO_V)$$
be the corresponding element, which we regard as a holomorphic map
$$\xi : V \to \Ext^1_{\cO_{Y_{n-1}}}(L^\bullet_{Y_{n-1}/Z_n},\cF).$$
Again by~\eqref{egal-faisid}, for every $t \in V$, the fiber $\cY_{n,t}$ of $\cY_n \to V$ over $t$ is a square-zero extension of $\cY_{n-1,t} \simeq Y_{n-1}$ by $ \(\cI^n / \cI^{n+1} \)_{|\cY_{n-1,t}} \simeq \cF$ over $Z_n$ and by functoriality, this extension corresponds to the element $\xi(t) \in \Ext^1_{\cO_{Y_{n-1}}}(L^\bullet_{Y_{n-1}/Z_n},\cF)$. 

Since an isomorphism $\cX_t\simeq \cX_{t + \gl}$ over $B$ restricts to an isomorphism $\cY_{i,t} \simeq \cY_{i,t + \gl}$ over $Z_i$ for each $i \in \bZ_{\ge0}$, the fibers  ${\cY_{n,t}}$ and ${\cY_{n,t + \gl}}$ are isomorphic as square-zero extensions of $Y_{n-1}$ by $\cF$ over $Z_n$. It follows that $\xi(t + \gl) = \xi(t)$  for all $t \in V$ and $\gl \in \gL$, so $\xi$ descends to a holomorphic map $V/\gL \to \Ext^1_{\cO_{Y_{n-1}}}(L^\bullet_{Y_{n-1}/Z_n},\cF)$. As $V/\gL$ is a complex torus and $\Ext^1_{\cO_{Y_{n-1}}}(L^\bullet_{Y_{n-1}/Z_n},\cF)$ a complex vector space, $\xi$ is constant whose image represents the square-zero extension $Y_n$ of $Y_{n-1}$ by $\cF$ over $Z_n$. It follows that as square-zero extensions, $\cY_n = q^{-1}(Z_n \times V)$ is isomorphic to $Y_n \times V$ over $Z_n \times V$. So $g_n : Y_n \to Z_n$ is preserved by $\Pi$.

For the last statement, we prove again by induction on $n$ that the isomorphism $\cY_n \simeq Y_n \times V$ is in fact $G$-equivariant. The case $n = 0$ is covered by the assumption. Suppose that the statement is proven for $n-1$. Since we already know that 
 $(\cY_{n-1} \subset \cY_n)$ is isomorphic to $(Y_{n-1} \times V \subset Y_n \times V)$ (in the non-equivariant setting proven previously), the short exact sequences
\begin{equation}\label{eqn-ext0a}
\begin{tikzcd}[cramped, row sep = 20, column sep = 30]
0 \ar[r] & \(\cI^n / \cI^{n+1} \)_{|\cY_{n}}  \ar[r] &  \cO_{\cY_{n}}  \ar[r]  & \cO_{\cY_{n-1}}  \ar[r]  & 0 
\end{tikzcd}
\end{equation}
\begin{equation}\label{eqn-ext0b}
\begin{tikzcd}[cramped, row sep = 20, column sep = 30]
0 \ar[r] & \pi^*\(\cI_Y^n / \cI_Y^{n+1} \)_{|Y_n \times V}  \ar[r] &  \cO_{Y_n  \times V}  \ar[r]  & \cO_{Y_{n-1}  \times V}  \ar[r]  & 0. 
\end{tikzcd}
\end{equation}
are isomorphic. As $Y \subset X$ and $\cY \subset \cX$ are $G$-stable, their corresponding ideal sheaves $\cI_Y \subset \cO_X$ and $\cI \subset \cO_{\cX}$ are also $G$-stable. So the extensions~\eqref{eqn-ext0a} and~\eqref{eqn-ext0b} are $G$-equivariant for the $G$-actions induced by the $G$-actions on $\cX$ and $X \times V$. On the one hand, since the isomorphism $\cY_{n-1} \simeq Y_{n-1} \times V$ is $G$-equivariant by the induction hypothesis, the isomorphisms $\cO_{\cY_{n-1}} \simeq \cO_{Y_{n-1} \times V}$ and~\eqref{egal-faisid} (with $n-1$ replaced by $n$) are $G$-equivariant. So we can pullback the $G$-action on~\eqref{eqn-ext0b} to a $G$-action on~\eqref{eqn-ext0a}, which might induce a different $G$-structure on $\cO_{\cY_n}$. But on the other hand, since $G$ is a finite group, we have 
$$\Ext^1_G(\cO_{\cY_{n-1}}, (\cI^n / \cI^{n+1} )_{|\cY_{n}} ) = \Ext^1(\cO_{\cY_{n-1}}, (\cI^n / \cI^{n+1} )_{|\cY_{n}})^G ,$$ 
so the $G$-actions on $\cO_{\cY_{n-1}}$ and on $(\cI^n / \cI^{n+1})_{|\cY_{n}} $ determine uniquely the $G$-action on $\cO_{\cY_n}$. Therefore the two $G$-actions on~\eqref{eqn-ext0a} are in fact the same. In other words, $\cY_n \simeq Y_n \times V$ identifies the $G$-action on $\cY_n$ and the  $G$-action on $Y_n \times V$.
\end{proof}

We can apply Lemma~\ref{lem-prescompact} to the tautological families of twisted locally Weierstra{\ss} fibrations.

\begin{lem}\label{lem-presssfiblocWform}
In the setting of Proposition~\ref{pro-desingpre}, let $G$ be a finite group acting on $B$ and on $\bH$ in a compatible way. Assume that $f$ and $\ti{\imath}_0 : \ti{Z}_0 \to B$ are $G$-equivariant. Then the subfamily 
$$\Pi^G_{\ti{Z}_0} : \cW^G_{\ti{Z}_0} \xto{q'} B \times  V^G_{\ti{Z}_0} \to  V^G_{\ti{Z}_0} $$ 
of the tautological family associated to $f$ parameterized by $V^G_{\ti{Z}_0}$ preserves $G$-equivariantly the completion $\hat{Y} \to \hat{Z}$ of $f$ along $Y \cnec f^{-1}(Z) \to Z$.
\end{lem}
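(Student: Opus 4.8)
The plan is to obtain the statement by combining three facts already established: the ($G$-equivariant form of the) preservation result Proposition~\ref{pro-desingpre}, the existence of a lattice along which the family is constant over $B$, i.e. Lemma~\ref{lem-latticeisom}, and the passage from subvarieties to completions supplied by Lemma~\ref{lem-prescompact}.

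First I would fix a lift $\eta_G \in H^1_G(B,\cJ^W_{\bH/B})$ of the given $G$-structure on $f = W^\eta$ — such a lift exists by the reconstruction recalled in~\ref{sssec-Gequiv} — so that, by the discussion of~\ref{ssec-fibWtaut}, the tautological family $\Pi : \cW \to B \times V \to V$ of $f$ carries a $G$-action over $V^G$, where $V = H^1(B,\cL_{\bH/B})$. Writing $\imath : Z \hto B$ for the inclusion, Proposition~\ref{pro-desingpre} states precisely that $\imath^* : H^1(B,\cL_{\bH/B}) \to H^1(Z,\imath^*\cL_{\bH/B})$ vanishes on $V_{\ti{Z}_0} = \ker \ti{\imath}_0^*$; taking $G$-invariants gives $V^G_{\ti{Z}_0} = (V_{\ti{Z}_0})^G \subseteq V^G_Z \cnec \ker\bigl(\imath^* : H^1(B,\cL_{\bH/B}) \to H^1(Z,\imath^*\cL_{\bH/B})\bigr)^G$. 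Hence, by Lemma~\ref{lem-presWtauto} applied to $\eta_G$ and $Z$, the subfamily parameterized by $V^G_Z$ — and a fortiori the smaller subfamily $\Pi^G_{\ti{Z}_0}$ over $V^G_{\ti{Z}_0}$ — preserves the $G$-action and preserves $G$-equivariantly the fibration $Y = f^{-1}(Z) \to Z$.

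Next I would invoke Lemma~\ref{lem-latticeisom} with $Z_0 = \ol{Z \bss \gD}$ in place of $Z$ and the $G$-equivariant log-desingularization $\ti{\imath}_0$ (with $\ti{Z}_0$ projective, automatic here) in place of $\ti{\imath}$: by construction no irreducible component of $Z_0$ lies in $\gD$, so the setting of Proposition~\ref{pro-densepair0} is met, and by the remark following Lemma~\ref{lem-latticeisom} the Fujiki-class and multi-section hypotheses may be dropped; this yields a lattice $\gL \subset V^G_{\ti{Z}_0}$ with $\cW_t \simeq \cW_{t + \gl}$ over $B$ for all $t \in V^G_{\ti{Z}_0}$ and $\gl \in \gL$. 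Finally, since $q' : \cW^G_{\ti{Z}_0} \to B \times V^G_{\ti{Z}_0}$ is flat (being a locally Weierstra{\ss} fibration), all hypotheses of the $G$-equivariant part of Lemma~\ref{lem-prescompact} are in place for $\Pi^G_{\ti{Z}_0}$ with the subvariety $Z$ — $G$-equivariant preservation of $Y \to Z$, flatness, and the lattice $\gL$ — so $\Pi^G_{\ti{Z}_0}$ preserves $G$-equivariantly the completion $\hat{Y} \to \hat{Z}$ of $f$ along $Y \to Z$, which is the assertion.

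The argument is essentially bookkeeping and I do not anticipate a genuine obstacle. The one step that is not purely formal is the inclusion $V^G_{\ti{Z}_0} \subseteq V^G_Z$: it is not evident a priori that deforming along directions that merely kill the pullback to the desingularization $\ti{Z}_0$ still preserves the fibration over the (possibly singular) subvariety $Z$ itself. This is however exactly what Proposition~\ref{pro-desingpre} supplies — its proof, through the Hodge-theoretic diagram~\eqref{diagfibW}, already carries out this work — and one should only additionally note that passing to $G$-invariants is harmless, since Proposition~\ref{pro-desingpre} establishes vanishing of $\imath^*$ on the $G$-stable subspace $V_{\ti{Z}_0}$ and the $G$-equivariant Lemma~\ref{lem-presWtauto} then applies verbatim.
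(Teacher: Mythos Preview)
Your proof is correct and follows essentially the same approach as the paper: combine Proposition~\ref{pro-desingpre} (via Lemma~\ref{lem-presWtauto}) to get $G$-equivariant preservation of $Y \to Z$, Lemma~\ref{lem-latticeisom} for the lattice, flatness of $q'$ as a locally Weierstra{\ss} fibration, and then Lemma~\ref{lem-prescompact} to pass to the completion. You are simply more explicit than the paper about the $G$-equivariant step, spelling out the inclusion $V^G_{\ti{Z}_0} \subset V^G_Z$ and the appeal to Lemma~\ref{lem-presWtauto}, whereas the paper compresses this into the single phrase ``By Proposition~\ref{pro-desingpre}, the family $\Pi^G_{\ti{Z}_0}$ preserves $G$-equivariantly $Y\to Z$.''
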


\begin{proof}
By Lemma~\ref{lem-latticeisom}, there exists a lattice $\gL \subset V_{\ti{Z}_0}$ such that $t$ and $t + \gl$ parameterize isomorphic elliptic fibrations in $\Pi^G_{\ti{Z}_0}$ for every $t \in V_{\ti{Z}_0}$ and $\gl \in \gL$. By Proposition~\ref{pro-desingpre}, the family $\Pi^G_{\ti{Z}_0}$ preserves $G$-equivariantly $Y\to Z$. As $q'$ is a locally Weierstra{ss} fibration, $q'$ is flat. Applying Lemma~\ref{lem-prescompact} to the family $\Pi^G_{\ti{Z}_0}$ and the fibration $Y \to Z$ contained in $f$ yields Lemma~\ref{lem-presssfiblocWform}.
\end{proof}

The next general result that we prove is the following, which will be used to pass from locally Weierstra{\ss} models to tautological models in the proof of Proposition~\ref{pro-locWeimodHdgiii}.

\begin{lem}\label{lem-finisurjtriv}
Let $f : X \to B$ and $g : Y \to B$ be two fibrations where $X$, $Y$, and $B$ are completions of  reduced complex spaces and assume that $Y$ has only finitely many irreducible components. Let $\bm : X \to Y$ be a finite surjective map over $B$. Let $\Pi : \cX \xto{q} B \times \gD \to \gD$ and $\Pi' : \cY \xto{q'} B \times \gD \to \gD$ be morphisms flat over a connected base $\gD$ (which could be regarded as deformations of $f$ and $g$ over $\gD$ respectively). 

Suppose that $\bm$ can be extended to a finite surjective map $\mu : \cX \to \cY$ over $B \times \gD$ and that for some open cover $\{U_i\}$ of $B$,  the restriction $\mu_i = \mu_{|\cX_i} :\cX_i \to \cY_i$  is isomorphic to $\bm_i \times \Id : X_i \times \gD \to Y_i \times \gD$ over $B \times \gD$ where $\cX_i  \cnec q^{-1}(U_i \times \gD)$, $\cY_i  \cnec q'^{-1}(U_i \times \gD)$, $X_i \cnec \cX_i \cap X$, and $Y_i \cnec \cY_i \cap Y$. Furthermore, assume that the restriction of the isomorphism $\mu_i \simeq \bm_i \times \Id$ to the central fiber $X_i \to Y_i$ is the identity. 

If  $\Pi'$ is a trivial deformation of $g$, then $\mu$ is isomorphic to $\bm \times \Id : X \times \gD \to Y \times \gD$ over $B \times \gD$. In particular, $\cX \to B \times \gD \to \gD$ is a trivial deformation. Moreover, let $G$ be a group acting on $X$, $Y$, and $B$ such that $f$, $g$, and $\bm$ are $G$-equivariant and assume that $\Pi$, $\Pi'$, and $\mu$ preserve the $G$-action. Assume that $\{U_i\}$ is $G$-invariant and the isomorphisms $\mu_i \simeq \bm_i \times \Id$ are $G$-equivariant as well. Then $\mu$ is $G$-equivariantly isomorphic to $\bm \times \Id$. 
\end{lem}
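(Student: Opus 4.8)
The plan is to glue a global isomorphism $\cX \simeq X \times \gD$ out of the local isomorphisms $\mu_i \simeq \bm_i \times \Id$, using the triviality of $\Pi'$ to control the transition functions. First I would fix an isomorphism $\Theta : \cY \eto Y \times \gD$ over $B \times \gD$ restricting to the identity on the central fiber (such a $\Theta$ exists since $\Pi'$ is a trivial deformation; if necessary compose with an automorphism of $Y\times\gD$ over $B\times\gD$ to normalize the central fiber). For each $i$, let $\theta_i : \cX_i \eto X_i \times \gD$ be the given local isomorphism over $B\times\gD$, chosen so that $\Theta_{|\cY_i} \circ \mu_i = (\bm_i \times \Id)\circ \theta_i$ and so that $\theta_i$ restricts to the identity on $X_i$. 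On the overlaps, the composite $\theta_{ij} \cnec \theta_i \circ \theta_j^{-1}$ is an automorphism of $X_{ij}\times\gD$ over $(U_i\cap U_j)\times\gD$, and the key point is that
$$
(\bm_{ij}\times\Id)\circ\theta_{ij} = \Theta_{|\cY_{ij}}\circ \mu_i\circ\mu_j^{-1}\circ\Theta_{|\cY_{ij}}^{-1}\circ(\bm_{ij}\times\Id) = \bm_{ij}\times\Id,
$$
because $\mu$ is globally defined, so $\mu_i\circ\mu_j^{-1} = \Id$ on the overlap. Hence $(\bm_{ij}\times\Id)\circ\theta_{ij} = \bm_{ij}\times\Id$, i.e.\ $\theta_{ij}$ is an automorphism of $X_{ij}\times\gD$ over $B\times\gD$ commuting with the finite surjective map $\bm_{ij}\times\Id$ to $Y_{ij}\times\gD$, and restricting to the identity on the central fiber $X_{ij}$.

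Next I would argue that such a $\theta_{ij}$ must be the identity. Since $\bm$ is finite and surjective and $X$, $Y$ are reduced (being completions of reduced complex spaces, by hypothesis), the map $\cO_{Y_{ij}\times\gD} \to (\bm_{ij}\times\Id)_*\cO_{X_{ij}\times\gD}$ is injective with cokernel supported on a nowhere-dense analytic subset; more directly, over the locus where $\bm$ is \'etale (a dense open of each component), an automorphism of $X\times\gD$ over $Y\times\gD$ is determined by a choice of deck transformation, which is locally constant, hence pinned down by its value on the central fiber — where it is the identity. Because $X_{ij}\times\gD$ is reduced and the identity agrees with $\theta_{ij}$ on a dense open set, $\theta_{ij} = \Id$ on all of $X_{ij}\times\gD$. (Here I use that $\gD$ is connected and the central fiber meets every component.) Therefore the $\theta_i$ patch to a global isomorphism $\theta : \cX \eto X\times\gD$ over $B\times\gD$, and by construction $(\bm\times\Id)\circ\theta = \Theta\circ\mu$, which exhibits $\mu$ as isomorphic to $\bm\times\Id$; in particular $\Pi$ is a trivial deformation of $f$.

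For the $G$-equivariant refinement I would run the same argument $G$-equivariantly: choose the trivialization $\Theta$ of $\cY$ to be $G$-equivariant (possible since $\Pi'$ is a $G$-equivariantly trivial deformation — or, if only the triviality and the $G$-action are given separately, average over $G$ as in Lemma~\ref{lem-prescompact}, using that $G$ is finite and $\Ext^1_G = (\Ext^1)^G$), take the $\{U_i\}$ to be $G$-invariant and the $\theta_i$ to come from the given $G$-equivariant isomorphisms $\mu_i\simeq\bm_i\times\Id$. Then each $\theta_{ij}$ is $G$-equivariant, and since we have already shown $\theta_{ij}=\Id$, the glued isomorphism $\theta$ is automatically $G$-equivariant, giving $\mu \simeq_G \bm\times\Id$. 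The main obstacle I anticipate is the rigidity step — proving $\theta_{ij}=\Id$ — where one must use reducedness and the finiteness of $\bm$ carefully to rule out nontrivial automorphisms over $Y\times\gD$ that are trivial on the central fiber; all the rest is bookkeeping with \v{C}ech-style gluing.
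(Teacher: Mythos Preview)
Your strategy is the same as the paper's: trivialize $\cX$ by showing that the given local trivializations of $\cX_i$ agree on overlaps, the key being that the transition on each $X_{ij}\times\gD$ lies over $Y_{ij}\times\gD$ and is therefore rigid (identity on the central fiber forces identity everywhere). The paper packages the rigidity step as a separate finiteness lemma: for a finite morphism $S\to T$ of formal complex spaces with $S$ the completion of a reduced space and $T$ having finitely many components, $\Aut(S/T)$ is finite. Your \'etale--locus/deck--transformation argument is morally the same statement, but in the formal setting the phrases ``dense open'' and ``\'etale locus'' need to be replaced by an honest algebra argument as in the paper; this is the point where care is required, and the paper's lemma supplies it.

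There is, however, a genuine gap in your setup before you ever reach rigidity. You ask for $\theta_i:\cX_i\eto X_i\times\gD$ satisfying $\Theta_{|\cY_i}\circ\mu_i=(\bm_i\times\Id)\circ\theta_i$. The hypothesis only gives you some pair $(\alpha_i,\beta_i)$ with $(\bm_i\times\Id)\circ\alpha_i=\beta_i\circ\mu_i$; to obtain your $\theta_i$ you must compose $\alpha_i$ with a lift of the automorphism $\Theta_{|\cY_i}\circ\beta_i^{-1}$ of $Y_i\times\gD$ through the finite map $\bm_i\times\Id$, and such a lift need not exist when $\bm$ is not Galois. The paper avoids this normalization entirely: it takes the given $\phi_i=\alpha_i^{-1}$ as they come, writes the transition fiberwise as $t\mapsto\psi_t\in\Aut(X_{ij}/Y_{ij})$, and then invokes the finiteness lemma to conclude $\psi_t$ is constant, hence $\psi_t=\psi_o=\Id$. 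So your use of the global $\Theta$, while natural, is exactly what creates the difficulty; dropping it and arguing fiberwise as the paper does is cleaner. For the $G$-equivariant statement, once the $\phi_i$ are $G$-equivariant and glue, the glued isomorphism is automatically $G$-equivariant; no averaging or $\Ext^1_G$ argument is needed here.
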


\begin{proof}
It suffices to show that for every $i$ and $j$, the isomorphisms $\phi_i : X_i \times \gD \xto{\sim} \cX_i$ and $\phi_j :   X_j \times \gD  \xto{\sim} \cX_j$ induced by $\mu_i \simeq \bm_i \times \Id$ agree on the intersection $X_{ij} \times \gD = (X_i \cap X_j) \times \gD$. Let $Y_{ij} = Y_i \cap Y_j$. Consider the map $\gD \to \Aut(X_{ij}/Y_{ij})$ which  associates $t \in \gD$ to the unique automorphism $\psi_t : X_{ij} \to X_{ij}$ satisfying $\phi_i(x,t) = \phi_j(\psi_t(x),t)$. As $\bm : X \to Y$ is finite and $X_{ij}$ is reduced,  $\Aut(X_{ij}/Y_{ij})$ is finite by the following lemma.
\begin{lem}\label{lem-finiredaut}
Let $f : S \to T$ be a finite morphism of formal complex spaces. Assume that $S$ is a completion of a reduced complex space and $T$ has only finitely many irreducible components, then $\Aut(S/T)$ is finite.
\end{lem}

Accordingly, $t \mapsto \psi_t$ is constant. Since the restriction of the isomorphism $\mu_i \simeq \bm_i \times \Id$ to the central fiber $X_i \to Y_i$ is the identity, we have $\psi_t = \psi_o = \Id$. Hence $\phi_i$ and $\phi_j$ agree on $X_{ij} \times \gD$.
\end{proof}

\begin{proof}[Proof of Lemma~\ref{lem-finiredaut}]
First we prove the following.
\begin{claim}
$S$ is a reduced formal complex space.
\end{claim}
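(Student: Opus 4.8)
The statement is local on $S$, so the plan is to show that every stalk of the structure sheaf $\cO_S$ is a reduced ring. Write $S = \hat{Y}$ for the completion of a reduced complex space $X$ along a closed complex subspace $Y \subset X$ with ideal sheaf $\cI \subset \cO_X$, so that the underlying topological space of $S$ is $Y$ and $\cO_S = \varprojlim_n \cO_X/\cI^n$. The first step is to recall, from the general formalism of formal complex spaces, that for every $y \in Y$ the canonical map
$$\cO_{S,y} \longrightarrow \varprojlim_n \cO_{X,y}/\cI_y^n$$
is an isomorphism, identifying $\cO_{S,y}$ with the $\cI_y$-adic completion $\widehat{\cO}_{X,y}$ of the analytic local ring $\cO_{X,y}$; this is the analytic counterpart of the corresponding statement for formal schemes and relies on the coherence of the sheaves $\cO_X/\cI^n$ together with a Mittag-Leffler argument (cf.\ \cite{ModAnLMS}). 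It therefore suffices to prove that $\widehat{\cO}_{X,y}$ is reduced for every $y \in Y$.

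Fix $y \in Y$ and set $R \cnec \cO_{X,y}$, a Noetherian local ring with maximal ideal $\mathfrak m$, and $I \cnec \cI_y \subseteq \mathfrak m$; since $X$ is reduced, $R$ is reduced. The key input is the classical fact that analytic local $\bC$-algebras are excellent --- they are quotients of convergent power series rings, which are excellent, and excellence passes to quotients --- so in particular the $\mathfrak m$-adic completion $\widehat{R}^{\mathfrak m}$ has geometrically reduced formal fibres and is hence again reduced, $R$ being reduced. Now consider the $I$-adic completion $R' \cnec \widehat{R}^{I}$: it is a Noetherian local ring with maximal ideal $\mathfrak m R'$ (because $R'/IR' = R/I$ is local and $IR' \subseteq \mathfrak m R'$), and since $I \subseteq \mathfrak m$ one checks that $R'/\mathfrak m^k R' = R/\mathfrak m^k$ for all $k$, so that the $\mathfrak m R'$-adic completion of $R'$ is again $\widehat{R}^{\mathfrak m}$. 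By Krull's intersection theorem $R'$ embeds into this completion $\widehat{R}^{\mathfrak m}$, which is reduced; hence $R' = \widehat{\cO}_{X,y}$ is reduced. As this holds for every $y \in Y$, the sheaf $\cO_S$ is reduced, which is the claim.

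The routine points to be filled in are the identity $R'/\mathfrak m^k R' = R/\mathfrak m^k$ and the elementary commutative algebra just sketched; the only substantive facts invoked are (i) the identification of the stalks of $\cO_S$ with $\cI$-adic completions of analytic local rings, and (ii) the excellence of analytic local rings (equivalently: the completion of a reduced complex space along a subspace remains reduced). Both are standard, and I do not expect either to cause real trouble --- the main care needed is to quote the correct references for these two facts.
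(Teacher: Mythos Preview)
Your argument is correct and takes a genuinely different route from the paper's. The paper proves the claim geometrically: it chooses a desingularization $\nu : \ti{\gS} \to \gS$, replaces the ideal $\cI$ by the cofinal ideal $\cI' = (\nu_*\cJ)\cap\cO_\gS$ (where $\cJ$ is the ideal of $\nu^{-1}(Z)$), and then shows that for small $U$ the $\cI'$-adic completion $\wh{\cO_\gS(U)}$ injects into the $\cJ$-adic completion $\wh{\cO_{\ti\gS}(\ti U)}$; the latter is a regular ring because $\ti\gS$ is smooth, hence reduced. Your approach is purely algebraic: you work stalkwise, reduce to showing that the $I$-adic completion of the reduced analytic local ring $R = \cO_{X,y}$ is reduced, and deduce this from excellence of $R$ via the chain $R' = \wh{R}^{I} \hookrightarrow \wh{R'}^{\mathfrak m R'} \simeq \wh{R}^{\mathfrak m}$, the last ring being reduced because excellent reduced rings have reduced completion.

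Both approaches rely on one nontrivial external input: the paper uses Hironaka's desingularization plus regularity of completions of smooth analytic algebras (citing~\cite[2.1]{BingenerForC}); you use excellence of analytic local $\bC$-algebras. Your argument is arguably more economical --- it avoids invoking resolution of singularities for what is really a local commutative-algebra statement --- but it does require you to nail down references for (i) the identification $\cO_{S,y} \simeq \wh{\cO_{X,y}}^{I}$ (this is the one genuinely delicate point, since stalks are direct limits and completion is an inverse limit; the coherence of the $\cO_X/\cI^n$ is what makes it work, and you should cite a precise statement) and (ii) excellence of analytic local rings. Once those are in place, the rest is routine.
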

\begin{proof}
Assume that $S$ is the completion of the reduced complex space $\gS$ along a complex subspace $Z \subset \gS$, defined by the ideal sheaf $\cI$. It suffices to show that for any small open subset $U \subset \gS$, the $\cI(U)$-adic completion $\wh{\cO_{\gS}(U)}$ of the ring $\cO_{\gS}(U)$ is reduced. 

Let $\nu: \ti{\gS} \to \gS$ be a desingularization of $\gS$ and let $\ti{Z} \cnec \nu^{-1}(Z)$. Let $\cJ \subset \cO_{\ti{\gS}}$ be the ideal sheaf of $\ti{Z} \subset \ti{\gS}$. As $\nu$ is surjective and $\gS$ is reduced, the natural map $\cO_{\gS} \to \nu_*\cO_{\ti{\gS}}$ is injective. Consider the ideal sheaf $\cI' \cnec (\nu_*\cJ) \cap \cO_{\gS} \subset \cO_\gS$. Since $Z$ and the complex subspace defined by $\cI'$ have the same underlying reduced complex subspace, $\wh{\cO_{\gS}(U)}$ is also the $\cI'(U)$-adic completion of $\cO_{\gS}(U)$.

Let $\ti{U} = \nu^{-1}(U)$. As $\cO_{\gS}(U)/\cI'(U)^k \hto \cO_{\ti{\gS}}(\ti{U})/\cJ(\ti{U})^k$ for every $k \in \bZ_{>0}$, we have $\wh{\cO_{\gS}(U)} \hto \wh{\cO_{\ti{\gS}}(\ti{U})}$ where $\wh{\cO_{\ti{\gS}}(\ti{U})}$ denotes the $\cJ(\ti{U})$-adic completion of ${\cO_{\ti{\gS}}(\ti{U})}$. Since $\ti{\gS}$ is smooth, $\wh{\cO_{\ti{\gS}}(\ti{U})}$ is a regular ring~\cite[2.1]{BingenerForC}. In particular, $\wh{\cO_{\ti{\gS}}(\ti{U})}$ is reduced, hence $\wh{\cO_{\gS}(U)}$ is reduced as well.
\end{proof}

Let $S = \bigcup_{i,j} S_{ij}$ be the decomposition of $S$ into its irreducible components such that $f(S_{ij}) = f(S_{i'j'})$ if and only if $i = i'$. Since 
$$ \Aut(S/T) \subset \prod_{i} \Aut(\cup_j S_{ij}/f(S_{ij})),$$ we can assume that $T$ is irreducible and  the image of each irreducible component of $S$ is $T$. By~\cite[(3.5)]{BingenerForC}, the map $(f: S \to T) \mapsto f_*\cO_S$
defines an equivalence of categories between the category of finite formal complex spaces over $T$ and the category of coherent $\cO_T$-algebras. By the above claim, $(f_*\cO_S)_t$ is a reduced $\cO_{T,t}$-algebra for every $t \in T$. Moreover, $(f_*\cO_S)_t$ is finitely generated as an $\cO_{T,t}$-module.  It follows from the following claim that the automorphism group $\Aut_{\cO_{T,t}}((f_*\cO_S)_t)$ of the $\cO_{T,t}$-algebra $(f_*\cO_S)_t$ is finite. 

\begin{claim}
Let $A \to B$ be a finite ring extension such that $B$ is reduced, $\Spec(A)$ is irreducible, and each irreducible component $\Spec(B_i)$ of $\Spec(B)$ surjects onto $A$. Then the automorphism group $\Aut_{A}(B)$ of the $A$-algebra $B$ is finite.
\end{claim}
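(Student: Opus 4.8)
The plan is to reduce the whole statement to a computation on the generic fibre of $\Spec B \to \Spec A$, which will turn out to be a finite product of fields and therefore have an obviously finite automorphism group. First I would record two easy reductions. Since the structure map $A \to B$ is injective and $B$ is reduced, $A$ is reduced as well; combined with the irreducibility of $\Spec A$ this forces $A$ to be an integral domain (a reduced ring with irreducible spectrum is a domain, since its nilradical is then both prime and zero). Write $\kappa$ for the fraction field of $A$ and set $S \cnec A \bss \{0\}$.

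The first substantial step is to show that $B$ is torsion-free as an $A$-module, so that the localization map $B \to S^{-1}B = B \otimes_A \kappa$ is injective. For this I would use that $B$ is reduced, hence embeds into $\prod_{\mathfrak q} B/\mathfrak q$, the product over the minimal primes $\mathfrak q$ of $B$ (the nilradical of $B$ being the intersection of its minimal primes). Each quotient $B/\mathfrak q$ is a domain and is finite over $A$; the hypothesis that the component $\Spec(B/\mathfrak q)$ of $\Spec B$ surjects onto $\Spec A$, together with $A$ being reduced, shows that $A \hookrightarrow B/\mathfrak q$ (the kernel of $A \to B/\mathfrak q$ is contained in the nilradical of $A$, hence is zero). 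A domain containing $A$ is $A$-torsion-free, so $\prod_{\mathfrak q} B/\mathfrak q$, and hence its subring $B$, is torsion-free over $A$.

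Next I would pass to $\bar B \cnec B \otimes_A \kappa = S^{-1}B$. Every $A$-algebra automorphism $\sigma$ of $B$ localizes to a $\kappa$-algebra automorphism $S^{-1}\sigma$ of $\bar B$, and by the injectivity of $B \hookrightarrow \bar B$ one has $S^{-1}\sigma = \mathrm{id}$ only if $\sigma = \mathrm{id}$; thus $\sigma \mapsto S^{-1}\sigma$ defines an injection $\Aut_A(B) \hookrightarrow \Aut_\kappa(\bar B)$. Now $\bar B$ is a finite-dimensional $\kappa$-algebra, because $B$ is a finite $A$-module, and it is reduced, being a localization of the reduced ring $B$; hence $\bar B$ is Artinian and reduced, so $\bar B \simeq \prod_{j=1}^n L_j$ for finitely many finite field extensions $L_1,\dots,L_n$ of $\kappa$.

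It then remains to see that $\Aut_\kappa\bigl(\prod_{j=1}^n L_j\bigr)$ is finite, which is elementary: a $\kappa$-algebra automorphism permutes the $n$ primitive idempotents, hence is determined by a permutation $\pi$ of $\{1,\dots,n\}$ together with, for each $j$, a $\kappa$-algebra isomorphism $L_j \to L_{\pi(j)}$; there are at most $n!$ permutations and at most $[L_j:\kappa]$ isomorphisms $L_j \to L_{\pi(j)}$ for each $j$, so $|\Aut_\kappa(\bar B)| \le n!\prod_{j=1}^n [L_j:\kappa] < \infty$, and therefore $\Aut_A(B)$ is finite. The argument is essentially formal once the hypotheses are unwound; the one point that genuinely uses their full strength — and which I expect to be the only place requiring care — is the deduction in the second step that $B$ is $A$-torsion-free from the assumption that every irreducible component of $\Spec B$ dominates $\Spec A$, since that is precisely what makes the passage to the generic fibre lose no automorphisms.
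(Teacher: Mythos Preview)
Your proof is correct and follows essentially the same route as the paper's: both reduce to the generic fibre $B \otimes_A \kappa$, identify it as a finite product of finite field extensions of $\kappa$, and conclude by the elementary finiteness of $\Aut_\kappa\bigl(\prod_j L_j\bigr)$. Your treatment is in fact slightly more careful than the paper's in making explicit the torsion-freeness of $B$ over $A$ (equivalently, the injectivity of $B \hookrightarrow S^{-1}B$), which the paper asserts only implicitly when it says each $\sigma \in \Aut_A(B)$ extends uniquely to $\Aut_K(L)$.
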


\begin{proof}
Note that since $B$ is reduced and  $\Spec(A)$ is irreducible, $A$ is an integral domain. Let $\{ \Spec(B_i)\}_{i \in I}$ be the set of irreducible components of $\Spec(B)$, which is finite because $B$ is finite over $A$. As $B$ is reduced, each $B_i$ is an integral domain. The induced maps $A \to B_i$ are finite ring extensions for all $i \in I$. If $K$ (resp. $L_i$) denotes the fractional field of $A$ (resp. $B_i$) and $L \cnec \prod_i L_i$, then each element of $\Aut_{A}(B)$ extends uniquely to an automorphism of $K$-algebra $ L \to L$. Therefore we have an injective map $\Aut_{A}(B) \hto \Aut_{K}(L)$ and it suffices to show that $\Aut_{K}(L)$ is finite. Note that $\Aut_{K}(L)$ is in bijection with $G \times \prod_{i}\Aut_{K}(L_i)$ where $G$ is the set of bijections $\gs : I \to I$ such that $L_i$ and $L_{\gs(i)}$ are isomorphic as field extensions of $K$ for all $i \in I$. As $I$ is finite, $G$ is also finite. Finally, since $L_i$ is a finite field extension of $K$, $\Aut_{K}(L_i)$ is finite as well for each $i \in I$.  Hence  $\Aut_{K}(L)$ is finite.
\end{proof}

Therefore, it suffices to show that the group homomorphism
$$\Psi_t : \Aut_{\cO_T}(f_*\cO_S) \to \Aut_{\cO_{T,t}}((f_*\cO_S)_t)$$
defined by the restriction is injective. To this end, let $g : S \to S$ be an automorphism over $T$ such that $\Psi_t(g) = \Id$. The maps $g$ and $\Id_S : S \to S$ give rise to two elements 
$$g, \Id_S \in \Hom_{\cO_T}(f_*\cO_S, f_*\cO_S) \simeq \Hom_{\cO_S}(f^*f_*\cO_S, \cO_S)$$
where $f_*\cO_S$ is considered as a \emph{coherent sheaf over $\cO_T$} (instead of a coherent $\cO_T$-algebra). Let $$\cI \cnec (g - \Id_S)(f^*f_*\cO_S) \subset \cO_S.$$ 
As $\Psi_t(g) = \Id$, there exists a neighborhood $U \subset T$ of $t$ such that $g_{|f^{-1}(U)}$ is the identity. So $\cI_{| f^{-1}(U)} = 0$. For every irreducible component $S_i$ of $S$, since $f(S_i) = T$ by assumption, the intersection $S_i \cap f^{-1}(U)$ is a nonempty open subset of $S_i$. Because $\cI$ is an ideal sheaf and $S_i$ is reduced, it follows from $\cI_{| S_i \cap f^{-1}(U)} = 0$ that $\cI_{| S_i} = 0$. So $\cI = 0$, hence  $g = \Id_S$.
\end{proof}

\begin{proof}[Proof of Proposition~\ref{pro-locWeimodHdgiii}]
 Let $Y \cnec f^{-1}(Z)$. It suffices to prove that 
$$\hat{\cY} \cnec q^{-1}(\hat{Z} \times  V_{\ti{Z}_0}^G) \to \hat{Z} \times  V_{\ti{Z}_0}^G$$ 
 is $G$-equivariantly isomorphic to the product $\hat{Y} \times  V_{\ti{Z}_0}^G \to \hat{Z} \times  V_{\ti{Z}_0}^G$ of $\hat{Y} \to \hat{Z}$ with the identity $\Id : V_{\ti{Z}_0}^G \to V_{\ti{Z}_0}^G$. Assume that $\sX = \sX\(m; (\eta_{ij}),(\eta^g_{i});\eta'_G\)$ where $\left\{(\eta_{ij}),(\eta^g_{i})\right\}$ is a 1-cocycle representing $\eta_G$ and $m \in \bZ \bss\{0\}$ and $\eta'_G \in H^1_G(B,\cJ^W_{\bH/B})$ are such that
 $m\eta_G$ can be lifted to $\eta'_G$. We will use the construction of the tautological family described in the proof of Proposition-Definition~\ref{pro-locWeimodHdg} and the notations therein.

 Let $\Psi : \hat{Z} \times V_{\ti{Z}_0}^G \hto B \times V$ be the product of $\hat{Z} \hto B$ with  $V_{\ti{Z}_0}^G \hto V$ and let
\begin{equation}\label{comm-recollement}
\begin{tikzcd}[cramped, row sep = 20, column sep = 60]
\hat{\sX}_{Z,ij} \times V_{\ti{Z}_0}^G \arrow[r, "H'_{ij} "] \ar[d, swap, "\mu_{i} \times \Id"] & \hat{\sX}_{Z,ji} \times V_{\ti{Z}_0}^G \ar[d, "\mu_{j} \times \Id "] \\ 
\hat{W}_{Z,ij} \times V_{\ti{Z}_0}^G \ar[r, "\gT_{ij}'"] &  \hat{W}_{Z,ji} \times V_{\ti{Z}_0}^G  
\end{tikzcd}
\end{equation}
be the pullback of the left side of~\eqref{diag-recollementW'} by $\Psi$.
Let  $\hat{\sX}_{Z,i} = \sX_i \times_B \hat{Z}$ and $\hat{W}_{Z,i} = W_i \times_B \hat{Z}$. Let 
$$\cW  \xto{q'} B \times V \to V$$ 
be the tautological family associated to the locally Weierstra{\ss} fibration $f' :  W' \cnec W^{\eta'}  \to B$ twisted by $\eta'$ where we recall that $\eta'$ is the image of $\eta'_G$ in $H^1(B,\cJ^W_{\bH/B})$. By Lemma~\ref{lem-presssfiblocWform}, the complex space obtained by gluing the $\hat{W}_{Z,i} \times V_{\ti{Z}_0}^G$ using $\gT_{ij}'$, which is $\hat{\cW}'_Z \cnec q'^{-1}(\hat{Z} \times V_{\ti{Z}_0}^G)$ by construction, is $G$-equivariantly isomorphic to $\hat{W}'_Z \times V_{\ti{Z}_0}^G$ over $\hat{Z} \times V_{\ti{Z}_0}^G$, where $\hat{W}'_Z \to \hat{Z}$ is the completion of $f' : W' \to B$ along $f'^{-1}(Z) \to Z$. Since $\hat{\sX}_Z \cnec \sX \times_B \hat{Z}$ is a completion of $\sX$ which is reduced and the finite map $\bm_{|\hat{\cY}} : \hat{\cY} \to \hat{\cW}'_Z$ extending $\bm_{|\hat{\sX}_Z} : \hat{\sX}_Z \to \hat{W}'_Z$ is obtained by gluing the 
$$\mu_{i} \times \Id : \hat{\sX}_{Z,i} \times V_{\ti{Z}_0}^G \to \hat{W}_{Z,i} \times V_{\ti{Z}_0}^G$$ 
using~\eqref{comm-recollement}, by Lemma~\ref{lem-finisurjtriv}  $\hat{\cY}  \to \hat{Z} \times  V_{\ti{Z}_0}^G$ 
 is $G$-equivariantly isomorphic to $\hat{Y} \times  V_{\ti{Z}_0}^G \to \hat{Z} \times  V_{\ti{Z}_0}^G$.
\end{proof}

\begin{rem}
In both Proposition~\ref{pro-desingpre} and Proposition~\ref{pro-locWeimodHdgiii}, the hypothesis that the total space of $f$ is in the Fujiki class $\cC$ is unnecessary.
\end{rem}

Combining Proposition~\ref{pro-densepair0} and Proposition~\ref{pro-locWeimodHdgiii}, now we can prove Proposition~\ref{pro-appalgpres}.

\begin{proof}[Proof of Proposition~\ref{pro-appalgpres}]
Let $Z_0 = \ol{Z\bss \gD}$ and $\ti{\imath}_0 : \ti{Z}_0 \to Z_0 \hto B$ be a $G$-equivariant log-desingularization of $(Z_0, Z_0 \cap \gD)$. Then the subfamily $\Pi'$ of the tautological family associated to $f$ parameterized by 
 $$V' = \ker\(\ti{\imath}_0^* : H^1(B, \cL_{\bH/B}) \to H^1(\ti{Z}_0,\ti{\imath}_0^*\cL_{\bH/B})\)^G$$
  is a deformation of $f$ which is $G$-equivariantly locally trivial over $B$ (Proposition-Definition~\ref{pro-locWeimodHdg}\comm{.}\ref{pro-locWeimodHdgii}) and preserves $G$-equivariantly $\rwh{f^{-1}(Z)} \to \hat{Z}$ (Proposition~\ref{pro-locWeimodHdgiii}). 

By Proposition-Definition~\ref{pro-locWeimodHdg}, there exists a multiplication-by-$m$ map $\bm : \cX \to \cW$ over $B \times V$ where $V \cnec H^1(B, \cL_{\bH/B})$ and $\cX \to B \times V \to V$ (resp. $\Pi : \cW \to B \times V \to V$) denotes the tautological family associated to $f$ (resp. the minimal locally Weierstra{\ss} fibration $p^\eta$ twisted by some element $\eta \in H^1(B,\cJ^W_{\bH/B})$). Since $f^{-1}(Z_0) \to Z_0$ has a multi-section, $(p^{\eta})^{-1}(Z_0) \to Z_0$ has a multi-section as well. By Proposition~\ref{pro-densepair0}, the subfamily of  $\Pi$ parameterized by $V'$ is an algebraic approximation of $W^\eta$. Since $\bm$ is finite and surjective, it follows that $\Pi'$ is an algebraic approximation of $\sX$.
 \end{proof}

\section{Algebraic approximations of elliptic fibrations over a projective variety}\label{sec-preuve}

We will prove Theorem~\ref{thm-mainfibellip} in this section. The following proposition provides a general approach that we will follow to proving the existence of algebraic approximations.
\begin{pro}\label{pro-Formalgapp}
Let $X$ be a compact complex variety and $\nu : X \dto X'$ a bimeromorphic map. Suppose that there exist a subvariety $Y' \subset X'$  and an algebraic approximation $\cX' \to \gD$ of $X'$ preserving the formal completion $\hat{Y}'$ of $X'$ along $Y'$ (cf. Definition~\ref{def-preserver}) such that $\nu^{-1}_{|X' \bss Y'}$ is biholomorphic onto its image. Then $X$ has an algebraic approximation.
\end{pro}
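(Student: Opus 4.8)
The plan is to deform the whole bimeromorphic correspondence linking $X$ and $X'$, carrying the given algebraic approximation $\cX' \to \gD$ of $X'$ along for the ride. Concretely, I would: (1) resolve $\nu$ to a common bimeromorphic model $\Gamma$ of $X$ and $X'$; (2) deform the projection $q : \Gamma \to X'$ over $\cX' \to \gD$, using the second part of Theorem~\ref{thm-ATB} together with Proposition~\ref{pro-modforex}, to obtain a relative model $\cG \to \cX'$; (3) deform the other projection $p : \Gamma \to X$, using the first part of Theorem~\ref{thm-ATB}, to produce a complex space $\cX$ with a morphism $\cX \to \gD$ and a bimeromorphic map $\cX \dto \cX'$ over $\gD$; and (4) conclude that $\cX \to \gD$ is an algebraic approximation of $X$, since $a(\cX_t) = a(\cX'_t) = \dim \cX_t$ for general $t$ and the Moishezon locus of $\cX' \to \gD$ is dense.

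\textbf{Resolving $\nu$.} I would take $\Gamma$ to be the reduced closure of the graph of $\nu$ in $X \times X'$, with proper bimeromorphic projections $p : \Gamma \to X$ and $q : \Gamma \to X'$. Since $\nu^{-1}$ is biholomorphic over $X' \bss Y'$, the part of $\Gamma$ over $X' \bss Y'$ is the graph of $\nu^{-1}$, so $q$ is an isomorphism there and is therefore a modification of $X'$ along $Y'$ with exceptional subvariety $E \cnec q^{-1}(Y')$. On $\Gamma \bss E \simeq X' \bss Y'$ the projection $p$ coincides with $\nu^{-1}$, hence is an open immersion; putting $Y \cnec p(E)$, the map $p$ is a modification of $X$ along $Y$ with $p^{-1}(Y) = E$. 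By the first part of Proposition~\ref{pro-modforex} the completions $\hat q : \hat E \to \hat Y'$ and $\hat p : \hat E \to \hat Y$ (here $\hat E$ is the completion of $\Gamma$ along $E$) are formal modifications.

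\textbf{The two applications of Theorem~\ref{thm-ATB}.} Let $\cY' \subset \cX'$ with $\hat{\cY}' \simeq \hat Y' \times \gD$ be the data provided by the hypothesis that $\cX' \to \gD$ preserves $\hat Y'$ (Definition~\ref{def-preserver}). By the second part of Proposition~\ref{pro-modforex}, $\hat q \times \Id_\gD : \hat E \times \gD \to \hat Y' \times \gD \simeq \hat{\cY}'$ is a formal modification whose target is the completion of the complex space $\cX'$ along $\cY'$; so the second part of Theorem~\ref{thm-ATB} yields a complex space $\cG$ and a modification $Q : \cG \to \cX'$ along $\cY'$, with $\cE \cnec Q^{-1}(\cY')$ and an isomorphism $\hat{\cE} \simeq \hat E \times \gD$ over $\hat{\cY}'$ of the completion of $\cG$ along $\cE$; in particular $Q$ is an isomorphism over $\cX' \bss \cY'$. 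Applying now the second part of Proposition~\ref{pro-modforex} to $\hat p$ and transporting along $\hat{\cE} \simeq \hat E \times \gD$ gives a formal modification $\hat{\cE} \to \hat Y \times \gD$ whose source is the completion of the complex space $\cG$ along $\cE$; so the first part of Theorem~\ref{thm-ATB} yields a complex space $\cX$, a subvariety $\cY \subset \cX$, and a modification $P : \cG \to \cX$ along $\cY$, with $P^{-1}(\cY) = \cE$ and $\hat{\cY} \simeq \hat Y \times \gD$. The morphism $\cX \to \gD$ is then obtained by descent: over $\cX \bss \cY \simeq \cG \bss \cE \simeq \cX' \bss \cY'$ it is the one induced by $\cX' \to \gD$, over $\hat{\cY} \simeq \hat Y \times \gD$ it is the second projection, and the two agree on the punctured completion along $\cY$. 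Restricting the same gluing to the central fibre, where $\hat E \to \hat Y'$ and $\hat E \to \hat Y$ are isomorphisms away from the centres and $\nu^{-1}$ glues $X \bss Y$ back to $\hat Y$, shows that $\cX_o = X$; and $\cX \to \gD$ is flat and surjective because it is so over $\cX \bss \cY$ and over the completion along $\cY$. So $\cX \to \gD$ is a deformation of $X$.

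\textbf{Conclusion and the delicate point.} As $P$ and $Q$ are proper bimeromorphic morphisms over $\gD$, the map $Q \circ P^{-1} : \cX \dto \cX'$ is bimeromorphic over $\gD$ and restricts to $\nu$ on $\cX_o$; since $P$ and $Q$ are isomorphisms over the fibrewise dense opens $\cX \bss \cY$ and $\cX' \bss \cY'$, it restricts to a bimeromorphic map $\cX_t \dto \cX'_t$ for all $t$ outside a nowhere dense analytic subset of $\gD$. Hence $a(\cX_t) = a(\cX'_t) = \dim \cX_t$ whenever $\cX'_t$ is Moishezon and $t$ avoids that subset; as the Moishezon locus of $\cX' \to \gD$ is dense, so is that of $\cX \to \gD$, which is therefore an algebraic approximation of $X$. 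The step I expect to need the most care is the bookkeeping in the previous paragraph: one must know that the exceptional subvariety $E = q^{-1}(Y') = p^{-1}(Y)$ governing the completions on both sides is ``the same'', so that the hypothesis that $\cX' \to \gD$ preserves only $\hat Y'$ suffices to feed both applications of Theorem~\ref{thm-ATB} --- this is exactly where the choice of $\Gamma$ as the closure of the graph, together with the hypothesis that $Y'$ contains the entire locus modified by $\nu^{-1}$, is used. Once this is in place, verifying that the glued $\cX$ is an honest (separated, flat) deformation of $X$ and that $\cX \dto \cX'$ restricts to $\nu$ on $\cX_o$ is routine, relying on the functoriality in Proposition~\ref{pro-modforex} and the uniqueness clauses in Theorem~\ref{thm-ATB}.
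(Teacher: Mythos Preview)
Your proposal is correct and follows essentially the same route as the paper: resolve $\nu$ to a common model, apply Theorem~\ref{thm-ATB}.\emph{ii)} to deform the projection to $X'$ over $\cX' \to \gD$, then Theorem~\ref{thm-ATB}.\emph{i)} to contract the other projection and produce $\cX$, and finally check flatness via the infinitesimal criterion and identify the central fibre via the uniqueness clause in Theorem~\ref{thm-ATB}. Two small remarks: your caveat ``for all $t$ outside a nowhere dense analytic subset'' is unnecessary, since $P$ and $Q$ are modifications along $\cY$ and $\cY'$, which contain no fibre of the map to $\gD$ (as $\cY \simeq Y \times \gD$ and $\cY' \simeq Y' \times \gD$), so $\cX_t \dto \cX'_t$ is bimeromorphic for \emph{every} $t$; and your identification of $\cX_o$ with $X$ by ``gluing'' is cleaner when phrased, as the paper does and as you allude to at the end, directly through the uniqueness in Theorem~\ref{thm-ATB} applied fibrewise.
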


\begin{proof}[Proof of Proposition~\ref{pro-Formalgapp}]
Let
\begin{equation}\label{res-nu}
\begin{tikzcd}[cramped, row sep = 20, column sep = 25]
X    & \ti{X} \ar[l, swap, "\mu' "] \ar[r, "\nu' "]     & X'
\end{tikzcd}
\end{equation}
be a minimal resolution of $\nu$ (in the sense that  if $U \cnec \nu^{-1}(X' \bss Y')$, then $\mu'_{|{\mu'}^{-1}(U)}$ is biholomorphic onto $U$). Let $Y \cnec  X \bss \(\nu^{-1}(X' \bss Y')\)$, namely $Y$ is the subvariety of $X$ such that the restriction of $\nu$ to $X \bss Y$ is biholomorphic onto $X' \bss Y'$. We have 
$$E \cnec {\mu'}^{-1}(Y) =  {\nu'}^{-1}(Y').$$ 
If $\hat{E}$ (resp. $\hat{Y}$ and $\hat{Y}'$) denotes the completion of $\ti{X}$ (resp. $X$ and $X'$) along $E$ (resp. $Y$ and $Y'$), then $\hat{E} \to \hat{Y}$ and $\hat{E} \to \hat{Y}'$ are formal modifications. So their products $\hat{E} \times \gD \to \hat{Y} \times \gD $ and $\hat{E} \times \gD \to \hat{Y}' \times \gD $ with the identity $\Id : \gD \to \gD$ are also formal modifications (Proposition~\ref{pro-modforex}). By assumption, there exists an algebraic approximation $\cY' \subset \cX' \xto{\pi'} \gD$ of the pair $(Y' \subset X')$ such that the completion $\hat{\cY}'$ of $\cX'$ along $\cY'$ is isomorphic to $\hat{Y}' \times \gD$ over $\gD$. Therefore by Theorem~\ref{thm-ATB}.\emph{ii)}, there exists a modification $\ti{\nu} : \ti{\cX} \to \cX'$ of $\cX'$ along $\cY'$ such that if $\cE \cnec \ti{\nu}^{-1}(\cY')$, then the completion of $\ti{\nu}$ along $\cE \to \cY'$ is isomorphic to $\hat{E} \times \gD \to \hat{Y}' \times \gD$. Similarly by Theorem~\ref{thm-ATB}.\emph{i)}, there exists a modification $\ti{\mu} : \ti{\cX} \to \cX$ along a subvariety $\cY \subset \cX$ such that $\cE = \ti{\mu}^{-1}(\cY)$ and the completion of $\ti{\mu}$ along $\cE \to \cY$ is isomorphic to $\hat{E} \times \gD \to \hat{Y} \times \gD$. Let $\ti{\pi} \cnec \pi' \circ \ti{\nu} : \ti{\cX} \to \cX' \to \gD$. As $\cE \to \cY$, being isomorphic to $E \times \gD \to Y \times \gD$, is a morphism over $\gD$, the map $\ti{\pi}$ induces a map $\pi : \cX \to \gD$. To sum up, we have a diagram
\begin{equation}
\begin{tikzcd}[cramped, row sep = 30, column sep = 30]
{\cX}  \ar[rd, swap, "\pi"]   & \ti{\cX} \ar[l, swap, "\ti{\mu} "] \ar[d, "\ti{\pi} "] \ar[r, "\ti{\nu} "]     & \cX' \ar[ld, "\pi'"] \\
& \gD &
\end{tikzcd}
\end{equation}
of morphisms over $\gD$ containing~\eqref{res-nu} as a fiber over the point $o \in \gD$ parameterizing $X'$ in $\pi' : \cX' \to \gD$.

Now we show that $\ti{\pi}$ and $\pi$ are deformations of $\ti{X}$ and $X$ respectively. First we show that $\ti{\pi}$ and $\pi$ are flat. On the one hand since $\hat{\cE} \to \gD$ is isomorphic to the projection $\hat{E} \times \gD \to \gD$, which is flat, the composition $\ti{\pi} : \ti{\cX} \to \cX \to \gD$ is flat at every point of $\cE \subset \ti{\cX}$  by the infinitesimal criterion of flatness. On the other hand as the restriction $\mu_{|\ti{\cX}\bss \cE} : \ti{\cX}\bss \cE \to {\cX}\bss \cY$ is an isomorphism and ${\cX}\bss \cY \to \gD$ is flat, the restriction of $\ti{\pi}$ to the Zariski open $\ti{\cX}\bss \cE$ is also flat. Hence $\ti{\pi}$ is flat everywhere. A similar argument shows that $\pi$ is flat as well.

Let $\ti{\cX}_o$ (resp. ${\cX}_o$) denote the central fiber of $\ti{\pi}$ (resp. $\pi$). The restriction $\ti{\cX}_o \to X'$ of $\ti{\nu} : \ti{\cX} \to \cX'$ to $\ti{\cX}_o$ is a modification of $X'$, whose completion along the exceptional locus is isomorphic to $\hat{E} \to \hat{Y}'$. So by the uniqueness of extensions of formal modifications (Theorem~\ref{thm-ATB}), the restriction of $\ti{\nu}$ to $\ti{\cX}_o$ is isomorphic to  $\nu : \ti{X} \to X'$. Thus $\ti{\pi}' : \ti{\cX}' \to \gD$ is a deformation of $\ti{X}'$. A similar argument shows that the restriction of $\ti{\mu}$ to $\ti{\cX}_o$ is isomorphic to  $\mu : \ti{X} \to X$. Thus $\pi : \cX \to \gD$ is a deformation of $X$. Finally since $\pi' : \cX' \to \gD$ is an algebraic approximation of $X'$, we conclude that $\pi$ is an algebraic approximation of $X$.
\end{proof}

\begin{proof}[Proof of Theorem~\ref{thm-mainfibellip}]

Let $X$ be a compact K\"ahler manifold bimeromorphic to the total space of an elliptic fibration over a projective variety. We will first construct bimeromorphic maps 
$$
\begin{tikzcd}[cramped, row sep = 20, column sep = 25]
X    & X'/G \ar[l, swap, "\mu"] \ar[r, dashed, "\bar{\tau}"]     & \sX/G
\end{tikzcd}
$$
such that $\bar{\tau}$ satisfies the hypothesis of Proposition~\ref{pro-Formalgapp}.

\begin{lem}\label{lem-red}
There exist a compact complex manifold $X'$, a finite group $G$ acting on it, a $G$-equivariant tautological model $f : \sX \to B$ satisfying Hypotheses~\ref{hyp-KahlerSection}, and bimeromorphic maps
$$
\begin{tikzcd}[cramped, row sep = 20, column sep = 25]
X    & X'/G \ar[l, swap, "\mu"] \ar[r, dashed, "\bar{\tau}"]     & \sX/G
\end{tikzcd}
$$
such that $\mu$ is holomorphic and $\bar{\tau}$ is the quotient of a $G$-equivariant bimeromorphic map $\tau : X' \dto \sX$ satisfying the following property: There exists a $G$-stable subvariety $Z \subset B$ containing $\gD$ such that $f^{-1}(\ol{Z \bss \gD}) \to \ol{Z \bss \gD}$ has a multi-section and $\tau^{-1}_{|\sX \bss f^{-1}(Z)}$ is biholomorphic onto its image.
\end{lem}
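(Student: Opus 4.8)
The plan is to produce the required data in four steps. \emph{Step 1}: since $X$ is bimeromorphic to the total space of an elliptic fibration over a projective variety, Hironaka's theorem gives a projective manifold $B_0$, a normal crossing divisor $\gD_0 \subset B_0$, and an elliptic fibration $X_0 \to B_0$ with $X_0$ bimeromorphic to $X$ and smooth over $B_0 \bss \gD_0$; I would then choose a finite Galois cover $r : B \to B_0$, with group $G$, ramified enough over $\gD_0$ (Kawamata's covering lemma) so that $\gD \cnec r^{-1}(\gD_0)$ is normal crossing, the local monodromies of the pulled-back variation around $\gD$ are unipotent, and the base change acquires local meromorphic sections over every point of $B$. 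A $G$-equivariant desingularization $\varphi_1 : X_1' \to B$ of $X_0 \times_{B_0} B$ is then a $G$-equivariant elliptic fibration satisfying Hypotheses~\ref{hyp-KahlerSection} (projective base; discriminant $\gD$ normal crossing with unipotent monodromies; total space a finite cover of the compact K\"ahler manifold $X_0$, hence in the Fujiki class $\cC$) with $X_1'/G$ bimeromorphic to $X$. \emph{Step 2}: by Lemma-Definition~\ref{lem-locWeimodHdg} the class $\eta_G(\varphi_1)$ has a $G$-equivariant tautological model $f : \sX \to B$; by Lemma~\ref{lem-dicstaut} its discriminant is again $\gD$, so $f$ still satisfies Hypotheses~\ref{hyp-KahlerSection}, and by Lemma~\ref{lem-isomlisse} there is a $G$-equivariant bimeromorphic map $\sigma : \sX \dto X_1'$ over $B$, biholomorphic over $B \bss \gD$. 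Composing, $\rho \cnec (X_1' \to X_1'/G \dto X) \circ \sigma : \sX \dto X$ is, over a dense open of $B$, compatible with the algebraic reduction maps $\sX \to B \xto{r} B_0$ and $X \dto B_0$.

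\emph{Step 3} is the heart of the argument. Let $Y_\mmin \subset \sX$ be the minimal subvariety modified under $\rho$ and set $Z \cnec \gD \cup \ol{f(Y_\mmin) \bss \gD}$, a $G$-stable closed subvariety containing $\gD$. Since $\rho$ restricts to an isomorphism on a general fiber of $f$, such a fiber misses $Y_\mmin$, so $f(Y_\mmin) \ne B$ and $Z \subsetneq B$. For the multi-section I would work component by component: given a component $Y_j$ of $Y_\mmin$ with $f(Y_j) \not\subseteq \gD$, resolving $\rho$ and $X \dto B_0$ and passing to strict transforms, a dimension count shows $\rho(Y_j)$ surjects onto $r(f(Y_j))$; moreover, \emph{after replacing the model by a suitably blown-up one in which no divisor modified by $\rho$ dominates the elliptic fibration $X \dto B_0$ over its image in $B_0$}, the map $\rho(Y_j) \to r(f(Y_j))$ is generically finite. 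Then $\rho(Y_j)$ is a multi-section of $X_{|r(f(Y_j))} \to r(f(Y_j))$, and pulling it back under the generically finite base change yields a multi-section of $f^{-1}(f(Y_j)) \to f(Y_j)$; the union over all components is a multi-section of $f^{-1}(\ol{Z\bss\gD}) \to \ol{Z\bss\gD}$.

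\emph{Step 4}: let $X' \to X_1'$ be a $G$-equivariant composition of blow-ups along smooth $G$-invariant centers resolving the indeterminacies of both $X_1' \dto \sX$ (centers over $\gD$, by Step 2) and $X_1' \dto X$ (centers over a proper $G$-stable subvariety, by the general-fibre argument once more), arranged so that the induced map $\mu : X'/G \to X$ is holomorphic; put $\tau : X' \dto X_1' \dto \sX$, with quotient $\bar\tau$. Since $\sigma$ is biholomorphic over $B \bss \gD$ and $X' \to X_1'$ is an isomorphism away from its centers — all of which, by the dimension analysis of Step 3, project into $Z$ — the map $\tau^{-1}_{|\sX \bss f^{-1}(Z)}$ is biholomorphic onto its image, and all of the asserted properties hold with this $Z$.

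The genuinely hard part is the multi-section in Step 3. The straightforward dimension count fails precisely when a divisor modified by $\rho$ is contracted onto the full preimage, under the algebraic reduction $X \dto B_0$, of a subvariety of $B_0$: over such a subvariety the restricted elliptic fibration need not be Moishezon, and no multi-section is visible. Eliminating this pathology — that is, engineering, by further equivariant blow-ups of the elliptic model, a bimeromorphic model of $X$ in which every component of the horizontal part of the modified locus is finite over its image in $B$ — is the real content hidden in the ``Step 2, 3, and 4'' alluded to in the introduction, and is where the minimality of $Y_\mmin$ has to be exploited carefully together with repeated $G$-equivariant blow-ups.
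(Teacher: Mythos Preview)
Your outline has the right shape and you correctly locate the crux in Step~3, but as you yourself concede in the last paragraph, you do not actually supply the mechanism that produces the multi-section. Saying ``after replacing the model by a suitably blown-up one in which no divisor modified by $\rho$ dominates the elliptic fibration over its image'' is a restatement of the goal, not a construction; and your proposed source of the multi-section --- the image $\rho(Y_j)\subset X$ --- is on the wrong side: you cannot modify $X$, and modifying $\sX$ or $X_1'$ changes $Y_{\min}$ with it, so there is no obvious termination.

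The paper fills this gap by a simultaneous inductive blow-up of the \emph{base}. One first resolves the bimeromorphic map $X_0/G\dto X$ by a tower $X_n\to\cdots\to X_0$ with $G$-stable centers $C_i$ not dominating $B_0$, and then builds $B_n\to\cdots\to B_0$ with $G$-stable $Z_i\subset B_i$ as follows: write $C_i=C'_i\cup C''_i$ where $C'_i$ collects those components that are \emph{full vertical preimages} $\ol{f_i^{-1}(\gS\bss Z_i)}$, blow up $B_i$ along $f_i(C'_i)$, and set $Z_{i+1}=\nu_i^{-1}(Z_i\cup f_i(C''_i))$. Because $f_i$ is a smooth elliptic fibration over $B_i\bss Z_i$, any component $Y''$ of $C''_i$ that is \emph{not} a full preimage is automatically generically finite over $f_i(Y'')$ --- it \emph{is} the multi-section --- while the vertical pieces $C'_i$ are absorbed by the base blow-up (blow-up commutes with the flat base change $f_i$). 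This forces a different order from yours: the final tautological model $f:\sX\to B$ is built only \emph{after} this process, over a $G$-equivariant log-resolution of $(B_n,\gD_n)$; fixing $\sX\to B$ over the initial Galois cover as you do in Step~2 leaves no room to absorb the vertical part of the modified locus.
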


\begin{proof}
The proof of Lemma~\ref{lem-red} consists of several steps.

\noindent \textbf{Step 1:} \textit{$X$ is bimeromorphic to $X_0/G$ where $G$ is a finite group and $X_0$ is the total space of a $G$-equivariant tautological model $f_0 : X_0 \to B_0$ satisfying Hypotheses~\ref{hyp-KahlerSection}.
}

Let $\bar{g} : \bar{X}' \to \bar{B}_0$ be an elliptic fibration over a projective variety $\bar{B}_0$ such that $\bar{X}'$ is a compact K\"ahler manifold bimeromorphic to $X$. Let $\bar{\gD}_0 \subset \bar{B}_0$ be the discriminant locus of $\bar{g}$. Up to base changing $\bar{g}$ with a log-desingularization $\bar{B}'_0 \to \bar{B}_0$ of $(\bar{B}_0, \bar{\gD}_0)$ and taking a desingularization of $\bar{X}' \times_{\bar{B}_0} \bar{B}'_0 $, we can assume that $\bar{B}_0$ is smooth and $\bar{\gD}_0$ is a simple normal crossing (SNC) divisor. 

Since $\bar{X}'$ is a compact K\"ahler manifold and $\bar{B}_0$ is smooth, $\bar{g}$ is locally projective~\cite[Theorem 3.3.3]{NakayamaLoc}. So there exists a Galois cover $r_0 : B_0 \to \bar{B}_0$ such that the base change $g : X' \cnec \bar{X}' \times_{\bar{B}_0} {B}_0 \to {B}_0$ of $\bar{g}$ is an elliptic fibration over a projective manifold $B_0$ whose  discriminant locus $r_0^{-1}(\bar{\gD}_0)$ is a simple normal crossing divisor and $g$ has local meromorphic sections at every point of $B_0$~\cite[Proposition 3.11]{ClaudonHorpi1}. Let $\gD_0  \subset r_0^{-1}(\bar{\gD}_0) \subset B_0$ be the discriminant divisor of the minimal Weierstra{\ss} fibration associated to $g$. As $g$ is locally projective, the local monodromies of the local system $\bH_0 \cnec (R^1g_*\bZ)_{|{B}_0 \bss {\gD}_0}$ around ${\gD}_0$ are quasi-unipotent. By Kawamata's unipotent reduction~\cite[Corollary 18]{KawamataAb}, we can further assume that the Galois cover $r_0 : {B}_0 \to \bar{B}_0$ we take is such that the local monodromies of $\bH_0$ around $\gD_0$ are unipotent and $({B}_0,{\gD}_0)$ remains a log-smooth projective pair. 

Since $g  : X' \to B_0$ has local meromorphic sections at every point of $B_0$, we can apply Lemma-Definition~\ref{lem-locWeimodHdg} and obtain a $G$-equivariant tautological model $f_0 : {X}_0 \to  {B}_0$ of  $g$ where $G \cnec \Gal (B_0 / \bar{B}_0)$. So ${X}_0/G$ is bimeromorphic to $X'/G$, which is further bimeromorphic to $X$. As the discriminant locus of $f_0 : {X}_0 \to  {B}_0$ is $\gD_0$ by Lemma~\ref{lem-dicstaut} and $({B}_0,{\gD}_0)$ is a log-smooth projective pair, $f_0$ satisfies $i)$ and $iii)$ in Hypotheses~\ref{hyp-KahlerSection}. Since $f_0$ is bimeromorphic to $g$ and $g$ satisfies $ii)$ and $iv)$ in Hypotheses~\ref{hyp-KahlerSection}, so does $f_0$. \qed

\noindent \textbf{Step 2:} \textit{There exist a sequence of blow-ups  
\begin{equation}\label{suite-eclatementX0}
 {X}_n \to \cdots \to {X}_0
 \end{equation}
along $G$-stable subvarieties $C_i \subset X_i$ (hence all maps $X_{i+1} \to X_i$ are $G$-equivariant) which do not dominate $B_0$ and a bimeromorphic morphism $\mu_n : X_n/G \to X$.}

By Step 1, we have a bimeromorphic map $\phi : \bar{X}_0 \cnec X_0/G \dto X$. Let
\begin{equation}\label{suite-eclatementX0quot}
 \bar{X}_n \to \cdots \to \bar{X}_0
 \end{equation}
 be a sequence of blow-ups and $\phi' : \bar{X}_n \to X$ a (bimeromorphic) morphism resolving $\phi$. As ${X}_0$ is normal by Lemma~\ref{lem-dicstaut}, so is the finite quotient $\bar{X}_0$. Thus the indeterminacy locus of $\phi$ is of codimension at least $2$. As $\dim \bar{B}_0 =  \dim \bar{X}_0 - 1$, we may assume that the blow-up centers $\bar{C}_i \subset \bar{X}_i$ of~\eqref{suite-eclatementX0quot} do not dominate $\bar{B}_0$.

We construct by induction the blow-up sequence~\eqref{suite-eclatementX0}  together with $G$-invariant morphisms $q_i: X_i \to \bar{X}_i$ such that the quotients $\bar{q}_i : X_i/G \to \bar{X}_i$ are bimeromorphic. For $i= 0$, we define $q_0 : X_0 \to \bar{X}_0$ to be the quotient map, so $\bar{q}_0$ is the identity. Suppose that $X_i$ and $q_i$ are constructed and $\bar{q}_i$ is known to be bimeromorphic. Let $C_i \cnec q_i^{-1}(\bar{C}_i)$. We define $X_{i + 1} \to X_i$ to be the blow-up of $X_i$ along $C_i$. As $q_i$ is $G$-equivariant, $C_i$ is $G$-stable. Since $\bar{C}_i$ does not dominate $\bar{B}_0$, ${C}_i$ does not dominate ${B}_0$. The universal property of blowing up provides a $G$-invariant map $q_{i+1} : X_{i+1} \to \bar{X}_{i+1}$ and a commutative diagram
\begin{equation}\label{diag-eclatcomm}
\begin{tikzcd}[cramped, row sep = 20, column sep = 25]
X_{i+1} \ar[r] \ar[d,"q_{i+1}"]  & X_i  \ar[d, "q_i"] \\
  \bar{X}_{i+1} \ar[r]  &\bar{X}_{i}
\end{tikzcd}
\end{equation}
where the horizontal arrows are the blow-ups along $C_i$ and $\bar{C}_i$ respectively. As $\bar{q}_i$ is bimeromorphic, $\bar{q}_{i+1}$ is also bimeromorphic. It follows that  the map $\mu_n$ defined by
$$\mu_n \cnec \phi' \circ \bar{q}_n : X_n/G \to \bar{X}_n \to X$$
 is  bimeromorphic as well. \qed

\noindent \textbf{Step 3:} \textit{We construct by induction starting from $i = 0$ a sequence of $G$-equivariant blow-ups
$$\nu :  {B}_n \xto{\nu_{n-1}} \cdots \xto{\nu_0} {B}_0$$
 and $G$-stable subvarieties $Z_i \subset B_i$ containing the pre-images $\gD_i \subset B_i$ of $\gD_0 \subset B_0$ and satisfying the following properties. 
\begin{enumerate}[label = \roman{enumi})]
\item The induced  $G$-equivariant meromorphic map  $f_i : X_i \dto B_i$ is almost holomorphic and well-defined over $B_i \bss Z_i$, which is nonempty.
\item Let $f'_i : X'_i \cnec X_0 \times_{B_0} B_i \to B_i$ be the base change of the fibration $f_0 : X_0 \to B_0$. Then $f'_i$ is a smooth elliptic fibration over $B_i \bss Z_i$ and ${f'_i}^{-1}(\ol{Z_i \bss \gD_i}) \to \ol{Z_i \bss \gD_i}$ has a multi-section.
\item  The map $f_i$ is $G$-equivariantly biholomorphic to $f'_i$ over $B_i \bss Z_i$. 
\end{enumerate}
}

For $i = 0$, we set $Z_0 = \gD_0$ and the three properties above hold obviously. Assume that $B_{i}$ and $Z_i$ are constructed and satisfy the properties listed above. Let $C'_i$ be the union of the irreducible components of the blow-up center $C_i \subset  X_i$ of $X_{i+1} \to X_i$ that are of the form $\ol{f_i^{-1}(\gS \bss Z_i)}$ for some subvariety $\gS \subset B_i$; we write $C_i = C'_i \cup C''_i$ where $C''_i$ is the union of the irreducible components of $C_i$ complementary to $C'_i$. We define $\nu_{i} : B_{i+1} \to B_{i}$ to be the blow-up of $B_i$ along $f_i(C'_i)$\footnote{For any subvariety $Y \subset X_i$, $f_i(Y)$ is defined as $\ol{f_i\(Y \cap {f_i}^{-1}(B_i \bss Z_i)\)}$, which is well-defined since $f_i$ is holomorphic over $B_i \bss Z_i$. In particular $ f_i(Y)$ is empty if and only if $Y \subset {f_i}^{-1}(Z_i)$.} and
$$Z_{i+1} \cnec \nu_i^{-1}(Z_{i} \cup f_i(C''_i)).$$
Obviously, $Z_{i+1}$ contains $\gD_{i+1}$.
As $C_i$ and $Z_i$ are $G$-stable, $C'_i$ and $C''_i$ are $G$-stable as well. Thus the blow-up center $f_i(C'_i)$ of $\nu_i$ and $Z_{i+1}$ are also $G$-stable.  
 
As $f_i$ is biholomorphic to $f'_i$ over $B_i \bss Z_{i} $ and $f'_i$ is a smooth elliptic fibration over $B_i \bss Z_{i}$, the map $f_i : X_i \dto B_i$ is flat over $B_i \bss (Z_{i} \cup f_i(C''_i))$. Since blowing up commutes with flat base change, we have the following cartesian diagram
\begin{equation}\label{diag-eclatplatcart}
\begin{tikzcd}[cramped, row sep = 20, column sep = 25]
X_{i+1}^\st \cnec \wt{ {f_i}^{-1}\(B_i \bss  (Z_{i} \cup f_i(C''_i))\) } \ar[r] \ar[d]  &  {f_i}^{-1}\(B_i \bss  (Z_{i} \cup f_i(C''_i))\)  \ar[d, "f_i"] \\
  B_{i+1} \bss Z_{i+1} \ar[r, "\nu_{i}"]  & B_i \bss  (Z_{i} \cup f_i(C''_i))
\end{tikzcd}
\end{equation}
where the upper horizontal arrow is the blow-up of the Zariski open ${f_i}^{-1}\(B_i \bss  (Z_{i} \cup f_i(C''_i))\)\subset X_i$ along 
\begin{equation}\label{eq-centreeclat}
{f_i}^{-1}\({f_i}(C'_i) \bss  (Z_{i} \cup f_i(C''_i))\) 
= C_i' \bss {f_i}^{-1}(Z_{i} \cup f_i(C''_i)) =  C_i \bss {f_i}^{-1}(Z_{i} \cup f_i(C''_i)) =
 C_i \cap {f_i}^{-1}\(B_i \bss (Z_i \cup f_i(C''_i))\).
\end{equation}
Here, the first equality of~\eqref{eq-centreeclat} follows from the assumption that irreducible components of $C'_i$ are of the form $\ol{f_i^{-1}(\gS \bss Z_i)}$.
Recall that $X_{i+1}$ is the blow-up of $X_i$ along $C_i$, so $X_{i+1}^\st$ is a Zariski open of $X_{i+1}$ and the left vertical arrow is the restriction of $f_{i+1}$ to $X_{i+1}^\st$. As $X_{i+1}^\st \to B_{i+1} \bss Z_{i+1}$ is proper, $f_{i+1}$ is almost holomorphic and well-defined over $B_{i+1} \bss Z_{i+1}$. As $C_i \subset X_i$  does not dominate $B_i$ by Step 2, we have $B_{i+1} \bss Z_{i+1} \ne \emptyset$, which shows $i)$. Since~\eqref{diag-eclatplatcart} is cartesian and by the induction hypothesis, the right vertical arrow of~\eqref{diag-eclatplatcart} is $G$-equivariantly isomorphic to the base change
$$X_0 \times_{B_0} \(B_i \bss (Z_i \cup f_i(C''_i))\) \to B_i \bss (Z_i \cup f_i(C''_i))$$
of $f_0 : X_0 \to B_0$, $iii)$ holds.

As for $ii)$, since $f_0$ is smooth over $B_0 \bss Z_0$ and $Z_{i+1}$ contains the pre-image of $Z_0$, the elliptic fibration $f'_{i+1}$ is smooth over $B_{i+1} \bss Z_{i+1}$. Since $Z_{i+1} = \nu_i^{-1}(Z_{i} \cup f_i(C''_i))$ and $f'_{i+1}$ is the base change of $f'_i$ by $\nu_i : B_{i+1} \to B_i$, it suffices to show that $f'^{-1}_{i}(\ol{Z_i \bss \gD_i}) \to \ol{Z_i \bss \gD_i}$ and $f'^{-1}_{i}(f_i(C''_i)) \to  f_i(C''_i)$ have multi-sections. The former holds by the induction hypothesis. As for the latter, we have to show that for every irreducible component $Y''$ of $C''_i$, if $Z'' \cnec f_i(Y'')$ (which we can assume to be nonempty, namely $Z'' \nsubset Z_i$), then the fibration $f'^{-1}_{i}(Z'') \to Z''$ has a multi-section. Over the dense Zariski open $Z'' \bss Z_i$ of $Z''$, the map ${f'_{i}}^{-1}(Z'') \to Z''$ is isomorphic to the almost holomorphic map $f^{-1}_{i}(Z'') \cnec \ol{f^{-1}_{i}(Z'' \bss Z_i)} \dto Z''$. In particular, fibers of  $f^{-1}_{i}(Z'') \dto Z''$ over $Z'' \bss Z_i$ are elliptic curves. So if $Y''$ is not a multi-section of $f^{-1}_{i}(Z'') \dto Z''$, then $Y'' = \ol{f^{-1}_{i}(Z'' \bss Z_i)}$, which would be an irreducible component of $C'_i$ and yield a contradiction. Accordingly, the image of $Y''$ in ${f'_{i}}^{-1}(Z'')$ is a multi-section of ${f'_{i}}^{-1}(Z'') \to Z''$. \qed

\noindent \textbf{Step 4:} \textit{Construction of the $G$-equivariant tautological model $f : \sX \to B$.}

Let $\nu' : ({B},{\gD}) \to ({B}_n,\gD_n)$ be a $G$-equivariant log-resolution of singularities of the pair $ ({B}_n,\gD_n)$, where we recall that  $\gD_n \subset B_n$ is the pre-image of  $\gD_0 \subset B_0$, the discriminant locus of $f_0 : X_0 \to B_0 $. Let $f : \sX \to B$ be the $G$-equivariant tautological model associated to the $G$-equivariant elliptic fibration $X_0 \times_{B_0} B = X'_n \times_{B_n} B \to B$ by virtue of Lemma-Definition~\ref{lem-locWeimodHdg}. Since $\gD = (\nu \circ \nu')^{-1}(\gD_0)$ is a normal crossing divisor and the local monodromies of $\bH_0$ around $\gD_0$ are unipotent, by Lemma~\ref{lem-functW} the pullback of the minimal Weierstra{\ss} fibration associated to $\bH_0$ by $\nu \circ \nu'$ is still a minimal Weierstra{\ss} fibration. Therefore by Lemma~\ref{lem-dicstaut}, the discriminant locus of $f$ is $\gD$. As $({B},{\gD})$ is log-smooth and projective, $f$ satisfies $i)$ and $iii)$ in Hypotheses~\ref{hyp-KahlerSection}. Since $f$ is the pullback of $f_0$ by $\nu \circ \nu'$ and $f_0$ satisfies $ii)$ and $iv)$ in Hypotheses~\ref{hyp-KahlerSection}, so does $f$.  \qed

 \noindent \textbf{Step 5:}  \textit{Construction of the $G$-stable subvariety $Z \subset B$ and the end of the proof of Lemma~\ref{lem-red}.}
 
Let $Z \cnec \nu'^{-1}(Z_n)$. As $\gD_n \subset Z_n$, we have $\gD \subset Z$. Since $f_n'^{-1}(\ol{Z_n \bss \gD_n}) \to \ol{Z_n \bss \gD_n}$ has a multi-section by Step 3 and $f$ is bimeromorphic to $X'_n \times_{B_n} B \to B$ over $B \bss \gD$, the fibration $f^{-1}(\ol{Z \bss \gD}) \to \ol{Z \bss \gD}$ has a multi-section.
 
Let $f''_n : X_n'' \to B_n$ be a $G$-equivariant minimal resolution of the $G$-equivariant meromorphic map $f_n : X_n \dto B_n$ and let $f'' : X'' \cnec X''_n \times_{B_n} B \to B$ be the base change of $f''_n$. Here "minimal" resolution means that over the complementary of the indeterminacy locus of  $f_n : X_n \dto B_n$, the bimeromorphic map $X_n'' \to X_n$ is biholomorphic. As the resolution is minimal and $X_n \dto B_n$ is almost holomorphic and well-defined over $B_n \bss Z_n$, the map $f''_n$ is ($G$-equivariantly) biholomorphic to $f_n$ over $B_n \bss Z_n$, which is further ($G$-equivariantly) biholomorphic to $f'_n$ over $B_n \bss Z_n$ by Step 3.   It follows that the induced  bimeromorphic map $\ga : X'' \dto X_0 \times_{B_0} B$ is biholomorphic over $B \bss Z$.  In particular, ${f''}^{-1}(B \bss Z) \to B \bss Z$ is a smooth elliptic fibration because $f_0 : X_0 \to B_0$ is smooth over $B_0 \bss \gD_0$ and $(\nu \circ \nu')^{-1}(\gD_0) = \gD \subset Z$. Therefore the singular locus of $X''$ is contained in ${f''}^{-1}(Z)$, so if $\mu' : X' \to X''$ is a $G$-equivariant desingularization of $X''$, then $\mu'$ is biholomorphic over $B\bss Z$. By Lemma~\ref{lem-isomlisse}, there exists a $G$-equivariant bimeromorphic map $\gb : X_0 \times_{B_0} B \dto \sX$ over $B$ that is biholomorphic over $B \bss \gD \supset B \bss Z$.  Thus if we define the  bimeromorphic map $\tau$ in the statement of Lemma~\ref{lem-red} to be the composition of the bimeromorphic maps
$$
\begin{tikzcd}[cramped, row sep = 20, column sep = 25]
\tau : X'   \ar[r, "\mu' "]  & X''  \ar[r, dashed, "\ga "] & X_0 \times_{B_0} B \ar[r, dashed, "\gb"] & \sX,
\end{tikzcd}
$$
then as $\ga$, $\gb$, and $\mu'$ are G-equivariant and biholomorphic over $B \bss Z$, so is $\tau$.  Hence $\tau$ satisfies the desired property in Lemma~\ref{lem-red}.

Finally we define the holomorphic map $\mu$ in the statement of Lemma~\ref{lem-red} to be the composition of the bimeromorphic morphisms  
$$\mu: X'/G \xto{\mu'/G} X''/G \to X_n''/G \to X_n/G \xto{\mu_n} X,$$
where the second arrow is the quotient by $G$ of the projection $X'' = X''_n \times_{B_n} B \to X''_n$ and the third arrow the quotient of the $G$-equivariant resolution $X''_n \to X_n$ of the map $X_n \dto B_n$ introduced in Step 4.
\end{proof}

By Lemma~\ref{lem-red} we have a diagram 
$$
\begin{tikzcd}[cramped, row sep = 20, column sep = 25]
X    & X'/G \ar[l, swap, "\mu"] \ar[r, dashed, "\bar{\tau}"]     & \sX/G
\end{tikzcd}
$$
where both arrows are bimeromorphic. Since $X'/G$ is a finite quotient of a complex manifold, it has at worst rational singularities~\cite[Proposition 5.15]{KollarMori}. As $X$ is smooth, we have $\mu_*\cO_{X'/G} =\cO_X$ and $R^1\mu_*\cO_{X'/G} = 0$. It follows from~\cite[Theorem 2.1]{RanStabMap} that if $X'/G$ has an algebraic approximation, then it induces an algebraic approximation of $X$. Since $Z$ is $G$-stable and $f$ is $G$-equivariant, $Y \cnec f^{-1}(Z)$ is also $G$-stable. As $\tau^{-1}_{|\sX \bss Y}$ is biholomorphic onto its image, so is $\bar{\tau}^{-1}_{|(\sX/G) \bss (Y/G)}$.  Thus by Proposition~\ref{pro-Formalgapp}, it suffices to prove that there exists an algebraic approximation of $\sX/G$ preserving the completion $\rwh{Y/G}$ of $\sX/G$ along $Y/G$.

Since $f : \sX \to B$ is a $G$-equivariant tautological model satisfying Hypotheses~\ref{hyp-KahlerSection} and since $f^{-1}(\ol{Z \bss \gD}) \to \ol{Z \bss \gD}$ is a fibration over a $G$-stable subvariety of $B$ which has a multi-section by Lemma~\ref{lem-red}, according to Proposition~\ref{pro-appalgpres} there exists an algebraic approximation 
$$\Pi : \cX  \xto{q} B \times V' \to V'$$ 
of $f : \sX \to B$ which is $G$-equivariantly locally trivial over $B$ and preserves $G$-equivariantly the completion $\hat{Y} \to \hat{Z}$ of $f$ along $Y \cnec f^{-1}(Z) \to Z$. By Lemma~\ref{lem-Gquotloctriv}, the quotient $\cX/G \to V'$ of $\Pi$ is an algebraic approximation of $\sX/G$. As $\hat{Y}$ is $G$-equivariantly preserved by $\Pi$, we have a $G$-stable subvariety $\cY \subset \cX$ along which the completion $\hat{\cY}$ of $\cX$ is $G$-equivariantly isomorphic to $\hat{Y} \times V'$ over $V'$. So 
$$\rwh{\cY/G} \simeq \hat{\cY}/G  \simeq (\hat{Y}/G) \times V' \simeq \rwh{Y/G} \times V'$$
over $V'$, where $\rwh{\cY/G}$ is the completion of $\cX/G$ along $\cY/G$. It follows that the quotient $\cX/G \to V'$ of $\cX \to V'$ is an algebraic approximation of $\sX/G$ preserving $\rwh{Y/G}$, which finishes the proof of Theorem~\ref{thm-mainfibellip}. 
\end{proof}

\section*{Acknowledgement}

The theory of elliptic fibrations developed by Nakayama plays a crucial role in this work, and I had a more thorough understanding of this subject when collaborating with Beno\^it Claudon and Andreas H\"oring. I am grateful to them for the e-mail correspondence related to this subject and their comments on the preliminary version of the text. I am also in debt to Noboru Nakayama for explaining to me several details of his work~\cite{NakayamaGlob}. Finally, I thank the referees for their careful reading, remarks, and suggestions.

\bibliographystyle{plain}
\bibliography{Kod3kappa2v2}

\end{document}